\definecolor{darkgreen}{rgb}{0,0.45,0} 
\def\flet#1:{\mathsf{let}\;#1 \@ifnextchar:\@fletdoublecolon\@fletsinglecolon}
\def\fletp#1:{\mathsf{let}'\;#1 \@ifnextchar:\@fletdoublecolon\@fletsinglecolon}
\def\@fletdoublecolon:=#1in{\Coloneqq #1\;\mathsf{in}\;}
\def\@fletsinglecolon=#1in{\coloneqq #1\;\mathsf{in}\;}
\def\type{\mathsf{Type}}
\def\inr{\mathsf{inr}}
\def\inl{\mathsf{inl}}
\def\refl{\mathsf{refl}}
\def\prop{\mathsf{Prop}}
\def\base{\mathsf{base}}
\def\floop{\mathsf{loop}}
\def\id{\mathsf{id}}
\def\ap{\mathsf{ap}}
\def\J{\mathsf{J}}
\def\disc{\mathsf{Disc}}
\def\discprop{\mathsf{DiscProp}}
\def\isdisc{\mathsf{isdiscrete}}
\def\codisc{\mathsf{Codisc}}
\def\iscodisc{\mathsf{iscodiscrete}}
\def\codiscprop{\mathsf{CodiscProp}}
\def\code{\mathsf{code}}
\def\encode{\mathsf{encode}}
\def\decode{\mathsf{decode}}
\def\const{\mathsf{const}}
\def\oo{\ensuremath{\infty}}
\def\fib{\mathsf{fib}}
\def\proj{\mathsf{pr}}
\def\inbox{\mathsf{in}_{\Box}}
\newcommand{\jdeq}{\equiv}      
\newcommand{\defeq}{\vcentcolon\equiv}  
\def\shc{\sim\hspace{-5pt}:\hspace{3pt}}
\def\@shc:{\shc}
\let\@tilde~
\newcommand{\shape}{\ensuremath{\mathord{\raisebox{0.5pt}{\text{\rm\esh}}}}}
\newcommand{\bR}{\R} 
\def\oo{\ensuremath{\infty}}
\let\xto\xrightarrow
\def\apart{\mathbin{\#}}
\let\toto\rightrightarrows
\def\prd#1{\@ifnextchar\bgroup{\prd@parens{#1}}{%
    \@ifnextchar\sm{\prd@parens{#1}\@eatsm}{%
    \@ifnextchar\prd{\prd@parens{#1}\@eatprd}{%
    \@ifnextchar\;{\prd@parens{#1}\@eatsemicolonspace}{%
    \@ifnextchar\\{\prd@parens{#1}\@eatlinebreak}{%
    \@ifnextchar\narrowbreak{\prd@parens{#1}\@eatnarrowbreak}{%
      \prd@noparens{#1}}}}}}}}
\def\prd@parens#1{\@ifnextchar\bgroup%
  {\mathchoice{\@dprd{#1}}{\@tprd{#1}}{\@tprd{#1}}{\@tprd{#1}}\prd@parens}%
  {\@ifnextchar\sm%
    {\mathchoice{\@dprd{#1}}{\@tprd{#1}}{\@tprd{#1}}{\@tprd{#1}}\@eatsm}%
    {\mathchoice{\@dprd{#1}}{\@tprd{#1}}{\@tprd{#1}}{\@tprd{#1}}}}}
\def\@eatsm\sm{\sm@parens}
\def\prd@noparens#1{\mathchoice{\@dprd@noparens{#1}}{\@tprd{#1}}{\@tprd{#1}}{\@tprd{#1}}}
\def\lprd#1{\@ifnextchar\bgroup{\@lprd{#1}\lprd}{\@@lprd{#1}}}
\def\@lprd#1{\mathchoice{{\textstyle\prod}}{\prod}{\prod}{\prod}({\textstyle #1})\;}
\def\@@lprd#1{\mathchoice{{\textstyle\prod}}{\prod}{\prod}{\prod}({\textstyle #1}),\ }
\def\tprd#1{\@tprd{#1}\@ifnextchar\bgroup{\tprd}{}}
\def\@tprd#1{\mathchoice{{\textstyle\prod_{(#1)}}}{\prod_{(#1)}}{\prod_{(#1)}}{\prod_{(#1)}}}
\def\dprd#1{\@dprd{#1}\@ifnextchar\bgroup{\dprd}{}}
\def\@dprd#1{\prod_{(#1)}\,}
\def\@dprd@noparens#1{\prod_{#1}\,}
\def\@eatnarrowbreak\narrowbreak{%
  \@ifnextchar\prd{\narrowbreak\@eatprd}{%
    \@ifnextchar\sm{\narrowbreak\@eatsm}{%
      \narrowbreak}}}
\def\@eatlinebreak\\{%
  \@ifnextchar\prd{\\\@eatprd}{%
    \@ifnextchar\sm{\\\@eatsm}{%
      \\}}}
\def\@eatsemicolonspace\;{%
  \@ifnextchar\prd{\;\@eatprd}{%
    \@ifnextchar\sm{\;\@eatsm}{%
      \;}}}
\def\lam#1{{\lambda}\@lamarg#1:\@endlamarg\@ifnextchar\bgroup{.\,\lam}{.\,}}
\def\@lamarg#1:#2\@endlamarg{\if\relax\detokenize{#2}\relax #1\else\@lamvar{\@lameatcolon#2},#1\@endlamvar\fi}
\def\@lamvar#1,#2\@endlamvar{(#2\,{:}\,#1)}
\def\@lameatcolon#1:{#1}
\def\lamu#1{{\lambda}\@lamuarg#1:\@endlamuarg\@ifnextchar\bgroup{.\,\lamu}{.\,}}
\def\@lamuarg#1:#2\@endlamuarg{#1}
\def\fall#1{\forall (#1)\@ifnextchar\bgroup{.\,\fall}{.\,}}
\def\exis#1{\exists (#1)\@ifnextchar\bgroup{.\,\exis}{.\,}}
\def\sm#1{\@ifnextchar\bgroup{\sm@parens{#1}}{%
    \@ifnextchar\prd{\sm@parens{#1}\@eatprd}{%
    \@ifnextchar\sm{\sm@parens{#1}\@eatsm}{%
    \@ifnextchar\;{\sm@parens{#1}\@eatsemicolonspace}{%
    \@ifnextchar\\{\sm@parens{#1}\@eatlinebreak}{%
    \@ifnextchar\narrowbreak{\sm@parens{#1}\@eatnarrowbreak}{%
        \sm@noparens{#1}}}}}}}}
\def\sm@parens#1{\@ifnextchar\bgroup%
  {\mathchoice{\@dsm{#1}}{\@tsm{#1}}{\@tsm{#1}}{\@tsm{#1}}\sm@parens}%
  {\@ifnextchar\prd%
    {\mathchoice{\@dsm{#1}}{\@tsm{#1}}{\@tsm{#1}}{\@tsm{#1}}\@eatprd}%
    {\mathchoice{\@dsm{#1}}{\@tsm{#1}}{\@tsm{#1}}{\@tsm{#1}}}}}
\def\@eatprd\prd{\prd@parens}
\def\sm@noparens#1{\mathchoice{\@dsm@noparens{#1}}{\@tsm{#1}}{\@tsm{#1}}{\@tsm{#1}}}
\def\lsm#1{\@ifnextchar\bgroup{\@lsm{#1}\lsm}{\@@lsm{#1}}}
\def\@lsm#1{\mathchoice{{\textstyle\sum}}{\sum}{\sum}{\sum}({\textstyle #1})\;}
\def\@@lsm#1{\mathchoice{{\textstyle\sum}}{\sum}{\sum}{\sum}({\textstyle #1}),\ }
\def\tsm#1{\@tsm{#1}\@ifnextchar\bgroup{\tsm}{}}
\def\@tsm#1{\mathchoice{{\textstyle\sum_{(#1)}}}{\sum_{(#1)}}{\sum_{(#1)}}{\sum_{(#1)}}}
\def\dsm#1{\@dsm{#1}\@ifnextchar\bgroup{\dsm}{}}
\def\@dsm#1{\sum_{(#1)}\,}
\def\@dsm@noparens#1{\sum_{#1}\,}
\let\types\vdash
\def\istrue{\;\mathsf{true}}
\newcommand{\blank}{\mathord{\hspace{1pt}\text{--}\hspace{1pt}}}
\newcommand{\trunc}[2]{\mathopen{}\left\Vert #2\right\Vert_{#1}\mathclose{}}
\newcommand{\tproj}[3][]{\mathopen{}\left|#3\right|_{#2}^{#1}\mathclose{}}
\newcommand{\tprojf}[2][]{|\blank|_{#2}^{#1}}
\newcommand{\brck}[1]{\trunc{}{#1}}
\newcommand{\unit}{\ensuremath{\mathbf{1}}}
\newcommand{\ttt}{\ensuremath{\mathsf{tt}}}
\newcommand{\bool}{\ensuremath{\mathbf{2}}}
\newcommand{\btrue}{{1_{\bool}}}
\newcommand{\bfalse}{{0_{\bool}}}
\newcommand{\N}{\ensuremath{\mathbb{N}}}
\newcommand{\Z}{\ensuremath{\mathbb{Z}}}
\newcommand{\Q}{\ensuremath{\mathbb{Q}}}
\newcommand{\R}{\ensuremath{\mathbb{R}}}
\newcommand{\coeq}{\mathsf{coeq}}
\newcommand{\topcirc}{\ensuremath{\mathbb{S}^1}}
\newcommand{\topsphere}{\ensuremath{\mathbb{S}^2}}
\newcommand{\hocirc}{\ensuremath{S^1}}
\newcommand{\topdisc}{\ensuremath{\mathbb{D}^2}}
\newcommand{\hosphere}{\ensuremath{S^2}}
\newcommand{\C}{\mathcal{C}}
\newcommand{\Sh}{\mathrm{Sh}}
\newcommand{\sharpf}{(\blank)^\sharp}
\newcommand{\flatf}{(\blank)_\flat}
\newcommand{\mprop}{proposition}
\newcommand{\mysc}[1]{{\scalefont{0.76}#1}}
\newtheorem{thm}{Theorem}[section]
\crefname{thm}{Theorem}{Theorems}
\newtheorem{thmua}[thm]{Theorem \mysc{\{UA\}}}
\crefname{thmua}{Theorem}{Theorems}
\crefname{thmlem}{Theorem}{Theorems}
\newtheorem{thmlemse}[thm]{Theorem \mysc{\{LEM,$\sharp\emptyset$\}}}
\crefname{thmlemse}{Theorem}{Theorems}
\crefname{thmse}{Theorem}{Theorems}
\newtheorem{thmlemr1t}[thm]{Theorem \mysc{\{LEM,C1,T\}}}
\crefname{thmlemr1t}{Theorem}{Theorems}
\newtheorem{thmac}[thm]{Theorem \mysc{\{AC\}}}
\crefname{thmac}{Theorem}{Theorems}
\newtheorem{thmccorlem}[thm]{Theorem \mysc{\{AC$_{\mathbb{N}}$ or LEM\}}}
\crefname{thmccorlem}{Theorem}{Theorems}
\newtheorem{thmr0}[thm]{Theorem \mysc{\{C0\}}}
\crefname{thmr0}{Theorem}{Theorems}
\newtheorem{thmlemr1}[thm]{Theorem \mysc{\{LEM,C1\}}}
\crefname{thmlemr1}{Theorem}{Theorems}
\newtheorem{thmr1}[thm]{Theorem \mysc{\{C1\}}}
\crefname{thmr1}{Theorem}{Theorems}
\newtheorem{thmr2}[thm]{Theorem \mysc{\{C2\}}}
\crefname{thmr2}{Theorem}{Theorems}
\newtheorem{thmlemr2}[thm]{Theorem \mysc{\{LEM,C2\}}}
\crefname{thmlemr2}{Theorem}{Theorems}
\newtheorem{thmr3}[thm]{Theorem \mysc{\{R$\flat$\}}}
\crefname{thmr3}{Theorem}{Theorems}
\newtheorem{thmuar3}[thm]{Theorem \mysc{\{UA,R$\flat$\}}}
\crefname{thmuar3}{Theorem}{Theorems}
\newtheorem{thmlemr3}[thm]{Theorem \mysc{\{LEM,R$\flat$\}}}
\crefname{thmlemr3}{Theorem}{Theorems}
\newtheorem{thmlemr3t}[thm]{Theorem \mysc{\{LEM,R$\flat$,T\}}}
\crefname{thmlemr3t}{Theorem}{Theorems}
\newtheorem{thmr3t}[thm]{Theorem \mysc{\{R$\flat$,T\}}}
\crefname{thmr3t}{Theorem}{Theorems}
\newtheorem{thmualemr3t}[thm]{Theorem \mysc{\{UA,LEM,R$\flat$,T\}}}
\crefname{thmualemr3t}{Theorem}{Theorems}
\crefname{thmuar3t}{Theorem}{Theorems}
\crefname{thmr4}{Theorem}{Theorems}
\newaliascnt{lem}{thm}
\newtheorem{lem}[lem]{Lemma}
\crefname{lem}{Lemma}{Lemmas}
\newtheorem{lemlem}[lem]{Lemma \mysc{\{LEM\}}}
\crefname{lemlem}{Lemma}{Lemmas}
\crefname{lemt}{Lemma}{Lemmas}
\newtheorem{lemr0}[lem]{Lemma \mysc{\{C0\}}}
\crefname{lemr0}{Lemma}{Lemmas}
\newtheorem{lemlemr1}[lem]{Lemma \mysc{\{LEM,C1\}}}
\crefname{lemlemr1}{Lemma}{Lemmas}
\newtheorem{lemlemr1t}[lem]{Lemma \mysc{\{LEM,C1,T\}}}
\crefname{lemlemr1t}{Lemma}{Lemmas}
\newtheorem{lemr1}[lem]{Lemma \mysc{\{C1\}}}
\crefname{lemr1}{Lemma}{Lemmas}
\newtheorem{lemr3}[lem]{Lemma \mysc{\{R$\flat$\}}}
\crefname{lemr3}{Lemma}{Lemmas}
\newtheorem{lemuar3}[lem]{Lemma \mysc{\{UA,R$\flat$\}}}
\crefname{lemuar3}{Lemma}{Lemmas}
\crefname{lemr3t}{Lemma}{Lemmas}
\newaliascnt{cor}{thm}
\newtheorem{cor}[cor]{Corollary}
\crefname{cor}{Corollary}{Corollaries}
\newtheorem{corac}[cor]{Corollary \mysc{\{AC\}}}
\crefname{corac}{Corollary}{Corollaries}
\newtheorem{corccorlem}[cor]{Corollary \mysc{\{AC$_{\mathbb{N}}$ or LEM\}}}
\crefname{corccorlem}{Corollary}{Corollaries}
\newtheorem{corr0}[cor]{Corollary \mysc{\{C0\}}}
\crefname{corr0}{Corollary}{Corollaries}
\newtheorem{corr1}[cor]{Corollary \mysc{\{C1\}}}
\crefname{corr1}{Corollary}{Corollaries}
\newtheorem{corccorlemr1}[cor]{Corollary \mysc{\{AC$_{\mathbb{N}}$ or LEM, C1\}}}
\crefname{corccorlemr1}{Corollary}{Corollaries}
\newtheorem{corr3}[cor]{Corollary \mysc{\{R$\flat$\}}}
\crefname{corr3}{Corollary}{Corollaries}
\crefname{corr3t}{Corollary}{Corollaries}
\crefname{coruar3}{Corollary}{Corollaries}
\newtheorem{corua}[cor]{Corollary \mysc{\{UA\}}}
\crefname{corua}{Corollary}{Corollaries}
\newtheorem{corlemr1}[cor]{Corollary \mysc{\{LEM,C1\}}}
\crefname{corlemr1}{Corollary}{Corollaries}
\newtheorem{corlemr2}[cor]{Corollary \mysc{\{LEM,C2\}}}
\crefname{corlemr2}{Corollary}{Corollaries}
\crefname{corlemr3}{Corollary}{Corollaries}
\newtheorem{corlemr3t}[cor]{Corollary \mysc{\{LEM,R$\flat$,T\}}}
\crefname{corlemr3t}{Corollary}{Corollaries}
\newtheorem{corualemr3t}[cor]{Corollary \mysc{\{UA,LEM,R$\flat$,T\}}}
\crefname{corualemr3t}{Corollary}{Corollaries}
\newtheorem{coracr1}[cor]{Corollary \mysc{\{AC,C1\}}}
\crefname{coracr1}{Corollary}{Corollaries}
\theoremstyle{definition}
\newaliascnt{rmk}{thm}
\newtheorem{rmk}[rmk]{Remark}
\crefname{rmk}{Remark}{Remarks}
\newaliascnt{defn}{thm}
\newtheorem{defn}[defn]{Definition}
\crefname{defn}{Definition}{Definitions}
\newtheorem{defnr0}[defn]{Definition \mysc{\{C0\}}}
\crefname{defnr0}{Definition}{Definitions}
\newaliascnt{eg}{thm}
\newtheorem{eg}[eg]{Example}
\crefname{eg}{Example}{Examples}
\newtheorem*{namedthm}{\protect\namedthmname}
\newcounter{namedthm}
\newenvironment{named}[1]
  {\def\namedthmname{#1}%
   \refstepcounter{namedthm}%
   \begin{namedthm}\def\@currentlabel{#1}}
  {\end{namedthm}}
\numberwithin{equation}{section}
\newenvironment{proof*}{\begin{proof}}{\end{proof}}
\def\mscsname#1{#1}\def\ns{ }
\title[Real-cohesive HoTT]{Brouwer's fixed-point theorem in real-cohesive homotopy type theory}
\author[Michael Shulman]{\mscsname{Michael}\ns\mscsname{Shulman}}
  \thanks{This material is based on research sponsored by The United States Air Force Research Laboratory under agreement number FA9550-15-1-0053.  The U.S. Government is authorized to reproduce and distribute reprints for Governmental purposes notwithstanding any copyright notation thereon.  The views and conclusions contained herein are those of the author and should not be interpreted as necessarily representing the official policies or endorsements, either expressed or implied, of the United States Air Force Research Laboratory, the U.S. Government, or Carnegie Mellon University.}
\address{University of San Diego, San Diego, CA, 92110, USA}
\begin{document}

\begin{abstract}
  We combine Homotopy Type Theory with axiomatic cohesion, expressing the latter internally with a version of ``adjoint logic'' in which the discretization and codiscretization modalities are characterized using a judgmental formalism of ``crisp variables''.
  This yields type theories that we call ``spatial'' and ``cohesive'', in which the types can be viewed as having independent topological and homotopical structure.
  These type theories can then be used to study formally the process by which topology gives rise to homotopy theory (the ``fundamental $\infty$-groupoid'' or ``shape''), disentangling the ``identifications'' of Homotopy Type Theory from the ``continuous paths'' of topology.
  In a further refinement called ``real-cohesion'', the shape is determined by continuous maps from the real numbers, as in classical algebraic topology.
  This enables us to reproduce formally some of the classical applications of homotopy theory to topology.
  As an example, we prove Brouwer's fixed-point theorem.
\end{abstract}

\maketitle

\section{Introduction}
\label{sec:introduction}

\subsection*{On spaces, types, and \oo-groupoids}
\label{sec:spaces-types-oogpds}

Homotopy type theory~\parencite{hottbook} is an emerging field that connects homotopy theory and higher category theory with constructive type theory.
The homotopy-theoretic semantics for type theory~\parencite{aw:htpy-idtype,klv:ssetmodel} enables us to view the types in type theory as homotopical objects.
This motivates new rules and axioms for type theory, such as Voevodsky's univalence axiom and higher inductive types, which allow us to do \emph{synthetic homotopy theory}.
That is, we can construct homotopical objects in type theory and prove theorems about them there, such as calculating homotopy groups of spheres~\parencite[Chapter 8]{hottbook}.
The homotopy-theoretic semantics imply that such formal theorems automatically yield proofs of corresponding classical results.

However, it is important to understand that at least on the surface, a theorem in synthetic homotopy theory is a  different statement about different objects than a theorem of the same name in classical homotopy theory.
For instance, in the synthetic theorem that $\pi_1(\hocirc)=\Z$~\parencite{ls:pi1s1}:
\begin{itemize}
\item $\hocirc$ is a \emph{higher inductive type}, freely generated by a basepoint and a loop.
\item $\pi_1(X)$ is the 0-truncation of the loop space $\Omega(X)$, where the 0-truncation is a higher inductive type that ``kills all information above dimension 0''.
\item The loop space $\Omega(X)$ involves ``paths'' that are an essentially undefined primitive notion, like ``point'' and ``line'' in axiomatic geometry.
  They are given meaning by the rules governing them (namely, those of Martin-L\"of's intensional identity type).
\end{itemize}
This should be contrasted with the meanings of the same words in the theorem of classical homotopy theory that is denoted $\pi_1(\topcirc)=\Z$:
\begin{itemize}
\item $\topcirc$ is the topological space $\{ (x,y)\in \R\times\R \mid x^2+y^2=1 \}$.
\item $\pi_1(X)$ is the set of path-components of the loop space $\Omega(X)$.
\item The loop space $\Omega(X)$ (and the notion of path-component) involves paths that are \emph{defined} as continuous functions $[0,1] \to X$ out of the topological unit interval.
\end{itemize}

\begin{rmk}
  In contrast to \textcite{hottbook}, I will use $\topcirc$ for the topological circle $\{ (x,y)\in \R\times\R \mid x^2+y^2=1 \}$, hoping that the font will serve as a mnemonic for its relationship to the real numbers $\R$.
  This leaves $\hocirc$ for the higher inductive one.
\end{rmk}

The connection between these worlds that justifies using the same terminology in both cases can be described as follows.
\begin{enumerate}
\item In homotopy theory we study objects that may be called \emph{homotopy spaces} or \emph{\oo-groupoids}, which have objects, identifications between those objects, higher identifications between identifications, and so on.
\item A topological space $X$ gives rise to an \oo-groupoid, traditionally called its \emph{fundamental \oo-groupoid}, whose objects are the points of $X$, whose identifications are continuous paths in $X$, whose higher identifications are continuous homotopies in $X$, and so on.
\item Type theory admits semantics in \oo-groupoids.
\item Finally, the fundamental \oo-groupoid of the topological circle $\topcirc$ is the same \oo-groupoid that furnishes semantics for the higher inductive circle $\hocirc$ in type theory (and similarly for paths, truncations, and so on).
\end{enumerate}

A classical algebraic topologist would not bring up \oo-groupoids, of course, but just talk about topological spaces directly.
However, I find it clarifying to do so, because it enables us to distinguish ways in which topological spaces and \oo-groupoids behave differently.
Specifically, a topological space can be defined up to homeomorphism, whereas an \oo-groupoid is (at least the way I use the word) only ever defined up to homotopy equivalence.
For instance, as topological spaces, a cylinder is distinct from a M\"{o}bius strip (e.g.\ one is orientable and one is not); but they have the same fundamental \oo-groupoid (which is, in fact, also the same as that of \topcirc).

This is important because the homotopy-theoretic semantics of type theory lands in \oo-groupoids, and \emph{not} in topological spaces.%
\footnote{To be completely precise, at least with current technology~\parencite{klv:ssetmodel} it lands in simplicial sets, which are a different model for \oo-groupoids.
Like a topological space and unlike an \oo-groupoid, a simplicial set can be defined up to isomorphism rather than equivalence; but the notion of isomorphism for simplicial sets is totally different from the notion of isomorphism for topological spaces.
Moreover, at most a very small amount of this extra strictness of simplicial sets beyond \oo-groupoids is visible to type theory, and conjecturally none at all.}
Thus, inside of type theory we should think of types as \oo-groupoids and not as topological spaces.
In particular, although it is common in homotopy type theory to use terminology borrowed from topology such as ``path'' and ``circle'', these words have \textit{a priori} nothing to do with their topological versions \emph{which can also be defined inside of type theory}.
That is, since type theory is rich enough to encode all of mathematics, we can define in it the real numbers \R\ and thus also the topological circle $\topcirc = \{ (x,y):\R\times\R \mid x^2+y^2=1 \}$; but this type is quite different from the higher inductive circle $\hocirc$.
For instance, the former is a set (0-truncated) with infinitely many distinct points, while the latter is not a set and has ``only one point'' (technically, it is 0-connected).

This frequently causes confusion among newcomers to homotopy type theory, who struggle to understand the meaning of ``path'' because it both is, and is not, like the topological concept after which it is named.
For this reason among others, in this paper I will say \emph{identification} or \emph{equality} rather than ``path'' when speaking of the synthetic notion (i.e.\ elements of identity types).

More importantly, however, this means there is something missing from homotopy type theory, because there is more to algebraic topology than the study of \oo-groupoids in their own right.
The process by which they arise from topological spaces (the fundamental \oo-groupoid) is also important.
This often becomes clear when we are concerned with \emph{applications} of algebraic topology to fields such as geometry and physics, which have no \emph{intrinsic} interest in homotopy theory.
As a simple example, consider the standard homotopy-theoretic proof of Brouwer's fixed-point theorem:

\begin{thm}
  Let $\topdisc$ denote the topological disc $\{ (x,y) \in \R^2 \mid x^2 + y^2 \le 1 \}$.
  Then any continuous map $f:\topdisc \to \topdisc$ has a fixed point.
\end{thm}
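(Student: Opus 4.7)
The plan is to follow the classical homotopy-theoretic proof of Brouwer's theorem, adapted to the real-cohesive setting. I would argue by contrapositive: assuming $f : \topdisc \to \topdisc$ has no fixed point, construct a retraction of the disc onto its boundary circle and derive a contradiction by applying the shape modality $\shape$.

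First I would make the ``no fixed point'' hypothesis constructively precise. To build the retraction geometrically, I need the pointwise apartness $f(x) \apart x$ for every $x : \topdisc$, which gives a well-defined normalization of the direction $x - f(x)$. Under this hypothesis I would define $r : \topdisc \to \topcirc$ by sending $x$ to the unique intersection of the ray from $f(x)$ through $x$ with the boundary $\topcirc$; the formula involves dividing by $|x - f(x)|$, which apartness makes sensible, and the result is a continuous (in fact smooth) function. One checks directly that $r(x) = x$ whenever $x \in \topcirc$, so $r$ is a retraction of the inclusion $i : \topcirc \hookrightarrow \topdisc$.

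Now I would apply $\shape$, which is functorial, to obtain a retraction $\shape(r) : \shape(\topdisc) \to \shape(\topcirc)$ of $\shape(i)$. The two crucial inputs are results about real-cohesion that the paper develops: first, $\shape(\topdisc) \simeq \unit$, because the disc admits a continuous straight-line contraction to the origin, which descends through $\shape$; and second, $\shape(\topcirc) \simeq \hocirc$, identifying the shape of the topological circle with the higher inductive circle, which is the central theorem of real-cohesion. Composing the retraction with these equivalences exhibits $\hocirc$ as a retract of $\unit$, hence as contractible. This contradicts $\pi_1(\hocirc) = \Z$, so the hypothesis fails and $f$ must have a fixed point.

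The main obstacle is the constructive handling of ``no fixed point'': the bare negation $\neg \exists x.\, f(x) = x$ is too weak to produce the retraction, because the retraction formula needs the positive content of apartness $f(x) \apart x$ (or a uniform lower bound on $|f(x) - x|$). Arranging the theorem so that the conclusion delivered about the existence of a fixed point is as strong as possible constructively, and correctly matching this up with the apartness input needed to build $r$, is the delicate bookkeeping step; once that is settled the argument reduces to functoriality of $\shape$ together with the real-cohesion theorems already established.
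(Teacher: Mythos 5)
Your outline is sound, and it is essentially the argument the paper itself develops later (in the section on the Brouwer fixed-point theorem), but it is not the proof the paper gives for \emph{this} statement. At the point where this theorem appears, the paper is working in ordinary classical mathematics and proves it with the unreconstructed classical argument: assume no fixed point, draw the ray from $f(z)$ through $z$, get a retraction $r:\topdisc\to\topcirc$, and conclude via $\pi_1(\topcirc)=\Z$ versus $\pi_1(\topdisc)=0$. That proof uses the full law of excluded middle and the classical definition of $\pi_1$, and is offered precisely as a target that homotopy type theory (at that point) cannot yet reach.

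Your proposal is the internal, real-cohesive version; compared with the paper's later treatment you have the skeleton right ($\shape\topcirc\simeq\hocirc$, $\shape\topdisc\simeq\unit$, retract argument, $\pi_1(\hocirc)=\Z$), but the ``delicate bookkeeping step'' you flag is exactly where the real work lives, and your proposal leaves it unresolved. Two things are needed. First, to pass from $\neg(f(z)=z)$ to the apartness $f(z)\apart z$ that makes the retraction formula constructive, the paper invokes its restricted law of excluded middle (\textsc{LEM}) together with Axiom~T (codiscreteness of $x>0$), from which it derives the analytic Markov principle; neither is free, and without them the negation really is too weak. Second, the conclusion cannot be the bare ``there exists a fixed point'' for an arbitrary (cohesive) $f$: fixed points cannot be chosen continuously, even locally, in $f$, so the strongest statement available is the codiscretely truncated $\sharp\bbrck{\sum_{x:\topdisc} f(x)=x}$, or equivalently the untruncated existence for a \emph{crisp} $f$. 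Your closing remark hints at this but doesn't commit to it; as written, ``any continuous map has a fixed point'' is stronger than what the real-cohesive argument can deliver. Finally, a small simplification: rather than appealing to the straight-line contraction (which requires knowing $\shape[0,1]$ is contractible and that $\shape$ preserves the relevant product), the paper shows $\shape\topdisc$ is contractible directly by exhibiting $\topdisc$ as a retract of $\R\times\R$ and using that $\shape$ preserves finite products.
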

\begin{proof}
  Suppose $f:\topdisc \to \topdisc$ is continuous with no fixed point.
  For any point $z\in\topdisc$, draw the ray from $f(z)$ to $z$ (which is well-defined since they are distinct) and keep going until you hit the boundary of $\topdisc$ (see \cref{fig:bfp}).
  \begin{figure}
    \centering
    \begin{tikzpicture}[>=stealth]
      \draw (0,0) circle (2);
      \node[circle,fill,inner sep=1pt,label=below:$f(z)$] (fz) at (-1,-.5) {};
      \node[circle,fill,inner sep=1pt,label=$r(z)$] (rz) at (0,2) {};
      \node[circle,fill,inner sep=1pt,label=left:$z$] (z) at ($(fz)!0.3!(rz)$) {};
      \draw[->] (fz) -- (rz);
    \end{tikzpicture}
    \caption{A retraction from a self-map}
    \label{fig:bfp}
  \end{figure}
  Call that point of intersection $r(z)$; then $r$ defines a continuous map from $\topdisc$ to its boundary, which is the topological circle $\topcirc$.
  Moreover, if $z\in\topcirc$ to begin with, then there is no need to keep going once we get from $f(z)$ to $z$, so that $r(z)=z$ in this case.
  Thus, $\topcirc$ would be a retract of $\topdisc$.
  It follows that $\pi_1(\topcirc)$ would be a retract of $\pi_1(\topdisc)$; but the latter is trivial while the former is not.
\end{proof}

Note how crucial it is that $\topcirc$ is a topological space, with points that can be specified exactly and not just up to homotopy.
Indeed, the statement of the theorem makes no reference to homotopy theory.

With present technology, this theorem is completely inaccessible to homotopy type theory.
Of course, the use of proof by contradiction requires the law of excluded middle, but this can be assumed, as discussed by~\textcite[Chapter 3]{hottbook}.
The real problem is that inside of type theory, we have no way to relate $\topcirc$ to $\hocirc$, so that we cannot make the jump into homotopy theory in the last step.
(We could, of course, repeat the \emph{classical} definition of $\pi_1$ as a quotient of a set of continuous paths, and likewise the classical proof that $\pi_1(\topcirc)\cong\Z$, but then we would have gained nothing from homotopy type theory.)

What we need is some way to define the fundamental \oo-groupoid inside of type theory.
The most natural way to interpret this is that given a topological space $X$ in type theory, we want a \emph{type} whose \emph{synthetic} paths (i.e.\ elements of its identity type) are the \emph{topological} paths in $X$ (i.e.\ continuous maps $[0,1] \to X$), and so on.
It is natural to try to construct such a thing as a higher inductive type, but there are at least two problems with this.
Firstly, it seems that we would need infinitely many constructors to handle paths of all dimension, which at present we do not know how to do in the finitary syntax of type theory.
And secondly, if we succeed in doing it, it is not immediately clear how we would show that this construction takes $\topcirc$ to $\hocirc$ without essentially copying again the classical proof, thereby negating any benefit from synthetic homotopy theory.

\subsection*{Combining synthetic topology and synthetic homotopy theory}

In this paper we will take a different approach: \emph{we make the topology synthetic, as well as the homotopy theory}.
That is, in addition to carrying a structure of synthetic \oo-groupoid, with identifications and higher identifications, every type also carries a structure of \emph{synthetic topological space}, with some sort of ``cohesion'' attaching nearby elements together.
(I am being deliberately vague here; I don't want to demand that a synthetic topology is necessarily specified in terms of open sets the way a classical topology usually is.)

It is well known that type theory admits topological semantics, in which types are interpreted by topological spaces or other similar objects.
Indeed, this has been known for much longer than the homotopical interpretations.
Moreover, just as type theory can be enhanced with homotopically motivated axioms such as univalence, it can be enhanced with topologically motivated axioms such as Browerian continuity principles or an axiomatic ``Sierpinski space'', leading to \emph{synthetic topology}~\parencite{escardo:syntop-datatypes,escardo:top-hoil,taylor:lamcra,bl:met-syntop}.

What we will do is to \emph{combine} these two kinds of interpretation, so that each type will have \emph{both} an \oo-groupoid structure \emph{and} a topological structure.
In contrast to the situation in classical algebraic topology, where the same structure is viewed at different times either as topological or homotopical, in our world the topological and homotopical structures will be unrelated.
The objects of our motivating model will be \emph{topological \oo-groupoids}, which can be thought of as \oo-groupoids together with topologies on their points, identifications, higher identifications, and so on, making all of the operations continuous.
Here are some examples to get the reader's intuition up to speed.
\begin{itemize}
\item An ordinary topological space yields a topological \oo-groupoid with no nontrivial identifications, in the same way that a set yields an ordinary \oo-groupoid.
\item An ordinary \oo-groupoid can be given the discrete topology (in all dimensions, i.e.\ on points, identifications, and so on).
\item An ordinary \oo-groupoid can also be given the \emph{indiscrete} topology.
\item Recall that an ordinary group $G$ can be ``delooped'' to an \oo-groupoid $K(G,1)$, with one point $b$ and $G$ forming the identifications from $b$ to itself.  If $G$ is a \emph{topological} group, then we have an analogous $K(G,1)$ that is a \emph{topological} \oo-groupoid, but remembering the topology on $G$ as the space of identifications.%
  \footnote{This is different from what a classical algebraic topologist means by ``$BG$'' for a topological group $G$.  The latter is the \emph{ordinary} $\infty$-groupoid with one object whose identifications are points of $G$, whose 2-identifications are paths in $G$, etc.---i.e.\ the delooping of the fundamental \oo-groupoid of $G$.} 
\end{itemize}
Note that here it starts to matter even more that we distinguish between paths and equalities: a topological \oo-groupoid has both paths \emph{and} equalities; and it can also have equalities between paths, and paths between equalities.\footnote{Modulo dealing correctly with endpoints, equalities between paths and paths between equalities are actually the same, but are different from paths between paths and from equalities between equalities.}

At this point we are presented with several problems.
\begin{enumerate}
\item How should we define precisely what we mean by a ``topological \oo-groupoid'' (in set-theoretic foundations, to serve as semantics for type theory)?\label{item:q1}
\item What new rules or axioms can we bring into type theory, motivated by such a model, that will enable us to relate $\topcirc$ to $\hocirc$?
  Continuity principles and the Sierpinski space are not well-adapted to saying interesting things about types that are not 0-truncated.\label{item:q2}
\item The classical proof of Brouwer's fixed-point theorem uses the law of excluded middle.
  But unlike homotopical interpretations, topological interpretations are generally incompatible with excluded middle; how can we resolve this apparent contradiction?\label{item:q3}
\end{enumerate}
Fortunately, all three have the same solution.
Let us start with~\ref{item:q3}.
The problem with excluded middle in topological models is that it may not hold \emph{continuously}: e.g.\ a subspace $U\subseteq X$ and its complement $X\setminus U$ will together contain all the points of $X$, but the map from their coproduct $U + (X\setminus U)$ to $X$ does not have a \emph{continuous} section because the topology on the domain is different.
Thus, we cannot expect to have the full law of excluded middle in a topological model.

However, we can recover a form of excluded middle if we enhance our model and our type theory with the ability to talk about \emph{discontinuous} functions in addition to continuous ones.
Then it will be consistent to say something like ``discontinuously, for all $P$ we have $P\vee \neg P$''.
There are at least three ways to add discontinuous functions to a world where everything is continuous: we could add a new basic notion of ``discontinuous map'', we could add an operation that retopologizes a space discretely (since every function out of a discrete space is continuous), or we could dually add an operation that retopologizes a space indiscretely (since every function into an indiscrete space is continuous).
In fact, we will do all three; we will denote the two retopologization operations by $\flat$ and $\sharp$, respectively.
Categorically, $\flat$ is a coreflection and $\sharp$ is a reflection; in type theory we call them \emph{modalities}.

Now let us move on to question~\ref{item:q2}.
Here our answer comes from \textcite{lawvere:cohesion} \parencite[see also][]{lm:ac-cohesive}, who has proposed an axiomatization of ``categories of cohesive spaces'' based entirely on adjoint functors and their properties.
This is convenient because it is easy to generalize to homotopy theory (regarded as $(\oo,1)$-category theory), since we have well-behaved notions of adjoint \oo-functors.
Lawvere has considered many axioms, but the basic setup is a string of four adjoint functors:
\begin{center}
  \begin{tikzpicture}
    \node (S) at (-0.5,0) {sets};
    \node (C) at (-0.5,2) {cohesive spaces};
    \draw[->] (0,1.7) -- node[auto] {$p_*$} (0,0.3);
    \draw[->] (1,0.3) -- node[auto,swap] {$p^!$} (1,1.7);
    \draw[->] (-1,0.3) -- node[auto,swap] {$p^*$} (-1,1.7);
    \draw[->] (-2,1.7) -- node[auto] {$p_!$} (-2,0.3);
    \node at (.7,1) {$\dashv$};
    \node at (-.3,1) {$\dashv$};
    \node at (-1.3,1) {$\dashv$};
  \end{tikzpicture}
\end{center}
in which $p^*$ (hence also $p^!$) is fully faithful.
The idea is that $p_*$ finds the underlying set of a space, $p^*$ equips a set with the discrete topology, and $p^!$ equips it with the indiscrete topology.
(We will return to the meaning of $p_!$ below.)
It follows that $p^* p_*$ is a coreflection into the subcategory of discrete spaces while $p^! p_*$ is a reflection into the subcategory of indiscrete spaces; thus they coincide with the modalities $\flat$ and $\sharp$ mentioned above.
We can \oo-ize this whole setup by simply replacing categories with $(\oo,1)$-categories, ``sets'' with ``\oo-groupoids'', and adjoint functors with adjoint \oo-functors; this has been studied extensively by \textcite{schreiber:dcct}.

Finally, there is a topos-theoretic side to Lawvere's work which informs our answer to question~\ref{item:q1}.
If the category of cohesive spaces is to be a topos, then the existence of this adjoint string says that it must be a \emph{local and locally connected} topos.
And we know how to construct local and locally connected toposes, by taking categories of sheaves on Grothendieck sites with special properties
\parencite[see e.g][C3.3.10 and C3.6.3(d)]{ptj:elephant}.
Thus, it stands to reason that by taking \oo-sheaves on similar sorts of sites, we should be able to construct \emph{cohesive $(\oo,1)$-toposes}, and this is in fact the case, as shown by \textcite{schreiber:dcct}.
The objects of the particular cohesive $(\oo,1)$-topos we are interested in are known as \emph{topological \oo-groupoids} or \emph{topological stacks}.

\subsection*{Real-cohesive homotopy type theory}

At this point we may appear to have wandered rather far from our original motivating problem, namely that we need an ``internal'' way to construct the fundamental \oo-groupoid.
In fact, however, we have snuck up behind it and are about ready to pounce.
The secret lies with Lawvere's fourth functor $p_!$.
In his 1-categorical context, it constructs the set of connected components of a space, since a map from any space to a discrete one must be constant on every component.
However, when we generalize to the $(\oo,1)$-categorical context, the analogous $p_!$ \emph{is the fundamental \oo-groupoid functor}!
Unfortunately, to explain how this comes about would take too much space here.
Some insight may be gained from the proof of \cref{thm:circ-circ} below; for more details see \textcite[Proposition 4.3.32]{schreiber:dcct} or \textcite[\S3]{carchedi:hotyorb}.

Just as $p^* p_*$ and $p^! p_*$ induce the operations $\flat$ and $\sharp$ on the type theory of cohesive spaces, the composite $p^* p_!$ induces a third modality, a reflection into \emph{discrete} spaces that we call the \emph{shape} and denote $\shape$.
(See \cref{rmk:shape} for discussion of the name ``shape''.)
With $\shape$ in our type theory, we can at least \emph{state} the desired relation between the circles: we should have $\shape\topcirc = \hocirc$.
The existence of $\shape$ alone does not suffice to \emph{prove} this, however; what we need is an additional axiom guaranteeing that $\shape$ is in fact constructed out of continuous paths indexed by intervals in \R, as we expect.
To be precise, we assert that $\shape$ is the ``internal localization'' at $\R$, as in~\textcite[Proposition 3.9.4]{schreiber:dcct} and~\textcite[Proposition 8.3]{dug:uht}.

At last, we have a type theory in which we can reproduce the classical proof of Brouwer's fixed-point theorem, using synthetic homotopy theory in the appropriate place.
We perform the topological part of the proof in type theory, using classical axioms such as our modified form of excluded middle where appropriate.
Then at the end we apply $\shape$, use our calculation that $\shape \topcirc = \hocirc$, and then appeal to the synthetic proof of $\pi_1(\hocirc)=\Z$ to reach a contradiction.

The type theory formed by the addition of $\shape$ to $\flat$ and $\sharp$ is called \emph{cohesive homotopy type theory}, and was already sketched by~\textcite{ss:qgftchtt}.
Our theory is a further enhancement of this which I call \emph{real-cohesive homotopy type theory}, due to the special role played by the real numbers in defining $\shape$ using continuous paths.
As we will see, the simple axiom of real-cohesion has many striking consequences, especially combined with our modified classical axioms.
In addition to Brouwer's fixed-point theorem, it implies versions of the Intermediate Value Theorem, and characterizes the internal functions $\R\to\R$ as being precisely those that are $\varepsilon$-$\delta$ continuous in the usual sense.
The present paper is a formal development of just enough real-cohesive homotopy type theory to enable the proof of the Brouwer fixed-point theorem, which we give in \cref{sec:bfp}.

\subsection*{The problem of comonadic modalities}

It should be noted before proceeding that there are in fact many subtly different ways to represent the modalities $\shape$, $\flat$, and $\sharp$ in type theory.
The problems in trying to do this arise from the fact that everything in ordinary type theory happens in an arbitrary context, which means that all type-theoretic operations must correspond to category-theoretic operations that apply in each slice category and are stable under pullback.
For instance, the na\"\i{}ve type-theoretic notion of ``reflective subuniverse''~\parencite[Definition 7.7.1]{hottbook} corresponds categorically not to a mere reflective subcategory, but to a ``reflective subfibration'' of the self-indexing.

For $\sharp$ this is not a problem, since it is a left exact reflector (being a composite of two right adjoints), and any left-exact-reflective subcategory admits a canonical extension to a reflective subfibration.
For $\shape$, which is a reflector but not left exact, it is more of an issue; however, as long as $\shape$ can be defined by an internal localization --- which categorically means that our topos is not just locally connected but \emph{stably} locally connected \parencite[see][]{johnstone:punctual-lc} --- it can be extended to a reflective subfibration.
(The category-theoretic side of such modalities will be studied elsewhere; presently we are concerned with their type-theoretic manifestation, with the category theory providing only motivation.
Internally in type theory, modalities of this sort were sketched briefly by \textcite[\S7.7]{hottbook} and will be studied in detail by \textcite{rss:modalities}.)

The real problem arises with the modality $\flat$, which is a \emph{coreflector}, and \emph{cannot} be extended to a ``coreflective subfibration''.
In fact, one can prove a ``no-go theorem'' (\cref{thm:no-go}) that the only coreflective subfibrations are those of the form $\Box_U (A) = A\times U$ for a subterminal object $U$, and $\flat$ is certainly not of this form.
Thus, we must find some way to represent $\flat$ that prevents it from being applied in arbitrary contexts.

\textcite{ss:qgftchtt} did this by using $\sharp\type$.
We cannot have $\flat$ as an operation $\type\to\type$ on the universe itself, but we can have it as an operation $\sharp\type \to \sharp\type$, i.e.\ a ``discontinuous'' self-map of the universe.
This choice had the advantage of being formalizable in existing proof assistants, but the disadvantage that it required developing large amounts of theory of $\sharp\type$ as ``the external category of types'' \parencite[which was mostly omitted by][]{ss:qgftchtt}.
Even worse, there is no way to ``escape'' from $\sharp\type$, but it should be the case that if we have a particular type defined in the empty context, such as $\N$, we can define $\flat\N$ as an actual type, rather than just an element of $\sharp\type$.
Finally, in \textcite{ss:qgftchtt} all the modalities were obtained by asserting axioms, but for good formal behavior of a type theory it is preferable to use rules rather than axioms, since axioms interfere with canonicity.

For these reasons, in this paper we take a different approach, based on the ``judgmental reconstruction'' of modal logic by \textcite{pd:modal}.
We modify our base type theory by introducing a new sort of hypothesis $x::A$, which we call \emph{crisp}, intended to mark whether dependence on a variable is continuous or discontinuous.
Based on this, we can then introduce $\flat$ and $\sharp$ using the usual sort of rules for type constructors: formation, introduction, elimination, and computation.

Our rules for $\flat$ are almost exactly those of \textcite{pd:modal} for $\Box$.
If $B$ is a type depending only on crisp hypotheses we can form $\flat B$, and similarly if a term $M:B$ depends only on crisp hypotheses we can form $M^\flat : \flat B$, while an element of $\flat B$ can always be assumed to be of this form.
However, although our $\sharp$ on its own behaves like the $\bigcirc$ of \textcite{pd:modal}, its rules are quite different (in particular, we have no judgment ``$M ~: A$'' like in \textcite{pd:modal}) and make it automatically right adjoint to $\flat$.
The resulting type theory, which requires no axioms yet, we call \emph{spatial type theory}; we expect that it corresponds semantically to \emph{local toposes}, which have $p^*\dashv p_* \dashv p^!$ but no $p_!$.
It includes as a fragment the ``modal logic of local toposes'' of \textcite{ab:ax-local} and \textcite{abs:lrt-modal-comput-ea}.

In particular, the rules of $\flat$ stipulate that it can only be applied in a context of purely discontinuous dependence, circumventing the no-go theorem.
In addition to resolving all the above problems with $\sharp\type$, this has the additional advantage that we can apply $\flat$ not only in a totally empty context, but in a context of other \emph{discrete} spaces.
Categorically speaking, this means we treat the topos of cohesive spaces as \emph{indexed over} the base topos via $p^*$, which is exactly the right thing to do for any geometric morphism \parencites[see][B3.1.2]{ptj:elephant}[and][Theorem 3.4.20]{schreiber:dcct}.

\begin{rmk}\label{rmk:modes}
  One might argue that it would be better to include a separate kind of type representing objects in the base category, and type constructors representing the functors $p^*$, $p_*$, etc.\ rather than the monads and comonads they induce.
  This would be in line with the ``judgmental deconstruction'' of~\textcite{jr:modal} that decomposes $\Box$ and $\bigcirc$ into pairs of adjoint functors.
  It could be that this is indeed better; in fact, the current rules for $\flat$ and $\sharp$ were deduced from a generalization of \textcite{jr:modal} developed by \textcite{ls:1var-adjoint-logic}.

  However, for now I have chosen to go with a simpler type theory in which there is only one kind of type.
  Learning to keep track of the two kinds of variables in spatial type theory is hard enough.
  Moreover, the theory of \textcite{ls:1var-adjoint-logic} is general enough to include this case as well: unlike $\Box$ and $\bigcirc$, our modalities $\flat$ and $\sharp$ are type-theoretically sensible in their own right without needing to be deconstructed.
  There is also a philosophical attractiveness to the position that \emph{all} types have topological structure, even if that structure happens to be discrete or codiscrete.
\end{rmk}

\begin{rmk}\label{rmk:admissible}
  It is natural to wonder why we introduce crisp hypotheses $x::A$, when semantically they are essentially just ordinary hypotheses $u:\flat A$.
  Why not simply restrict the formation of $\flat B$ and $M^\flat$ to contexts containing only variables of the form $u:\flat A$?

  This might be simpler in some respects, but it would be bizarre in others.
  The official type-theoretic reason is that, as noted by \textcite[\S4.2]{pd:modal}, ``substitution would no longer be an admissible rule''.
  In less fancy language, that means that we would not be able to determine the validity of an expression by plain syntactic analysis, or else we would have to introduce a complicated calculus of explicit substitutions.

  For instance, if $M^\flat$ could be formed in a context of variables $u:\flat A$, then we could define a function $f\defeq (\lam{x} x^\flat):\flat B \to \flat \flat B$, since a variable $x:\flat B$ has the correct form to allow us to write $x^\flat : \flat\flat B$.
  Now suppose we also had a function $g:A\to \flat B$.
  Then the composite $f \circ g$ would reduce to $\lam{a} g(a)^\flat$; but $g(a)$ does \emph{not} have the correct form to allow us to apply $(\blank)^\flat$ to it!
  So if we allowed $g(a)^\flat$ as a valid expression, we would be unable to determine its validity by inspection, but would have to ``guess'' that it was obtained by substituting $g(a)$ into the valid $x^\flat$.
  Otherwise, we would have to write instead $x^\flat[g(a)/x]$, where the substitution $[g(a)/x]$ is not, as usual, a \emph{defined operation} on expressions, but a basic part of the grammar of expressions (an ``explicit substitution'').

  The formalism of crisp variables avoids both horns of this dilemma.
  In this setup, $f:\flat B \to \flat\flat B$ must be defined as $\lam{x} \flet u^\flat := x in u^\flat{}^\flat$ (see \cref{sec:flat}), and when we compose it with $g$ we get $\lam{a} \flet u^\flat := g(a) in u^\flat{}^\flat$.
  Now we are no longer applying $(\blank)^\flat$ directly to $g(a)$, but to a crisp variable that it has been destructed into; thus the \emph{syntactic} invariant that indicates a correct application of $(\blank)^\flat$ is preserved.

  With all that said, once the basic lemmas about $\flat$ are proven using crisp variables, in most cases we will be able to blur the distinction between crisp elements of $A$ and elements of $\flat A$, at least when working informally.
\end{rmk}

Now we have to choose how to represent $\shape$.
One possibility would be to add further new judgment forms so that it could be characterized similarly to $\flat$ and $\sharp$.
The general theory of \textcite{ls:1var-adjoint-logic} immediately suggests how to do this, but the judgmental structure of the resulting theory would be substantially more complicated.
Moreover, semantically it seems likely to correspond to indexing the base topos over the cohesive topos by means of $p_!$, which is at least an odd thing to do.
We would also need to take special care that the rules didn't force $\shape$ to be left exact (whereas our rules for $\flat$ and $\sharp$ do automatically make both of them left exact), since in our desired models it is not.

For these reasons, I have chosen to introduce $\shape$ axiomatically.
However, rather than asserting the modality itself as an axiom, we assert a set of ``generators'' for it, enabling the modality itself to be constructed by a higher inductive localization.
This has some technical advantages, 
and it also leads naturally into the stronger axiom of real-cohesion, where we can simply assert that \R\ alone is a sufficient generator for $\shape$.
It seems highly unlikely that the special position of \R\ in real-cohesion could be obtained by rules rather than an axiom, so we lose nothing in that regard by making $\shape$ axiomatic already.

It should be clear from this discussion, however, that there is nothing sacred about the particular choices made in this paper.
Most of the length of this paper is devoted to \emph{proving} from our rules and axioms that $\shape$, $\flat$, and $\sharp$ satisfy the correct relationships (which~\textcite{ss:qgftchtt} just \emph{assumed} to be true).
Specifically, we need to know that $\sharp$ and $\shape$ are reflectors (\cref{thm:codiscrete-reflective,thm:discrete-reflective}) and $\flat$ is a coreflector (\cref{thm:discrete-coreflective}), that $\sharp$ is left exact (\cref{thm:sharp-pullback}), that $\shape\dashv\flat\dashv\sharp$ (\cref{thm:flat-sharp-adj,thm:shape-flat-adj}), that $\shape$ preserves finite products 
and reflects into the same subcategory that $\flat$ coreflects into, that the images of $\flat$ and $\sharp$ are equivalent (\cref{thm:co-disc-equiv}) by an equivalence that identifies $\flat$ with $\sharp$ (\cref{thm:co-disc-equiv-gamma}), and that $\shape$ is generated by $\R$ (\ref{ax:r3}).
Once these facts are established somehow, the contentful theorems (such as $\shape\topcirc = \hocirc$ and the Brouwer fixed-point theorem) are essentially independent of the underlying type-theoretic machinery.
I had to fix some particular choice of that machinery in order to write the paper, but this choice made may very well turn out not to be the optimal one.

Moreover, the behavior of ``spatial type theory'' as a type theory has not yet been studied.
It looks somewhat reasonable, in that $\sharp$ and $\flat$ have introduction and elimination rules of roughly the usual sort.
But for it to be really respectable as a type theory one should prove various standard theorems about it, such as normalization and canonicity, and this has not been done.
(\textcite{ls:1var-adjoint-logic} did this for a more general class of ``adjoint logic'' type theories, but only under the fairly severe simplifying assumptions of no dependent types and only one variable in the context.
\textcite{lsr:multi} allows multi-variable contexts, but still no dependent types, and has a rather fiddlier adequacy theorem.)
Eventually, one might hope to implement such a type theory in a proof assistant.

\subsection*{Outline of the paper}

This paper has three parts; but rather than coming in sequential order, one of the parts is interleaved through the other two.
The first part, consisting of \crefrange{sec:spatial-type-theory}{sec:topos-models}, develops spatial type theory.
It begins with the judgmental structure (\cref{sec:spatial-type-theory}), then moves on to $\sharp$ (\cref{sec:codisc}) and then $\flat$ (\crefrange{sec:flat}{sec:disc}).
In \cref{sec:topos-models} we sketch briefly the intended categorical models of this theory, and use them to motivate the axioms to be introduced in the third part.

The second, interleaved, part, consisting of \cref{sec:lem,sec:lem-2,sec:axiom-choice,sec:axiom-choice-2,sec:ac-2}, studies classical axioms that can be added to spatial type theory without destroying its topological content, such as the modified law of excluded middle mentioned above.
It is interleaved because the formulation and proof of various classical principles requires more and more of the base type theory, but at the same time provides useful motivation for the same.
In \cref{sec:lem} we state the restricted law of excluded middle, and decide that in order to state a similarly restricted axiom of choice we need $\sharp$.
After introducing $\sharp$, in \cref{sec:lem-2} we revisit excluded middle in some other guises made possible by $\sharp$, and in \cref{sec:axiom-choice} we formulate the axiom of choice, but note that there ought to be simpler versions available if we had $\flat$.
Thus, after introducing $\flat$, in \cref{sec:axiom-choice-2} we discuss these versions and their applications.
Finally, after stating the axioms of real-cohesion, in \cref{sec:ac-2} we show that these axioms strengthen our classicality axioms, and state one further such axiom.

The third part, consisting of \cref{sec:real-cohesion,sec:shape,sec:pieces-have-points,sec:continuity,sec:bfp}, adds additional axioms to spatial type theory that make it cohesive and real-cohesive, and applies them to prove the Brouwer fixed-point theorem.
In \cref{sec:real-cohesion} we state the cohesion and real-cohesion axioms in generator form and deduce some consequences, and in \cref{sec:shape} we construct $\shape$ as a higher inductive type.
\Cref{sec:pieces-have-points} relates our axioms back to the motivating topos-theoretic notions.
In \cref{sec:continuity} we note some implications of real-cohesion for synthetic topology that don't require any higher homotopy, including some versions of the Intermediate Value Theorem and a version of ``Brouwer's theorem'' that all functions $\R\to\R$ are continuous.
Finally, in \cref{sec:bfp} we bring everything together to prove the Brouwer fixed-point theorem by mimicking the classical proof, invoking synthetic homotopy theory and our modified classicality axioms in the appropriate places.
We also similarly prove a constructive variant asserting only the existence of \emph{approximate} fixed points, but not requiring any classicality axioms.

\subsection*{On axioms and notation}

Because we are developing a hierarchy of type theories (spatial $<$ cohesive $<$ real-cohesive), and using classical axioms in some places, we keep careful track of which axioms are necessary for which theorems.
In addition to our classicality axioms (\mysc{LEM}, \mysc{AC}, and \mysc{T}) and (real-)cohesion axioms (\mysc{C0}, \mysc{C1}, \mysc{C2}, and \mysc{R$\flat$}), we track uses of the full univalence axiom (\mysc{UA}).
We do need \mysc{UA} for our proof of the Brouwer fixed-point theorem, but most of the theory does not require it, and hence should be valid in a 1-topos model in addition to an $(\oo,1)$-topos model.
We will, however, use without comment the axioms of function extensionality, propositional resizing, and univalence restricted to propositions (``propositional extensionality''), since these are all valid in a 1-topos.

We will generally adhere to the notation of \textcite{hottbook}.
For example, we write $x=y$ for the identity type, $M\jdeq N$ for a judgmental equality, and $a\defeq P$ if $a$ is currently being defined to equal $P$.
However, as remarked above, we will call elements of $x=y$ ``identifications'' or ``equalities'' rather than ``paths'', and we write $\hocirc$ for the higher inductive circle to reserve $\topcirc$ for the topological one.
We will also call $(-1)$-truncated types simply \emph{propositions} rather than ``mere propositions'', as is common outside of \textcite{hottbook}.
Similarly, we pronounce the $(-1)$-truncated existential (as in \textcite{hottbook}, $\exists_{x:A} P(x)$ means $\brck{\sm{x:A}P(x)}$) simply as ``there exists'', rather than the ``there merely exists'' of \textcite{hottbook}.
This also requires avoiding other terminology such as ``propositional equality'' and ``propositional uniqueness rule'' since the notions in question are not $(-1)$-truncated; for this purpose we introduce the new adjective \emph{typal} (to contrast with \emph{judgmental}).

\subsection*{Vistas}

The Brouwer fixed-point theorem is not itself of central importance; it serves mainly as a convenient test case.
The real point is the development of spatial, cohesive, and real-cohesive type theory.
Some possibilities for further applications include the following.
\begin{itemize}
\item Real-cohesion should also imply other applications of homotopy theory to topology, such as the Lefschetz fixed-point theorem and the hairy ball theorem.
\item In cohesive homotopy type theory, a topological group appears as an ordinary internal (0-truncated) group, which can be delooped as in \textcite{lf:emspaces}.
  These deloopings are classifying spaces for topological principal bundles up to isomorphism (rather than the up-to-homotopy-equivalence classifying spaces of classical algebraic topology); the classifying maps live in ``continuous cohomology theories''.
\item Matrix groups and their deloopings are important in classical algebraic topology, but as \oo-groupoids they are hard to define in ordinary homotopy type theory.
  In real-cohesive homotopy type theory, we can obtain them as shapes of ordinary set-level definitions.
  The same is true for other classical objects such as higher Hopf fibrations.
\item There are other local toposes than cohesive and real-cohesive ones, such as Sierpinski cones, the topological topos of \textcite{ptj:topological-topos}, and
  the relative realizability topos of \textcite{abs:lrt-modal-comput-ea}.
  Each of these should motivate different axioms.
\item Similarly, there are cohesive toposes other than real-cohesive ones.
  Some, which play an important role in \textcite{schreiber:dcct}, encode \emph{smoothness} rather than continuity;
  these satisfy a version of \ref{ax:r3} with $\R$ replaced by a type of ``smooth reals''.
  Other interesting examples include the \emph{global homotopy theory} of \textcite{rezk:global-cohesion}, the \oo-topos of simplicial \oo-groupoids, and the tangent \oo-toposes of \textcite{lurie:ha}.
\end{itemize}

\subsection*{On theft and honest toil}

One of the advantages of type theory is that it enables us to ``add structure by failing to rule it out''.
For instance, because type theory admits topological models, if we define (say) a \emph{group} in type theory, as long as we didn't use any principles that are invalid topologically, we have automatically defined a \emph{topological group} as well.
Homotopy type theory applies the same effect to \oo-groupoids: as long as we don't use any principles like UIP, everything we do in type theory is automatically homotopical.%
\footnote{We do have to be careful that this happens in the right way.
  For instance, the na\"\i{}ve definition of ``group'' doesn't give the correct \oo-groupoidal notion, but there is a different one that does.}
This is especially valuable because working directly with \oo-groupoids can be combinatorially complicated and require a lot of background in algebraic topology and category theory.

Spatial and cohesive type theory extend this advantage even further: not only does type theory apply automatically to spaces and to \oo-groupoids, it also applies to \emph{spatial \oo-groupoids}, which have both structures at once.
Moreover, even if we didn't have any categorical models in mind to specify what a ``spatial \oo-groupoid'' might mean, we could still undertake to study them in type theory by simply \emph{combining} the axioms and principles that pertain to spatial models (such as continuity principles and/or modalities) and homotopical models (such as univalence and HITs).
Except for \cref{sec:topos-models,sec:pieces-have-points}, the present paper can be read from such a perspective.

It is true that mathematical honesty may demand that such models eventually be produced, to ensure relative consistency and allow our ``synthetic'' results to be translated into classical theorems.
However, once \emph{someone} has undertaken that toil \parencite[in our case, the toil includes][as well as future work that remains to be done]{lurie:higher-topoi,schreiber:dcct,gk:univlcc,shulman:intext-codisc,shulman:rsfib-factsys,klv:ssetmodel,lw:localuniv,ls:hits}, we can continue to work in the type theory without needing to think about or understand the model.
This principle should also apply to other possible enhancements of spatial type theory.

\subsection*{Acknowledgments}

This paper owes a lot to many people.
My understanding of cohesive $(\oo,1)$-toposes developed over the course of many discussions with Urs Schreiber, who also helped greatly with understanding the categorical semantics of modalities (this joint work is represented in \textcite{ss:qgftchtt}, which this paper draws heavily on).
Other participants at the $n$Forum and $n$-Category Caf\'e have also been very helpful, particularly Zhen Lin.
Andrej Bauer and Mart\'\i{}n Escard\'o patiently explained to me the relationship between different classicality axioms and some existing theories of synthetic topology, and Bas Spitters contributed the proof of \cref{thm:axt-fails}.
Urs Schreiber, Bas Spitters, Egbert Rijke, and Mart\'in Escard\'o also gave useful feedback on drafts.
Finally, Dan Licata explained \textcite{pd:modal} to me, did most of the work on our joint paper~\parencite*{ls:1var-adjoint-logic} generalizing \textcite{jr:modal} that led to the spatial type theory presented here, and has been generally indispensable in teaching me to sound at least vaguely like a type theorist.


\section{Judgments of spatial type theory}
\label{sec:spatial-type-theory}

\addtocontents{toc}{\protect\setcounter{tocdepth}{2}}

As explained in the introduction, we take the point of view that \emph{all types have spatial structure, independently of their \oo-groupoid structure}, and concomitantly \emph{all ordinary type-theoretic constructions are continuous}.
Now, it does happen sometimes in mathematics that we want to talk about things that are \emph{not} continuous.
Our main target in this paper --- the Brouwer fixed-point theorem --- is a prime example: fixed points of functions cannot be selected \emph{continuously} with respect to the space of functions.
For this reason, we augment our type theory with technology that enables us to talk about possibly-discontinuous constructions as well as continuous ones.

Recall that the basic judgment of type theory is $a:A$ for some specific type $A$, expressing that $a$ is a point of the type $A$.
This judgment is used in two ways: for some specific \emph{expression} $a$ we can judge it to be true, or for some \emph{variable} $x$ we can \emph{suppose} it to be true.
For instance, in a statement like ``whenever $x:A$ then $f(x):B$'' we see both uses: $x$ is a variable hypothesized to be of type $A$, and under this hypothesis, $f(x)$ is an expression judged to have type $B$.
It is a distinguishing feature of type theory that whenever $x:A$ is a \emph{hypothesis}, $x$ must be a \emph{variable}; it makes no sense to say ``suppose that $f(x):B$'' (in contrast to the behavior of the membership predicate $\in$ of set theory).

In a world where types are spaces, all ordinary type-theoretic constructions are continuous; thus a statement like ``whenever $x:A$ then $f(x):B$'' means that $f(x)$ depends \emph{continuously} on $x$.
We now augment our type theory with new features to express the possibility that this dependence may \emph{not} be continuous.
In general, we might imagine doing this either by modifying the hypothesis $x:A$ or the conclusion $f(x):B$.
Since continuous functions are also discontinuous,\footnote{We will use ``discontinuous'' to mean ``not necessarily continuous'', by analogy with other phrases such as ``noncommutative ring''.} the variance of implication means that if we modify $x:A$, we would have to replace it by a \emph{stronger} judgment (i.e.\ one that implies $x:A$), whereas if we modify the conclusion $f(x):B$, we would have to replace it by a \emph{weaker} judgment (i.e.\ one that is implied by $f(x):B$).

We take the first route.
Following \textcite{pd:modal}, we denote the stronger judgment by
\[x::A\]
When we hypothesize $x::A$, this means that we allow ourselves to perform constructions and proofs using $x$ that may not respect the topology of $A$ (but they still must respect the topology of any other ordinary hypotheses $y:C$).
In other words, \emph{having} $x::A$ is \emph{more} powerful than having $x:A$; and thus, oppositely, \emph{proving} something with a hypothesis of $x::A$ says \emph{less} than proving it with a hypothesis $x:A$.

We refer to $x::A$ as a \textbf{crisp} hypothesis, and $x$ as a crisp variable.%
\footnote{The word ``crisp'' is chosen as having a connotation somewhat similar to ``discrete'' (i.e.\ not flabby or deformable) and being relatively free of other mathematical meanings.
Its one existing mathematical usage that I am aware of is in fuzzy-set theory, where it refers to ordinary (non-fuzzy) sets; this is actually closely related to our usage, since many categories of fuzzy sets are quasitoposes \parencite[Chapter 8]{wyler:quasitopoi} in which the crisp sets are what we would call the codiscrete objects, so that they also ``forget about the cohesion''.}
(\textcite{pd:modal}, whose focus is on logic, use the word \emph{valid}.)
A crisp variable can always be used as an ordinary one, but also (potentially) in other ways.
If we don't want to give a name to the variable, we may say that $A$ \textbf{holds crisply}.
If necessary, for contrast we will refer to an ordinary hypothesis or variable $x:A$ as \textbf{cohesive}.

For consistency, we require that all variables appearing in the type of a crisp hypothesis are themselves crisp.
Moreover, a crisp variable can only be substituted by expressions involving only other crisp variables.
If $M:A$ is an expression other than a variable, we will say that $M$ is crisp, and sometimes write $M::A$, if the only \emph{variables} it contains are crisp.
Thus, as for \textcite{pd:modal} the notion of \emph{crisp conclusion} is not a basic part of the theory, but rather is defined to mean an ordinary conclusion that depends only upon crisp hypotheses.

All the ordinary rules of type theory ($\prod$-types, $\sum$-types, $=$-types, $W$-types, HITs) are imported into our theory \emph{only in the world of cohesive variables}.
Applications of these rules can involve dependence on ``unaffected'' crisp variables; but the variables that are manipulated in the rules, as well as their conclusions, are always cohesive.
(We will eventually prove, however, that in some cases this can be worked around.)

At a formal level, what we are doing is separating our \emph{context} into two pieces: first a crisp one, then a cohesive one, maintaining the restriction that the type of each variable can only contain variables occurring to its left.
We would therefore write our judgments as $\Delta\mid \Gamma \types \mathcal{J}$.
The previous paragraph means that the ordinary rules of type theory leave the crisp context $\Delta$ untouched; for example, the rules for $\prod$-types are shown in \cref{fig:prod}.
\begin{figure}
  \centering
  \begin{mathpar}
  \inferrule{\Delta\mid\Gamma \types A:\type \\ \Delta\mid\Gamma,x:A\types B:\type}
    {\Delta\mid\Gamma\types{\tprd{x:A}B}:\type}
\and
  \inferrule
  {\Delta\mid\Gamma,x:A\types {b}:{B}}
  {\Delta\mid\Gamma\types{\lam{x} b}:{\tprd{x:A} B}}
\and
  \inferrule
  {\Delta\mid\Gamma\types {f}:{\tprd{x:A} B} \\ \Delta\mid\Gamma\types{a}:{A}}
  {\Delta\mid\Gamma\types{f(a)}:{B[a/x]}}
\and
  \inferrule
  {\Delta\mid\Gamma,x:A\types{b}:{B} \\ \Delta\mid\Gamma\types{a}:{A}}
  {\Delta\mid\Gamma\types {(\lam{x} b)(a)} \jdeq {b[a/x]}}
\and
  \inferrule
  {\Delta\mid\Gamma\types {f}:{\tprd{x:A} B}}
  {\Delta\mid\Gamma\types {f}\jdeq {(\lam{x}f(x))}}
  \end{mathpar}
  \caption{$\prod$-types in spatial type theory}
  \label{fig:prod}
\end{figure}
In what follows, we will attempt to describe all the rules of spatial type theory in words, but we will also include some formal syntax of this sort for precision.
After introducing the rules, however, we will reason almost entirely informally in the style of \textcite{hottbook}.

We will have occasion to use various \emph{axioms} (including axioms from \textcite{hottbook} such as function extensionality and univalence), so it is worth noting how these interact with our split contexts.
Ordinarily in type theory an \emph{axiom} means an assumed element of some type.
There are two ways of thinking about such axioms in type theory: they can be premise-free rules added to the theory, or they can be simply additional assumptions in the context that are never discharged.
According to the first approach, an axiomatic $a:A$ in our theory would be given by the rule
\[ \inferrule{\ }{\Delta\mid\Gamma\types a:A} \]
Note that because this rule is valid even if $\Gamma$ is empty, it implies unavoidably that the term $a:A$ is \emph{crisp}.
The second approach is more flexible: we can add assumptions to either the crisp context $\Delta$ or the cohesive one $\Gamma$.
However, \emph{in this paper, whenever we speak of an axiom, it is to be understood that it is crisp}.

We will also consider some ``axioms'' that are not stated as assumed elements of a fixed type, e.g.\ because they quantify over crisp variables.
These can, however, always be formulated as ``unjustified rules''; an example will occur momentarily.
Moreover, once we introduce $\flat$, all such axioms can be reformulated in the usual way (modulo potential issues with universes, which we ignore).

\subsection{The law of excluded middle}
\label{sec:lem}

With the tool of crisp hypotheses in hand, we can express a version of the law of excluded middle that does not contradict topological models.

\begin{named}{Axiom LEM}(Crisp excluded middle)\label{ax:lem}
  For any crisp $P::\prop$, we have $P \vee \neg P$.
\end{named}

\begin{rmk}\label{rmk:entailment}
  In ordinary mathematics, we blur the distinction between a \emph{hypothetical conclusion} (or \emph{entailment}) and an \emph{implication} (or \emph{function}).
  That is, when we write ``if $x:A$ then $f(x):B(x)$'', we might mean that we have constructed $f(x):B(x)$ under the hypothesis of $x:A$, or we might mean that we have an element $f:\prd{x:A} B(x)$.
  These two statements are intimately connected by the introduction and elimination rules for $\prod$, so no confusion results from failing to distinguish between them.

  However, when the hypothesis is crisp, we have as yet no type-former corresponding to $\prod$: we cannot abstract over a crisp variable.
  Thus, statements such as crisp excluded middle \emph{must} be read as hypothetical conclusions.
  In type-theoretic language, we have to read \ref{ax:lem} as an ``unjustified rule'':
  \[ \inferrule{\Delta\mid\cdot\types P:\prop}{\Delta\mid\cdot \types \mathsf{lem}_P : P\vee \neg P} \]
  rather than an assumed element of some fixed type.
  (We write ``$\Delta\mid\cdot$'' to indicate that the cohesive context is empty.)

  Fortunately, in \cref{sec:codisc,sec:flat} we will introduce new type formers that will allow us to mostly go back to ignoring this distinction.
  See \cref{rmk:admissible} for an explanation of why we do this in two steps.
\end{rmk}

To understand why this law of excluded middle is sensible, we need to think a bit about what a \emph{predicate} $P:A\to\prop$ looks like topologically.
In the classical category of topological spaces, the monomorphisms are the continuous injections; they need \emph{not} necessarily be subspace inclusions.
Therefore, given a subobject (i.e.\ a mono) $m:[P]\hookrightarrow A$, there is a difference between saying that every \emph{point} of $A$ is in $[P]$ (which is to say that the injection $m$ is actually a bijection) and saying that $m$ is a homeomorphism.
We regard the judgments ``$P(x)$ for all $x::A$'' and ``$P(x)$ for all $x:A$'' as expressing this same dichotomy in our type theory.
The latter says that $P(x)$ holds for all $x$ \emph{continuously} as a function of $x$, so that our monomorphism $m$ must have a section, and hence be a homeomorphism.

Now if $P:A\to\prop$, its pointwise negation $\neg P:A\to \prop$ is, essentially by definition, the \emph{largest subobject of $A$ disjoint from $P$}.
Topologically, this means the corresponding mono $[\neg P] \hookrightarrow A$ should contain exactly those points of $A$ that are not in $[P]$,  but also that $[\neg P] \hookrightarrow A$ \emph{must be a subspace inclusion} --- for the inclusion of the subspace determined by those points is certainly disjoint from $[P]$ and hence must be contained in $[\neg P]$ as a subobject of $A$.

It should be clear now that the ordinary LEM cannot hold, for it would assert that for any mono $m:[P]\hookrightarrow A$, the space $A$ is the disjoint union of the space $[P]$ and the subspace determined by its complement.
This is not true in general even if $[P]$ is a subspace inclusion.
However, since $[P]$ and $[\neg P]$ together do contain all the \emph{points} of $A$, it is sensible to assert that for any $x:A$ we have $P(x) \vee \neg P(x)$ \emph{crisply}, and this is what the crisp law of excluded middle gives us.

\begin{rmk}
  It is natural to wonder how we can express internally in type theory the property that a subobject is a subspace inclusion.
  The above analysis suggests that the double negation $\neg\neg P : A\to \prop$ should be the subspace containing the same points as $P$, so that $P$ would itself be a subspace if and only if $\prd{x:A} \neg\neg P(x)\to P(x)$.
  In \cref{sec:codisc} we will introduce another way to express this property, and in \cref{sec:lem-2} we will prove (using our axioms) that they are equivalent.
\end{rmk}

\begin{rmk}
  The assertion that every \emph{subspace} has a complementary subspace (such that their disjoint union is the whole space) translates to $\prd{P:\prop} \neg P \vee \neg\neg P$.
  Thus, the latter assertion, which is sometimes called \emph{de Morgan's law} (since it is equivalent to the one direction of de Morgan's laws that is not constructively valid, $\neg (P\wedge Q) \to (\neg P \vee \neg Q)$), is not acceptable for us either.
\end{rmk}

We can try to perform a similar analysis of the axiom of choice.
There is one subtlety: we must realize that the ``surjections'' of type theory correspond topologically to \emph{regular epimorphisms} --- which is to say, quotient maps --- rather than arbitrary epimorphisms.
One way to see that this must be so is to recall that any surjection between sets in type theory can be proven to be a quotient of its domain by some equivalence relation, which is true of regular epis of spaces but not ordinary ones.
Another way is to recall that in type theory we have a (surjection, embedding) factorization system, and topologically the left class corresponding to the monomorphisms is the quotient maps.

Now supposing that $Y \to X$ is a quotient map, of course there may not be any continuous section of it.
However, if the only source of non-classicality is topology, it should have a \emph{discontinuous} section.
Thus we may hope to be able to formulate this principle in our type theory.
However, if we inspect the usual formulation of the axiom of choice from \textcite[Theorem 3.2.2]{hottbook}:
\[ \left(\prd{x:X} \brck{Y(x)} \right) \to \brck{\prd{x:X} Y(x)} \]
it is not clear how to do this with our current tools.
What we want is for the section $f:\prd{x:X} Y(x)$ on the right to be discontinuous.
But at present we can only indicate discontinuity by a conclusion $f(x):Y(x)$ with a crisp hypothesis $x::X$, and we cannot apply a $\brck{\blank}$ \emph{outside} such a judgment.
(The outer $\brck{\blank}$ is, of course, necessary for consistency with univalence.)

In the next section, therefore, we introduce a new type former $\sharp$ that ``internalizes'' such judgments, allowing us to solve this problem (and many others).

\section{The $\sharp$ modality and codiscreteness}
\label{sec:codisc}
\addtocontents{toc}{\protect\setcounter{tocdepth}{1}}

Topologically, the function-type $\prd{x:X} Y(x)$ represents the space of \emph{continuous} functions; but as we saw in the last section, we sometimes want to internalize the notion of a discontinuous function.
Since every function out of a discrete topological space is continuous, and every function into a codiscrete space is continuous, to make a discontinuous function continuous all we have to do is retopologize its domain discretely or its codomain codiscretely.
We represent these ``retopologizing'' constructions by \emph{modalities} called $\flat$ and $\sharp$, which ``reify'' judgments involving crisp hypotheses.


In this section we study $\sharp$ first, as it is somewhat simpler and stands on its own better.
Described in English, the rules are as follows; a type-theoretic presentation is shown in \cref{fig:sharp}.
\begin{itemize}
\item For any $A:\type$ there is a type $\sharp A$.
\item If $a: A$, then $a^\sharp : \sharp A$.
\item All crisp variables appearing in $a$ can become cohesive ones in $a^\sharp$.
  In other words, when we write $(\blank)^\sharp$, inside the $(\blank)$ we are free to use all variables introduced outside of it as if they were crisp.
  In yet other words, when we ``hit a $(\blank)^\sharp$'' while parsing an expression, all variables currently in the context become crisp.
  This expresses the fact that all functions into a codiscrete space are continuous.
\item In the same way, all crisp variables appearing in a type $A$ can become cohesive ones in $\sharp A$.
  This expresses the fact that the space of codiscrete types is itself codiscrete (which may not be obvious, but is true; we will explain it topologically in \cref{rmk:codiscrete-universe}).
\item From any \emph{crisp} element $x::\sharp A$, we can extract an element $x_\sharp : A$.
  Of course, since $x$ is crisp, $x_\sharp$ is also crisp.
\item A computation rule, which says $(a^\sharp)_\sharp \jdeq a$.
  This requires $(a^\sharp)_\sharp$ to be well-typed, which means that only crisp variables in the ambient context can occur in $a$.
  (The above rule says that when writing $a^\sharp$, cohesive variables in the ambient context can be used as crisp \emph{inside} $a$; but such variables remain cohesive in the term $a^\sharp$, potentially preventing us from writing $(a^\sharp)_\sharp$.)
\item A uniqueness rule, which says $(a_\sharp)^\sharp \jdeq a$.
  This likewise requires both sides to be well-typed, but now the potential problem lies with the right-hand side.\footnote{Note that if we view this rule as an ``expansion'' $a \to_\eta (a_\sharp)^\sharp$ rather than a reduction, as is common for uniqueness rules, then it shares with the computation rule the property that it can be applied to any well-typed input $a:\sharp A$.}
  In writing $(a_\sharp)^\sharp$ the cohesive variables in the context might be treated as crisp inside $a_\sharp$ and hence inside $a$, so that $a$ itself might not be well-typed outside of $(\blank)^\sharp$.
\end{itemize}

\begin{figure}
  \centering
  \begin{mathpar}
    \inferrule{\Delta,\Gamma\mid\cdot \types A:\type}{\Delta\mid\Gamma \types \sharp A : \type}\and
    \inferrule{\Delta,\Gamma\mid\cdot \types M:A}{\Delta\mid\Gamma \types M^\sharp : \sharp A}\and
    \inferrule{\Delta\mid\cdot \types M:\sharp A}{\Delta\mid\Gamma \types M_\sharp : A}\and
    \inferrule{\Delta\mid\cdot \types M:A}{\Delta\mid\Gamma \types (M^\sharp)_\sharp \jdeq M : A}\and
    \inferrule{\Delta\mid\Gamma \types M:\sharp A}{\Delta\mid\Gamma \types (M_\sharp)^\sharp \jdeq M : \sharp A}
  \end{mathpar}
  \caption{Rules for $\sharp$}
  \label{fig:sharp}
\end{figure}

\begin{rmk}\label{thm:negative}
  $\sharp$ is a \emph{negative} type former.
  This means, roughly, that its introduction rule(s) are chosen to match its elimination rule(s) rather than vice versa.
  For instance, function types are negative because to introduce a function we use $\lambda$-abstraction, which essentially means we have to say what will happen when we apply the eliminator (application) on any input.
  In the case of $\sharp$, the elimination rule says that we can get to $A$ if we have a \emph{crisp} element of $\sharp A$; thus to introduce an element of $\sharp A$, it suffices to give an element of $A$ under the assumption that everything is crisp.

  Negative type formers tend to require (judgmental) uniqueness rules (whereas for positive ones, a typal uniqueness rule\footnote{Recall that we say ``typal'' to mean that a rule is witnessed by an inhabitant of the identity type rather than being a judgmental equality, avoiding the older term ``propositional'' since it may not be a ``proposition'' in the $(-1)$-truncated sense.} is usually provable).
  This holds true for $\sharp$.
\end{rmk}

As our first application, we can make $\sharp$ into a functor: given $f:A\to B$, we define $\sharp f : \sharp A \to \sharp B$ by
\[ \sharp f (u) \defeq f(u_\sharp)^\sharp. \]
Note how the type-checking works: inside the $(\blank)^\sharp$, the variable $u::\sharp A$ is crisp, so we can write $u_\sharp : A$ and apply $f$ to it.
We can check functoriality: if also $g:B\to C$, then
\begin{align*}
  \sharp g (\sharp f(u)) &\jdeq g((f(u_\sharp)^\sharp)_\sharp)^\sharp\\
  &\jdeq g(f(u_\sharp))^\sharp\\
  &\jdeq \sharp (gf)(u).
\end{align*}
In addition, for any $A:\type$, we have a map $\sharpf : A \to \sharp A$, and we can check that this is natural:
\begin{align*}
  \sharp f(x^\sharp) &\jdeq f((x^\sharp)_\sharp)^\sharp\\
  &\jdeq f(x)^\sharp.
\end{align*}
We will often omit parentheses around iterated sub- and superscripted $\sharp$s (and, later, $\flat$s), maintaining only their order.
Thus, for instance, $g((f(u_\sharp)^\sharp)_\sharp)^\sharp$ becomes instead $g(f(u_\sharp)^\sharp{}_\sharp)^\sharp$, and $f((x^\sharp)_\sharp)^\sharp$ becomes $f(x^\sharp{}_\sharp)^\sharp$.
This avoids proliferation of parentheses.


Now, recall that our intent was that $\sharp A$ would be $A$ retopologized codiscretely.
Thus, a type should be called \emph{codiscrete} just when this operation does nothing to it.

\begin{defn}
  A type $A:\type$ is \textbf{codiscrete} if $\sharpf : A \to \sharp A$ is an equivalence.
\end{defn}

The rules for $\sharp$ transfer over to theorems about codiscrete types.
For instance, we have:

\begin{thm}\label{thm:codicsrete-ind}
  If $P$ is codiscrete, then when constructing an element of $P$ we may assume that all variables in the context are crisp.
\end{thm}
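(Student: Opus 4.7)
The plan is to turn the statement into a direct application of the introduction rule for $\sharp$ combined with the equivalence hypothesis on $P$. Concretely, the claim to unpack is the following meta-theoretic admissibility: given a derivation
\[ \Delta, \Gamma \mid \cdot \types M : P \]
(i.e.\ a term $M : P$ produced with every variable of the original context made crisp), I want to produce a term of type $P$ in the context $\Delta \mid \Gamma$.

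First I would apply the introduction rule for $\sharp$ (\cref{fig:sharp}), which is precisely designed to reify such crisp derivations: from $\Delta, \Gamma \mid \cdot \types M : P$ we obtain $\Delta \mid \Gamma \types M^\sharp : \sharp P$. Note that this is the only place where the special behavior of $\sharp$ regarding the promotion of cohesive variables to crisp ones is used; the rest of the argument does not touch the context.

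Next I would use the codiscreteness hypothesis on $P$: since $\sharpf : P \to \sharp P$ is an equivalence, it has an inverse, call it $g : \sharp P \to P$, which lives in the empty cohesive context (in fact it is crisp, being obtained from a crisp proof of codiscreteness). Applying $g$ to $M^\sharp$ yields $\Delta \mid \Gamma \types g(M^\sharp) : P$, as desired.

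There is essentially no obstacle here beyond being precise about the reading of the statement: the ``induction principle'' is really a derived rule rather than a judgmental primitive, and the only content is the combination of the $\sharp$-introduction rule with the assumed inverse of $\sharpf$. One might also want to note a typal computation principle, namely that the constructed element $g(M^\sharp)$ agrees with $M$ up to the equivalence $\sharpf$, which follows from $(g \circ \sharpf)(M) = M$ in the crisp context and naturality of the rules — but this is routine and not needed for the bare statement.
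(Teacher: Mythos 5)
Your proof is correct and takes essentially the same route as the paper: apply the $\sharp$-introduction rule to reify the crisp derivation as an element of $\sharp P$, then postcompose with the inverse of $\sharpf : P \to \sharp P$ supplied by codiscreteness. The paper states this slightly more informally (``it suffices to construct an element of $\sharp P$, but now we can apply $\sharpf$''), while you make the meta-theoretic admissibility reading explicit; the mathematical content is identical.
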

\begin{proof}
  Since $P$ is codiscrete, we have an inverse $r : \sharp P \to P$.
  Thus, it suffices to construct an element of $\sharp P$.
  But now we can apply $\sharpf$, which allows us to assume all variables are crisp.
\end{proof}

\begin{thm}\label{thm:sharp-ind}
  If $P : \sharp A \to \type$ is such that each $P(v)$ is codiscrete, and we have $f:\prd{x:A} P(x^\sharp)$, then we have $g:\prd{v:\sharp A} P(v)$ such that $g(x^\sharp) = f(x)$ for all $x:A$.
  We refer to this principle as \textbf{$\sharp$-induction}.
\end{thm}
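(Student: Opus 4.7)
The plan is to apply \cref{thm:codicsrete-ind} pointwise to each $P(v)$. Given $v : \sharp A$, since $P(v)$ is codiscrete, constructing $g(v) : P(v)$ reduces (via the inverse $r : \sharp P(v) \to P(v)$ of $\sharpf$) to constructing an element of $\sharp P(v)$. For this, I would use the introduction form $(\blank)^\sharp$: inside it, the ambient variable $v$ becomes a crisp $v :: \sharp A$, so $v_\sharp : A$ is a well-formed (crisp) element, and $f(v_\sharp)$ has type $P((v_\sharp)^\sharp)$, which is judgmentally $P(v)$ by the uniqueness rule $(v_\sharp)^\sharp \jdeq v$. I would therefore set
\[ g(v) \defeq r\bigl(f(v_\sharp)^\sharp\bigr). \]

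For the typal equality $g(x^\sharp) = f(x)$, substitute $v \equiv x^\sharp$ in this definition. Inside the $(\blank)^\sharp$ the cohesive variable $x$ is promoted to crisp, so the computation rule $(x^\sharp)_\sharp \jdeq x$ applies, yielding $g(x^\sharp) \jdeq r(f(x)^\sharp) \jdeq r(\sharpf(f(x)))$. The retraction homotopy $r \circ \sharpf \sim \id$, which is part of the equivalence data witnessing that $P(x^\sharp)$ is codiscrete, then supplies the desired equality $r(\sharpf(f(x))) = f(x)$, giving only a typal and not judgmental equation (which matches the statement, that uses $=$ rather than $\jdeq$).

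The main obstacle is the bookkeeping of crisp versus cohesive status: I must verify that the uniqueness rule $(v_\sharp)^\sharp \jdeq v$ is invoked inside the outer $(\blank)^\sharp$ where $v$ has been promoted to crisp, and likewise that after substituting $v \equiv x^\sharp$ the computation rule $(x^\sharp)_\sharp \jdeq x$ is available because $x$ too has been promoted by the same $(\blank)^\sharp$. A secondary subtlety is to package the inverse $r$ as depending on $v$, but this is immediate from the hypothesis that each $P(v)$ is codiscrete, which provides the equivalence data parametrically. Beyond these checks, the argument is essentially a direct unfolding of \cref{thm:codicsrete-ind} with care taken to track the dependence on $v$.
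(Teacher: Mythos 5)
Your proposal is correct and takes essentially the same route as the paper's proof: you define $g(v) \defeq r(f(v_\sharp)^\sharp)$ where $r$ is the inverse of $\sharpf$ supplied by codiscreteness, type-check it via the uniqueness rule $(v_\sharp)^\sharp \jdeq v$ (valid since $v$ is crisp inside $(\blank)^\sharp$), and verify $g(x^\sharp) = f(x)$ by the computation rule $(x^\sharp)_\sharp \jdeq x$ followed by the retraction homotopy. The crispness bookkeeping you flag as the "main obstacle" is indeed the key point, and you handle it exactly as the paper does.
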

\begin{proof}
  Since each $P(v)$ is codiscrete, we have inverses $r_v : \sharp P(v) \to P(v)$.
  Thus, to construct $g$ it will suffice to construct for each $v:\sharp A$ an element of $\sharp P(v)$, and for this it suffices to construct an element of $P(v)$ assuming a crisp $v::\sharp A$.
  Now of course we have $v_\sharp : A$, and hence $f(v_\sharp) : P(v_\sharp{}^\sharp) \jdeq P(v)$, as desired.
  In symbols,
  \[ g(v) \defeq r_v(f(v_\sharp)^\sharp). \]
  Finally, we have
  \[ g(x^\sharp) \jdeq r_v(f(x^\sharp{}_\sharp)^\sharp) \jdeq r_v(f(x)^\sharp) = f(x) \]
  using the computation rule for $\sharp$ and the fact that $r_v$ is an inverse of $\sharpf$.
\end{proof}

We now show that the codiscrete types form a \emph{reflective subuniverse} in the sense of \textcite[\S7.7]{hottbook}.

\begin{thm}\label{thm:codiscrete-sharp}
  For any $A:\type$, the type $\sharp A$ is codiscrete.
\end{thm}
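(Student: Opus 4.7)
The plan is to construct a judgmental inverse $r : \sharp\sharp A \to \sharp A$ to $\sharpf : \sharp A \to \sharp\sharp A$. The natural idea is to ``strip off'' one layer of $\sharp$ using $(\blank)_\sharp$; since this eliminator demands a crisp argument while the ambient $w : \sharp\sharp A$ is cohesive, I would first cross into a $(\blank)^\sharp$ (which promotes $w$ to crisp) and then apply $(\blank)_\sharp$ twice to descend all the way to $A$:
\[ r(w) \defeq (w_\sharp{}_\sharp)^\sharp. \]
Inside the outer $(\blank)^\sharp$, the variable $w$ becomes crisp, so $w_\sharp : \sharp A$ is well-typed (and itself crisp), whence $w_\sharp{}_\sharp : A$ makes sense.

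For $r \circ \sharpf = \id_{\sharp A}$: unfolding definitions, $r(u^\sharp) \jdeq ((u^\sharp)_\sharp{}_\sharp)^\sharp$. Inside the outer $(\blank)^\sharp$, where $u$ is crisp, the $\sharp$-computation rule reduces $(u^\sharp)_\sharp$ to $u$, so the body becomes $u_\sharp$; then the $\sharp$-uniqueness rule gives $(u_\sharp)^\sharp \jdeq u$.

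For $\sharpf \circ r = \id_{\sharp\sharp A}$: we have $\sharpf(r(w)) \jdeq ((w_\sharp{}_\sharp)^\sharp)^\sharp$. Since $w_\sharp : \sharp A$ is a crisp element, the uniqueness rule applied to it gives $(w_\sharp{}_\sharp)^\sharp \jdeq w_\sharp$; congruence then yields $((w_\sharp{}_\sharp)^\sharp)^\sharp \jdeq (w_\sharp)^\sharp$, and a second application of uniqueness, now to $w : \sharp\sharp A$ itself, gives $(w_\sharp)^\sharp \jdeq w$. Combining, $\sharpf(r(w)) \jdeq w$.

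The only genuine difficulty is bookkeeping: at every step one must verify that each $(\blank)_\sharp$ is applied to a term all of whose free variables are crisp in the current context, where ``the current context'' is governed by however many enclosing $(\blank)^\sharp$'s one sits inside. Once the expression $(w_\sharp{}_\sharp)^\sharp$ is guessed, the verification of both composites reduces to two uses each of $\sharp$-computation and $\sharp$-uniqueness, so in fact $\sharpf$ is a judgmental isomorphism and no further axioms (not even function extensionality) are needed.
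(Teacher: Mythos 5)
Your proof is correct and coincides with the paper's: you define the same candidate inverse $r(w) \defeq (w_\sharp{}_\sharp)^\sharp$ and verify both round-trips by the same two uses of the computation rule and the uniqueness rule, being careful (as the paper is) that each $(\blank)_\sharp$ only fires inside the scope of a $(\blank)^\sharp$ that has promoted the relevant variable to crisp. The only stylistic wrinkle is your phrasing ``since $w_\sharp : \sharp A$ is a crisp element'' — it is only crisp \emph{inside} the outer $(\blank)^\sharp$, not in the ambient context — but your immediately following ``congruence'' step makes clear you mean the equality $(w_\sharp{}_\sharp)^\sharp \jdeq w_\sharp$ is derived in the all-crisp inner context and then transported under $(\blank)^\sharp$, exactly as the paper does.
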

\begin{proof}
  Given $z:\sharp\sharp A$, to define an element of $\sharp A$ it suffices to define an element of $A$ assuming $z$ is crisp.
  But then we have $z_\sharp{}_\sharp : A$.
  In other words,
  \[ (\lam{z} z_\sharp{}_\sharp{}^\sharp) : \sharp\sharp A \to \sharp A. \]
  Note that we cannot reduce $z_\sharp{}_\sharp{}^\sharp$ to $z_\sharp$ using the uniqueness rule, since $z_\sharp$ is not well-typed for an arbitrary (cohesive) $z$.

  We now show that this map is inverse to $\sharpf:\sharp A \to \sharp\sharp A$.
  In one direction, if $z$ is $y^\sharp$ for $y:\sharp A$, we have
  \[ y^\sharp{}_\sharp{}_\sharp{}^\sharp \jdeq y_\sharp{}^\sharp \jdeq y \]
  using the computation rule followed by the uniqueness rule.
  Note that we could not do this computation in the other order, because $y^\sharp{}_\sharp$ is not well-typed on its own: $y$ is not crisp, so neither is $y^\sharp$.
  In the other direction, for any $z:\sharp\sharp A$ we have
  \[ z_\sharp{}_\sharp{}^\sharp{}^\sharp \jdeq z_\sharp{}^\sharp \jdeq z \]
  using the uniqueness rule twice;
  the first use is valid because $z_\sharp$ \emph{is} well-typed inside the outer $\sharpf$.
\end{proof}

\begin{thm}\label{thm:codiscrete-reflective}
  If $B:\type$ is codiscrete, then for any $A:\type$, precomposition with $\sharpf : A \to \sharp A$ is an equivalence
  \[ (\sharp A \to B) \simeq (A\to B). \]
  More generally, if $B:\sharp A\to \type$ has each $B(v)$ codiscrete, then precomposition with $\sharpf$ is an equivalence
  \[ \left(\tprd{v:\sharp A} B(v)\right) \simeq \left(\tprd{x:A} B(x^\sharp)\right). \]
\end{thm}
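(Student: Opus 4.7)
The plan is to construct an explicit inverse $\Phi$ to precomposition with $\sharpf$, and verify both triangle identities pointwise, reducing in each case to the inverse homotopy for the equivalence $\sharpf : B(v) \to \sharp B(v)$. I would prove the dependent version directly, since the non-dependent one is the special case of a constant family $B$.

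Given $f : \prd{x:A} B(x^\sharp)$, the family $B$ satisfies the hypothesis of $\sharp$-induction (\cref{thm:sharp-ind}), so I obtain $\Phi(f) : \prd{v:\sharp A} B(v)$ whose definition, unfolding the proof of \cref{thm:sharp-ind}, is $\Phi(f)(v) \defeq r_v(f(v_\sharp)^\sharp)$, where $r_v : \sharp B(v) \to B(v)$ is the chosen inverse of $\sharpf$. For the first triangle, I compute
\[ \Phi(f)(x^\sharp) \jdeq r_{x^\sharp}(f(x^\sharp{}_\sharp)^\sharp) \jdeq r_{x^\sharp}(f(x)^\sharp) = f(x), \]
using the computation rule $x^\sharp{}_\sharp \jdeq x$ followed by the fact that $r_{x^\sharp}$ is an inverse of $\sharpf$. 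Function extensionality then gives $\Phi(f) \circ \sharpf = f$ (where $\circ\sharpf$ denotes precomposition, i.e., the map sending $g$ to $\lam{x} g(x^\sharp)$).

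For the second triangle, given $g : \prd{v:\sharp A} B(v)$, I unfold
\[ \Phi(g \circ \sharpf)(v) \jdeq r_v((g\circ\sharpf)(v_\sharp)^\sharp) \jdeq r_v(g(v_\sharp{}^\sharp)^\sharp). \]
The key step is that inside the outer $(\blank)^\sharp$ all ambient variables, including $v$, are treated as crisp; thus the uniqueness rule applies to yield $v_\sharp{}^\sharp \jdeq v$, and hence $g(v_\sharp{}^\sharp) \jdeq g(v)$, so the whole expression reduces judgmentally to $r_v(g(v)^\sharp)$, which is typally equal to $g(v)$ by the inverse property. Function extensionality gives $\Phi(g \circ \sharpf) = g$.

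The main obstacle I anticipate is not difficulty of the calculation itself but rather being scrupulous about the scoping discipline for crisp variables: the equation $v_\sharp{}^\sharp \jdeq v$ is only available \emph{inside} $(\blank)^\sharp$, where $v$ becomes crisp, and similarly the well-typedness of $v_\sharp$ inside $\Phi(f)$ depends on this same scoping. Once those judgmental facts are applied correctly, the rest of the argument is a direct combination of the computation and uniqueness rules for $\sharp$ with the inverse homotopies witnessing codiscreteness of each $B(v)$.
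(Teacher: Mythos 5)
Your proof is correct, but it takes a genuinely different route from the paper. You construct an explicit quasi-inverse $\Phi$ by reusing the section from $\sharp$-induction (\cref{thm:sharp-ind}), $\Phi(f)(v) \defeq r_v(f(v_\sharp)^\sharp)$, and then verify both round trips directly; the second round trip is where you do real new work, invoking the uniqueness rule $v_\sharp{}^\sharp \jdeq v$ inside the outer $(\blank)^\sharp$ where $v$ has become crisp. The paper instead begins with a 2-out-of-3 reduction: since postcomposition with $\sharpf : B(v) \to \sharp B(v)$ is an equivalence (as $B(v)$ is codiscrete), it suffices to show that precomposition between the ``$\sharp$-wrapped'' types $\prd{v:\sharp A} \sharp B(v)$ and $\prd{x:A} \sharp B(x^\sharp)$ is an equivalence. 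In that reduced setting the inverse is $g(v) \defeq f(v_\sharp)_\sharp{}^\sharp$ --- no appeal to the typal inverse $r_v$ is needed, and both round trips collapse to chains of \emph{judgmental} equalities using only the computation and uniqueness rules for $\sharp$. So the paper's route has the aesthetic advantage that the core lemma is entirely judgmental and does not depend on having already proved \cref{thm:sharp-ind}, while your route is more direct and makes the connection to $\sharp$-induction explicit, at the cost of the round-trip identities being only typal. Both establish the equivalence; your handling of the crisp-scoping discipline (especially the point that $v_\sharp{}^\sharp \jdeq v$ is only available under the outer $(\blank)^\sharp$) matches the paper's own care on exactly the same steps.
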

\begin{proof}
  We prove the more general statement.
  Since each $B(v)$ is codiscrete, postcomposition with $\sharpf:B(v)\to \sharp B(v)$ is an equivalence.
  Thus, by the 2-out-of-3 property of equivalences, it will suffice to show that the precomposition map
  \[ \left(\tprd{v:\sharp A} \sharp B(v)\right) \to \left(\tprd{x:A} \sharp B(x^\sharp)\right) \]
  is an equivalence.
  Now in the opposite direction, given $f:\prd{x:A} \sharp B(x^\sharp)$, we can construct $g:\prd{v:\sharp A} \sharp B(v)$ by
  \[ g(v) \defeq f(v_\sharp)_\sharp{}^\sharp. \]
  Note that this has the form of the left-hand-side of the uniqueness rule, but that rule doesn't apply: ``$f(v_\sharp)$'' would not be well-typed since $v$ is not a crisp variable.
  On the other hand, it has the correct type $\sharp B(v)$ \emph{because} the uniqueness rule says $v_\sharp{}^\sharp \jdeq v$ since $v$ is crisp inside the $\sharpf$, hence $f(v_\sharp) : \sharp B(v_\sharp{}^\sharp)\jdeq \sharp B(v)$.

  If we precompose $g$ with $\sharpf : A \to \sharp A$, we get
  \begin{equation*}
    g(y^\sharp) \jdeq f(y^\sharp{}_\sharp)_\sharp{}^\sharp
    \jdeq f(y)_\sharp{}^\sharp
    \jdeq f(y)
  \end{equation*}
  where now we have been able to apply the uniqueness rule since $f(y)$ is well-typed on its own.
  On the other hand, if we start with $h:\sharp A \to \sharp B$, precompose it with $\sharpf$, and then extend the result back to $\sharp A$ as above, we get the function sending $v:\sharp A$ to
  \begin{equation*}
    h(v_\sharp{}^\sharp)_\sharp{}^\sharp
    \jdeq h(v)_\sharp{}^\sharp
    \jdeq h(v)
  \end{equation*}
  using the uniqueness rule twice.
  Applying function extensionality on both sides completes the proof.
\end{proof}

Combining this with \cref{thm:sharp-ind}, by \textcite[Theorem 7.7.4]{hottbook} we see that the codiscrete types are actually a \emph{modality} in the sense of \textcite[\S7.7]{hottbook}.
However, in the present paper we are using the word ``modality'' more generally to include coreflectors such as $\flat$ in addition to reflectors such as $\sharp$, so we will refer to modalities in the sense of \textcite[\S7.7]{hottbook} as \textbf{monadic modalities}.

The fact that $\sharp$ is a monadic modality formally implies many useful consequences.
In particular, essentially all properties of the $n$-truncation from \textcite[Chapter 7]{hottbook} that don't refer to more than one value of $n$ are true for all monadic modalities, and hence in particular for $\sharp$.
Monadic modalities will be studied further by \textcite{rss:modalities}; here we list some of the main results, specialized to $\sharp$.

\begin{enumerate}
\item A type $A$ is codiscrete if and only if $\sharpf: A \to \sharp A$ admits a retraction.
\item If $A,B,C$ are codiscrete, and we have $x,y:A$ and functions $f:B\to A$ and $g:C\to A$, then the following types are also codiscrete:
  \begin{mathpar}
    \unit \and
    A\times B\and
    x =_A y\and
    \mathsf{fib}_f(x) \and
    B\times_A C \and
    A\simeq B\and
    \textsf{is-}n\textsf{-type}(A)
  \end{mathpar}
\item If $A$ is any type and $P:A\to\type$ is such that each $P(x)$ is codiscrete, then $\tprd{x:A} P(x)$ is codiscrete.
  If in addition $A$ is codiscrete, then $\tsm{x:A} P(x)$ is also codiscrete.
\item For any $A,B$, the canonical map $\sharp(A\times B) \to \sharp A \times \sharp B$ is an equivalence.
\item If $A$ is a \mprop{}, so is $\sharp A$.
\end{enumerate}


In fact, $\sharp$ is even a \emph{left exact} monadic modality, i.e.\ it preserves pullbacks.
To show this, we begin with an ``encode-decode'' characterization of the identity types of $\sharp A$.

\begin{thm}\label{thm:path-sharp}
  For any $x,y:A$, we have an equivalence $(x^\sharp = y^\sharp) \simeq \sharp(x=y)$ such that the following triangle commutes:
  \begin{equation}\label{eq:path-sharp-tri}
    \vcenter{\xymatrix@R=1pc{
        & (x^\sharp = y^\sharp) \ar[dd]^{\simeq}\\
        (x=y) \ar[ur]^{\ap_{\sharpf}} \ar[dr]_{\sharpf} \\
        & \sharp(x=y)
      }}
  \end{equation}
\end{thm}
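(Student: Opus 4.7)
The plan is to apply the encode-decode method.  For the decoding direction $f : \sharp(x=y) \to (x^\sharp = y^\sharp)$, observe that $\sharp A$ is codiscrete by \cref{thm:codiscrete-sharp} and that identity types of codiscrete types are codiscrete (one of the closure properties of monadic modalities listed just above).  Hence $(x^\sharp = y^\sharp)$ is codiscrete, and by the universal property \cref{thm:codiscrete-reflective} the map $\ap_{\sharpf} : (x=y) \to (x^\sharp = y^\sharp)$ extends uniquely along $\sharpf$ to a map $f$ satisfying $f \circ \sharpf = \ap_{\sharpf}$ by construction.  This already gives commutativity of the triangle~\eqref{eq:path-sharp-tri}.

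For the inverse direction $g : (x^\sharp = y^\sharp) \to \sharp(x=y)$, I would build a type family $C : \sharp A \to \sharp A \to \type$, pointwise codiscrete, with $C(x^\sharp, y^\sharp) \simeq \sharp(x=y)$, together with a reflexivity element $r : \prd{u : \sharp A} C(u,u)$ extending $r_{x^\sharp} \defeq \refl_x{}^\sharp$.  One starts from the pointwise-codiscrete family $(x,y) \mapsto \sharp(x=y)$ on $A \times A$ and extends along $\sharpf$ in each variable using the universal property; this requires the subuniverse of codiscrete types to itself be codiscrete, which one verifies from the $\Sigma$- and identity-closure properties.  Path induction then yields $g(p) \defeq \mathsf{transport}^{C(x^\sharp, -)}_p(r_{x^\sharp})$.

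To conclude, I would verify that $f$ and $g$ are mutually inverse.  Both composites are endomaps of codiscrete types, so by the universal property of $\sharpf$ they equal the identity as soon as they agree with it after precomposing with $\sharpf$ or $\ap_{\sharpf}$ respectively; both equations reduce to routine path inductions using the definitions of $f$ and $r$.  The main obstacle is the rigorous construction of $C$: because $\type$ itself is not codiscrete, na\"\i{}ve $\sharp$-induction into $\type$ is unavailable, and one must work carefully in a suitable universe of codiscrete types, using that it is codiscrete.
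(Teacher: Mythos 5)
The high-level encode-decode strategy is right, and your decoding direction matches the paper's: $(x^\sharp = y^\sharp)$ is codiscrete, so $\ap_{\sharpf}$ extends uniquely along $\sharpf : (x=y) \to \sharp(x=y)$, which already gives the triangle.

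The gap is in how you build the family $C$. You correctly identify the obstacle --- extending $(x,y) \mapsto \sharp(x=y)$ to a family over $\sharp A \times \sharp A$ by $\sharp$-induction would need the target universe to be codiscrete --- but your proposed fix doesn't work. The claim that codiscreteness of $\codisc$ follows from ``the $\Sigma$- and identity-closure properties'' is false: $\codisc \defeq \tsm{A:\type}\iscodisc(A)$ is a $\Sigma$-type over $\type$, which is \emph{not} codiscrete, so the closure of codiscrete types under $\Sigma$ says nothing here. In the paper, $\codisc$ being codiscrete is \cref{thm:codiscrete-universe}, a \emph{later} result that needs the full univalence axiom together with a direct construction using $\sharp(\proj_1(X_\sharp))$; it is not a corollary of the listed closure properties. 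So as written, your route to $g$ both appeals to a theorem you haven't established and, if repaired, would silently add a UA hypothesis to a result the paper proves without it.

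The paper sidesteps the universe issue entirely by defining
\[
\code(u,v) \defeq \sharp(u_\sharp = v_\sharp)
\]
directly for $u,v : \sharp A$. This is well-typed because the formation rule for $\sharp$ moves the cohesive context into the crisp one when typing the body, so inside $\sharp(\blank)$ one is allowed to write $u_\sharp$ and $v_\sharp$ even though $u,v$ were cohesive outside. Then $\code(x^\sharp,y^\sharp) \jdeq \sharp(x=y)$ by the $\sharp$ computation rule, and the family is pointwise codiscrete by \cref{thm:codiscrete-sharp}, so one does $\sharp$-induction \emph{valuewise} (into the codiscrete types $\code(u,v)$ and $(u=v)$) rather than trying to $\sharp$-induct into $\type$ itself. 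This is the key step you're missing: you don't need the universe to be codiscrete if you can name the family syntactically. Everything else in your sketch --- transporting the reflexivity element, and checking one round trip via the fibered contractibility trick --- then goes through as you describe.
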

\begin{proof*}
  We define $\code : \sharp A \to \sharp A \to \type$ by
  \[ \code(u,v) \defeq \sharp (u_\sharp = v_\sharp). \]
  Here we use the fact that variables may be assumed crisp inside $\sharp(\blank)$ as well as inside $\sharpf$.
  Note that for $x,y:A$ we have $\code(x^\sharp,y^\sharp) \jdeq \sharp(x=y)$.

  We also have $\mathsf{r} : \prd{u:\sharp A} \code(u,u)$.
  For by \cref{thm:codiscrete-sharp} $\code(u,u)$ is codiscrete; thus by $\sharp$-induction the goal reduces to $\prd{x:A} \sharp(x=x)$, which is inhabited by $\lam{x} \refl_x{}^\sharp$.

  Now we define
  \[ \encode : \prd{u,v:\sharp A} (u=v) \to \code(u,v) \]
  in the usual way, $\encode(u,v,p) \defeq p_* (\mathsf{r}(u))$.
  To define
  \[ \decode : \prd{u,v:\sharp A} \code(u,v) \to (u=v) \]
  we proceed as follows.
  Since $\sharp A$ is codiscrete, so is $u=v$, and hence so are $\code(u,v) \to (u=v)$ and $\prd{v:\sharp A} \code(u,v) \to (u=v)$.
  Thus we can apply $\sharp$-induction twice, reducing the goal to $\prd{x,y:A} \code(x^\sharp,y^\sharp) \to (x^\sharp = y^\sharp)$.
  But $\code(x^\sharp,y^\sharp) \jdeq \sharp(x=y)$, so we can use $\sharp$-induction again followed by $\ap_{\sharpf}$.

  It now suffices to show that $\encode \circ \decode$ is the identity, since for each $u$ that will exhibit $\sm{v:\sharp A} \code(u,v)$ as a retract of the contractible $\sm{v:\sharp A} (u=v)$, hence itself contractible.
  Now the goal $\encode(u,v,\decode(u,v,c)) = c$ is an equality in $\code(u,v)$, which is codiscrete by \cref{thm:codiscrete-sharp}.
  Thus, the goal is also codiscrete, so we can do $\sharp$-induction again, replacing $u$ and $v$ by $x^\sharp$ and $y^\sharp$ respectively, and then $c:\sharp(x=y)$ by $p^\sharp$ for $p:x=y$, so that the goal becomes $(\ap_{\sharpf}(p))_*((\refl_{x})^\sharp) = p^\sharp$.
  Finally, an Id-induction on $p$ completes the proof of equivalence.

  To see that~\eqref{eq:path-sharp-tri} commutes, we do Id-induction on $p$ again, then compute
  \[\encode(x^\sharp,x^\sharp,\ap_{\sharp}(\refl_x)) \jdeq \encode(x^\sharp,x^\sharp,\refl_{x^\sharp}) \jdeq (\refl_{x^\sharp})_*(\mathsf{r}(x^\sharp)) \jdeq \mathsf{r}(x^\sharp) = \refl_x{}^\sharp.\qedhere \]
\end{proof*}

We remark in passing that this implies that $\sharp$ is ``functorial on homotopies'' as well.
Given $f,g:A\to B$ and $H:f\sim g$, i.e.\ $H:\prd{x:A} f(x)=g(x)$, for any $u:\sharp A$ we have $H(u_\sharp)^\sharp : \sharp(f(u_\sharp) = g(u_\sharp))$, which by \cref{thm:path-sharp} can be decoded to give $f(u_\sharp)^\sharp = g(u_\sharp)^\sharp$, i.e.\ $\sharp f(u) = \sharp g(u)$.
We could also go on to construct higher-dimensional aspects of a ``coherent \oo-functor'' structure on $\sharp$.

Returning to left-exactness of $\sharp$, this is actually implied formally by the conclusion of \cref{thm:path-sharp} \parencite[see][]{rss:modalities,hottcoq}.
However, using the special properties of $\sharp$ we can give a more direct proof, beginning with a sense in which $\sharp$ ``preserves $\sum$s''.

\begin{lem}\label{thm:sharp-sigma}
  For any $A:\type$ and $B:A\to \type$, we have
  \[ \sharp\left(\sm{x:A} B(x)\right) \;\simeq\; \sm{u:\sharp A} \sharp B(u_\sharp). \]
\end{lem}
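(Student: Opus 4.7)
The plan is to construct mutually inverse maps using the universal property of $\sharp$ (\cref{thm:codiscrete-reflective}) together with $\sharp$-induction (\cref{thm:sharp-ind}). Write $C \defeq \sum_{x:A} B(x)$ and $D \defeq \sum_{u:\sharp A} \sharp B(u_\sharp)$. The first order of business is to observe that $D$ is codiscrete: $\sharp A$ is codiscrete by \cref{thm:codiscrete-sharp}, each fiber $\sharp B(u_\sharp)$ is codiscrete, and one of the closure properties listed after \cref{thm:codiscrete-reflective} states that a $\sum$ over a codiscrete base of codiscrete fibers is codiscrete.

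For the forward direction, I would define $\tilde\varphi : C \to D$ by $(x,y) \mapsto (x^\sharp, y^\sharp)$; the second component lies in $\sharp B((x^\sharp)_\sharp)$, which is judgmentally equal to $\sharp B(x)$ by the computation rule (valid because inside the outer $\sharp B(\blank)$ the variable $x$ is treated as crisp). Since $D$ is codiscrete, \cref{thm:codiscrete-reflective} then produces a unique $\varphi : \sharp C \to D$ satisfying $\varphi((x,y)^\sharp) = (x^\sharp, y^\sharp)$. In the other direction, I would set $\psi : D \to \sharp C$ by $(u,v) \mapsto (u_\sharp, v_\sharp)^\sharp$, noting that inside the outer $(\blank)^\sharp$ the variables $u$ and $v$ are treated as crisp, so that $u_\sharp : A$ and $v_\sharp : B(u_\sharp)$ are well-typed and $(u_\sharp, v_\sharp)$ assembles to an element of $C$ in that scope.

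To verify $\psi \circ \varphi = \id_{\sharp C}$, I would use \cref{thm:codiscrete-reflective} once more: since $\sharp C$ is codiscrete, it suffices to check agreement after precomposition with $\sharpf : C \to \sharp C$, where I compute $\psi(\varphi((x,y)^\sharp)) = \psi(x^\sharp, y^\sharp) = ((x^\sharp)_\sharp, (y^\sharp)_\sharp)^\sharp = (x, y)^\sharp$, applying the computation rule inside the outer $(\blank)^\sharp$ where $x$ and $y$ are crisp. For $\varphi \circ \psi = \id_D$, I would do $\sharp$-induction on $u : \sharp A$ with predicate $\prod_{v:\sharp B(u_\sharp)} \bigl(\varphi(\psi(u,v)) = (u,v)\bigr)$ (a $\prod$ of a path in a codiscrete type, hence codiscrete), reducing $u$ to $x^\sharp$ for some $x:A$; then $\sharp$-induction on $v$ reduces it to $y^\sharp$ for some $y:B(x)$, and the remaining goal $\varphi(\psi(x^\sharp, y^\sharp)) = (x^\sharp, y^\sharp)$ follows by first reducing $\psi(x^\sharp, y^\sharp)$ to $(x,y)^\sharp$ via the computation rule and then invoking the defining property of $\varphi$.

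The main obstacle will be the careful bookkeeping of which variables are crisp in which nested scopes, and justifying each application of $\sharp$'s computation and uniqueness rules; the technical content of the argument is essentially this typechecking discipline. Once that is in hand, the equivalence follows by routine combination of the universal property (\cref{thm:codiscrete-reflective}) and $\sharp$-induction.
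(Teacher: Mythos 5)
Your proof is correct and follows essentially the same route as the paper's: construct the forward map by first defining it on $A \times B$ and extending along $\sharpf$ using codiscreteness of the target, define the backward map directly by the $\sharp$-introduction rule (exploiting that the pair's components become crisp inside $(\blank)^\sharp$), and verify the round-trips by reducing to crisp/cohesive variables. The paper's presentation is terser --- it just says the round-trips ``can be compared to identities with further $\sharp$-inductions'' --- but your detailed bookkeeping of which variables are crisp in which scope is exactly the content being compressed there, and your invocation of the closure property (a $\sum$ over a codiscrete base with codiscrete fibers is codiscrete) is the same fact the paper tacitly uses to say ``the right-hand side is codiscrete.''
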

(The right-hand side makes sense because inside $\sharp(\blank)$ we may use $u$ as crisp.)
\begin{proof}
  The right-hand side is codiscrete, so we can construct a map from left to right by $\sharp$-induction, sending $(a,b)$ to $(a^\sharp,b^\sharp)$.
  (Note that if $u$ is $a^\sharp$, then $\sharp B(a^\sharp{}_\sharp) \jdeq \sharp B(a)$, so this is well-typed.)
  In the other direction it suffices to construct an element of $\sm{x:A} B(x)$ supposing crisp hypotheses $u::\sharp A$ and $v::\sharp B(u_\sharp)$;
  but then we have $(u_\sharp, v_\sharp) : \sm{x:A} B(x)$.
  Finally, since both sides are codiscrete, both round-trip composites can be compared to identities with further $\sharp$-inductions.
\end{proof}

\begin{thm}\label{thm:sharp-pullback}
  $\sharp$ preserves pullbacks.
\end{thm}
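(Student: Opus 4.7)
The plan is to reduce pullback preservation to two facts we have already established: that $\sharp$ preserves $\sum$-types (\cref{thm:sharp-sigma}) and that it preserves identity types up to the equivalence of \cref{thm:path-sharp}. So given maps $f : B \to A$ and $g : C \to A$, I would first unfold the pullback as $B \times_A C \;\equiv\; \sum_{b:B} \sum_{c:C} (f(b) = g(c))$, then apply \cref{thm:sharp-sigma} twice to obtain
\[ \sharp(B \times_A C) \;\simeq\; \sum_{u:\sharp B}\sum_{v:\sharp C} \sharp\bigl(f(u_\sharp) = g(v_\sharp)\bigr). \]
Here the iterated use of \cref{thm:sharp-sigma} is legitimate because inside $\sharp(\blank)$ the bound variables may be used as crisp, so $u_\sharp$ and $v_\sharp$ are available.

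Next I would apply \cref{thm:path-sharp} fiberwise: for each $u:\sharp B$ and $v:\sharp C$ we have $\sharp(f(u_\sharp) = g(v_\sharp)) \simeq (f(u_\sharp)^\sharp = g(v_\sharp)^\sharp)$, and by definition of the functorial action of $\sharp$ this is exactly $(\sharp f(u) = \sharp g(v))$. Composing gives
\[ \sharp(B \times_A C) \;\simeq\; \sum_{u:\sharp B}\sum_{v:\sharp C} (\sharp f(u) = \sharp g(v)) \;\equiv\; \sharp B \times_{\sharp A} \sharp C. \]

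The only nontrivial check is that this composite equivalence agrees with the \emph{canonical} comparison map induced by the universal property of the pullback, namely the one sending $(b,c,p)^\sharp$ to $(b^\sharp, c^\sharp, \ap_{\sharpf}(p))$. Chasing the definitions, each of our component equivalences is defined by $\sharp$-induction in the obvious way, so on $(b,c,p)^\sharp$ the composite returns $(b^\sharp, c^\sharp, q)$ where $q$ is the image of $p^\sharp$ under the decoding equivalence of \cref{thm:path-sharp}. The commutative triangle~\eqref{eq:path-sharp-tri} in that theorem is precisely the statement that this $q$ equals $\ap_{\sharpf}(p)$, so the triangle gives the required coherence.

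I expect this last coherence check to be the main (minor) obstacle: the bulk of the argument is a formal chain of equivalences, but one must verify that the map we build is the canonical one rather than merely \emph{some} equivalence between source and target. Once the triangle of \cref{thm:path-sharp} is invoked, this is immediate.
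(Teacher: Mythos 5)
Your proof is correct and follows essentially the same route as the paper: unfold the pullback as an iterated $\Sigma$-type, apply \cref{thm:sharp-sigma}, and then reduce the fiberwise statement to \cref{thm:path-sharp} via the observation that $\sharp f(u) = f(u_\sharp)^\sharp$. The one place you go beyond the paper is in explicitly checking that the assembled equivalence coincides with the \emph{canonical} comparison map, invoking the triangle~\eqref{eq:path-sharp-tri}; the paper elides this, but your extra care is warranted if one insists on the strict reading of ``preserves pullbacks''.
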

\begin{proof}
  Given $f:A\to C$ and $g:B\to C$, recall that the pullback $A\times_C B$ is $\sm{x:A}{y:B} (f(x)=g(y))$.
  By \cref{thm:sharp-sigma}, we have
  \[ \sharp(A\times_C B) \;\simeq \; \sm{u:\sharp A}{v:\sharp B} \sharp (f(u_\sharp) = g(v_\sharp)). \]
  while
  \[ \sharp A \times_{\sharp C} \sharp B \;\simeq \; \sm{u:\sharp A}{v:\sharp B} (\sharp f(u) = \sharp g(v)) \]
  Thus, it will suffice to prove that for all $u:\sharp A$ and $v:\sharp B$ we have
  \[ \sharp (f(u_\sharp) = g(v_\sharp)) \;\simeq\; (\sharp f(u) = \sharp g(v)). \]
  But by definition of the functoriality of $\sharp$, the right-hand side is $f(u_\sharp)^\sharp = g(v_\sharp)^\sharp$.
  Thus the result follows from \cref{thm:path-sharp}.
\end{proof}

Left exact monadic modalities also have many nice features, for most of which we refer to \textcite{rss:modalities} (and which are formalized in \textcite{hottcoq}).
One particularly important one is the following:

\begin{cor}\label{thm:codiscrete-ntypes}
  $\sharp$ preserves $n$-types for all $n$.
\end{cor}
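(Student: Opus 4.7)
The plan is to proceed by induction on $n$, using \cref{thm:path-sharp} to rewrite identity types in $\sharp A$ in terms of $\sharp$ applied to identity types in $A$.

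For the base case $n=-2$, we must show that if $A$ is contractible then so is $\sharp A$. Given a center $a:A$, we propose $a^\sharp : \sharp A$ as the center of contraction. We must inhabit $\tprd{u:\sharp A} a^\sharp = u$. Since $\sharp A$ is codiscrete (\cref{thm:codiscrete-sharp}) and identity types of codiscrete types are codiscrete, this goal is codiscrete, so we may apply $\sharp$-induction (\cref{thm:sharp-ind}). The goal reduces to $\tprd{x:A} a^\sharp = x^\sharp$, which follows from the contraction $a=x$ via $\ap_{\sharpf}$.

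For the inductive step, assume $\sharp$ preserves $(n-1)$-types, and suppose $A$ is an $n$-type. To show $\sharp A$ is an $n$-type, we must inhabit $\tprd{u,v:\sharp A} \textsf{is-}(n{-}1)\textsf{-type}(u=v)$. Since $u=v$ is codiscrete and being an $(n-1)$-type of a codiscrete type is itself codiscrete (as listed after \cref{thm:codiscrete-reflective}), the goal is codiscrete in both $u$ and $v$. Applying $\sharp$-induction twice reduces the problem to $\tprd{x,y:A} \textsf{is-}(n{-}1)\textsf{-type}(x^\sharp = y^\sharp)$. By \cref{thm:path-sharp} there is an equivalence $(x^\sharp = y^\sharp) \simeq \sharp(x=y)$, and being an $n$-type is invariant under equivalence. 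Now $x=y$ is an $(n-1)$-type because $A$ is an $n$-type, so by the inductive hypothesis $\sharp(x=y)$ is an $(n-1)$-type, which completes the step.

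The only real subtlety is checking that every goal we try to prove by $\sharp$-induction is in fact codiscrete; but both $u=v$ and $\textsf{is-}n\textsf{-type}(-)$ applied to codiscrete types appear explicitly among the closure properties of codiscrete types listed just after \cref{thm:codiscrete-reflective}, so no new work is needed. (Alternatively, since $\sharp$ is a left exact monadic modality by \cref{thm:sharp-pullback}, this corollary could simply be cited from the general theory of such modalities in \textcite{rss:modalities}.)
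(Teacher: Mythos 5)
Your proof is correct and takes essentially the same approach as the paper: induction on $n$, using $\sharp$-induction to reduce to the case of points $x^\sharp, y^\sharp$, and then applying the equivalence $(x^\sharp = y^\sharp)\simeq\sharp(x=y)$ from \cref{thm:path-sharp} together with the inductive hypothesis. The only cosmetic difference is in the base case, where the paper simply invokes the general fact that any monadic modality preserves contractibility (i.e.\ $\sharp\unit\simeq\unit$), while you spell out the $\sharp$-induction directly; both are fine.
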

\begin{proof}
  We induct on $n$.
  The base case $n=-2$ (i.e.\ $\sharp\unit \simeq \unit$) is true for any monadic modality.
  Thus, suppose $\sharp$ preserves $n$-types for some $n$, let $A$ be an $(n+1)$-type, and let $u,v:\sharp A$.
  Then $(u=v)$ is codiscrete, hence so is the \mprop{} ``$(u=v)$ is an $n$-type'', so we can use $\sharp$-induction on $u$ and $v$.
  But by \cref{thm:path-sharp}, for $x,y:A$ we have $(x^\sharp = y^\sharp)\simeq \sharp(x=y)$, which is an $n$-type by the inductive hypothesis.
\end{proof}

\textcite{rss:modalities} and \textcite{hottcoq}
show that if a left-exact monadic modality is also \emph{accessible} in a technical sense (defined there; see also \cref{sec:shape}), then the universe of modal types is modal.
We can prove this for $\sharp$ without accessibility, because of our formation rule for $\sharp A$.
The marker \{\mysc{UA}\} means that we need the full univalence axiom (although of course for the subuniverse of \mprop{}s we would need only propositional extensionality).

\begin{thmua}\label{thm:codiscrete-universe}
  The universe $\codisc \defeq \tsm{A:\type} \iscodisc(A)$ of codiscrete types is codiscrete.
\end{thmua}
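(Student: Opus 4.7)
The plan is to build an inverse $r : \sharp\codisc \to \codisc$ to the map $\sharpf$ directly, then verify both round-trips.

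For $u : \sharp\codisc$ I would define
\[ r(u) \defeq \bigl(\sharp(\proj_1(u_\sharp)),\; q_u\bigr), \]
where $q_u$ is the witness that $\sharp$ of any type is codiscrete (from \cref{thm:codiscrete-sharp}). The subterm $u_\sharp$ is not legal on its own since $u$ is cohesive, but the formation rule for $\sharp$ allows $u$ to be treated as crisp inside the scope of the outer $\sharp(-)$; this makes $\sharp(\proj_1(u_\sharp))$ a well-formed type in the context $u:\sharp\codisc$, and the same trick lets the standard construction of its codiscreteness witness go through inside this scope. This is exactly the kind of move already used in defining $\sharp f$ and in the proof of \cref{thm:sharp-sigma}.

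Next I would check $r \circ \sharpf \sim \id_\codisc$. For $X : \codisc$, the first component of $r(X^\sharp)$ is judgmentally $\sharp(\proj_1(X^\sharp{}_\sharp))$; since inside the outer $\sharp$ the variable $X$ is treated as crisp, the computation rule $X^\sharp{}_\sharp \jdeq X$ applies, reducing it to $\sharp(\proj_1(X))$. Codiscreteness of $X$ makes $\sharpf : \proj_1(X) \to \sharp(\proj_1(X))$ an equivalence, so \mysc{UA} yields a typal equality $\sharp(\proj_1(X)) = \proj_1(X)$, and because $\iscodisc(A)$ is a proposition this lifts to $r(X^\sharp) = X$ in $\codisc$.

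For the other composite $\sharpf \circ r \sim \id_{\sharp\codisc}$, the goal $r(u)^\sharp = u$ lies in $\sharp\codisc$, which is codiscrete by \cref{thm:codiscrete-sharp}, so the identity type $(r(u)^\sharp = u)$ is itself codiscrete. Applying $\sharp$-induction (\cref{thm:sharp-ind}), it suffices to produce, for each $X : \codisc$, a witness $r(X^\sharp)^\sharp = X^\sharp$, which follows from the previous paragraph by $\ap_{\sharpf}$. The main obstacle is essentially bookkeeping: the entire construction hinges on the asymmetry that inside a $\sharp$-former one may treat cohesive variables as crisp, which is exactly what lets us write $u_\sharp$ for a cohesive $u$. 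Once this is set up, the remainder is a routine combination of $\sharp$-induction with univalence, the latter being genuinely needed to identify $\sharp(\proj_1(X))$ with $\proj_1(X)$ when $\proj_1(X)$ is already codiscrete.
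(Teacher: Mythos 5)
Your proof is correct and essentially matches the paper's: both define the retraction by sending $u : \sharp\codisc$ to the pair whose first component is $\sharp(\proj_1(u_\sharp))$ (legal because variables become crisp inside the $\sharp(-)$ former) with the canonical codiscreteness witness, and both verify the retraction identity via the $\sharp$-computation rule $X^\sharp{}_\sharp \jdeq X$, the equivalence $\sharp(\proj_1(X)) \simeq \proj_1(X)$, univalence, and the fact that $\iscodisc$ is a proposition. The paper stops there, citing the general fact (listed just after \cref{thm:codiscrete-reflective}) that $A$ is codiscrete iff $\sharpf : A \to \sharp A$ admits a retraction; your additional verification of the other composite by $\sharp$-induction is sound but redundant.
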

\begin{proof}
  Suppose $X : \sharp \codisc$; we want to construct an element of $\codisc$ (to define a retraction).
  The first component of it will be $\sharp$ of some type, and for the purpose of constructing that type we may assume $X$ is crisp.
  Thus, we have $X_\sharp : \codisc$, which we can destruct further as $(A,c)$ for $A:\type$ and $c:\iscodisc(A)$.
  Now we use $A$ to be the type required.
  Explicitly, from $X : \sharp \codisc$ we have constructed
  \[ \sharp(\mathsf{pr}_1(X_\sharp)) : \type \]
  By \cref{thm:codiscrete-sharp}, this type is codiscrete, so gives an element of $\codisc$.

  To show that this is a retraction, if $X$ is of the form $Y^\sharp$, then we have
  \[ \sharp(\mathsf{pr}_1(Y^\sharp{}_\sharp)) \jdeq \sharp(\mathsf{pr}_1(Y))\]
  which is equivalent to $\mathsf{pr}_1(Y)$, since by $\mathsf{pr}_2(Y)$ the latter is codiscrete.
  This completes the proof since codiscreteness is a \mprop{}.
\end{proof}

It follows that we can interpret the formal syntax of ordinary type theory (with only one kind of context) entirely into the codiscrete types, with $\codisc$ playing the role of \emph{the} universe, obtaining a model with $\prod$-types, $\sum$-types, a unit type, and identity types.
We can also obtain some (higher) inductive types in this model, at least up to homotopy \parencite[in the sense of][\S5.5]{hottbook}, simply by applying $\sharp$ to their ordinary versions; this includes binary sums, the empty type, the natural numbers, $n$-truncations, suspensions, and more general colimits.

This doesn't appear to work for general W-types, however; if $W$ is generated by a constructor $\mathsf{sup}:\prd{a:A} (B(a) \to W) \to W$, there isn't even any obvious way to define a function $\prd{a:A} (B(a) \to \sharp W) \to \sharp W$.
It may be tempting to write ``$\lam{a}{b} \mathsf{sup}(a,\lam{y} b(y)_\sharp )^\sharp$'', but this is ill-typed: although the outer $\sharpf$ makes all variables in the \emph{current} context crisp, the inner $\lambda y$ binds a \emph{new}, \emph{cohesive} variable $y$, so that $b(y)$ is not crisp and we cannot apply $(\blank)_\sharp$ to it.

We could work around this if $\sharp$ were accessible, by adding the appropriate localization constructors to any HIT.
Accessibility is a perfectly reasonable axiom, since it holds in all known examples.
But in this paper we have no need for it, and moreover we want the base spatial type theory of this part (as opposed to the real-cohesive extension of it in the next part) to be free of axioms.

As an example of translating type theory into the codiscrete universe, let us define the Dedekind real numbers codiscretely.
Since the types $\Z$ and $\Q$ are abstractly equivalent to $\N$, and $\sharp\N$ is the natural numbers in the codiscrete world, it follows that the codiscrete versions of $\Z$ and $\Q$ will be equivalent to $\sharp\Z$ and $\sharp\Q$ respectively.
Now if we expand out the usual definition of the Dedekind reals $\R$ into the basic type constructors, we obtain the type shown first in \cref{fig:R}.
\begin{figure}
  \centering
  \begin{align*}
  \R &\defeq \tsm{L,U:\Q \to \prop}
   \brck{\tsm{q:\Q} L(q)}\times \brck{\tsm{r:\Q} U(r)}\\
  &\land \left(\tprd{q:\Q} L(q) \leftrightarrow \brck{\tsm{r : \Q} (q < r) \times L(r)}\right)\\
  &\land \left(\tprd{r:\Q} U(r) \leftrightarrow \brck{\tsm{q : \Q} U(q) \times (q < r)}\right)\\
  &\land \left(\tprd{q:\Q} (L(q) \times U(q)) \to \emptyset\right)
  \land \left(\tprd{q,r:\Q} (q < r) \rightarrow \brck{L(q) + U(r)}\right)\\
  \R' &\defeq \tsm{L,U:\sharp\Q \to \codiscprop}
   \sharp\brck{\tsm{q:\sharp\Q} L(q)}\times \sharp\brck{\tsm{r:\sharp\Q} U(r)}\\
  &\land \left(\tprd{q:\sharp\Q} L(q) \leftrightarrow \sharp\brck{\tsm{r : \sharp\Q} (q <' r) \times L(r)}\right)\\
  &\land \left(\tprd{r:\sharp\Q} U(r) \leftrightarrow \sharp\brck{\tsm{q : \sharp\Q} U(q) \times (q <' r)}\right)\\
  &\land \left(\tprd{q:\sharp\Q} (L(q) \times U(q)) \to \sharp\emptyset \right)
  \land \left(\tprd{q,r:\sharp\Q} (q <' r) \rightarrow \sharp\brck{\sharp(L(q) + U(r))}\right)
\end{align*}
\caption{The Dedekind reals and their codiscrete version}
\label{fig:R}
\end{figure}
Thus, their codiscrete version should be the type $\R'$ shown second in \cref{fig:R}.
In contrast to $\Z$ and $\Q$, it is not at all obvious that $\R'$ has anything to do with $\sharp\R$; in the former all intermediate constructions are discontinuous, whereas in the latter we do everything continuously and then at the very end forget the resulting topology.

We will see later on that they \emph{are} in fact equivalent (\cref{thm:sharp-R}), but this is a very special property of $\R$.
In most cases, the ``codiscrete version'' of a type $A$ is very different from $\sharp A$.
For instance, $\sharp(\R\to\R)$ should be the set of \emph{continuous} functions $\R\to\R$ topologized codiscretely, whereas $\R'\to\R'$ is the set of \emph{discontinuous} functions.

\begin{rmk}\label{rmk:sharp-overall}
  The overall conclusion of this section so far can be summarized as ``$\sharp$ is a left-exact monadic modality'' (plus \cref{thm:codiscrete-universe}, which could be obtained from accessibility).
  This is a purely cohesive statement, i.e.\ it can be stated without any need for crisp variables.
  In \textcite{ss:qgftchtt}, using ordinary type theory without crispness, we took this statement as the axiomatic definition of $\sharp$.
  The benefit of instead deriving it from rules involving crisp variables is that, as we will see in \cref{sec:flat}, $\sharp$ then automatically has the correct relationship to $\flat$.
\end{rmk}

\addtocontents{toc}{\protect\setcounter{tocdepth}{2}}
\subsection{Excluded middle revisited}
\label{sec:lem-2}

With $\sharp$ in hand, we can reformulate the crisp law of excluded middle (\ref{ax:lem}) as a single term (modulo universe polymorphism).

\begin{lemlem}[The sharp law of excluded middle]\label{thm:sharp-lem}
  $\prd{P:\prop} \sharp(P\vee\neg P)$.
\end{lemlem}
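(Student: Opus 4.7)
The plan is to use the fact that the introduction rule for $\sharp$ allows variables in the ambient cohesive context to be treated as crisp inside a $(\blank)^\sharp$. This is precisely the mechanism by which $\sharp$ is designed to internalize crisp hypothetical judgments like \ref{ax:lem}, and it converts the ``unjustified rule'' form of \ref{ax:lem} (as discussed in \cref{rmk:entailment}) into an honest term of a dependent product type.

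Concretely, I would construct the function by $\lambda$-abstraction over $P:\prop$. Given such a $P$, the goal is to inhabit $\sharp(P \vee \neg P)$. To do so, I form $(\blank)^\sharp$ applied to a term of type $P \vee \neg P$, and inside this term the variable $P$ becomes crisp, i.e.\ we have $P :: \prop$. Now crisp excluded middle (\ref{ax:lem}) applies directly to give a term $\mathsf{lem}_P : P \vee \neg P$, and wrapping yields $(\mathsf{lem}_P)^\sharp : \sharp(P \vee \neg P)$. Thus the whole proof is the one-line definition
\[
  \lam{P:\prop}(\mathsf{lem}_P)^\sharp \;:\; \tprd{P:\prop}\sharp(P\vee\neg P).
\]

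There is essentially no obstacle here; the content of the lemma is just to observe that $\sharp$ does its job. The only point worth verifying is that the formation rule for $\sharp$ also permits $P$ in the type $\sharp(P \vee \neg P)$ to be treated as crisp when building this type, which is exactly the second bullet in the English description of the rules at the start of \cref{sec:codisc} (and the formation rule for $\sharp$ in \cref{fig:sharp}). The upshot is that whereas \ref{ax:lem} could only be stated as a derivation schema, \cref{thm:sharp-lem} is a genuine inhabitant of a $\prod$-type, and thus can henceforth be applied to arbitrary (cohesive) propositions in ordinary type-theoretic reasoning, at the cost of landing in $\sharp$.
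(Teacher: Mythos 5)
Your proposal is correct and is essentially identical to the paper's own proof: both assume $P:\prop$, apply the $\sharp$ introduction rule so that $P$ may be treated as crisp inside $(\blank)^\sharp$, and then invoke \ref{ax:lem} to produce $(\mathsf{lem}_P)^\sharp : \sharp(P\vee\neg P)$. Your explicit term $\lam{P:\prop}(\mathsf{lem}_P)^\sharp$ is the obvious formal rendering of the paper's one-line argument.
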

\begin{proof}
  Assume $P:\prop$.
  By the introduction rule for $\sharp$, it suffices to prove $P\vee\neg P$ assuming a crisp $P::\prop$.
  But this is exactly \ref{ax:lem}.
\end{proof}

Let us now return to the topological viewpoint, where codiscrete types have the codiscrete topology.
More generally, if $P:A\to \type$ and each $P(x)$ is codiscrete, then $\sm{x:A} P(x)$ must have the ``final topology'' induced by the first projection, i.e.\ the coarsest possible topology such that the first projection is continuous.
To see this, note that sections $\prd{x:A} P(x)$ are equivalent to sections $\prd{x:A} \sharp P(x)$, and hence to conclusions $f(x) : P(x)$ under crisp hypothesis $x::A$; but by our topological gloss, these are supposed to be simply discontinuous sections.
Thus, the sections $\prd{x:A} P(x)$ take no account of any topology on $P$, so it must be that $P$ has the final topology.

\begin{rmk}\label{rmk:codiscrete-universe}
  This topological gloss explains why the universe of codiscrete types must itself be codiscrete.
  A map into $A$ whose domain has the final topology is uniquely specified by a map into the underlying set (or \oo-groupoid) of $A$, i.e.\ it takes no account of the topology of $A$.
  Thus, a family of codiscrete types over $A$ should be the same as such a family over $\sharp A$, which is to say that any map $P:A\to\codisc$ must factor through $\sharp A$; thus $\codisc$ must be codiscrete.
\end{rmk}

In particular, to say that a predicate $P:A\to \prop$ is pointwise codiscrete must mean that it is a \emph{subspace}, with the induced topology.
We have already seen in \cref{sec:lem} a different type-theoretic property that ought also to characterize subspaces, namely that $P$ is $\neg\neg$-closed.
It turns out that one simple assumption beyond \ref{ax:lem} guarantees that these two properties coincide.
\parencite[See also][Corollary 4.5.]{lm:ac-cohesive}

\begin{named}{Axiom $\sharp\emptyset$}\label{ax:dense}
  $\emptyset$ is codiscrete.
\end{named}

\begin{thmlemse}\label{thm:codiscrete-notnot}\ 
  \begin{enumerate}
  \item A \mprop{} $P$ is codiscrete if and only if $\neg\neg P \to P$.\label{item:snn1}
  \item For any \mprop{} $P$ we have $\sharp P \simeq \neg\neg P$.\label{item:snn2}
  \item For any type $A$ we have $\sharp\brck{A} \simeq \neg\neg A$.\label{item:snn3}
  \end{enumerate}
\end{thmlemse}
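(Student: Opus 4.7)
I will prove the three parts in order. The main leverage comes from the observation that $\neg X = X \to \emptyset$ is always codiscrete under \ref{ax:dense}, because codiscrete types are closed under function types into a codiscrete codomain (a consequence of $\sharp$ being a monadic modality, listed in the section). In particular, $\neg\neg P$ is codiscrete for every $P$. Once (1) is known, (2) follows by comparing $\sharp P$ and $\neg\neg P$ as codiscrete propositions, and (3) is a quick reduction to (2).

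\textbf{Part (1).} For the forward direction, suppose $P$ is codiscrete and we have $h:\neg\neg P$; I want to produce an element of $P$. Since $P$ is codiscrete, \cref{thm:codicsrete-ind} lets me assume the entire ambient context, including $P$ itself and $h$, is crisp. Now $P$ is a crisp proposition, so \ref{ax:lem} provides $P\vee \neg P$; in the first case we are done, and in the second case $h$ together with the hypothesis $\neg P$ produces an element of $\emptyset$, whence $P$ follows by ex falso (which requires no axiom). For the reverse direction, suppose $\neg\neg P \to P$. Combined with the unit $P\to\neg\neg P$, which exists unconditionally, this makes $P$ and $\neg\neg P$ logically equivalent. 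Since both are propositions, propositional extensionality gives $P\simeq \neg\neg P$, and $\neg\neg P$ is codiscrete by the opening remark, so $P$ is codiscrete.

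\textbf{Parts (2) and (3).} For (2), note that $\sharp P$ is a codiscrete proposition (using that $\sharp$ preserves propositions, as listed in the monadic-modality properties) and $\neg\neg P$ is a codiscrete proposition by the opening remark; so it suffices to give maps in both directions. The map $\sharp P \to \neg\neg P$ is obtained from the unit $P\to\neg\neg P$ by the universal property of $\sharp$ into a codiscrete type (\cref{thm:codiscrete-reflective}). For the converse, apply part (1) to $\sharp P$, which is codiscrete, to get $\neg\neg \sharp P \to \sharp P$; precomposing with the contravariant action of $\neg\neg$ on the unit $P\to\sharp P$ yields $\neg\neg P \to \neg\neg \sharp P \to \sharp P$. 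For (3), apply (2) to the proposition $\brck{A}$ to obtain $\sharp\brck{A}\simeq \neg\neg\brck{A}$, and then observe that $\neg\brck{A}\simeq \neg A$ by the recursion principle for $\brck{\blank}$ (since $\emptyset$ is a proposition), so $\neg\neg\brck{A}\simeq \neg\neg A$, and composing finishes the proof.

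\textbf{Main obstacle.} The only delicate step is the forward direction of (1): the move from a cohesive hypothesis $h:\neg\neg P$ to a crisp one, which is precisely what \cref{thm:codicsrete-ind} licenses when the goal is codiscrete. Everything else is formal manipulation of propositions and applications of already-established facts about $\sharp$ (namely that it is a left-exact monadic modality and in particular preserves propositions and reflects into the codiscrete types). It is worth recording, as a lemma folded into the opening remark, that $\neg X$ is codiscrete for every $X$, since that fact is used in both directions of (1) and again in (2).
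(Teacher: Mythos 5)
Your proof is correct and follows essentially the same route as the paper: codiscreteness of $\neg\neg P$ is derived from \ref{ax:dense} via closure of codiscrete types under $\prod$ into a codiscrete codomain, \ref{ax:lem} handles the hard direction of (1) (the paper invokes \cref{thm:sharp-lem} followed by $\sharp$-induction, which is the same crisp-ifying move you make via \cref{thm:codicsrete-ind}), and (2) and (3) are deduced by comparing $\sharp$ and $\neg\neg$ as reflections into codiscrete propositions. Where the paper dispatches (2) with a one-line appeal to uniqueness of reflections, you spell out the two comparison maps explicitly, which is fine; one tiny slip is that the map $\neg\neg P\to\neg\neg\sharp P$ arises from the \emph{covariant} functoriality of $\neg\neg$ (two applications of the contravariant $\neg$), not a contravariant action.
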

\begin{proof}
  We prove~\ref{item:snn1} first.
  Since $P\to \neg\neg P$ always, we have $\neg\neg P \to P$ if and only if $\neg\neg P = P$.
  Now since $\emptyset$ is codiscrete, 
  so is $\neg\neg P$ for any $P$. 
  Thus if $\neg\neg P = P$, then $P$ is also codiscrete.

  Conversely, suppose $P$ is codiscrete, and suppose that $\neg\neg P$; we must show $P$.
  By crisp excluded middle, we have $\sharp (P\vee \neg P)$.
  Since our goal $P$ is codiscrete, by $\sharp$-induction we have $P\vee \neg P$.
  But since $\neg\neg P$, it must be that $P$.
  (Note that each of the two assumptions \ref{ax:lem} and \ref{ax:dense} is used in exactly one direction of this equivalence.)

  Now~\ref{item:snn2} follows because $\sharp$ and $\neg\neg$ both reflect a \mprop{} into the sub-poset of codiscrete \mprop{}s.
  Likewise,~\ref{item:snn3} follows because $\sharp\brck{\blank}$ and $\neg\neg$ both reflect an arbitrary type into that same poset (since $\neg\neg A$ is always a \mprop{}).
\end{proof}

In particular, in this case the statement of \cref{thm:sharp-lem} is equivalent to
\[ \prd{P:\prop} \neg\neg(P\vee\neg P) \]
which is a provable statement in type theory.
Of course, we used \ref{ax:lem} in proving \cref{thm:codiscrete-notnot}, so this doesn't in any way absolve us from having to assume \ref{ax:lem}.
But it is nice to see that the cohesive version of excluded middle implied by our \ref{ax:lem} is just the one that is already provable in constructive type theory.

\subsection{The axiom of choice}
\label{sec:axiom-choice}

We can now revisit the axiom of choice.
First, however, let us consider all the ways of saying that a family $P:A\to\type$ is ``inhabited'', of which we now have \emph{four}.
Here is a list of them with their intended topological meanings.
\begin{enumerate}
\item $\prd{x:A} P(x)$ says that $\proj_1 : (\sm{x:A} P(x)) \to A$ has a continuous section.\label{item:inh1}
\item $\prd{x:A} \sharp P(x)$ says that this projection has a \emph{discontinuous} section.\label{item:inh2}
\item $\prd{x:A} \brck{P(x)}$ says that this projection is a quotient map.\label{item:inh3}
\item $\prd{x:A} \sharp\brck{P(x)}$ says that this projection is surjective on points.\\
  (Assuming \ref{ax:lem}, this is equivalent to $\prd{x:A} \neg\neg{P(x)}$.)\label{item:inh4}
\end{enumerate}
These topological glosses are mainly for our intuition, but in some cases we can prove precise versions of them in type theory.
For instance, statement~\ref{item:inh1} makes sense internally and is obviously true.
Regarding~\ref{item:inh2}, we can prove:

\begin{lem}\label{thm:sharp-section}
  $\prd{x:A} \sharp P(x)$ if and only if $\sharp\proj_1 : \sharp(\sm{x:A} P(x)) \to \sharp A$ has a section.
\end{lem}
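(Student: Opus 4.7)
The plan is to chain together two results from the preceding section to reduce the stated biconditional to a definitional equality. First, by \cref{thm:sharp-sigma} we have an equivalence
\[ \sharp\left(\tsm{x:A} P(x)\right) \;\simeq\; \tsm{u:\sharp A} \sharp P(u_\sharp). \]
Unwinding the construction there --- where $(a,b)$ on the left is sent to $(a^\sharp,b^\sharp)$ on the right --- one checks that this equivalence intertwines $\sharp\proj_1$ with the first projection from the right-hand $\sum$-type to $\sharp A$. Consequently, sections of $\sharp\proj_1$ correspond bijectively to sections of this first projection, and such sections are in turn the same thing as dependent functions $\prd{u:\sharp A} \sharp P(u_\sharp)$ (this is the standard equivalence between sections of a $\sum$-projection and dependent functions, proved from the universal property of $\sum$).

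Second, for each $u:\sharp A$ the type $\sharp P(u_\sharp)$ is codiscrete by \cref{thm:codiscrete-sharp}, so we may apply \cref{thm:codiscrete-reflective} to obtain an equivalence
\[ \left(\tprd{u:\sharp A} \sharp P(u_\sharp)\right) \;\simeq\; \left(\tprd{x:A} \sharp P(x^\sharp{}_\sharp)\right). \]
The computation rule for $\sharp$ gives $x^\sharp{}_\sharp \jdeq x$, so the right-hand side is judgmentally $\tprd{x:A} \sharp P(x)$. Composing the two equivalences yields the desired biconditional between $\prd{x:A} \sharp P(x)$ and the existence of a section of $\sharp\proj_1$.

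The only point that requires any care is verifying that the equivalence from \cref{thm:sharp-sigma} commutes with projection to $\sharp A$; this is the ``main obstacle'' but is really just bookkeeping, since the equivalence was constructed by $\sharp$-induction in a way that preserves the first coordinate on the nose. Everything else is a direct invocation of the reflective-subuniverse machinery already developed for $\sharp$.
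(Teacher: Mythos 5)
Your proof is correct and follows essentially the same route as the paper's own argument: both chain together \cref{thm:sharp-sigma} with the universal property of $\sharp$ (what you invoke as \cref{thm:codiscrete-reflective} the paper phrases as ``$\sharp$-induction''), and both note that the equivalence from \cref{thm:sharp-sigma} intertwines the projections. You simply present the chain in the opposite direction and spell out the intermediate correspondence between sections of a $\sum$-projection and dependent functions, which the paper leaves implicit.
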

\begin{proof}
  By $\sharp$-induction $\prd{x:A} \sharp P(x)$ is equivalent to $\prd{x:\sharp A} \sharp P(x_\sharp)$.
  This, in turn, is equivalent to saying that $\proj_1 : (\sm{x:\sharp A} \sharp P(x_\sharp)) \to \sharp A$ has a section.
  But by \cref{thm:sharp-sigma}, $\sharp(\sm{x:A} P(x)) \simeq \sm{x:\sharp A} \sharp P(x_\sharp)$, and it is easy to see that the projections match up.
\end{proof}

As for~\ref{item:inh3}, if $A$ and $P$ are sets we can consider it justified by~\cite[Theorem 10.1.5]{hottbook} (surjections of sets are regular epimorphisms).
The general case requires a more refined notion of ``quotient map'', but is also true when suitably formulated \parencite[see][]{kraus:nonrec-hit,vandoorn:proptrunc-nonrec,rijke:join}.

Finally, regarding~\ref{item:inh4} we can say the following.

\begin{lem}\label{thm:sharp-surj}
  $\prd{x:A} \sharp\brck{P(x)}$ if and only if $\sharp\proj_1 : \sharp(\sm{x:A} P(x)) \to \sharp A$ is ``codiscretely surjective'', i.e.\ for all $x:\sharp A$ we have $\sharp\brck{\fib_{\sharp\proj_1}(x)}$.
\end{lem}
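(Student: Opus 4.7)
The plan is to mirror the argument of \cref{thm:sharp-section}, using \cref{thm:sharp-sigma} to reduce the statement to a purely computational claim about first projections after applying $\sharp$. Let $f \defeq \proj_1 : (\sm{x:A} P(x)) \to A$. By \cref{thm:sharp-sigma}, there is an equivalence $\sharp(\sm{x:A} P(x)) \simeq \sm{u:\sharp A} \sharp P(u_\sharp)$ under which $\sharp f$ corresponds to the first projection $\proj_1 : \sm{u:\sharp A} \sharp P(u_\sharp) \to \sharp A$. Hence for any $u:\sharp A$ we can compute the fiber as
\[ \fib_{\sharp f}(u) \;\simeq\; \sm{v:\sharp A}{w:\sharp P(v_\sharp)} (v = u) \;\simeq\; \sharp P(u_\sharp), \]
contracting the based path-space in the standard way.

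Thus codiscrete surjectivity of $\sharp f$ unfolds to $\prd{u:\sharp A} \sharp\brck{\sharp P(u_\sharp)}$, and the lemma reduces to showing
\[ \left(\tprd{x:A} \sharp\brck{P(x)}\right) \;\simeq\; \left(\tprd{u:\sharp A} \sharp\brck{\sharp P(u_\sharp)}\right). \]
First I would dispose of the outer $\sharp$ on the fiber: since $\sharp$ preserves propositions, both $\sharp\brck{X}$ and $\sharp\brck{\sharp X}$ are codiscrete propositions, so to exhibit $\sharp\brck{\sharp X} \simeq \sharp\brck{X}$ it suffices to give logical implications both ways. The map $\sharp\brck{X} \to \sharp\brck{\sharp X}$ is induced by $X \to \sharp X$, and conversely, since the target $\sharp\brck{X}$ is codiscrete, the universal properties of $\sharp$ and of propositional truncation reduce the task to producing a map $\sharp X \to \sharp\brck{X}$, which is $\sharp\bprojf$.

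Finally, since each $\sharp\brck{P(u_\sharp)}$ is codiscrete, \cref{thm:sharp-ind} ($\sharp$-induction) shows that $\prd{u:\sharp A} \sharp\brck{P(u_\sharp)}$ is equivalent to $\prd{x:A} \sharp\brck{P(x)}$, completing the chain. The only mildly subtle step is the equivalence $\sharp\brck{\sharp X} \simeq \sharp\brck{X}$, which is the main obstacle only in the sense that one must carefully navigate which universal property applies in which direction; once this is noted, everything else follows by the same formal manipulations already used in \cref{thm:sharp-section}.
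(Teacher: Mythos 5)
Your proof is correct, but it takes a genuinely different route from the paper's. The paper does \emph{not} go through \cref{thm:sharp-sigma} here: instead, after the $\sharp$-induction reduction to showing $\sharp\brck{P(x_\sharp)} \leftrightarrow \sharp\brck{\fib_{\sharp\proj_1}(x)}$ for crisp $x::\sharp A$, it constructs the two implications directly by hand. The forward direction builds an explicit fiber element $(x_\sharp,p)^\sharp$ and checks $\sharp\proj_1((x_\sharp,p)^\sharp)\jdeq x$; the reverse direction destructs the fiber element, applies $\sharp$-induction to peel off a representative $(y,p)^\sharp$, and then crucially invokes \cref{thm:path-sharp} to decode the resulting equality $y^\sharp = x_\sharp{}^\sharp$ into an element of $\sharp(y=x_\sharp)$ so that $p$ can be transported. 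Your approach replaces all of that with a single structural computation: identify $\fib_{\sharp\proj_1}(u)$ with $\sharp P(u_\sharp)$ via \cref{thm:sharp-sigma} plus singleton contraction, after which the only content left is $\sharp\brck{\sharp X}\simeq\sharp\brck{X}$. That last fact is indeed correct for exactly the reason you give (both sides are the reflection of $X$ into codiscrete propositions), and the paper uses it verbatim later in the proof of \cref{thm:sharp-R}, so it is entirely in the spirit of the surrounding material. What you buy with your route is conceptual economy and tighter parallelism with the paper's own proof of \cref{thm:sharp-section}; what the paper's route buys is a self-contained calculation that only leans on \cref{thm:path-sharp}, which has already been established, and avoids having to re-verify that the equivalence of \cref{thm:sharp-sigma} really does intertwine $\sharp\proj_1$ with the new $\proj_1$ (a point you correctly flag but must be careful to actually check, as the paper also does in \cref{thm:sharp-section}).
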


Note that ``codiscretely surjective'' is what we get by interpreting ``surjective'' in the universe of codiscrete types, as discussed at the end of \cref{sec:codisc}.
Thus, it is the appropriate way to say that a map of codiscrete types ``is surjective'' in the untopologized world.

\begin{proof}
  By $\sharp$-induction $\prd{x:A} \sharp \brck{P(x)}$ is equivalent to $\prd{x:\sharp A} \sharp \brck{P(x_\sharp)}$.
  Thus, it will suffice to show that for any $x:\sharp A$ we have $\sharp\brck{P(x_\sharp)} \leftrightarrow \sharp\brck{\fib_{\sharp\proj_1}(x)}$.

  Suppose first that $q:\sharp\brck{P(x_\sharp)}$; we will show $\sharp\brck{\fib_{\sharp\proj_1}(x)}$.
  We apply $\sharpf$, making our goal $\brck{\fib_{\sharp\proj_1}(x)}$ and our hypotheses crisp, allowing us to form $q_\sharp : \brck{P(x_\sharp)}$.
  Now by functoriality of $\brck{\blank}$, it suffices to assume $p:P(x_\sharp)$ and prove $\fib_{\sharp\proj_1}(x)$.
  But then $(x_\sharp,p)^\sharp:\sharp(\sm{x:A} P(x))$, and
  \[ \sharp\proj_1((x_\sharp,p)^\sharp) \jdeq \proj_1(x_\sharp,p)^\sharp \jdeq x_\sharp{}^\sharp \jdeq x. \]

  Next suppose that $z:\sharp\brck{\fib_{\sharp\proj_1}(x)}$; we will show $\sharp\brck{P(x_\sharp)}$.
  Since $\sharp$ takes values in codiscrete types, it suffices to show $\sharp\sharp\brck{P(x_\sharp)}$.
  Thus, we can apply $\sharpf$ to make our hypotheses crisp while keeping the goal $\sharp\brck{P(x_\sharp)}$.
  Now we can form $z_\sharp : \brck{\fib_{\sharp\proj_1}(x)}$, and since $\sharp\brck{P(x_\sharp)}$ is a \mprop{} we can destruct $z_\sharp$ to obtain $q:\sharp(\sm{x:A} P(x))$ and $\sharp\proj_1(q)=x$.
  Next, by $\sharp$-induction we can assume $q$ is $(y,p)^\sharp$ for some $y:A$ and $p:P(y)$.
  But $\sharp\proj_1((y,p)^\sharp) \jdeq \proj_1(y,p)^\sharp \jdeq y^\sharp$, so we have $y^\sharp = x$.
  And since $x$ is crisp we have $x\jdeq x_\sharp{}^\sharp$, so $y^\sharp = x_\sharp{}^\sharp$.
  But by \cref{thm:path-sharp} this type is equivalent to $\sharp (y = x_\sharp)$.
  Thus, with another $\sharp$-induction we obtain $s:y=x_\sharp$, so we can transport $p$ along $s$ to get an element of $P(x_\sharp)$, and finally map it into $\sharp\brck{P(x_\sharp)}$.
\end{proof}

\begin{cor}\label{thm:emb-sharp-eqv}
  If $f:A\to B$ is an embedding and $\prd{x:B} \sharp\fib_f(x)$, then $\sharp f$ is an equivalence.
\end{cor}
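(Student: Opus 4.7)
The plan is to exploit two facts we already have: $\sharp$ preserves pullbacks (\cref{thm:sharp-pullback}) and $n$-types (\cref{thm:codiscrete-ntypes}), and $f$ being an embedding is equivalent to each fiber $\fib_f(x)$ being a \mprop{}. Since an equivalence is the same as an embedding whose fibers are inhabited, I would split the argument into showing (a) $\sharp f$ is an embedding, and (b) each of its fibers is inhabited. Both parts share a common technique: reduce a statement about an arbitrary $u:\sharp B$ to the case $u \jdeq x^\sharp$ by $\sharp$-induction, whose applicability hinges on the predicate in question being codiscrete.

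First, I would record the key identification provided by left-exactness. Writing $\fib_f(x) \jdeq A \times_B \unit$ (pulled back along $\lam{\ttt} x$), \cref{thm:sharp-pullback} together with $\sharp\unit \simeq \unit$ (valid for any monadic modality) gives a natural equivalence
\[ \sharp \fib_f(x) \;\simeq\; \sharp A \times_{\sharp B} \unit \;\simeq\; \fib_{\sharp f}(x^\sharp). \]

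Next, to show $\sharp f$ is an embedding, I need $\fib_{\sharp f}(u)$ to be a \mprop{} for every $u:\sharp B$. Since $\sharp A$ and $\sharp B$ are codiscrete, so is any fiber of $\sharp f$ (using that codiscretes are closed under $\Sigma$ and identity types), and the statement ``$X$ is a \mprop'' is codiscrete whenever $X$ is. Thus by $\sharp$-induction it suffices to treat $u \jdeq x^\sharp$. But then $\fib_{\sharp f}(x^\sharp) \simeq \sharp \fib_f(x)$ by the equivalence above, and this is a \mprop{} because $f$ is an embedding and $\sharp$ preserves $(-1)$-types by \cref{thm:codiscrete-ntypes}.

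For inhabitation of fibers, I would apply the same reduction. Since $\fib_{\sharp f}(u)$ is codiscrete, $\sharp$-induction lets us check $\prd{u:\sharp B} \fib_{\sharp f}(u)$ by verifying $\prd{x:B} \fib_{\sharp f}(x^\sharp)$; and under the equivalence $\fib_{\sharp f}(x^\sharp) \simeq \sharp\fib_f(x)$ this is exactly the hypothesis $\prd{x:B} \sharp\fib_f(x)$. Combining (a) and (b) yields an embedding with inhabited (hence contractible) fibers, so $\sharp f$ is an equivalence. The one step that requires a little care, rather than being the main obstacle, is making sure the predicates we $\sharp$-induct over are genuinely codiscrete; once the equivalence $\sharp\fib_f(x) \simeq \fib_{\sharp f}(x^\sharp)$ is in hand this reduces to the general closure properties already established for codiscrete types.
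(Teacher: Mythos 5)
Your proof is correct, but it takes a genuinely different route from the paper's. The paper proves this corollary in three lines by first noting that $\sharp f$ is an embedding with codiscrete fibers, and then invoking \cref{thm:sharp-surj} (the biconditional characterization of $\prd{x:B}\sharp\brck{P(x)}$ in terms of ``codiscrete surjectivity'' of $\sharp\proj_1$) as a black box: the hypothesis gives codiscrete surjectivity, and since the fibers are codiscrete $(-1)$-types, ``codiscretely surjective'' upgrades to ``fibers contractible.'' You bypass \cref{thm:sharp-surj} entirely. Instead you isolate the natural equivalence $\sharp\fib_f(x) \simeq \fib_{\sharp f}(x^\sharp)$ directly from left-exactness (\cref{thm:sharp-pullback}) and $\sharp\unit\simeq\unit$, and then verify both $(-1)$-truncatedness and inhabitation of the fibers of $\sharp f$ by $\sharp$-induction over $u:\sharp B$, reducing each to the case $u\jdeq x^\sharp$ where the bridge equivalence applies. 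This is effectively an inline re-derivation of the relevant half of \cref{thm:sharp-surj}, but it replaces that lemma's somewhat intricate encode-decode manipulations (going through \cref{thm:path-sharp}) with one clean appeal to pullback-preservation. The trade-off: the paper's proof is shorter because it reuses prior work; yours is more self-contained and, once the equivalence $\sharp\fib_f(x)\simeq\fib_{\sharp f}(x^\sharp)$ is in hand, arguably easier to check in isolation. Both correctly rely on the same closure properties of codiscrete types (under $\Sigma$, identity types, and the $\textsf{is-}n\textsf{-type}$ predicate) to justify the $\sharp$-inductions.
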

\begin{proof}
  Since $f$ is an embedding, so is $\sharp f$.
  Moreover, $\sharp f$ has codiscrete fibers.
  Thus, \cref{thm:sharp-surj} is sufficient to ensure that it is an equivalence.
\end{proof}

Now, since we cannot expect every surjection to have a continuous section, the conclusion of our axiom of choice should be $\prd{x:A} \sharp P(x)$.
Therefore, it ought to suffice as a hypothesis that it be surjective on points, i.e.\ $\prd{x:A} \sharp\brck{P(x)}$.
Of course, the conclusion must be further $(-1)$-truncated outside the $\prod$, to be consistent with univalence.
This might lead us to state the axiom of choice as
\begin{equation}
  \left(\prd{x:A} \sharp\brck{P(x)}\right) \to \brck{\prd{x:A} \sharp P(x)}.\label{eq:wrong-ac}
\end{equation}
However, I believe this is not yet quite right either.
Remember that the entire statement is also parametrized by $A$ and $P$.
Thus,~\eqref{eq:wrong-ac} is actually asserting that one subobject of $\sm{A:\type}(A\to \type)$ is contained in another.
But the first subobject is codiscrete --- i.e.\ has the subspace topology --- whereas the second is \emph{not}, at least not by definition: $\brck{\blank}$ doesn't preserve codiscreteness.
Moreover, the reverse implication always holds; thus~\eqref{eq:wrong-ac} is actually asserting that these two subobjects agree, and hence that the one on the right is a subspace even though we haven't forced it to be.
This seems unreasonable (although I do not currently have a concrete counterexample).
Thus, instead we take the following:

\begin{named}{Axiom AC}(Sharp axiom of choice)\label{ax:shac}\label{ax:ac}
  For any set $A$ and type family $P:A\to \type$, we have
  \begin{equation}
    \left(\prd{x:A} \sharp\brck{P(x)}\right) \to \sharp\brck{\prd{x:A} \sharp P(x)}.\label{eq:ac}
  \end{equation}
\end{named}

This is admittedly quite a mouthful, and one is left wondering whether it is good for anything.
Are there simpler special cases?
What about, for instance, the \emph{countable} axiom of choice, where $A=\N$; surely there shouldn't be any topology on $\N$ to get in the way.
Thus, we now turn to making sense of what it means to have a \emph{discrete} topology.

\section{The $\flat$ modality}
\label{sec:flat}

\addtocontents{toc}{\protect\setcounter{tocdepth}{1}}
\subsection{A no-go theorem}
\label{sec:no-go}

Discreteness is encoded by the modality $\flat$ that is dual to $\sharp$.
Whereas $\sharp$ is a reflector into the subcategory of codiscrete types, $\flat$ is a coreflector into the subcategory of discrete types.
However, unlike $\sharp$, the behavior of $\flat$ cannot be stated as ordinary type-theoretic rules or axioms, because the resulting pullback-stability would cause it to degenerate.
This is the conclusion of the following ``no-go theorem''.

\begin{thm}\label{thm:no-go}
  Suppose we have the following data:
  \begin{enumerate}
  \item A predicate $\inbox : \type\to\prop$ that is invariant under equivalence, i.e.\ $(A\simeq B) \to \inbox(A) \to \inbox(B)$.
    (This condition is, of course, automatic with univalence.)
  \item An operation $\Box: \type\to\type$, such that $\inbox(\Box(A))$ for all $A$.
  \item For each $A:\type$, a function $\varepsilon_A : \Box A \to A$.
  \item If $\inbox(B)$, then postcomposition with $\varepsilon_A$ is an equivalence $(B\to\Box A) \simeq (B\to A)$.\label{item:nogoh4}
  \end{enumerate}
  Then there exists $U:\prop$ such that for all $A$ we have
  \begin{enumerate}[label=(\alph*)]
  \item $\inbox(A) \leftrightarrow (A\to U)$ and\label{item:nogo1}
  \item $\Box A \simeq (A\times U)$\label{item:nogo2}
  \end{enumerate}
\end{thm}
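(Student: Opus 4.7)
The plan is to take $U \defeq \Box\unit$, verify that it is a proposition with $\inbox(U)$, and then use hypothesis (4) as an internal adjunction to manufacture equivalences $\Box A \simeq A\times U$. A useful preliminary observation is that whenever $\inbox(B)$, the counit $\varepsilon_B$ is already an equivalence: hypothesis (4) gives $(B\to \Box B) \simeq (B\to B)$, so $\id_B$ lifts to a unique $\eta_B : B\to \Box B$ with $\varepsilon_B\circ \eta_B = \id_B$; applying (4) again with source $\Box B$ (which lies in $\inbox$ by (2)) forces $\eta_B\circ \varepsilon_B = \id_{\Box B}$, since both composites postcompose with $\varepsilon_B$ to give $\varepsilon_B$.

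To see that $U$ is a proposition, I would apply (4) with $B \defeq U$ and $A\defeq \unit$, using $\inbox(U)$ from (2); this gives $(U\to U) \simeq (U\to \unit) \simeq \unit$, so any two endomaps of $U$ are equal. For arbitrary $u,v : U$ the constant maps $c_u, c_v : U\to U$ must then agree, and evaluating at $v$ yields $u = v$.

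For part (b), I would construct the equivalence in both directions. In the forward direction, applying (4) with $B\defeq \Box A$ to the unique map $\Box A \to \unit$ yields a canonical $p_A : \Box A \to U$, and pairing with $\varepsilon_A$ produces $\langle \varepsilon_A, p_A \rangle : \Box A \to A\times U$. In the backward direction, for each $a : A$ the constant map $c_a : U\to A$ lifts through (4) (using $\inbox(U)$) to a unique $\hat c_a : U\to \Box A$ with $\varepsilon_A\circ \hat c_a = c_a$; set $\phi(a,u) \defeq \hat c_a(u)$. One composite is immediate: $\varepsilon_A(\phi(a,u)) = c_a(u) = a$, and the $U$-component matches automatically because $U$ is a proposition. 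For the other composite, both $\phi\circ \langle\varepsilon_A, p_A\rangle$ and $\id_{\Box A}$ are endomaps of $\Box A$ whose postcomposition with $\varepsilon_A$ equals $\varepsilon_A$, so the uniqueness from (4) applied to $\inbox(\Box A)$ forces them to coincide.

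Finally, for part (a), if $\inbox(A)$ then the preliminary observation gives $A\simeq \Box A \simeq A\times U$ and the second projection yields $A\to U$. Conversely, given $f : A\to U$, I would define $s : A\to \Box A$ by $s(a) \defeq \phi(a, f(a))$; then $\varepsilon_A\circ s = \id_A$, and the same adjunction uniqueness argument forces $s\circ \varepsilon_A = \id_{\Box A}$, so $\varepsilon_A$ is an equivalence and invariance of $\inbox$ under equivalence gives $\inbox(A)$. The main subtlety throughout will be keeping track of which types are known to satisfy $\inbox$ so that hypothesis (4) can legitimately be applied; once this bookkeeping is in place the proof is essentially a systematic exercise in the universal property of $\varepsilon$.
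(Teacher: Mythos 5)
Your proof is correct and follows essentially the same strategy as the paper's: take $U \defeq \Box\unit$ and repeatedly exploit the fact that, by hypothesis~(4), two maps out of a type in $\inbox$ agree whenever their postcompositions with $\varepsilon$ agree. The only organizational differences are that you prove (b) before (a) rather than the reverse, and you sidestep the paper's construction of $\Box$ as a functor with $\varepsilon$ natural by lifting the constant maps $c_a$ directly through~(4) to obtain $\phi$; the resulting lifts $\hat c_a$ are exactly the paper's $\Box\const_a$, so the computations are the same.
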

\begin{proof}
  Firstly, we observe that if $A$ is a \mprop{}, so is $\Box A$.
  For if $x,y:\Box A$, then we have two functions $\const_x, \const_y : \Box A \to \Box A$ whose composites with $\varepsilon_A$ are equal.
  Hence $\const_x = \const_y$ and thus (evaluating them at $x$, say) $x=y$.

  Secondly, we observe that if $\varepsilon_A : \Box A \to A$ has a section $s:A\to \Box A$, then $\inbox(A)$.
  It suffices to show that $s \circ \varepsilon_A = \id_{\Box A}$; but these are again two maps $\Box A \to \Box A$ having the same composite with $\varepsilon_A$.

  Thirdly, we can extend $\Box$ to a functor such that $\varepsilon$ is a natural transformation from $\Box$ to the identity.
  (This is also the dual of facts we observed in \cref{sec:codisc} for $\sharp$.)

  Now let $U \defeq \Box \unit$.
  First we prove~\ref{item:nogo1}.
  On one hand, if $\inbox(A)$, then $A\to \unit$ factors through $U$.
  On the other hand, suppose $f:A\to U$; we will construct a section $s:A\to\Box A$.
  For any $x:A$ we have $\const_x : \unit \to A$, and hence by functoriality $\Box\const_x : \Box \unit \to \Box A$.
  But $f:A\to \Box \unit$, so we have $\Box\const_x(f(x)):\Box A$; we define this to be $s(x)$.
  Now by naturality of $\varepsilon$, we have
  \[\varepsilon_A(s(x)) = \varepsilon_A(\Box\const_x(f(x))) = \const_x(\varepsilon_\unit(f(x))) = x.\]
  Thus, $\inbox(A)$; this completes the proof of \ref{item:nogo1}.

  Now we prove~\ref{item:nogo2}.
  By~\ref{item:nogo1}, we have $\inbox(A\times U)$; thus the projection $\proj_1 : A\times U\to A$ factors through $\varepsilon_A$ by some $g:A\times U\to \Box A$.
  On the other hand, since $\inbox(\Box A)$ we have $f:\Box A \to U$, and thus $(\varepsilon_A,f):\Box A \to A\times U$.
  The composite in one direction is
  \[ (\varepsilon_A,f) \circ g = (\varepsilon_A \circ g,f \circ g) = (\proj_1, \proj_2) = \id_{A\times U} \]
  where $\varepsilon_A \circ g= \id_A$ by definition of $g$, while $f\circ g = \id_U$ since $U$ is a \mprop{}.
  The composite in the other direction is a map $\Box A \to \Box A$ whose composite with $\varepsilon_A$ is $\varepsilon_A$, hence it must be the identity.
  This shows~\ref{item:nogo2}.
\end{proof}

We certainly don't expect $\flat$ to be of the form $(\blank\times U)$, so this means that we cannot describe $\flat$ as ordinary type-theoretic rules or axioms.
The central point of \cref{thm:no-go} is that the universal property~\ref{item:nogoh4} can be applied in an arbitrary context; this is what enables us to construct a section $A\to\Box A$ from a map $A\to \Box\unit$.
Thus, in order to describe a more general coreflection, we need to restrict the context in which it and its universal property can be applied.
In our current setup, an obvious restriction to try is that the context should include only crisp variables.
This is what we will do.

\subsection{The rules for $\flat$}
\label{sec:rules-flat}

We now describe our rules for $\flat$ in English; a type-theoretic presentation is shown in \cref{fig:flat}.
\begin{itemize}
\item For any \emph{crisp} type $A::\type$, there is a type $\flat A$.
\item For any \emph{crisp} $a::A$, we have $a^\flat : \flat A$.
\item An element of $\flat A$ may always be assumed to be of the form $u^\flat$ for some crisp variable $u::A$.
  This is called \textbf{$\flat$-induction}.
  Syntactically, if $C$ is a type depending on $x:\flat A$, and $N:C[u^\flat/x]$ is an expression depending on a crisp variable $u::A$, while we have an element $M:\flat A$, we have an induced element
  \[ (\flet u^\flat := M in N) : C[M/x]. \]

  Here the notation $C[M/x]$ indicates that the expression $M$ is substituted for the variable $x$ in $C$.
  \textcite{hottbook} used function notation when discussing induction principles, but for $\flat$ we cannot since we do not have a basic notion of ``function of a crisp variable'' (indeed, one of the purposes of $\flat$ is to provide such a notion); thus we have to talk about substituting into expressions rather than evaluating functions.
\item The expected computation rule: $(\flet u^\flat := M^\flat in N) \jdeq N[M/u]$
  if $M::A$ is crisp.
\end{itemize}
It is crucial that we cannot apply $\flat$ to types containing cohesive variables.
This reflects the fact that, by \cref{thm:no-go}, ``retopologizing discretely'' is not a \emph{continuous} operation on the space of spaces.
(It \emph{is} possible to do this for ``retopologizing codiscretely'', so that $\sharp$ requires no such restriction.
Indeed, by \cref{thm:codiscrete-universe} the type of codiscrete types is codiscrete, so any function into it is automatically continuous.)

\begin{figure}
  \centering
  \begin{mathpar}
    \inferrule{\Delta\mid\cdot \types A:\type}{\Delta\mid\Gamma \types \flat A :\type} \and
    \inferrule{\Delta\mid\cdot \types M:A}{\Delta\mid\Gamma \types M^\flat: \flat A} \and
    \inferrule{\Delta\mid\Gamma,x:\flat A \types C:\type \\ \Delta\mid\Gamma \types M:\flat A \\ \Delta,u::A\mid\Gamma \types N:C[u^\flat/x]}{\Delta\mid\Gamma \types (\flet u^\flat := M in N):C[M/x]} \and
    \inferrule{\Delta\mid\Gamma,x:\flat A \types C:\type \\ \Delta\mid\cdot \types M:A \\ \Delta,u::A\mid\Gamma \types N:C[u^\flat/x]}{\Delta\mid\Gamma \types(\flet u^\flat := M^\flat in N) \jdeq N[M/u] \;:\: C[M^\flat/x]}
  \end{mathpar}
  \caption{Rules for $\flat$}
  \label{fig:flat}
\end{figure}

\begin{rmk}
  Strictly speaking, the syntax ``$\flet u^\flat := M in N$'' should also notate the type family $C$, since that is not generally uniquely inferrable.
  However, when working informally this will not matter much.
\end{rmk}

\begin{rmk}
  Dually to $\sharp$ (see \cref{thm:negative}), $\flat$ is a \emph{positive} type former.\footnote{Well, at least in the usual sense that this adjective is used for dependent type theories.
    In the sequent calculus of \textcite{ls:1var-adjoint-logic}, $\flat$ is (conjecturally) positive in the precise sense of ``focusing'', but our presentation of $\flat$ breaks some of this positivity.
    However, this often happens to positive types when translating from sequent calculus into dependent type theory.}
  This means, roughly, that its elimination rule is chosen to match its introduction rule(s).
  For instance, the coproduct $A+B$ is positive because to define a (dependent) function \emph{out} of it, we use case analysis, which essentially means we have to specify the values of that function on all the ways to construct an element of $A+B$ (namely via $\inl$ or $\inr$).
  In the case of $\flat$, the introduction rule says that the way to get an element of $\flat A$ is to have a crisp element of $A$; thus the elimination rule says that to define a function out of $\flat A$ it suffices to assume we have a crisp element of $A$.

  In contrast to negative type formers, whose uniqueness principles tend to be judgmental rules, positive type formers tend to have uniqueness principles that are only typal, but can be proven from their elimination and computation rules.
  As we will now see, this is also the case for $\flat$.
\end{rmk}

\begin{lem}[Uniqueness principle for $\flat$]\label{thm:flat-eta}
  Let $A::\type$, $C:\flat A \to \type$, and $f:\prd{x:\flat A} C(x)$.
  For any $x:\flat A$ we have
  \[ (\flet u^\flat := x in f(u^\flat)) = f(x). \]
\end{lem}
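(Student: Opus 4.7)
The plan is to prove this directly by $\flat$-induction on the element $x:\flat A$, against the type family
\[ C'(x) \defeq \bigl((\flet u^\flat := x in f(u^\flat)) = f(x)\bigr). \]
Since $C'$ depends only on $x:\flat A$ (with $A$, $C$, and $f$ available as crisp data in the ambient context), it is a valid motive for $\flat$-induction.

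First I would set up the induction: to construct a term of $C'(x)$ for arbitrary $x:\flat A$, by $\flat$-induction it suffices to construct, for each crisp variable $v::A$, a term of $C'[v^\flat/x]$. Expanding this type, the substitution pushes $v^\flat$ inside the $\mathsf{let}$-binder and into the right-hand side, yielding
\[ C'[v^\flat/x] \;\jdeq\; \bigl((\flet u^\flat := v^\flat in f(u^\flat)) = f(v^\flat)\bigr). \]
Now I would apply the computation rule for $\flat$ (the fourth rule in \cref{fig:flat}), which is applicable here precisely because $v$ is crisp; it gives the judgmental equality $(\flet u^\flat := v^\flat in f(u^\flat)) \jdeq f(v^\flat)$. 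Consequently the target type is judgmentally $f(v^\flat) = f(v^\flat)$, which is inhabited by $\refl_{f(v^\flat)}$.

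Assembling these pieces, the witness is
\[ \flet v^\flat := x in \refl_{f(v^\flat)} \;:\; C'(x), \]
which is exactly the desired identification. The only subtle point, and the one I would double-check, is that the motive $C'$ is well-formed in the cohesive context: the term $\flet u^\flat := x in f(u^\flat)$ uses only the cohesive variable $x:\flat A$ (destructed to the crisp $u$ inside the binder) and the ambient crisp data $A$, $C$, $f$, so its formation is legitimate and the substitution of $v^\flat$ for $x$ interacts correctly with the binder, triggering the computation rule as needed. There is no real obstacle beyond this bookkeeping; the lemma is essentially a direct consequence of combining $\flat$-induction with the $\beta$-like computation rule, which is the standard pattern by which positive type formers admit a typal rather than judgmental uniqueness principle.
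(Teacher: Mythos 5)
Your proof is correct and is essentially identical to the paper's: same motive $P(x) \defeq \bigl((\flet u^\flat := x in f(u^\flat)) = f(x)\bigr)$, same $\flat$-induction, same appeal to the computation rule to reduce the goal at $v^\flat$ to reflexivity, and the same final witness $\flet v^\flat := x in \refl_{f(v^\flat)}$.
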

\begin{proof*}
  Let $P(x) \defeq ((\flet u^\flat := x in f(u^\flat)) = f(x))$; then $P:\flat A \to \type$.
  Thus, in proving $\prd{x:\flat A} P(x)$ we can use $\flat$-induction.
  But when $x$ is $v^\flat$, we have $P(v^\flat) \jdeq (f(v^\flat) = f(v^\flat))$, using the computation rule for $\flat$ on the left-hand side, and this is trivial by reflexivity.
  In symbols, we have
  \[ (\flet v^\flat := x in \refl_{f(v^\flat)}) \;:\; (\flet u^\flat := x in f(u^\flat)) = f(x).\qedhere \]
\end{proof*}

As with $\sharp$, we can make $\flat$ into a functor, at least on crisp functions: given $A,B::\type$ and $f::A\to B$, we define $\flat f : \flat A \to \flat B$ by
\begin{equation}
  \flat f(x) \defeq (\flet u^\flat := x in f(u)^\flat).\label{eq:flat-f}
\end{equation}
We might be tempted to write instead $(\flet u^\flat := x in f(u))^\flat$, but this is invalid because the inside of $(\blank)^\flat$ cannot contain cohesive variables such as $x$.
For the same reason, we require not only $A$ and $B$, but also $f$, to be crisp.
In particular, we cannot deduce from this a map $(A\to B) \to (\flat A \to \flat B)$.
The best we can do is a map $\flat (A\to B) \to \flat (\flat A \to \flat B)$, defined by $\flat$-induction followed by~\eqref{eq:flat-f}.


When we try to verify the functoriality of this operation, we see that we need some additional lemmas.
Given $A,B,C::\type$ and $f::A\to B$ and $g::B\to C$, in addition to~\eqref{eq:flat-f} we have
\begin{equation}
  \flat g(y) \defeq (\flet v^\flat := y in g(v)^\flat)\label{eq:flat-g}
\end{equation}
and thus for any $x:\flat A$, we have
\[ \flat g(\flat f(x)) \jdeq \big(\flet v^\flat := (\flet u^\flat := x in f(u)^\flat) in g(v)^\flat\big) \]
but on the right-hand side, what are we to do with this ``nested $\mathsf{let}$''?
We have to apply the following lemma, called a \emph{(typal) commuting conversion}.

\begin{lem}[Commuting \textsf{let} with itself]\label{thm:commute-let}
  Whenever both sides typecheck, we have
  \[ \big(\flet v^\flat := (\flet u^\flat := M in N) in P\big) = \big(\flet u^\flat := M in (\flet v^\flat := N in P)\big). \]
  Moreover, if the left-hand side typechecks, so does the right-hand side, while if the right-hand side typechecks and $u$ does not occur in $P$, then the left-hand side typechecks.
\end{lem}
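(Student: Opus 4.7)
The plan is to prove the typal equality by $\flat$-induction on $M : \flat A$. Consider the type family
\[ E(x) \defeq \bigl((\flet v^\flat := (\flet u^\flat := x in N) in P) = (\flet u^\flat := x in (\flet v^\flat := N in P))\bigr) \]
indexed by $x : \flat A$, whose value at $M$ is exactly the asserted equality. By $\flat$-induction it suffices to exhibit $E(w^\flat)$ for a fresh crisp variable $w :: A$; the witness for $E(M)$ is then $\flet w^\flat := M in (\text{witness of } E(w^\flat))$.

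In the case $x \jdeq w^\flat$, the $\flat$ computation rule from \cref{fig:flat} applies on both sides of $E(w^\flat)$. The LHS reduces judgmentally to $\flet v^\flat := N[w/u] in P$, and the RHS reduces to $\flet v^\flat := N[w/u] in P[w/u]$. Under the hypothesis that both sides of the original equation typecheck (see the next paragraph), $u$ does not occur in $P$, so $P[w/u] \jdeq P$ and the two expressions coincide judgmentally. Hence $\refl$ witnesses $E(w^\flat)$, and the final term is $\flet w^\flat := M in \refl$.

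The subsidiary claims about typechecking follow by tracking variable scopes. On the LHS the binding $\flet u^\flat := M in N$ is strictly interior to the binding of $v$, so $P$ must already typecheck in a context in which $u$ is not in scope; by weakening, $P$ also typechecks after extending the context by $u :: A$, and hence the RHS typechecks. Conversely, if the RHS typechecks and $u$ does not occur in $P$, then by strengthening we may remove $u$ from the context in which $P$ lives, making the inner $\flet$ of the LHS, and thus the whole LHS, well-typed. There is no real mathematical obstacle here: once $\flat$-induction replaces $M$ by $w^\flat$, the computation rule collapses both sides to the same term. The only delicacy is scoping-theoretic, namely justifying that $u$ does not occur in $P$ inside the body of the induction, which is precisely what the typechecking hypothesis guarantees.
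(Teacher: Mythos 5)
Your proof is correct and coincides with the first of the paper's two suggested proofs: do $\flat$-induction on $M$, apply the computation rule, and observe that both sides collapse to the same term so reflexivity suffices. You are somewhat more explicit than the paper about the variable-occurrence condition underlying the reduction (namely that $u$ cannot occur free in $P$, so that $P[w/u]\jdeq P$ on the right-hand side); the paper leaves this implicit, but it is already built into the typing premises of the formal version of the rule in Figure~\ref{fig:uniq-comm}.
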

\begin{proof}
  One proof is to do $\flat$-induction on $M$:
  when $M$ is $w^\flat$ for $w::A$, both sides of the goal reduce to $(\flet v^\flat := N[w/u] in P)$, so we can apply reflexivity.
  Another is to apply the uniqueness principle in reverse, followed by the computation rule:
  \begin{align*}
    &\hspace{2.8cm} \flet v^\flat := (\flet u^\flat := M in N) in P\\
    &= \flet w^\flat := M in (\flet v^\flat := (\flet u^\flat := w^\flat in N) in P)\\
    &\jdeq \flet w^\flat := M in (\flet v^\flat := N[w/u] in P)\\
    &\jdeq \flet u^\flat := M in (\flet v^\flat := N in P)
  \end{align*}
  The second proof makes the typechecking conditions clearer, as these rewriting steps can always be performed forwards, whereas to perform them backwards we need to know that $u$ does not occur in $P$.
\end{proof}

\begin{rmk}\label{rmk:derivable-rule}
  In \cref{rmk:entailment} we mentioned the difference between implication and entailment, whose type-theoretic syntaxes are $\Gamma\types f:A\to B$ and $\Gamma,x:A \types f(x):B$ respectively.
  Both of these, however, are single ``hypothetical judgments'', whose hypotheses consist of a context of variables (either ``$\Gamma$'' or ``$\Gamma,x:A$'').
  By contrast, \cref{thm:commute-let} cannot be expressed as a \emph{single} hypothetical judgment, because $M$, $N$, and $P$ must be ``metavariables'' denoting \emph{expressions} rather than ordinary variables.
  This should be clear from the condition ``$u$ does not occur in $P$'', which would be nonsensical if $P$ were a variable.
  In type-theoretic language, \cref{thm:commute-let} is a \emph{derivable rule} rather than a single judgment, because among its \emph{hypotheses} are other \emph{hypothetical judgments}; \cref{fig:uniq-comm} shows it written out in type-theoretic syntax.
  We will try to avoid such ``meta-statements'' as much as possible, but in this section and the next we will need a few more of them.
\end{rmk}

\begin{figure}
  \centering
  \begin{mathpar}
    \inferrule*[right=(\ref{thm:commute-let})]{                 
      \Delta\mid\cdot \types A : \type \\
      \Delta\mid\cdot \types B : \type \\
      \Delta\mid\Gamma,y:\flat B \types C : \type \\
      \Delta\mid\Gamma \types M : \flat A\\
      \Delta,u::A\mid\Gamma \types N : \flat B \\
      \Delta,v::B\mid\Gamma \types P:C[v^\flat/y]}
    {\Delta\mid\Gamma \types \big(\flet v^\flat := (\flet u^\flat := M in N) in P\big) = \big(\flet u^\flat := M in (\flet v^\flat := N in P)\big)}
    \and
    \inferrule*[right=(\ref{thm:commute-app})]{\Delta\mid\cdot \types A : \type \\
      \Delta\mid\Gamma,x:\flat A \types B : \type \\
      \Delta\mid\Gamma,x:\flat A, y:B \types C : \type \\
      \Delta\mid\cdot \types M : \flat A \\
      \Delta,u::A \mid\Gamma \types N : B[u^\flat/x] \\
      \Delta\mid\Gamma \types f:\tprd{x:\flat A}{y:B} C}
    {\Delta\mid\Gamma \types f\big(M,(\flet u^\flat := M in N)\big) = \big(\flet u^\flat := M in f(u^\flat,N)\big)}
  \end{mathpar}
  \caption{Commuting conversions}
  \label{fig:uniq-comm}
\end{figure}

With \cref{thm:commute-let}, we can now verify the functoriality of $\flat$:
\begin{align*}
  \flat g(\flat f(x))
  &\jdeq \flet v^\flat := (\flet u^\flat := x in f(u)^\flat) in g(v)^\flat\\
  &= \flet u^\flat := x in (\flet v^\flat := f(u)^\flat in g(v)^\flat)\\
  &\jdeq \flet u^\flat := x in g(f(u))^\flat\\
  &\jdeq \flat(g\circ f)(x).
\end{align*}
Functoriality on identities is easier, using the uniqueness principle:
\begin{equation*}
  \flat(\id_A)(x)
  \jdeq (\flet u^\flat := x in u^\flat)
  = x
\end{equation*}
Note that unlike the functoriality of $\sharp$, these are typal rather than judgmental equalities.
Thus, they should actually be viewed as the first level of a ``coherent \oo-functor''; later on we will construct the next level by showing that $\flat$ is also ``functorial on homotopies''.

Continuing with the analogy to $\sharp$, in place of the ``unit'' $\sharpf : A \to \sharp A$ we have a ``counit'' $\flatf : \flat A \to A$, which is defined for any (crisp!)\ $A::\type$ by
\[x_\flat \defeq (\flet u^\flat := x in u). \]
The computation rule tells us that for $v::A$ we have
\begin{equation}
  v^\flat{}_\flat \jdeq (\flet u^\flat := v^\flat in u) \jdeq v.\label{eq:beta-flatf}
\end{equation}
We expect $\flatf$ to be a natural transformation (and eventually a ``coherent \oo-natural transformation'') from $\flat$ to the identity functor.
To verify this, let $A,B::\type$ and $f::A\to B$ and fix any $x:\flat A$; then we would like to compute
\begin{alignat*}{2}
  (\flat f(x))_\flat
  &\jdeq (\flet u^\flat := x in f(u)^\flat)_\flat\\
  &= \flet u^\flat := x in f(u)^\flat{}_\flat
  &\qquad\text{(\cref{thm:commute-let})}\\
  &\jdeq \flet u^\flat := x in f(u)\\
  &= f(\flet u^\flat := x in u)
  &\qquad\text{(?)}\\
  &\jdeq f(x_\flat).
\end{alignat*}
To fill in the step marked ?, we need another commuting conversion.

\begin{lem}[Commuting \textsf{let} with function application]\label{thm:commute-app}
  Suppose given types $A$, $B$, and $C$, where $A$ is crisp, $B$ may depend on $x:\flat A$, and $C$ may depend on $x:\flat A$ and $y:B$.
  Also suppose we have expressions $M:\flat A$ and $N:B[u^\flat/x]$ and $f:\prd{x:\flat A}{y:B} C$, where $N$ may depend on $u::A$.
  Then we may conclude
  \[ f\big(M,(\flet u^\flat := M in N)\big) = \big(\flet u^\flat := M in f(u^\flat,N)\big). \]
\end{lem}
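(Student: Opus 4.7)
The plan is to mimic the two-pronged proof of \cref{thm:commute-let} for $\flat$, which itself relied on either $\flat$-induction on $M$ or a use of the uniqueness principle \cref{thm:flat-eta} followed by the computation rule. Both strategies should transfer without trouble, so I would present the second since it makes the direction of the equality explicit and parallels the earlier commuting conversion cleanly.

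The first step is to apply the uniqueness principle \cref{thm:flat-eta} ``in reverse'' to $M:\flat A$, taking as the predicate the family $C(x) \defeq f(x,\flet u^\flat := x in N)$ (which typechecks by hypothesis when $x = M$, and will typecheck for generic $x$ in the relevant subterm below since it appears under $\flet w^\flat := M in (\cdot)$). This gives
\[
f\bigl(M, \flet u^\flat := M in N\bigr)
= \bigl(\flet w^\flat := M in f(w^\flat, \flet u^\flat := w^\flat in N)\bigr).
\]
Next I would invoke the computation rule of \cref{fig:flat} on the inner $\flet$, using that $w^\flat$ has exactly the required form: this gives the judgmental equality
\[
\flet u^\flat := w^\flat in N \jdeq N[w/u].
\]
Substituting through and then renaming the bound variable $w$ to $u$ yields the right-hand side of the desired equation.

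Alternatively, by $\flat$-induction on $M$ it suffices to check the case $M \jdeq w^\flat$ for a crisp $w::A$; then the computation rule makes both sides reduce judgmentally to $f(w^\flat,N[w/u])$, and reflexivity closes the goal. I would mention this as a brief second proof, matching the double proof of \cref{thm:commute-let}.

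The only real obstacle is the bookkeeping of typechecking: one must verify that the intermediate terms, particularly $f(x,\flet u^\flat := x in N)$ at a generic cohesive $x:\flat A$, are well-formed under the hypotheses stated, and that dependency of $B$ and $C$ on $x$ (which is allowed by the signature of $f:\prd{x:\flat A}{y:B} C$) is correctly tracked through the substitution. Since $u$ does not occur free in $f$, no side condition of the sort appearing in \cref{thm:commute-let} is needed, so this is a routine check rather than a genuine difficulty. Once this is noted, the proof is essentially immediate from the rules of \cref{fig:flat} together with \cref{thm:flat-eta}.
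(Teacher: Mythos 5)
Your proof is correct and takes essentially the same two-pronged approach as the paper: the paper also mentions the $\flat$-induction-on-$M$ option briefly, then writes out the expand-and-contract argument (apply the uniqueness principle \cref{thm:flat-eta} in reverse to $\lam{x} f(x, \flet u^\flat := x in N)$, reduce the inner $\mathsf{let}$ by the computation rule, and $\alpha$-rename the bound variable). The one cosmetic issue is reusing the name $C$ for the motive of the uniqueness principle, which collides with the type family $C$ already fixed in the statement; otherwise the argument matches the paper step for step.
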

Like \cref{thm:commute-let}, this is formally a ``derivable rule''; in \cref{fig:uniq-comm} we have written it out in type-theoretic syntax.
\begin{proof*}
  As before, $\flat$-induction on $M$ reduces both sides to $f(w^\flat,N[w/u])$; or we can expand and contract:
  \begin{align*}
    &\hspace{2.7cm} f\big(M,(\flet u^\flat := M in N)\big)\\
    &= \flet v^\flat := M in f\big(v^\flat,(\flet u^\flat := v^\flat in N)\big)\\
    &\jdeq \flet v^\flat := M in f(v^\flat,N[v/u])\\
    &\jdeq \flet u^\flat := M in f(u^\flat,N).\qedhere
  \end{align*}
\end{proof*}

Analogous commuting conversions can be proven for the constructors and eliminators of all other ordinary types; we leave them to the reader.
However, there is one conversion we need that is more subtle, relating to $\flat$ itself.
To see how this arises, consider the obvious dual claim to~\eqref{eq:beta-flatf}, that $x_\flat{}^\flat = x$.
Note that $x_\flat{}^\flat$ only makes sense when $x$ is a \emph{crisp} element of $\flat A$, since we can only apply $(\blank)^\flat$ to crisp elements.
In this case we would like to write
\begin{equation}
  x_\flat{}^\flat \jdeq (\flet u^\flat := x in u)^\flat = (\flet u^\flat := x in u^\flat) = x,\label{eq:eta-flatf}
\end{equation}
using the uniqueness principle (\cref{thm:flat-eta}) at the end; but in the middle we need to be able to commute $(\blank)^\flat$ past $\mathsf{let}$.
However, if we try to prove this as we did for the other commuting conversions, we run into the problem that (as remarked above) $x::\flat A$ must be crisp, whereas $\flat$-induction is only stated for types $C$ depending on a \emph{cohesive} variable $x:\flat A$.
Thus, we now take a brief digression to investigate \emph{crisp induction principles}.

\section{Crisp induction principles}
\label{sec:crisp-induction}


Here is the lemma we need about $\flat$.
Note that its proof involves $\sharp$ as well!

\begin{lem}[Crisp $\flat$-induction]\label{thm:crisp-flat-ind}
  Let $C$ be a crisp type depending on a crisp variable $x::\flat A$, and $N::C[u^\flat/x]$ a crisp expression depending on a crisp variable $u::A$.
  If we have a crisp element $M::\flat A$, then we have an element of $C[M/x]$, which we denote
  \[(\flet u^\flat ::= M in N) : C[M/x]. \]
  Moreover, if $M::A$ then
  \[(\flet u^\flat ::= M^\flat in N) \jdeq N[M/u].\]
\end{lem}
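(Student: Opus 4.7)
The difficulty is a mismatch: ordinary $\flat$-induction eliminates into families $C : \flat A \to \type$ depending \emph{cohesively} on $x:\flat A$, whereas here $C$ depends \emph{crisply} on $x::\flat A$, and likewise $N$ depends crisply on $u::A$. So we cannot apply the usual rule of \cref{fig:flat} directly. The plan is to ``tunnel'' the crisp dependency through $\sharp$, exploiting the fact that inside $\sharp(\blank)$ every ambient variable may be treated as crisp, and then peel the $\sharp$ off at the end using crispness of the resulting term.

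Concretely, I will first form a cohesively-dependent family $C' : \flat A \to \type$ by setting $C'(x) \defeq \sharp C$, where the formation rule for $\sharp$ permits us to use the cohesive $x:\flat A$ as if it were crisp inside the $\sharp$. Similarly, from the crisp expression $N::C[u^\flat/x]$ I will form $N^\sharp$, using the introduction rule for $\sharp$, which permits the crisp $u::A$ appearing in $N$ to be reinterpreted as a cohesive hypothesis $u:A$ outside the $(\blank)^\sharp$; thus $N^\sharp : C'(u^\flat)$ depending cohesively on $u:A$. This setup is now exactly what is needed to apply ordinary $\flat$-induction (\cref{fig:flat}) to the cohesive family $C'$ and the crisp $M::\flat A$ (viewed as a perfectly good cohesive element of $\flat A$), yielding
\[ (\flet u^\flat := M in N^\sharp) \;:\; C'(M) \jdeq \sharp C[M/x]. \]

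At this stage I will observe that every ingredient ($M$, $C$, $N$) is crisp, so the constructed term above is also crisp; hence the elimination rule for $\sharp$ applies and produces an element of $C[M/x]$. I therefore \emph{define}
\[ (\flet u^\flat ::= M in N) \;\defeq\; (\flet u^\flat := M in N^\sharp)_\sharp. \]
For the judgmental computation rule when $M \jdeq M_0^\flat$ with $M_0 :: A$, I will chain the $\flat$-computation rule with the $\sharp$-computation rule: the ordinary rule gives $(\flet u^\flat := M_0^\flat in N^\sharp) \jdeq N^\sharp[M_0/u] \jdeq (N[M_0/u])^\sharp$ (since substituting the crisp $M_0$ for $u$ inside the $(\blank)^\sharp$ commutes with the superscript), and then $((N[M_0/u])^\sharp)_\sharp \jdeq N[M_0/u]$ by the computation rule for $\sharp$, valid because $N[M_0/u]$ is crisp.

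The main obstacle is not in the calculation itself but in justifying the meta-level step that ``the term $(\flet u^\flat := M in N^\sharp)$ is crisp'' — that is, that substituting crisp expressions for crisp variables in crisply-built terms yields a crisp term, so that the elimination rule for $\sharp$ applies. This is a structural property of the judgmental formalism of \cref{sec:spatial-type-theory}, and is what makes the whole strategy go through; once granted, the remainder is a direct interplay between the two computation rules.
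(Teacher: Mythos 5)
Your proposal is correct and follows exactly the paper's own proof: wrap the crisp family $C$ in $\sharp$ to make it a cohesive family, form $N^\sharp$ so that ordinary $\flat$-induction applies, and then peel off the $\sharp$ using the crispness of the resulting term, with the computation rule falling out by chaining the $\flat$- and $\sharp$-computation rules. The only minor presentational difference is whether one views $u$ as becoming cohesive in the conclusion of the $\sharp$-introduction step or keeps it crisp (as the paper's derivation tree does); both readings typecheck and yield the same definition.
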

\begin{proof*}
  If we have only a cohesive variable $x:\flat A$, we can still write $\sharp C : \type$, since cohesive variables can become crisp inside $\sharp(\blank)$.
  Similarly, if $u::A$ we have $N^\sharp : \sharp C[u^\flat/x]$.
  Thus, since $M:\flat A$, we have $(\flet u^\flat := M in N^\sharp) : \sharp C[M/x]$.
  And since this expression is crisp, we have $(\flet u^\flat := M in N^\sharp)_\sharp : C[M/x]$, which we define to be ``$\flet u^\flat ::= M in N$''.
  Finally, if $M::A$ then
  \[ (\flet u^\flat := M^\flat in N^\sharp)_\sharp \jdeq N[M/u]^\sharp{}_\sharp \jdeq N[M/u]. \qedhere \]
\end{proof*}
This proof is somewhat tricky, so we include a derivation tree for it; see \cref{fig:crisp-flat-ind}. 
\begin{figure}
  \centering
  \[
  \inferrule
  {\inferrule
    {\inferrule*{\Delta, x::\flat A\mid\cdot \types  C : \type}
      {\Delta\mid x:\flat A \types  \sharp C : \type} \\
      \Delta\mid\cdot \types M : \flat A \\
      \inferrule*{\Delta,u::A\mid\cdot \types N:C[u^\flat/x]}{\Delta,u::A\mid\cdot \types N^\sharp : \sharp C[u^\flat/x]}}
    {\Delta\mid \cdot \types (\flet u^\flat := M in N^\sharp) : \sharp C[M/x]}}
  {\Delta\mid\Gamma \types (\flet u^\flat := M in N^\sharp)_\sharp:C[M/x]}
  \]
\caption{A derivation tree for \cref{thm:crisp-flat-ind}}
\label{fig:crisp-flat-ind}
\end{figure}
The crispness restriction on $C$ and $N$ can be worked around in the usual ``Frobenius'' way: if $C$ and $N$ depend on some cohesive variable $y:B$, we can form $C' \defeq \prd{y:B} C$ and $N' \defeq \lam{y}N$ and apply \cref{thm:crisp-flat-ind} to them instead.
By induction over metatheoretic natural numbers, we can incorporate an arbitrary cohesive context.

Now we can prove the needed commuting conversion.

\begin{lem}[Commuting \textsf{let} with $(\blank)^\flat$]\label{thm:commute-flat}
  Given crisp expressions $M::\flat A$ and $N::B[u^\flat/x]$, where $B$ depends on a crisp variable $x::\flat A$ and $N$ depends on $u::A$, we have
  \[ (\flet u^\flat := M in N)^\flat = (\flet u^\flat := M in N^\flat) \]
\end{lem}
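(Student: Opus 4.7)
The natural plan is to apply crisp $\flat$-induction (the lemma just established) on $M$, with the identity type as the motive. Concretely, I would take the crisp family
\[ C[x] \;\defeq\; \bigl((\flet u^\flat := x in N)^\flat = (\flet u^\flat := x in N^\flat)\bigr) \]
depending on a crisp variable $x :: \flat A$. This family is itself crisp: its free variables are those of $N$ and $B$ together with the crisp $x$, and by hypothesis $N$ and $B$ involve only crisp variables. The right-hand identity type makes sense because $x$ is crisp, so $(\flet u^\flat := x in N)$ is a crisp expression of a crisp type, and $(\blank)^\flat$ may be applied to it; likewise $N^\flat$ is well-formed because $N$ is crisp in $u :: A$.

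Next I would verify the inductive case. Suppose $x$ is replaced by $w^\flat$ for a crisp $w :: A$. Then by the computation rule for $\flat$, the left-hand side becomes
\[(\flet u^\flat := w^\flat in N)^\flat \;\jdeq\; N[w/u]^\flat, \]
and likewise the right-hand side becomes
\[(\flet u^\flat := w^\flat in N^\flat) \;\jdeq\; (N^\flat)[w/u] \;\jdeq\; N[w/u]^\flat. \]
So $C[w^\flat/x]$ is judgmentally inhabited by $\refl_{N[w/u]^\flat}$. Crisp $\flat$-induction on $M$ then produces an element of $C[M/x]$, which is exactly the desired equality.

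The only thing that could go wrong is a crispness bookkeeping issue, and that is the step I would check most carefully: one needs to confirm that the motive $C$ meets the hypotheses of \cref{thm:crisp-flat-ind}, i.e.\ that it is a crisp type depending on a crisp variable. Once that is in order, the computation is essentially immediate. (Alternatively, one could mimic the ``expand and contract'' style of \cref{thm:commute-let,thm:commute-app} by wrapping $M$ in an outer $\flet v^\flat := M in (\cdots)$, but this would require a crisp uniqueness principle for $\flat$ analogous to \cref{thm:flat-eta}, which is an extra step; direct crisp induction is cleaner.)
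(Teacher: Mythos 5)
Your proof is correct and takes essentially the same approach as the paper: apply crisp $\flat$-induction (\cref{thm:crisp-flat-ind}) on $M$ to reduce to the case $w^\flat$, where both sides become judgmentally equal to $N[w/u]^\flat$ by the computation rule.
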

\begin{proof}
  By \cref{thm:crisp-flat-ind}, we can replace $M$ by $w^\flat$ for some $w::A$.
  Now both sides reduce to $N^\flat[w/u]$.
\end{proof}

At last, the computation~\eqref{eq:eta-flatf} makes sense.
Later on we will also have use for the corresponding type-level commuting conversion:

\begin{lem}[Commuting \textsf{let} with $\flat$]\label{thm:commute-flat-type}
  Given crisp expressions $M::\flat A$ and $N::\type$, where $N$ may depend on $u::A$, we have
  \[ \flat(\flet u^\flat := M in N) = (\flet u^\flat := M in \flat N). \qedhere \]
\end{lem}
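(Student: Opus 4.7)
The plan is to mimic exactly the proof of \cref{thm:commute-flat}, replacing the term-level commuting conversion by its type-level analogue and using crisp $\flat$-induction (\cref{thm:crisp-flat-ind}) on $M$. The key observation is that the entire goal is formed from crisp data, so crisp $\flat$-induction applies.

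First I would introduce the type family
\[ C(x) \;\defeq\; \flat(\flet u^\flat := x in N) = (\flet u^\flat := x in \flat N) \]
depending on a crisp variable $x :: \flat A$. To see that $C$ is well-formed and crisp, note that inside the binder $u :: A$ is crisp, so $\flat N$ is well-typed; both $\flet u^\flat := x in N$ and $\flet u^\flat := x in \flat N$ are therefore crisp expressions of type $\type$, and $\flat(\flet u^\flat := x in N)$ is then well-formed because its argument is a crisp type. Thus $C$ is a crisp type depending on a crisp variable, exactly the setup required by \cref{thm:crisp-flat-ind}.

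Next I would apply crisp $\flat$-induction on $M$, reducing the goal to constructing an element of $C[w^\flat/x]$ for a fresh crisp $w :: A$. By the computation rule for $\flat$-induction, on the left-hand side we have
\[ \flat(\flet u^\flat := w^\flat in N) \jdeq \flat(N[w/u]), \]
and on the right-hand side we have
\[ (\flet u^\flat := w^\flat in \flat N) \jdeq (\flat N)[w/u] \jdeq \flat(N[w/u]). \]
Hence both sides are judgmentally equal, and the goal is discharged by $\refl$. Unwinding, the whole proof term is $\flet w^\flat ::= M in \refl$, using the crisp induction of \cref{thm:crisp-flat-ind}.

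There is no real obstacle beyond the routine check that the types and expressions remain crisp throughout so that the crisp $\flat$-induction is legitimate; this is essentially the same bookkeeping performed in \cref{thm:commute-flat}, except applied at the universe level rather than at the level of an element of $\flat B$. As with the earlier commuting conversions, one could alternatively prove it by an ``expand/compute/contract'' argument using the uniqueness principle \cref{thm:flat-eta}, but the $\flat$-induction proof is shorter and makes the typechecking conditions transparent.
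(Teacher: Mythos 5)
Your proof is correct and takes essentially the same approach the paper intends (the paper leaves the proof implicit with a \(\qed\) mark, signalling it is the direct analogue of the proof of \cref{thm:commute-flat}): apply crisp \(\flat\)-induction from \cref{thm:crisp-flat-ind} to replace \(M\) by \(w^\flat\), after which both sides reduce judgmentally to \(\flat(N[w/u])\). The bookkeeping you spell out — that the goal is a crisp type depending on a crisp variable \(x::\flat A\), so \cref{thm:crisp-flat-ind} applies — is exactly the point the paper is leaving to the reader.
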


\begin{figure}
  \centering
  \begin{mathpar}
    \inferrule*[right=(\ref{thm:crisp-flat-ind})]{\Delta,x::\flat A\mid\cdot \types C:\type \\ \Delta\mid\cdot \types M:\flat A \\ \Delta,u::A\mid\cdot \types N:C[u^\flat/x]}{\Delta\mid\Gamma \types (\flet u^\flat ::= M in N):C[M/x]}
    \and
    (\flet u^\flat ::= M^\flat in N) \jdeq N[M/u]
    \and
    \inferrule*[right=(\ref{thm:commute-flat})]{\Delta\mid\cdot \types A : \type \\
      \Delta,x::\flat A\mid\cdot \types B : \type \\\\
      \Delta\mid\cdot \types M : \flat A \\
      \Delta,u::A\mid\cdot \types N : B[u^\flat/x]}
    {\Delta\mid\Gamma \types (\flet u^\flat := M in N)^\flat = (\flet u^\flat := M in N^\flat)}
    \and
    \inferrule*[right=(\ref{thm:commute-flat})]{\Delta\mid\cdot \types A : \type \\
      \Delta\mid\cdot \types M : \flat A \\
      \Delta,u::A\mid\cdot \types N : \type}
    {\Delta\mid\Gamma \types \flat(\flet u^\flat := M in N) = (\flet u^\flat := M in \flat N)}
  \end{mathpar}
  \caption{Crisp $\flat$-induction and commuting \textsf{let} with $\flat$}
  \label{fig:crisp-flat}
\end{figure}


The idea used to prove \cref{thm:crisp-flat-ind} can be applied to other positive types as well.
For instance, since we assert the rules of ordinary type theory only for cohesive variables, the eliminator for the coproduct type is
\[
\inferrule{\Delta\mid\Gamma,z:A+B \types C:\type \\ \Delta\mid\Gamma \types M:A+B\\ \Delta\mid\Gamma,x:A\types c_A : C[\inl(x)/z] \\ \Delta\mid\Gamma,y:B\types c_B : C[\inr(y)/z]}
{\Delta\mid\Gamma \types \mathsf{case}(z.C,x.c_A,y.c_B,M) : C[M/z]}
\]
Thus, the element $M$ being case-analyzed is cohesive, the variables $x$ and $y$ that it is destructed into are cohesive, and the goal $C$ depends on a cohesive $z:A+B$.
The first restriction is not significant since we can always use a crisp $M$ as a cohesive one, but the others are nontrivial.
The method of \cref{thm:crisp-flat-ind} allows us to remove them.

\begin{thm}[Crisp case analysis]\label{thm:crisp-case}
  Let $A$ and $B$ be crisp types, let $C$ be a crisp type depending on a {crisp} variable $z::A+B$, and let $N::C[\inl(u)/z]$ and $P::C[\inr(v)/z]$ be crisp expressions depending on {crisp} variables $u::A$ and $v::B$.
  If we have a crisp $M::A+B$, then we have an element $\mathsf{case}^\flat(z.C,u.N,v.P,M):C[M/z]$, which computes to $N$ and $P$ on $\inl$ and $\inr$.
\end{thm}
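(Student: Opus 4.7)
The plan is to mimic the proof of crisp $\flat$-induction (\cref{thm:crisp-flat-ind}) verbatim, using $\sharp$ as the bridge between crisp and cohesive dependence. The key observation is that although $C$ is given as a type only in a context with a crisp variable $z::A+B$, we can form $\sharp C$ in a context with a cohesive variable $z:A+B$, since inside $\sharp(\blank)$ all cohesive variables in the ambient context may be treated as crisp. Similarly, although $N::C[\inl(u)/z]$ is only given with $u::A$ crisp, the term $N^\sharp$ inhabits $\sharp C[\inl(u)/z]$ with $u:A$ cohesive, and symmetrically for $P^\sharp$ with $v:B$ cohesive.

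First I would apply the ordinary case analysis eliminator in the cohesive fragment, using the crisp $M::A+B$ as a cohesive $M:A+B$, the family $z.\sharp C$, and the branches $u.N^\sharp$ and $v.P^\sharp$. This yields a term
\[ \mathsf{case}(z.\sharp C, u.N^\sharp, v.P^\sharp, M) : \sharp C[M/z]. \]
Second, I would observe that this entire expression is crisp, because it is built only from the crisp data $A,B,C,N,P,M$. Hence $(\blank)_\sharp$ can be applied to extract an element of $C[M/z]$, and I would define
\[ \mathsf{case}^\flat(z.C,u.N,v.P,M) \defeq \bigl(\mathsf{case}(z.\sharp C, u.N^\sharp, v.P^\sharp, M)\bigr)_\sharp. \]

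For the computation rules, suppose $M$ is $\inl(M')$ for some crisp $M'::A$. The ordinary computation rule for coproducts gives
\[ \mathsf{case}(z.\sharp C, u.N^\sharp, v.P^\sharp, \inl(M')) \jdeq N^\sharp[M'/u], \]
and then the computation rule $(a^\sharp)_\sharp \jdeq a$ from \cref{fig:sharp} yields $N[M'/u]$, as required. The case of $\inr$ is symmetric. As in \cref{thm:crisp-flat-ind}, the restriction that $C$, $N$, and $P$ be crisp can be relaxed in the usual ``Frobenius'' way: any cohesive dependence on some $y:B'$ can be absorbed by replacing $C$ with $\prd{y:B'} C$ and $N$, $P$ with their $\lambda$-abstractions, then applying the result and evaluating.

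The main obstacle, as with \cref{thm:crisp-flat-ind}, is not finding the argument but keeping careful track of which variables are crisp and which are cohesive at each step — in particular verifying that the case expression genuinely is a crisp term so that $(\blank)_\sharp$ applies, and that the type $\sharp C$ is well-formed with $z$ cohesive. I would include a derivation tree analogous to \cref{fig:crisp-flat-ind} to make the context management transparent.
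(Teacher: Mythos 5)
Your proof is correct and is essentially identical to the paper's: form $\sharp C$ over a cohesive $z:A+B$, apply ordinary case analysis with branches $N^\sharp$ and $P^\sharp$, observe that the resulting term is crisp, apply $(\blank)_\sharp$, and derive the computation rules from those of $\mathsf{case}$ and $\sharp$ combined.
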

\cref{fig:crisp-ind} shows \cref{thm:crisp-case} in type-theoretic syntax.
\begin{proof}
  If we have only a cohesive variable $z:A+B$, we can still write $\sharp C : \type$.
  Similarly, if $u:A$ we have $N^\sharp : \sharp C[\inl(u)/z]$, and if $v:B$ we have $P^\sharp : \sharp C[\inr(v)/z]$.
  Thus, treating the crisp $M::A+B$ as cohesive, we can do ordinary case analysis to yield $\mathsf{case}(z.\sharp C,u.N^\sharp,v.P^\sharp,M):\sharp C[M/z]$.
  But since $C$, $M$, $N$, and $P$ are crisp, so is this expression, so we can write
  \[\mathsf{case}(z.\sharp C,u.N^\sharp,v.P^\sharp,M)_\sharp : C[M/z]. \]
  The computation rule follows from those for $\mathsf{case}$ and $\sharp$ combined.
\end{proof}

\begin{figure}
  \centering
  \begin{mathpar}
    \inferrule*[right=(\ref{thm:crisp-case})]{\Delta,z:A+B\mid\cdot \types C:\type \\
      \Delta,u::A\mid\cdot\types N:C[\inl(u)/z] \\
      \Delta,v::B\mid\cdot\types P:C[\inr(v)/z] \\
      \Delta\mid\cdot \types M:A+B}
    {\Delta\mid\Gamma \types \mathsf{case}^\flat(z.C,u.N,v.P,M) : C[M/z]}
    \and
    \mathsf{case}^\flat(z.C,u.N,v.P,\inl(u)) \jdeq N
    \and
    \mathsf{case}^\flat(z.C,u.N,v.P,\inr(v)) \jdeq P
    \and
    \inferrule*[right=(\ref{thm:crisp-J})]{\Delta,u::B,v::B,p::u=v\mid\cdot \types C : \type \\
      \Delta,u::B \mid\cdot \types d : C[u/v,\refl_u/p]\\
      \Delta\mid\cdot \types b_1 : B\\
      \Delta\mid\cdot \types b_2 : B\\
      \Delta\mid\cdot \types q : b_1=b_2}
    {\Delta\mid\Gamma \types \J^\flat_{u,v,p.C}(u.d; b_1,b_2,q) : C[b_1/u,b_2/v,q/p]}
    \and
    \J^\flat_{u,v,p.C}(u.d; u,u,\refl_u) \jdeq d
  \end{mathpar}
  \caption{Some crisp induction principles}
  \label{fig:crisp-ind}
\end{figure}

A more or less equivalent way to state \cref{thm:crisp-case} is that $\flat$ preserves coproducts.
This is sensible from a category-theoretic point of view, since we will soon see that $\flat$ is left adjoint to $\sharp$.

\begin{cor}\label{thm:flat-pres-coprod}
  $\flat A + \flat B \simeq\flat(A+B)$ for any crisp types $A$ and $B$.
\end{cor}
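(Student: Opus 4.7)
The plan is to construct explicit inverse maps $\flat A + \flat B \leftrightarrows \flat(A+B)$ using ordinary case analysis and $\flat$-induction on one side, and crisp case analysis (\cref{thm:crisp-case}) combined with $\flat$-induction on the other.

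First I would define $f : \flat A + \flat B \to \flat(A+B)$ by ordinary case analysis on its argument, since the variable being destructed is cohesive and the target does not depend on it: on $\inl(x)$ with $x:\flat A$, set
\[ f(\inl(x)) \defeq \flet u^\flat := x in \inl(u)^\flat, \]
and dually on $\inr(y)$ with $y:\flat B$. These are well-typed because inside $\flet u^\flat := x in \blank$ the variable $u::A$ is crisp, so $\inl(u)::A+B$ and hence $\inl(u)^\flat : \flat(A+B)$. In the other direction I would define $g : \flat(A+B) \to \flat A + \flat B$ by $\flat$-induction, reducing to the case $m^\flat$ with $m::A+B$; then apply crisp case analysis (\cref{thm:crisp-case}) to $m$, sending $\inl(u)$ to $\inl(u^\flat)$ and $\inr(v)$ to $\inr(v^\flat)$. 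Note that crisp case analysis is exactly what is needed here, because the destructed element $m$ is crisp and we want the branches to refer to $u^\flat$ and $v^\flat$.

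To check that $g \circ f = \id$, I would case-analyze on the input $z : \flat A + \flat B$; in the $\inl(x)$ case, $\flat$-induction reduces to $x = u^\flat$, after which the computation rules for $\flat$ and for crisp case analysis reduce $g(f(\inl(u^\flat)))$ to $\inl(u^\flat)$ on the nose. Conversely, for $f \circ g = \id$ on input $w : \flat(A+B)$, I would use $\flat$-induction to reduce to $w = m^\flat$, then crisp case analysis on $m$; in the $\inl(u)$ case, $g(\inl(u)^\flat)$ reduces to $\inl(u^\flat)$ and then $f(\inl(u^\flat))$ reduces to $\inl(u)^\flat$ by the computation rule for $\flat$.

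Since all of these reductions are judgmental once the appropriate inductions are performed, no commuting conversions or uniqueness principles beyond those already established are needed. The main (minor) obstacle is keeping straight which variables are crisp and which are cohesive at each step, and in particular noticing that the definition of $g$ must go through \cref{thm:crisp-case} rather than ordinary case analysis, since after $\flat$-induction the element $m$ lives in the crisp context and we want to construct $u^\flat$ and $v^\flat$ from its branches.
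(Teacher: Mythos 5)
Your proof is correct and follows exactly the route the paper sketches: ordinary case analysis plus the functoriality of $\flat$ (your $\flet u^\flat := x in \inl(u)^\flat$ is literally $\flat\inl(x)$) for one direction, and $\flat$-induction followed by crisp case analysis and $(\blank)^\flat$ for the other. The only thing you add is the verification that the composites reduce judgmentally to identities, which the paper leaves to the reader, and your trace of those reductions is accurate.
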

\begin{proof}
  To map from left to right, we do ordinary case analysis and then use the functoriality of $\flat$.
  From right to left, we do $\flat$-induction, then crisp case analysis, then apply $(\blank)^\flat$ in both cases.
  We leave the details to the reader, along with the proof that these are inverses.
\end{proof}

We can apply the same method to other positive types such as $\emptyset$ and $\N$, concluding in particular that $\flat\emptyset \simeq \emptyset$ and $\flat\N\simeq\N$.
It also applies to identity types:

\begin{thm}[Crisp Id-induction]\label{thm:crisp-J}
  Suppose $C::\type$ depends on $u,v::B$ and $p::u=v$, and that we have $d :: C[u/v,\refl_u/p]$ depending on $u::B$.
  Then for any $u,v::B$ and $p::u=v$ we have $\J^{\flat}_{u,v,p.C}(u.d; u,v,p) : C$, such that $\J^{\flat}_{u,v,p.C}(u.d;u,u,\refl_u) \jdeq d$.
\end{thm}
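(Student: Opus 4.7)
The plan is to mimic the proof of crisp $\flat$-induction (\cref{thm:crisp-flat-ind}) and crisp case analysis (\cref{thm:crisp-case}): use $\sharp$ to pass from a setting where everything is crisp (so ordinary J does not directly apply) into one where everything is cohesive (so ordinary J does apply), then use the crisp elimination $(\blank)_\sharp$ at the end to return.

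Concretely, first I would form the type $\sharp C$ with $u,v$ and $p$ now taken as \emph{cohesive} hypotheses $u,v:B$, $p:u=v$. This is legal by the formation rule for $\sharp$, which allows the crisp variables appearing in $C$ to be treated as cohesive on the outside. Second, for cohesive $u:B$, the term $d^\sharp$ gives an inhabitant of $\sharp C[u/v,\refl_u/p]$, again by the introduction rule for $\sharp$ (cohesive variables in the ambient context become crisp inside $(\blank)^\sharp$). Third, since the motive $\sharp C$ and the base case $d^\sharp$ now depend only on cohesive variables, the ordinary path-induction principle applies and yields
\[ \J_{u,v,p.\sharp C}(u.d^\sharp;\,u,v,p) \;:\; \sharp C \]
as a term depending cohesively on $u,v:B$ and $p:u=v$.

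Finally, given the actual \emph{crisp} inputs $b_1,b_2::B$ and $q::b_1=b_2$ from the statement, I substitute them into the above (crisp variables can always be used as cohesive ones). Because all of $C$, $d$, $b_1$, $b_2$, $q$ are crisp, the resulting term $\J_{u,v,p.\sharp C}(u.d^\sharp;\,b_1,b_2,q) :: \sharp C[b_1/u,b_2/v,q/p]$ is itself crisp, so I may apply $(\blank)_\sharp$ and define
\[ \J^{\flat}_{u,v,p.C}(u.d;\,b_1,b_2,q) \;\defeq\; \J_{u,v,p.\sharp C}(u.d^\sharp;\,b_1,b_2,q)_\sharp \;:\; C[b_1/u,b_2/v,q/p]. \]
For the computation rule, when $b_1\jdeq b_2\jdeq u$ and $q\jdeq \refl_u$, the ordinary $\beta$-rule for J reduces the inner term to $d^\sharp$, and then the computation rule $(d^\sharp)_\sharp \jdeq d$ for $\sharp$ (applicable because $d$ is crisp) completes the judgmental equality $\J^\flat_{u,v,p.C}(u.d;\,u,u,\refl_u) \jdeq d$.

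The only real subtlety, as usual with these crisp induction principles, is bookkeeping: one has to check that at each stage the crisp/cohesive status of every variable makes the intended rule applicable — in particular that $\sharp C$ is well-formed with $u,v,p$ cohesive, and that the final term is crisp so $(\blank)_\sharp$ applies. As in \cref{thm:crisp-flat-ind} and \cref{thm:crisp-case}, any extra cohesive variables in the ambient context can be handled Frobenius-style by $\prod$-abstracting them into $C$ and $d$ before applying the above construction.
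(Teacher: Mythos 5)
Your proof is essentially identical to the paper's own proof of \cref{thm:crisp-J}: pass to $\sharp C$ with the variables made cohesive, use $d^\sharp$ as the base case for ordinary Id-induction, observe that the result is crisp when instantiated at crisp arguments so that $(\blank)_\sharp$ applies, and check the computation rule via the $\beta$-rule for $\J$ together with $(d^\sharp)_\sharp \jdeq d$. The bookkeeping you flag (well-formedness of $\sharp C$ with cohesive $u,v,p$, crispness of the final term, Frobenius handling of an ambient cohesive context) is exactly what the paper handles, so this is a correct proof by the same method.
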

\begin{proof*}
  As before, if we have cohesive variables $x,y:B$ and $q:x=y$ we can write $\sharp C[x/u,y/v,q/p]$. 
  Now we apply ordinary Id-induction: to inhabit $\sharp C[x/u,y/v,q/p]$ for all $x,y:A$ and $q:x=y$, it suffices to inhabit $\sharp C[x/u,x/v,\refl_x/p]$ for all $x:A$.
  But such an inhabitant is supplied by $d[x/u]^\sharp$ (note that $u$ is crisp in $d$, but inside $\sharpf$ we can assume $x$ to be crisp).

  We have shown that for all $x,y:A$ and $q:x=y$, we have
  \[\J_{x,y,q.\sharp C[x/u,y/v,q/p]}(x.d[x/u]^\sharp; x,y,q) \;:\; \sharp C[x/u,y/v,q/p]. \]
  Now if we in fact have \emph{crisp} assumptions $u,v::A$ and $p::u=v$, then this becomes a crisp conclusion
  \[\J_{x,y,q.\sharp C[x/u,y/v,q/p]}(x.d[x/u]^\sharp; u,v,p) \;::\; \sharp C \]
  (since $C$ and $d$ are also crisp).
  Thus, we can apply $(\blank)_\sharp$ to obtain an element of $C$.
  Finally, if we substitute $(u,u,\refl_u)$ for $(u,v,p)$, we have
  \[ \J_{x,y,q.\sharp C(x,y,q)}(x.d[x/u]^\sharp; u,u,\refl_u)_\sharp
  \jdeq d^\sharp{}_\sharp
  \jdeq d.\qedhere
  \]
\end{proof*}

In particular, we can transport in a crisp type along a crisp identification: if $C::\type$ depends on $z::A$ and we have $a,b::A$ and $p::a=b$ and $c::C[a/z]$, then we have $p_{**}(c) :: C[b/z]$.
We repeat that the important novelty here is that the definition of $C$ might use the crispness of $z$.
Note that since crisp Id-induction is defined using an ordinary Id-induction followed by $(\blank)_\sharp$, the uniqueness rule for $\sharp$ implies $p_{**}(c)^\sharp = p_*(c^\sharp)$.

With ``crisp transport'', we can state and prove crisp induction principles for some higher inductive types as well, and particularly for homotopy colimits.

\begin{thm}[Crisp $\coeq$-induction]\label{thm:crisp-coeq-ind}
  Let $A,B::\type$ and $f,g::A\to B$ with homotopy coequalizer $q::B\to \coeq(f,g)$ and $p::\prd{u:A} q(f(u))=q(g(u))$.
  Let $C::\type$ depend on $z::\coeq(f,g)$, let $N::C[q(v)/z]$ depend on $v::B$, and let $P::p(u)_{**}(N[q(f(u))/z]) = N[q(g(u))/z]$ depend on $u::A$.
  Then for any $M::\coeq(f,g)$ we have an element of $C[M/z]$, which computes to $N$ on $q$ and to $P$ on $f,g$.
\end{thm}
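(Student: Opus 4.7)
The plan is to imitate exactly the recipe already used to establish \cref{thm:crisp-flat-ind}, \cref{thm:crisp-case} and \cref{thm:crisp-J}: we detour through $\sharp$, do the ordinary (cohesive) higher inductive elimination into $\sharp C$, and then apply $(\blank)_\sharp$ at the end, using the crispness of $M$. Concretely, I would first observe that although $C$ depends on a crisp $z::\coeq(f,g)$, the type $\sharp C$ depends perfectly well on a \emph{cohesive} $z:\coeq(f,g)$, because inside $\sharp(\blank)$ any ambient cohesive variable may be treated as crisp. Similarly, for cohesive $v:B$ we may set $\widetilde N(v) \defeq N[v/\text{--}]^\sharp : \sharp C[q(v)/z]$, again using that inside $\sharpf$ we may treat $v$ as crisp.

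Next I would supply the dependent coequalizer coherence required by ordinary $\coeq$-induction: for every cohesive $u:A$ we need $p(u)_*(\widetilde N(f(u))) = \widetilde N(g(u))$ as an equality in the fiber $\sharp C[q(g(u))/z]$. This fiber is codiscrete, hence so is the identity type in question, so by \cref{thm:codicsrete-ind} we may assume $u$ is crisp while proving it. Once $u$ is crisp, $\widetilde N(f(u)) \jdeq N[f(u)/\text{--}]^\sharp$ and $\widetilde N(g(u)) \jdeq N[g(u)/\text{--}]^\sharp$ judgmentally, and the hypothesis $P(u): p(u)_{**}(N[q(f(u))/z]) = N[q(g(u))/z]$ is available; applying $\ap_{(\blank)^\sharp}$ and then the identity $p_{**}(c)^\sharp = p_*(c^\sharp)$ (recalled just before the theorem) converts $P(u)$ into precisely the needed equality. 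Ordinary $\coeq$-induction now yields $h: \prd{z:\coeq(f,g)} \sharp C[z/z]$; since $M::\coeq(f,g)$ is crisp, so is $h(M) :: \sharp C[M/z]$, and we define
\[
  \coeq\text{-ind}^\flat(z.C,v.N,u.P,M) \defeq h(M)_\sharp \;:\; C[M/z].
\]

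For the computation rules, when $M \jdeq q(v)$ with $v::B$ crisp, the ordinary $\coeq$-computation rule gives $h(q(v)) \jdeq \widetilde N(v) \jdeq N[v/\text{--}]^\sharp$, and then $(\blank)_\sharp$ reduces to $N[v/\text{--}]$. The higher computation on $p(u)$ is where I expect the main bookkeeping burden to sit: the dependent $\ap$-computation rule for $\coeq$ applied to $h$ recovers the coherence we supplied, so one has to trace this recovered equation through $(\blank)_\sharp$ and through the $\sharp$-uniqueness identity $p_{**}(c)^\sharp = p_*(c^\sharp)$ used above to prove it to see that it reduces (typally) to $P(u)$ itself. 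In other words, the routine step is the point-level case; the delicate step is verifying that the transport coherence coming out of the $\sharp$-detour, together with the interchange $p_{**}((-)^\sharp) = p_*(-)^\sharp$, is judgmentally (or at least typally) compatible with the original crisp $P$. Modulo this bookkeeping, the argument is a direct transcription of the proofs of \cref{thm:crisp-flat-ind} and \cref{thm:crisp-J} to the coequalizer setting, and the same pattern will evidently extend to any homotopy colimit whose point and path constructors can be lifted through $\sharpf$.
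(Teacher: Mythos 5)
Your proposal is correct and follows essentially the same strategy as the paper: detour through $\sharp C$, supply the ordinary $\coeq$-elimination data with $N^\sharp$ at the point constructor, use the commutation identity $p_{**}(c)^\sharp = p_*(c^\sharp)$ to handle the path constructor, and finish with $(\blank)_\sharp$ on the crisp $M$. The only cosmetic difference is at the coherence step: where you first invoke \cref{thm:codicsrete-ind} to crispen $u$ and then apply $\ap_{\sharpf}$ to $P(u)$, the paper instead stays with cohesive $u$ and invokes the equivalence from \cref{thm:path-sharp} to recognize the goal as the type of $P^\sharp$ — the two are just repackagings of the same move.
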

\begin{proof}
  For a cohesive variable $z:\coeq(f,g)$ we can write $\sharp C:\type$, and apply ordinary $\coeq$-induction.
  We also have $N^\sharp : \sharp C[q(v)/z]$ depending on a cohesive $v:B$.
  As remarked above, we have $p(u)_*(N[q(f(u))/z]^\sharp) = p(u)_{**}(N[q(f(u))/z])^\sharp$.
  Now both $p(u)_{**}(N[q(f(u))/z])^\sharp$ and $N[q(g(u))/z]^\sharp$ belong to $\sharp C[q(g(u))/z]$, so by the proof of \cref{thm:path-sharp} we have
  \begin{alignat*}{2}
    && \Big(p(u)_{**}(N[q(f(u))/z])^\sharp &= N[q(g(u))/z]^\sharp\Big)\\
    &\simeq&\quad \sharp\Big(p(u)_{**}(N[q(f(u))/z])^\sharp{}_\sharp &= N[q(g(u))/z]^\sharp{}_\sharp\Big)\\
    &\jdeq &\quad \sharp\Big(p(u)_{**}(N[q(f(u))/z]) &= N[q(g(u))/z]\Big)
  \end{alignat*}
  which is the type of $P^\sharp$.
  Thus we have the inputs to ordinary $\coeq$-induction, after which we can apply $(\blank)_\sharp$ as usual.
\end{proof}

Similarly, we have crisp induction for $\hocirc$, for pushouts, and so on.
The case of truncations requires a little more thought.

\begin{thm}[Crisp truncation-induction]\label{thm:crisp-trunc-ind}
  Let $n::\N$ and $A::\type$, let $C$ be a crisp $n$-type depending on $z::\trunc n A$, and let $N::C[\tproj{n}{x}/z]$ depend on $x::A$.
  For any $M::\trunc n A$, we have an element of $C[M/z]$, which computes to $N$ on $\tprojf{n}$.
\end{thm}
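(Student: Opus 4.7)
I would follow exactly the pattern established in \cref{thm:crisp-flat-ind,thm:crisp-case,thm:crisp-J,thm:crisp-coeq-ind}: replace the crisp goal by its $\sharp$-image, apply the ordinary induction principle, and then use $(\blank)_\sharp$ on the resulting crisp term. The only genuinely new ingredient is a check that the hypothesis of ordinary truncation-induction---namely that the target family is pointwise $n$-truncated---survives the detour through $\sharp$.

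Concretely, with a cohesive variable $z:\trunc n A$ we can form $\sharp C : \type$, since cohesive variables may be treated as crisp inside $\sharp(\blank)$. By \cref{thm:codiscrete-ntypes}, $\sharp$ preserves $n$-types, so each $\sharp C$ is again an $n$-type. For cohesive $x:A$ we have $N^\sharp : \sharp C[\tproj{n}{x}/z]$, so ordinary truncation-induction produces a term
\[ g : \tprd{z:\trunc n A} \sharp C. \]
Since $C$, $N$, and $M$ are all crisp, applying $g$ to the crisp $M$ yields a crisp element of $\sharp C[M/z]$, and we may therefore apply $(\blank)_\sharp$ to obtain the desired element of $C[M/z]$.

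For the computation rule, substituting $M \jdeq \tproj{n}{x}$ for crisp $x::A$, the ordinary computation rule for truncation-induction gives $g(\tproj{n}{x}) \jdeq N^\sharp[x/\blank]$, and then the computation rule for $\sharp$ collapses $N^\sharp{}_\sharp \jdeq N$, exactly as in the closing line of the proof of \cref{thm:crisp-flat-ind}.

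I do not expect any real obstacle here; the only place one might stumble is in remembering that $\sharp C$ is still an $n$-type, which is precisely the content of \cref{thm:codiscrete-ntypes}, and in checking that crispness propagates correctly through the application $g(M)$ so that $(\blank)_\sharp$ is legal at the end.
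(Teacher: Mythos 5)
Your proposal has the right shape --- form $\sharp C$ over a cohesive $z:\trunc n A$, apply ordinary truncation-induction, and strip with $(\blank)_\sharp$ --- and you correctly pinpoint the one nontrivial step: verifying that $\sharp C$ is an $n$-type. But your citation of \cref{thm:codiscrete-ntypes} is too quick. As stated and proved, that theorem quantifies over a \emph{cohesive} $n:\N$ and a \emph{cohesive} $n$-type, whereas here $n::\N$ is crisp, and $C$ is not a well-formed type in the cohesive context at all: it lives over the crisp variable $z::\trunc n A$, and its witness of being an $n$-type is likewise only available crisply. So \cref{thm:codiscrete-ntypes} does not apply verbatim; what is needed is a derivable-rule version in which $n$ is crisp and the input $n$-type depends on crisp variables (which become cohesive inside $\sharp C$). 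The paper flags precisely this point and notes that the generalization is obtained by crisp $\N$-induction, mirroring the original inductive proof. Alternatively, since the goal $\textsf{is-}n\textsf{-type}(\sharp C)$ is a codiscrete \mprop{}, you could apply \cref{thm:codicsrete-ind} to treat all hypotheses as crisp, at which point the crisp data can be read as cohesive data and \cref{thm:codiscrete-ntypes} applies. Either route repairs the gap; but as written, your argument silently glosses over the single place the paper explicitly says requires additional care.
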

\begin{proof}
  For a cohesive variable $z:\trunc n A$, we have $\sharp C : \type$, to which we apply ordinary truncation-induction.
  We have $N^\sharp : \sharp C[\tproj{n}{x}/z]$ for a cohesive $x:A$; so it remains to show that $\sharp C$ is an $n$-type.
  But in \cref{thm:codiscrete-ntypes} we showed that $\sharp$ preserves $n$-types, and it is straightforward to generalize this (using crisp $\N$-induction) to a derivable rule in which $n$ is crisp and the input $n$-type $C$ depends on crisp variables (that become cohesive in $\sharp C$).
  Thus ordinary truncation-induction applies, after which we use $(\blank)_\sharp$ as usual.
\end{proof}

In the sequent calculus of \textcite{ls:1var-adjoint-logic}, these crisp induction principles are taken as given, rather than derived from $\sharp$.
This produces a better-behaved calculus in a proof-theoretic sense.
However, not \emph{all} (higher) inductive types can consistently admit a crisp induction principle; we will see a counterexample in \cref{thm:no-crisp-shape-ind}.
For this reason, I prefer to derive all such principles from $\sharp$.

\section{Discreteness and $\flat$ revisited}
\label{sec:disc}

\subsection{Identifications in $\flat$}
\label{sec:flat-left-exact}

Just as we showed in \cref{thm:path-sharp} that $\sharp$ is left exact using an encode-decode argument, we can do the same for $\flat$.

\begin{thm}\label{thm:equiv-flat-path}
  For any $x,y::A$ we have $(x^\flat = y^\flat) \simeq \flat(x=y)$.
\end{thm}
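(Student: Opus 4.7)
The plan is to mimic the encode-decode argument of \cref{thm:path-sharp}, with $\sharp$-induction replaced by $\flat$-induction and the internal appeal to $\ap_{\sharpf}$ replaced by an application of crisp Id-induction (\cref{thm:crisp-J}).

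First I would define a code relation $\code : \flat A \to \flat A \to \type$ by
\[ \code(u, v) \defeq (\flet x^\flat := u in \flet y^\flat := v in \flat(x = y)), \]
which is well-typed because the nested $\flat$-eliminations supply crisp $x, y :: A$, so $x = y$ is crisp and $\flat(x = y)$ is a valid type; the computation rule for $\flat$ then gives $\code(x^\flat, y^\flat) \jdeq \flat(x = y)$ whenever $x, y :: A$. A reflexivity element $\mathsf{r} : \prd{u:\flat A} \code(u, u)$ is then produced by $\flat$-induction, inhabiting the reduced case $\code(x^\flat, x^\flat) \jdeq \flat(x = x)$ with $(\refl_x)^\flat$.

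Setting $\encode(u, v, p) \defeq p_*(\mathsf{r}(u))$ in the standard way, I would construct the inverse $\decode : \prd{u, v : \flat A} \code(u, v) \to (u = v)$ by double $\flat$-induction on $u$ and $v$, which reduces the task to constructing a map $\flat(x = y) \to (x^\flat = y^\flat)$ for crisp $x, y :: A$. A further $\flat$-induction on the input extracts a crisp $q :: (x = y)$, and crisp Id-induction on $q$ reduces the goal to the base case $\refl_{x^\flat} : x^\flat = x^\flat$.

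To conclude, it suffices to verify that $\encode(u, v, \decode(u, v, c)) = c$, since this exhibits $\sm{v:\flat A} \code(u, v)$ as a retract of the contractible based path space $\sm{v:\flat A}(u = v)$, forcing the former to be contractible as well and hence the fibers of $\encode$ to be contractible. By the same pattern of $\flat$-induction on $u$ and $v$, then on the element of $\flat(x = y)$, and finally crisp Id-induction on the resulting crisp identification, this check collapses definitionally to $(\refl_{x^\flat})_*(\mathsf{r}(x^\flat)) = \mathsf{r}(x^\flat)$, which is immediate. Specializing $u \jdeq x^\flat$ and $v \jdeq y^\flat$ and unfolding $\code(x^\flat, y^\flat) \jdeq \flat(x = y)$ delivers the stated equivalence.

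The main technical obstacle is bookkeeping the crispness discipline: one must verify that the nested $\mathsf{let}$ defining $\code$ really is a type family on all of $\flat A \times \flat A$ rather than merely on pairs of the form $(x^\flat, y^\flat)$, and one must track at each step that the goals of the subsequent $\flat$- and crisp Id-inductions are of the required form, so that the freshly introduced variables are crisp precisely when needed.
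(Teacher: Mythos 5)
Your proof is correct and follows essentially the same route as the paper's: the same $\code$ family defined by nested $\mathsf{let}$, the same reflexivity element $\mathsf{r}$, the same $\encode$ via transport, the same $\decode$ by $\flat$-induction on $u$, $v$, and $c$ followed by crisp Id-induction, and the same one-sided retraction check. The only cosmetic difference is that you leave the final verification goal written as $(\refl_{x^\flat})_*(\mathsf{r}(x^\flat)) = \mathsf{r}(x^\flat)$ rather than unfolding $\mathsf{r}(x^\flat) \jdeq \refl_x{}^\flat$, which is a trivial reduction.
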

\begin{proof}
  We define $\code: \flat A \to \flat A \to \type$ by
  \[ \code(u,v) \defeq (\flet x^\flat := u in (\flet y^\flat := v in \flat(x=y))). \]
  Then for any $x,y::A$, $\code(x^\flat,y^\flat) \jdeq \flat(x=y)$.
  Thus, it will suffice to prove that for any $u,v:\flat A$ we have $\code(u,v) \simeq (u=v)$.

  First note that for any $u:\flat A$, we have
  \[ \mathsf{r}(u) \defeq (\flet x^\flat := u in (\refl_x)^\flat) : \code(u,u). \]
  As usual, we then define $\encode : \prd{u,v:\flat A} (u=v) \to \code(u,v)$ by $\encode(p) \defeq p_* \mathsf{r}(u)$.

  Second, to construct $\decode : \prd{u,v:\flat A} \code(u,v) \to (u=v)$, if we destruct $u$ and $v$ as $x^\flat$ and $y^\flat$ respectively with $x,y::A$, we obtain $c:\flat(x=y)$ with a goal of $(x^\flat = y^\flat)$.
  Then we can further destruct $c$ as a crisp $p::x=y$.
  Now our assumptions of $x$, $y$, and $p$ suffice to apply crisp Id-induction, \cref{thm:crisp-J}; this reduces our goal to $(x^\flat = x^\flat)$ for a given $x::A$.
  But now of course we can use $\refl_{x^\flat}$.
  In symbols, we have
  \begin{multline*}
    \decode_{u,v}(c) \defeq\\ (\flet x^\flat := u in (\flet y^\flat := v in (\flet p^\flat := c in \J^{\flat}_{x,y,p.(x^\flat=y^\flat)}(x.\refl_{x^\flat};x,y,p)))).
  \end{multline*}

  Now as remarked in \cref{thm:path-sharp}, it suffices to show that for any $u,v:\flat A$ and $c:\code(u,v)$ we have $\encode(\decode(c)) = c$.
  Of course, we destruct $u,v,c$ as $x,y::A$ and $p::x=y$, and then apply crisp Id-induction; the goal then becomes $(\refl_{x^\flat})_* (\refl_x{}^\flat) = \refl_x{}^\flat$, which is true.
\end{proof}

\begin{cor}\label{thm:equiv-path-flat}
  For any $u,v::\flat A$, we have $(u=v)\simeq \flat(u_\flat = v_\flat)$.
\end{cor}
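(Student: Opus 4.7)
The plan is to reduce the statement directly to \cref{thm:equiv-flat-path} by destructing the crisp hypotheses $u, v :: \flat A$. Since the goal $(u=v)\simeq \flat(u_\flat = v_\flat)$ is a crisp type depending only on crisp variables (note that $u_\flat$ and $v_\flat$ are well-typed precisely because $u$ and $v$ are crisp), we may apply crisp $\flat$-induction (\cref{thm:crisp-flat-ind}) twice: once to replace $u$ by $x^\flat$ for a crisp $x::A$, and then to replace $v$ by $y^\flat$ for a crisp $y::A$. Under these substitutions, the computation rule for $\flat$ gives $u_\flat \jdeq x^\flat{}_\flat \jdeq x$ and $v_\flat \jdeq y^\flat{}_\flat \jdeq y$, so the goal reduces judgmentally to $(x^\flat = y^\flat) \simeq \flat(x=y)$, which is exactly \cref{thm:equiv-flat-path}.

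There is no real obstacle here; the content of the corollary is carried entirely by \cref{thm:equiv-flat-path}, and the corollary is just a repackaging in which one specializes $x \defeq u_\flat$ and $y \defeq v_\flat$. Alternatively, one can avoid the nested crisp induction by invoking the typal uniqueness identity $u = u_\flat{}^\flat$ (and similarly $v = v_\flat{}^\flat$) from~\eqref{eq:eta-flatf}, which is available precisely because $u$ and $v$ are crisp; transporting the equivalence of \cref{thm:equiv-flat-path} at $x \defeq u_\flat$, $y \defeq v_\flat$ along these two identifications then yields the desired equivalence. Either way, the essential use of crispness is in being able to form $u_\flat$ and $v_\flat$ to begin with, and in being able to invoke either crisp induction or the $\flat$-uniqueness rule on $u$ and $v$ themselves.
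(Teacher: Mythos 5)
Your proposal is correct, and your second alternative is exactly the paper's own proof: use the typal uniqueness $u = u_\flat{}^\flat$ (and likewise for $v$), then apply \cref{thm:equiv-flat-path} at $x \defeq u_\flat$, $y \defeq v_\flat$. Your first route via crisp $\flat$-induction is an equally valid minor variant and requires no extra ideas.
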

\begin{proof}
  We showed above that any crisp element $u::\flat A$ is equal to $u_\flat{}^\flat$, so this follows from \cref{thm:equiv-flat-path}.
\end{proof}

Note that $u$ and $v$ in the statement of \cref{thm:equiv-path-flat} must be crisp for the right-hand side to make sense.
However, inspecting the proof of \cref{thm:equiv-flat-path} we extract a more general statement that applies even to cohesive variables $x,y:\flat A$, namely
\[ (x=y) \simeq (\flet x^\flat := u in (\flet y^\flat := v in \flat(x=y))). \]

\cref{thm:equiv-flat-path} has many useful consequences.
For instance, it implies the functoriality of $\flat$ on homotopies.
That is, suppose that $f,g::A\to B$ and $H::f\sim g$ is a crisp homotopy, i.e.\ $H ::\prd{x:A} f(x)=g(x)$; we would like to conclude $\flat f \sim \flat g$.
Introducing $x:\flat A$ and using $\flat$-induction, this reduces to saying that for any $u::A$ we have $f(u)^\flat = g(u)^\flat$.
This \emph{looks} like it should be immediate from $H(u) :: f(u) = g(u)$, but since $(\blank)^\flat$ is not a \emph{function}, we cannot write ``$\ap_{(\blank)^\flat}(H(u))$''.
Instead we use the following.

\begin{cor}\label{thm:path-flat}
  For any $u,v::B$ and $p::u=v$, we have $p^{=\flat} : u^\flat = v^\flat$ such that $(\refl_u)^{=\flat} = \refl_{u^\flat}$ and $\ap_{\flatf}(p^{=\flat}) = p$.
\end{cor}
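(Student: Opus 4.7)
The plan is to obtain $p^{=\flat}$ directly from the equivalence of \cref{thm:equiv-flat-path}. Given crisp $u,v::B$ and $p::u=v$, form the crisp element $p^\flat :: \flat(u=v)$ and define
\[ p^{=\flat} \;\defeq\; \decode_{u^\flat,v^\flat}(p^\flat), \]
where $\decode$ is the map constructed in the proof of \cref{thm:equiv-flat-path}. Unfolding the definition, this is really just
\[ p^{=\flat} \;\jdeq\; \J^{\flat}_{x,y,q.(x^\flat=y^\flat)}(x.\refl_{x^\flat};\,u,v,p). \]

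First I would verify the two stated equations. For the first, when $p$ is $\refl_u$, the computation rule for crisp Id-induction (\cref{thm:crisp-J}) immediately reduces $p^{=\flat}$ to $\refl_{u^\flat}$. For the second, I would apply crisp Id-induction (\cref{thm:crisp-J}) to $p$; this reduces the goal $\ap_{\flatf}(p^{=\flat}) = p$ to the case $p \jdeq \refl_u$, where by the first equation $p^{=\flat}$ becomes $\refl_{u^\flat}$, and then $\ap_{\flatf}(\refl_{u^\flat}) \jdeq \refl_{(u^\flat)_\flat} \jdeq \refl_u$ by the computation rule~\eqref{eq:beta-flatf} for $\flatf$. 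Both equations are then reflexivities after the appropriate crisp inductions.

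The one subtle point to check is that crisp Id-induction actually applies to the second equation, since the motive $q.\bigl(\ap_{\flatf}(q^{=\flat}) = q\bigr)$ depends on the variable $q::u=v$ in a type that itself uses the crispness of $q$ (to form $q^{=\flat}$). This is exactly the situation crisp Id-induction from \cref{thm:crisp-J} is designed to handle: the motive is permitted to be a crisp type family in crisp variables $u,v,q$, and $q^{=\flat}$ was defined precisely via such a crisp family. So the induction goes through, and the main lemma we are leaning on --- that \cref{thm:equiv-flat-path} makes sense of ``$\ap$ along $(\blank)^\flat$'' --- does all the work.

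I do not expect any serious obstacle; the corollary is essentially a repackaging of \cref{thm:equiv-flat-path}'s $\decode$ map together with its behaviour on $\refl$, the only slightly delicate point being the appeal to \emph{crisp} (rather than ordinary) Id-induction when computing $\ap_{\flatf}(p^{=\flat})$.
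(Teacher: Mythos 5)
Your proof is correct and takes essentially the same route as the paper: define $p^{=\flat}$ by applying the $\decode$ of \cref{thm:equiv-flat-path} to $p^\flat$, get the first equation from the computation rule, and establish the second by crisp Id-induction together with the first. The only difference is that you explicitly unfold $\decode_{u^\flat,v^\flat}(p^\flat)$ to the underlying $\J^\flat$ term and spell out why crisp (rather than ordinary) Id-induction is the appropriate tool; the paper leaves these points implicit.
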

\begin{proof}
  We apply the $\decode$ from \cref{thm:equiv-flat-path} to $p^\flat:\flat(u=v)$.
  The first equality $(\refl_u)^{=\flat} = \refl_{u^\flat}$ follows from the definition of $\decode$.
  For the second, we use crisp Id-induction on $p$, and observe that
  \[ \ap_{\flatf}((\refl_u)^{=\flat}) = \ap_{\flatf}(\refl_{u^\flat}) = \refl_u \]
  using the first equality.
\end{proof}

Now we can define the action of $\flat$ on a homotopy $H$ by
\[ \flat H(x) \defeq (\flet u^\flat := x in H(u)^{=\flat}). \]
Functoriality on homotopies also implies that the functor $\flat$ preserves (crisp) equivalences: if $e::A\simeq B$, then we have $\flat e : \flat A \simeq \flat B$.

We can also regard \cref{thm:equiv-flat-path} as the corollary of \cref{thm:crisp-J} saying analogously to \cref{thm:flat-pres-coprod}
that ``$\flat$ preserves identity types''.
It implies analogous corollaries of \cref{thm:crisp-coeq-ind,thm:crisp-trunc-ind}.

\begin{cor}\label{thm:flat-pres-coeq}
  For any $A,B::\type$ and $f,g::A\to B$, we have $\flat(\coeq(f,g)) \simeq \coeq(\flat f, \flat g)$.
\end{cor}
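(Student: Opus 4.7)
The plan is to mimic the proof of \cref{thm:flat-pres-coprod} (that $\flat$ preserves coproducts) but with \cref{thm:crisp-coeq-ind} (crisp $\coeq$-induction) in place of crisp case analysis, and \cref{thm:path-flat} in place of the fact that constructors respect crisp variables. Let $q::B\to\coeq(f,g)$ and $p::\prd{u:A} q(f(u))=q(g(u))$ be the crisp generators of $\coeq(f,g)$, and let $q':\flat B \to \coeq(\flat f,\flat g)$ and $p':\prd{u:\flat A} q'(\flat f(u))=q'(\flat g(u))$ be those of $\coeq(\flat f,\flat g)$.

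To build $\Phi:\flat(\coeq(f,g))\to\coeq(\flat f,\flat g)$, apply $\flat$-induction to reduce to a crisp $w::\coeq(f,g)$, then apply crisp $\coeq$-induction (\cref{thm:crisp-coeq-ind}) with the constant type family $C \defeq \coeq(\flat f,\flat g)$. On a point $q(v)$ with $v::B$, send it to $q'(v^\flat)$. On the generating identification $p(u)$ with $u::A$, the transport in a constant family is trivial, so I must produce $q'(f(u)^\flat)=q'(g(u)^\flat)$; this is $p'(u^\flat)$, modulo the judgmental equality $\flat f(u^\flat)\jdeq f(u)^\flat$ from the definition~\eqref{eq:flat-f} of $\flat f$. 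For the reverse map $\Psi:\coeq(\flat f,\flat g)\to\flat(\coeq(f,g))$, use ordinary $\coeq$-induction: on a point $q'(x)$ with $x:\flat B$, output $\flet v^\flat := x in q(v)^\flat$; on a generating identification $p'(u)$ with $u:\flat A$, apply $\flat$-induction on $u$ to reduce to $u\jdeq w^\flat$ with $w::A$, which (after unfolding $\flat f,\flat g$ and the computation rule for $\flat$-induction) leaves the goal $q(f(w))^\flat = q(g(w))^\flat$; I supply $(p(w))^{=\flat}$ via \cref{thm:path-flat}.

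To verify $\Psi\circ\Phi=\id$, apply $\flat$-induction on the input, then crisp $\coeq$-induction; on the point constructor, the two computation rules reduce the composite to the identity on $q(v)^\flat$, while on the path constructor the coherence to check reduces, by the computation rules for crisp $\coeq$-induction and for $\flat$-induction, to the equation $\ap_{\flatf}(p(w)^{=\flat}) = p(w)$, which is precisely the second clause of \cref{thm:path-flat}. The verification that $\Phi\circ\Psi=\id$ is dual: ordinary $\coeq$-induction reduces the point case to another straightforward reduction (using $\flat$-induction on the input), and the path case collapses by the same \cref{thm:path-flat} identity together with the commuting conversions of \cref{thm:commute-let,thm:commute-flat}.

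The main obstacle is not the overall shape of the argument (which closely parallels \cref{thm:flat-pres-coprod}) but the path-coherence bookkeeping: matching up the judgmental reductions that produce the endpoints of $p'(u^\flat)$ with the endpoints of $(p(w))^{=\flat}$, and verifying that the round-trip on the generating path collapses via the identity $\ap_{\flatf}\circ(-)^{=\flat}=\id$. Everything else is the now-familiar interplay between crisp induction, $\flat$-induction, and the commuting conversions.
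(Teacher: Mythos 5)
Your proposal is correct and follows essentially the same route as the paper: $\flat$-induction then crisp $\coeq$-induction for the forward map, and ordinary $\coeq$-induction plus $\flat q$ for the reverse, with the path case handled via $(-)^{=\flat}$. The only cosmetic difference is that the paper invokes ``functoriality of $\flat$ on homotopies'' as a black box where you unfold it explicitly to $\flat$-induction followed by $(-)^{=\flat}$ — which is exactly how that functoriality was defined.
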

\begin{proof}
  From left to right, we apply $\flat$-induction, crisp $\coeq$-induction, and then $(\blank)^\flat$ and the structure maps of $\coeq(\flat f, \flat g)$.
  From right to left, we apply ordinary $\coeq$-induction first, and then $\flat q : \flat B \to \flat (\coeq(f,g))$; finally we need to show that for $x:\flat A$ we have $\flat q (\flat f(x)) = \flat q (\flat g(x))$; but this follows from the above functoriality of $\flat$ on homotopies.
  That the composites are identities is proven similarly.
\end{proof}

\begin{cor}\label{thm:flat-pres-hocirc}
  $\flat\hocirc \simeq \hocirc$.\qed
\end{cor}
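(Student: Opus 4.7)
The plan is to realize $\hocirc$ as a (homotopy) coequalizer to which Corollary~\ref{thm:flat-pres-coeq} applies. Specifically, $\hocirc$ has the standard presentation as $\coeq(\id_\unit,\id_\unit)$: the coequalizer of the two identity maps $\unit \toto \unit$ produces a point (from the cocone structure map $q:\unit\to\coeq$) together with a self-identification of that point (from the coequalizing homotopy $p:\prd{u:\unit} q(\id(u)) = q(\id(u))$ evaluated at $\ttt$), and its induction principle matches that of $\hocirc$. Since $\unit$ and $\id_\unit$ are defined in the empty context they are crisp, so Corollary~\ref{thm:flat-pres-coeq} gives
\[ \flat\hocirc \;\simeq\; \flat(\coeq(\id_\unit,\id_\unit)) \;\simeq\; \coeq(\flat\id_\unit,\flat\id_\unit). \]

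The remaining task is to identify the right-hand side with $\hocirc$. For this I would first verify the small lemma $\flat\unit\simeq\unit$ in the same style as the $\flat\emptyset\simeq\emptyset$ and $\flat\N\simeq\N$ results mentioned after Corollary~\ref{thm:flat-pres-coprod}: the map $\flatf:\flat\unit\to\unit$ has an inverse $\ttt \mapsto \ttt^\flat$, with one composite equal to the identity by the computation rule $\ttt^\flat{}_\flat\jdeq\ttt$ and the other by $\flat$-induction reducing to $u^\flat{}_\flat{}^\flat\jdeq u^\flat$ (and collapsing $u::\unit$ to $\ttt$). Under this equivalence, $\flat\id_\unit$ corresponds to $\id_\unit$ (either by direct calculation $\flat\id_\unit(x) \jdeq \flet u^\flat:=x in u^\flat = x$, which is just Lemma~\ref{thm:flat-eta}, or by naturality of $\flatf$).

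Thus $\coeq(\flat\id_\unit,\flat\id_\unit) \simeq \coeq(\id_\unit,\id_\unit) \simeq \hocirc$, and composing with the equivalence above yields $\flat\hocirc\simeq\hocirc$. The argument is essentially bookkeeping once Corollary~\ref{thm:flat-pres-coeq} is in hand; the only mild obstacle is making sure that $\hocirc$ is accepted as a coequalizer in the precise sense used by Corollary~\ref{thm:flat-pres-coeq} (i.e.\ that the induction principle of $\hocirc$ really does coincide with that of $\coeq(\id_\unit,\id_\unit)$), but this is a standard fact about HITs and requires no new ideas.
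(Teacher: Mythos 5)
Your approach is the one the paper has in mind: the corollary is recorded with no written proof because it is meant to follow immediately from $\hocirc \simeq \coeq(\unit \toto \unit)$ and \cref{thm:flat-pres-coeq}, together with $\flat\unit\simeq\unit$ and the (typal) functoriality of $\flat$ on identities, exactly as you set it up. So this is essentially the paper's route, spelled out.

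One small slip in the bookkeeping: in your verification that $\flat\unit\simeq\unit$, the inverse map $s:\unit\to\flat\unit$ must be the constant map at $\ttt^\flat$ (it cannot be $\lam t\, t^\flat$, since a cohesive $t:\unit$ cannot be boxed). So after $\flat$-inducing $x$ to $u^\flat$ with $u::\unit$, the round trip evaluates to $s(u^\flat{}_\flat)\jdeq s(u) = \ttt^\flat$, not to $u^\flat{}_\flat{}^\flat$; what you actually need is $\ttt^\flat = u^\flat$, which follows from crisp contractibility of $\unit$ together with \cref{thm:path-flat} (or \cref{thm:equiv-flat-path}). This is exactly how the paper proves it a few results later in \cref{thm:flat-pres-unit}, by showing $\flat\unit$ is contractible; that statement appears after the present corollary in the text but is logically independent of it, so there is no circularity. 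With that one line repaired, the proposal is complete.
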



\begin{thm}\label{thm:flat-set}\label{thm:flat-ntype}
  For any $n::\N$ and $A::\type$, if $A$ is crisply an $n$-type, so is $\flat A$.
\end{thm}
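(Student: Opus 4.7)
The plan is to proceed by crisp induction on $n::\N$, paralleling the proof of \cref{thm:codiscrete-ntypes} but substituting $\flat$-induction and \cref{thm:equiv-flat-path} for the corresponding tools used there for $\sharp$.

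For the base case $n=-2$, if $A$ is crisply contractible with crisp center $c::A$ and crisp contraction $p::\prd{x:A} c = x$, I would take $c^\flat : \flat A$ as the center, and construct the contraction at a cohesive $u:\flat A$ by ordinary $\flat$-induction on $u$: when $u$ is $x^\flat$ for $x::A$, the term $p(x)$ is crisp, so \cref{thm:path-flat} supplies $p(x)^{=\flat} : c^\flat = x^\flat$, as required.

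For the inductive step, assume the claim for $n$, and suppose $A$ is crisply an $(n+1)$-type, witnessed by a crisp $\alpha :: \textsf{is-}(n{+}1)\textsf{-type}(A)$. Given cohesive $u,v : \flat A$, the goal $\textsf{is-}n\textsf{-type}(u = v)$ is a type family in $u,v$, so I can apply ordinary $\flat$-induction to both, reducing to showing $\textsf{is-}n\textsf{-type}(x^\flat = y^\flat)$ for $x,y::A$. By \cref{thm:equiv-flat-path}, $(x^\flat = y^\flat) \simeq \flat(x = y)$. Since $\alpha$ is crisp and $x,y::A$ are crisp, applying it yields that $x=y$ is crisply an $n$-type, and the inductive hypothesis gives that $\flat(x = y)$ is an $n$-type; transporting along the equivalence completes the step.

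The main subtlety, and the one point where care is required, is setting up the outer induction: the whole statement quantifies over crisp $n::\N$ and crisp $A::\type$, so is formally a derivable rule rather than a single term, and we need crisp $\N$-induction in the style of \cref{thm:crisp-trunc-ind} (or equivalently a metatheoretic induction on the external numeral). This is exactly the generalization flagged in the comment following \cref{thm:crisp-trunc-ind}, and once that bookkeeping is in place, the rest of the argument is essentially a direct translation of the proof of \cref{thm:codiscrete-ntypes}.
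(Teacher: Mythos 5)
Your proof is correct and follows essentially the same route as the paper's: crisp $\N$-induction on $n$, the base case constructing the contraction of $\flat A$ from $c^\flat$ via $\flat$-induction and \cref{thm:equiv-flat-path} (you invoke \cref{thm:path-flat}, which is its corollary and amounts to the same step), and the inductive step using $\flat$-induction on $u,v:\flat A$ together with $(x^\flat = y^\flat) \simeq \flat(x=y)$ to hand off to the inductive hypothesis. Your final paragraph about the derivable-rule status and crisp $\N$-induction is exactly the bookkeeping the paper compresses into its opening sentence referring to \cref{sec:crisp-induction}.
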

\begin{proof}
  We do crisp induction (as in \cref{sec:crisp-induction}) over $n::\N$, beginning with $n=-2$.
  If $A$ is crisply contractible, we have some $c::A$, hence also $c^\flat : \flat A$.
  Now any $x:\flat A$ can be destructed as $u^\flat$ for $u::A$, and by \cref{thm:equiv-flat-path} $(u^\flat = a^\flat) \simeq \flat (u=a)$.
  But $A$ is crisply contractible, so we have $p::u=a$, hence $p^\flat : \flat (u=a)$.

  For the induction step, suppose $A$ is crisply an $(n+1)$-type.
  Then $x,y:\flat A$ can be destructed as $u^\flat$ and $v^\flat$ for $u,v::A$, and we have $(u^\flat = v^\flat) \simeq \flat (u=v)$.
  But $u=v$ is crisply an $n$-type; hence by the inductive hypothesis so is $\flat (u=v)$.
\end{proof}

\begin{cor}\label{thm:flat-pres-trunc}
  For $n::\N$ and $A::\type$, we have $\flat\trunc n A \simeq \trunc n {\flat A}$.
\end{cor}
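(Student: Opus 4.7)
The plan is to follow the pattern of \cref{thm:flat-pres-coeq}: construct a map $\varphi : \flat\trunc{n}{A} \to \trunc{n}{\flat A}$ using crisp truncation-induction (\cref{thm:crisp-trunc-ind}), a map $\psi : \trunc{n}{\flat A} \to \flat\trunc{n}{A}$ using the ordinary universal property of the truncation, and then verify both round-trips are the identity. Since $n::\N$ and $A::\type$ are crisp, all of the types $\flat A$, $\trunc{n}{A}$, $\flat\trunc{n}{A}$, and $\trunc{n}{\flat A}$ are crisp, which will be needed throughout to satisfy the crispness hypotheses of the induction principles.

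For $\psi$, \cref{thm:flat-ntype} tells us that $\flat\trunc{n}{A}$ is an $n$-type, since $\trunc{n}{A}$ is crisply an $n$-type by construction; hence the universal property of $\trunc{n}{\blank}$ reduces the construction of $\psi$ to giving a map $\flat A \to \flat\trunc{n}{A}$, for which I take the functorial action $\flat(\tprojf{n})$ on the crisp function $\tprojf{n} : A \to \trunc{n}{A}$. By the definition of $\flat$ on crisp functions, this satisfies $\psi(\tproj{n}{a^\flat}) = \tproj{n}{a}^\flat$ for each $a::A$. For $\varphi$, I apply $\flat$-induction to destruct $x : \flat\trunc{n}{A}$ as $u^\flat$ for a crisp $u::\trunc{n}{A}$, and then, using that $\trunc{n}{\flat A}$ is a crisp $n$-type, apply crisp truncation-induction on $u$; this leaves the task of producing an element of $\trunc{n}{\flat A}$ for each crisp $a::A$, for which I take $\tproj{n}{a^\flat}$. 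The computation rules give $\varphi(\tproj{n}{a}^\flat) = \tproj{n}{a^\flat}$ for each $a::A$.

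For $\psi \circ \varphi = \id$, $\flat$-induction reduces the goal to $\psi(\varphi(u^\flat)) = u^\flat$ for crisp $u::\trunc{n}{A}$; since $\flat\trunc{n}{A}$ is a crisp $n$-type (again by \cref{thm:flat-ntype}), crisp truncation-induction reduces this further to the case $u = \tproj{n}{a}$ for crisp $a::A$, where the two computations above give $\psi(\varphi(\tproj{n}{a}^\flat)) = \psi(\tproj{n}{a^\flat}) = \tproj{n}{a}^\flat$. The composite $\varphi \circ \psi = \id$ is then checked by ordinary truncation-induction, whose target $\trunc{n}{\flat A}$ is an $n$-type, followed by $\flat$-induction on the resulting $y : \flat A$ and an appeal to the same two computations. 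The main obstacle is entirely bookkeeping rather than mathematical content: at every application of crisp truncation-induction one must confirm that the motive is a crisp $n$-type, which holds here because all the ingredients are themselves crisp and $\flat$ preserves $n$-types by \cref{thm:flat-ntype}.
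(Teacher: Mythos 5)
Your proposal is correct and follows essentially the same route as the paper's proof: in both directions it uses the same inductions in the same order ($\flat$-induction then crisp truncation-induction via \cref{thm:crisp-trunc-ind} from left to right, ordinary truncation-induction with \cref{thm:flat-ntype} and the functorial action $\flat(\tprojf{n})$ from right to left), and verifies the round-trips the same way. You simply spell out the computation rules and crispness bookkeeping that the paper leaves implicit.
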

\begin{proof}
  From left to right, we apply $\flat$-induction and crisp truncation-induction, followed by $(\blank)^\flat$.
  From right to left, we apply ordinary truncation-induction (using \cref{thm:flat-ntype}) and then functoriality of $\flat$ on $\tprojf{n}$.
  The composites on either side are proven to be identities similarly.
\end{proof}

Thirdly, \cref{thm:equiv-flat-path} implies $\flat$ is left exact, using an analogue of \cref{thm:sharp-sigma}:

\begin{lem}\label{thm:flat-pres-sigma}
  For any $A::\type$ and $B::A\to\type$, we have
  \[ \flat\left(\sm{x:A} B(x) \right) \simeq \sm{x:\flat A} \big(\flet u^\flat := x in \flat B(u)\big).\]
\end{lem}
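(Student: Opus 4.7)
The plan is to mirror the proof of \cref{thm:sharp-sigma} (with appropriate dualization between introduction and elimination forms), building explicit maps in each direction by $\flat$-induction.

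For the forward map, given $z:\flat(\sm{x:A} B(x))$, apply $\flat$-induction, reducing to the case $z \jdeq w^\flat$ for a crisp $w::\sm{x:A} B(x)$. Destruct $w$ as a pair with components $a::A$ and $b::B(a)$, and emit $(a^\flat, b^\flat)$. This typechecks as an inhabitant of the right-hand side because the computation rule for $\flat$ at the type level gives $\flet u^\flat := a^\flat in \flat B(u) \jdeq \flat B(a)$, so $b^\flat : \flat B(a)$ lands in the required fiber.

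In the other direction, starting from $(x,c)$ with $x:\flat A$ and $c:\flet u^\flat := x in \flat B(u)$, first apply $\flat$-induction on $x$, replacing it by $a^\flat$ for crisp $a::A$. The type of $c$ then reduces by the computation rule to $\flat B(a)$, so a second $\flat$-induction on $c$ yields a crisp $b::B(a)$. We then return $(a,b)^\flat:\flat(\sm{x:A} B(x))$. To verify that the two composites are identities, apply $\flat$-induction (and in one direction also the uniqueness principle \cref{thm:flat-eta} to reduce a generic pair $(x,c)$ to the image of $a^\flat$ and $b^\flat$); in each case every step collapses to a judgmental equality by the $\flat$ computation rule.

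The principal subtlety is ensuring that the dependent second component $\flet u^\flat := x in \flat B(u)$ is a well-formed type family over $x:\flat A$ and that substitution interacts with it correctly. This is underwritten by \cref{thm:commute-flat-type} (for manipulating $\flet$ inside type expressions) together with the computation rule for $\flat$; once these are in hand, so that substituting $a^\flat$ for $x$ really does yield $\flat B(a)$ definitionally, the rest of the argument is a straightforward back-and-forth of $\flat$-inductions.
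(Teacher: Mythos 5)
Your proof is correct and takes essentially the same route as the paper's: apply $\flat$-induction and the $\flat$-computation rule to shuttle between a crisp pair and a pair of $\flat$-reflected components, in both directions, and check the round-trips by further $\flat$-inductions. One small remark: the appeal to \cref{thm:commute-flat-type} in your closing paragraph is not actually needed. That lemma moves a $\flat$ \emph{into} a $\flet$, whereas what you need here is only that the expression $\flet u^\flat := x \Coloneqq \flat B(u) \;\mathsf{in}$ is well-typed (this is just the $\flat$-elimination rule with $C\defeq\type$) and that substituting $a^\flat$ for $x$ reduces it judgmentally to $\flat B(a)$ (this is the basic computation rule for $\flat$, not a derived lemma). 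Dropping that citation would make the account cleaner, but it does not affect the correctness of the argument.
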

\begin{proof}
  From left to right, we first apply $\flat$-induction, then destruct to get a crisp $u::A$ and $v::B(u)$.
  Then we can form $u^\flat$ and $v^\flat : \flat B(u)$, and $(u^\flat,v^\flat)$ belongs to the right-hand side by the computation rule for $\flat$.
  From right to left, we first destruct into $x:\flat A$ and $y:(\flet u^\flat := x in \flat B(u))$.
  Then we apply $\flat$-induction to $x$ to get $u::A$, reducing the type of $y$ to $\flat B(u)$.
  Next we apply $\flat$-induction to $y$ to get $v::B(u)$, so we can form $(u,v)^\flat:\flat\left(\sm{x:A} B(x) \right)$.
  We leave it to the reader to check that these are inverses.
\end{proof}

\begin{thm}\label{thm:flat-pres-prod}
  For any $A,B::\type$ we have $\flat(A\times B) \simeq \flat A \times \flat B$.
\end{thm}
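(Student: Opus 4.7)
The plan is to deduce this as an immediate specialization of the dependent version \cref{thm:flat-pres-sigma} to a constant family. Taking $B(x) \defeq B$ in that lemma yields
\[ \flat(A\times B) \simeq \sm{x:\flat A}\big(\flet u^\flat := x in \flat B\big). \]
It will then suffice to show that the inner $\flet$-expression is equal to $\flat B$ for every $x:\flat A$, since then the dependent sum on the right collapses to the binary product $\flat A \times \flat B$, and composing equivalences finishes the proof.

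To establish this collapse I would use $\flat$-induction on $x$ against the motive $P(x) \defeq \big((\flet u^\flat := x in \flat B) = \flat B\big)$: when $x$ is $v^\flat$ for some $v::A$, the computation rule for $\flat$ reduces the left-hand side judgmentally to $\flat B$, so $P(v^\flat)$ is inhabited by $\refl$. This produces a term $e : \prd{x:\flat A}\big(\flet u^\flat := x in \flat B\big) = \flat B$, and transporting fiberwise along $e$ assembles into the required equivalence of $\sum$-types. Alternatively, one could appeal directly to the uniqueness principle \cref{thm:flat-eta} applied to the constant type-valued function $\lam{x}\flat B$, which yields the same equality without writing out the induction by hand.

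I do not anticipate any genuine obstacle: the argument is pure bookkeeping atop the dependent version, and the only mild subtlety is confirming that pointwise equality of the fibers assembles into an equivalence of the total $\sum$-types, which is routine. If for some reason one preferred not to invoke \cref{thm:flat-pres-sigma}, one could instead mimic its proof directly, using $\flat$-induction to destruct an element of $\flat(A \times B)$ into a crisp pair $(u,v) :: A \times B$ and sending it to $(u^\flat, v^\flat)$, and conversely destructing an element of $\flat A \times \flat B$ by $\flat$-induction on each component before pairing; the round trips are then identities by the $\flat$ computation and uniqueness rules.
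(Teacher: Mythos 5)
Your proposal is correct and takes the same route as the paper: specialize \cref{thm:flat-pres-sigma} to a constant family and observe that $(\flet u^\flat := x in \flat B) = \flat B$, which, as you note, is exactly an instance of the uniqueness principle \cref{thm:flat-eta}. The paper's proof is a one-liner saying precisely this; your additional unwinding (direct $\flat$-induction, or the from-scratch argument) is correct bookkeeping but not needed.
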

\begin{proof}
  If $B$ is independent of $u:A$, then $(\flet u^\flat := x in \flat B) = \flat B$ (this can be regarded as an instance of the uniqueness principle for $\flat$).
\end{proof}

\begin{thm}\label{thm:flat-pres-pullbacks}
  For any $A,B,C::\type$ with $f::A\to C$ and $g::B\to C$, we have $\flat(A\times_C B) \simeq \flat A \times_{\flat C} \flat B$.
\end{thm}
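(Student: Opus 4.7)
The plan is to unfold the pullback as $\sm{x:A}{y:B}(f(x) = g(y))$ and push $\flat$ inside using \cref{thm:flat-pres-sigma,thm:equiv-flat-path}. Two successive applications of \cref{thm:flat-pres-sigma} (the second performed inside the outer $\flet$-binding, where the variable $u::A$ is crisp as required) yield
\[ \flat(A\times_C B) \;\simeq\; \sm{x:\flat A}\Bigl(\flet u^\flat := x in \sm{y:\flat B}\bigl(\flet v^\flat := y in \flat(f(u) = g(v))\bigr)\Bigr). \]
Since $u$ and $v$ are crisp inside the $\flet$s, \cref{thm:equiv-flat-path} replaces $\flat(f(u) = g(v))$ by the identity type $(f(u)^\flat = g(v)^\flat)$, which by the computation rule in the definition of $\flat f$ (respectively $\flat g$) is judgmentally $(\flat f(u^\flat) = \flat g(v^\flat))$.

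The remaining step is to collapse the two nested $\flet$-bindings. For any $x:\flat A$ and any type family $P$ depending on a crisp variable $u::A$, there is an equivalence $(\flet u^\flat := x in P(u^\flat)) \simeq P(x)$: by $\flat$-induction on $x$, destructing $x$ as $w^\flat$ reduces both sides to $P(w^\flat)$ via the computation rule, so the identity equivalence suffices. This is the type-level analogue of \cref{thm:flat-eta}. Applying it first to the inner $\flet$ (while still inside the outer one) and then to the outer $\flet$ transforms the displayed type into $\sm{x:\flat A}\sm{y:\flat B}(\flat f(x) = \flat g(y))$, which is $\flat A \times_{\flat C} \flat B$ by definition.

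The only technical point is assembling these pointwise equivalences under the outer $\sum$-types; this is routine using function extensionality and the standard fact that fibrewise equivalences of type families induce equivalences of total spaces. There is no real obstacle---all of the required manipulations have already appeared, sometimes implicitly, in the proofs of \cref{thm:flat-pres-sigma,thm:flat-pres-prod}.
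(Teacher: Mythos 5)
Your proof is correct and follows essentially the same route as the paper's: unfold the pullback as a nested $\sum$, apply \cref{thm:flat-pres-sigma} twice, and replace $\flat$ of the identity type by an identity type between $\flat$-terms using \cref{thm:equiv-flat-path}. The only difference lies in how the residual $\flet$-bindings are discharged---you use the $\beta$-rule for $\flat f$ together with the uniqueness principle \cref{thm:flat-eta}, whereas the paper invokes commuting conversions, naturality of $\flatf$, and \cref{thm:equiv-path-flat}---but these are minor bookkeeping variants of one another.
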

\begin{proof}
  We compute
  \begin{align*}
    \flat(A\times_C B)
    &\jdeq \flat\Big(\tsm{x:A}{y:B} (f(x)=g(y))\Big)\\
    &\simeq \tsm{x:\flat A} \flet u^\flat := x in \tsm{y:\flat B} \flet v^\flat := y in \flat (f(u)=g(v))\\
    &\simeq \tsm{x:\flat A}{y:\flat B} \flat (f(x_\flat) = g(y_\flat))\\
    &\simeq \tsm{x:\flat A}{y:\flat B} \flat (\flat f(x)_\flat = \flat g(y)_\flat)\\
    &\simeq \tsm{x:\flat A}{y:\flat B} (\flat f(x) = \flat g(y))\\
    &\jdeq \flat A \times_{\flat C} \flat B.
  \end{align*}
  The first step is the definition of pullbacks and the second is \cref{thm:flat-pres-sigma}.
  The third step combines several commuting conversions and the fourth is naturality of $\flatf$.
  The fifth step is \cref{thm:equiv-path-flat} and the last is the definition of pullbacks.
\end{proof}

For completeness, we should also record:

\begin{thm}\label{thm:flat-pres-unit}
  $\flat\unit \simeq \unit$.
\end{thm}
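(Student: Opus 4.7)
The plan is to observe that $\unit$ is crisply contractible---it has the crisp element $\ttt :: \unit$, and for every $u :: \unit$ we have the crisp identification from the ordinary contractibility of $\unit$---and then appeal directly to \cref{thm:flat-ntype} with $n = -2$. Since $\unit$ is crisply a $(-2)$-type, the theorem yields that $\flat\unit$ is contractible, which immediately gives $\flat\unit \simeq \unit$.

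If one preferred a direct argument without invoking \cref{thm:flat-ntype}, the two mutually inverse maps are easy to write down. In one direction, $\flat\unit \to \unit$ is the unique map into $\unit$. In the other direction, $\unit \to \flat\unit$ sends $\ttt \mapsto \ttt^\flat$; this is well-typed because $\ttt$ is a crisp constant. The composite $\unit \to \flat\unit \to \unit$ is judgmentally the identity. For the other composite, given $x : \flat\unit$, apply $\flat$-induction to replace $x$ by $u^\flat$ for a crisp $u :: \unit$; the composite then yields $\ttt^\flat$, and by crisp contractibility of $\unit$ we have $u = \ttt$ crisply, so by \cref{thm:path-flat} (or by crisp Id-induction directly) we obtain $u^\flat = \ttt^\flat$.

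There is no real obstacle here: the key ingredients---$\flat$-induction, crisp access to the constructor and contractibility of $\unit$, and \cref{thm:flat-ntype}---have all been established. The one subtlety worth noting is that one must not attempt to write ``$\ap_{(\blank)^\flat}$'' of the contracting identification, since $(\blank)^\flat$ is not a function; this is precisely the issue already handled by \cref{thm:path-flat} or, more efficiently, absorbed into the statement of \cref{thm:flat-ntype}.
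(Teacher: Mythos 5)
Your proposal is correct, and both routes you sketch are sound. Your second paragraph is essentially the paper's own proof, just phrased as exhibiting the two round-trips rather than exhibiting a centre of contraction and then using $\flat$-induction to show $\flat\unit$ is a proposition — the ingredients ($\ttt^\flat$, $\flat$-induction, crisp contractibility of $\unit$, and $\flat(x=y)\simeq(x^\flat=y^\flat)$) are identical. Your first route, citing \cref{thm:flat-ntype} with $n=-2$, is a valid shortcut that the paper could equally well have taken: that theorem does precede \cref{thm:flat-pres-unit}, and its base case $n=-2$ is literally the same argument the paper spells out again here. You are also right to flag that one cannot write $\ap_{(\blank)^\flat}$; the paper routes the crisp identification through $\flat(x=y)\simeq(x^\flat=y^\flat)$ (\cref{thm:equiv-flat-path}), which is the same mechanism underlying \cref{thm:path-flat}.
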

\begin{proof}
  It suffices to show that $\flat\unit$ is contractible.
  Since we have $\ttt^\flat : \flat\unit$, it suffices to show that $\flat\unit$ is a \mprop.
  Now if $u,v:\flat\unit$, we can destruct them as $x^\flat$ and $y^\flat$ for $x::\unit$ and $y::\unit$.
  Since $\unit$ is (crisply) contractible we have $p::x=y$, whence $p^\flat : \flat(x=y)$.
  Thus, by \cref{thm:equiv-flat-path}, we have $x^\flat=y^\flat$.
\end{proof}

\subsection{Discrete types}
\label{sec:discrete-types}

We now move on to study discrete types, the duals of the codiscrete types from \cref{sec:codisc}.
The definition should come as no surprise.

\begin{defn}
  A crisp type $A::\type$ is \textbf{discrete} if $\flatf : \flat A \to A$ is an equivalence.
\end{defn}

\begin{rmk}
  To say that a type is \emph{discrete} is very different from saying that it is \emph{crisp}.
  A discrete type is one whose \emph{points} are ``completely unconnected'' topologically, whereas
  a crisp type is one for which we can ignore the topology relating it to \emph{other types}.
  Discreteness is an ``intrinsic'' aspect of a type, whereas crispness is a more ``syntactic'' one indicating what we are doing with that type at the moment.
  Roughly speaking, a type is discrete if we are free to assume that its \emph{elements} are crisp.

  At this point in our development, a type must be crisp before we can even ask whether it is discrete.
  However, in \cref{sec:real-cohesion} we will introduce a more general notion of discreteness that removes this restriction.

  Note that a (crisp and) discrete type $A::\type$ might also be \emph{crisply discrete}, i.e.\ we might have a \emph{crisp} witness $d :: \isdisc(A)$.
  This distinction is admittedly somewhat confusing.
  In practice, pretty much every discrete type is crisply discrete, and whenever we assume a type to be discrete it must be crisply discrete.
  Moreover, in \cref{sec:real-cohesion} we will introduce a further axiom ensuring that \emph{every} discrete type is crisply discrete.
\end{rmk}

Our first goal is to show that the discrete types form a \emph{coreflective} subuniverse in an appropriate sense, with $\flat$ as the coreflector.
The core of that result is that maps $\flat B\to A$ are equivalent to maps $\flat B \to \flat A$; this should appear sensible since any function out of a discrete space is continuous.
We do, however, have to apply a further $\flat$ on both sides, since the type $\flat B\to A$ may inherit a nontrivial topology from $A$.

\begin{thm}\label{thm:discrete-coreflective-0}
  For any $A,B::\type$, postcomposition with $\flatf : \flat A \to A$ induces an equivalence
  \begin{equation}
    \flat (\flat B\to \flat A) \simeq \flat (\flat B\to A).\label{eq:flat-corefl-0}
  \end{equation}
\end{thm}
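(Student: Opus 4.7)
The plan is to construct a direct inverse to the postcomposition map. The forward map is obtained by functoriality: since $A$ is crisp, the counit $\flatf:\flat A\to A$ is a crisp function (its only ambient dependency is on $A$), hence $\alpha \defeq \lam{f:\flat B\to\flat A} \flatf\circ f$ is a crisp function $(\flat B\to\flat A)\to(\flat B\to A)$, and I take $\flat\alpha:\flat(\flat B\to\flat A)\to\flat(\flat B\to A)$ as the forward direction. For the inverse $\Phi$, given $h:\flat(\flat B\to A)$ I would use $\flat$-induction to destruct $h$ as $g^\flat$ for a crisp $g::\flat B\to A$, and then set
\[ \Phi(h) \defeq \flet g^\flat := h in \bigl(\lam{x:\flat B} \flet u^\flat := x in g(u^\flat)^\flat\bigr)^\flat. \]
The point is that inside the inner $\mathsf{let}$, both $g$ and $u^\flat$ are crisp, so $g(u^\flat)::A$ is crisp and $g(u^\flat)^\flat:\flat A$ type-checks; moreover, the whole $\lambda$-expression depends only on the crisp $g$ in the ambient context, so the outer $(\blank)^\flat$ is also valid.

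For the round trip $\flat\alpha\circ\Phi$: starting from $h$, I apply the commuting conversion \cref{thm:commute-let} to move $\flat\alpha$ inside the $\mathsf{let}$, use the computation rule for $\flat$, and then simplify $\flatf$ past the inner $\mathsf{let}$ via commutation and the computation rule $v^\flat{}_\flat \jdeq v$ from~\eqref{eq:beta-flatf}. This should reduce the composite to
\[ \flet g^\flat := h in \bigl(\lam{x:\flat B} \flet u^\flat := x in g(u^\flat)\bigr)^\flat. \]
The uniqueness principle \cref{thm:flat-eta} says $\flet u^\flat := x in g(u^\flat) = g(x)$, so by function extensionality the inner $\lambda$-term equals $g$, leaving $\flet g^\flat := h in g^\flat$, which is $h$ by a second use of \cref{thm:flat-eta}. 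For the other round trip $\Phi\circ\flat\alpha$, the parallel computation uses the dual equation $y_\flat{}^\flat = y$ for crisp $y::\flat A$ (as in~\eqref{eq:eta-flatf}) to rewrite $(f(u^\flat))_\flat{}^\flat$ as $f(u^\flat)$, again collapsing the composite to $h$ via \cref{thm:flat-eta} and function extensionality.

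The main obstacle is strictly the bookkeeping of crispness: one must check that the $g$ exposed by $\flat$-induction is crisp enough for $g(u^\flat)$ to be crisp (hence for $g(u^\flat)^\flat$ to type-check), and that the resulting $\lambda$-abstraction really has no cohesive ambient dependencies before one can apply the outer $(\blank)^\flat$. Once these syntactic checks pass, the actual verification that the two composites are identities is a mechanical combination of \cref{thm:commute-let}, the $\flat$-computation rule, the uniqueness principle \cref{thm:flat-eta}, equations~\eqref{eq:beta-flatf} and~\eqref{eq:eta-flatf}, and function extensionality.
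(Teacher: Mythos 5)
Your proposal is correct and matches the paper's proof essentially term-for-term: expanding $\flat\alpha$ via~\eqref{eq:flat-f} gives exactly the paper's left-to-right map $\flet k^\flat := h in (\lam{x}k(x)_\flat)^\flat$, your $\Phi$ is the paper's inverse verbatim, and your sketch of the round-trip calculations reproduces the chains in \cref{fig:disc-corefl} using the same tools (\cref{thm:commute-let}, the $\flat$-computation rule, \cref{thm:flat-eta}, equations~\eqref{eq:beta-flatf} and~\eqref{eq:eta-flatf}, and function extensionality).
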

\begin{proof}
  To be precise, the left-to-right map in~\eqref{eq:flat-corefl-0} is defined as follows: given $h : \flat (\flat B\to \flat A)$, we first destruct it into $k::\flat B \to \flat A$.
  Now since we have only crisp variables, it suffices to construct a function $\flat B \to A$, for which we have $\lam{x:\flat B} k(x)_\flat$.
  In symbols, from $h : \flat (\flat B\to \flat A)$ we construct
  \[ \flet k^\flat := h in (\lam{x:\flat B} k(x)_\flat)^\flat \quad: \flat (\flat B\to A). \]

  To define the right-to-left map in~\eqref{eq:flat-corefl-0}, suppose given an element of $\flat (\flat B\to A)$, which we immediately destruct into $f::\flat B \to A$.
  To construct an element of $\flat (\flat B\to \flat A)$, since we have only crisp variables it suffices to construct a function $\flat B \to \flat A$.
  Thus, let $x:\flat B$, which we immediately destruct into $u::B$.
  But now again it suffices to construct an element of $A$ itself, for which we can take $f(u^\flat)$.
  In symbols, from $g : \flat (\flat B\to A)$ we construct
  \[ \flet f^\flat := g in (\lam{x:\flat B} \flet u^\flat := x in f(u^\flat)^\flat)^\flat \quad: \flat (\flat B \to \flat A). \]
  It is tempting to think that the uniqueness principle for $\flat$ (\cref{thm:flat-eta}) should simplify ``$\flet u^\flat := x in f(u^\flat)^\flat$'' to $f(x)^\flat$.
  However, this rule does not apply because we cannot write $f(x)^\flat$, since $x$ is not a crisp variable.

  In \cref{fig:disc-corefl} we verify that both round-trip composites are the identity.
  We use the fact that since $\flatf$ is defined with $\mathsf{let}$, the commuting conversions allow us to push it inside another $\mathsf{let}$, along with function extensionality to allow this (and the uniqueness principle) to happen inside a $\lambda$-abstraction.
\end{proof}

\begin{figure}
  \centering
    \begin{align*}
    &\quad \flet k^\flat := (\flet f^\flat := g in (\lam{x} \flet u^\flat := x in f(u^\flat)^\flat)^\flat) in (\lam{x} k(x)_\flat)^\flat\\
    &= \flet f^\flat := g in (\flet k^\flat := (\lam{x} \flet u^\flat := x in f(u^\flat)^\flat)^\flat in (\lam{x} k(x)_\flat)^\flat)\\
    &\jdeq \flet f^\flat := g in (\lam{x} (\flet u^\flat := x in f(u^\flat)^\flat)_\flat)^\flat\\
    &= \flet f^\flat := g in (\lam{x} \flet u^\flat := x in f(u^\flat)^\flat{}_\flat)^\flat\\
    &\jdeq \flet f^\flat := g in (\lam{x} \flet u^\flat := x in f(u^\flat))^\flat\\
    &= \flet f^\flat := g in (\lam{x} f(x))^\flat\\
    &\jdeq \flet f^\flat := g in f^\flat\\
    &= g.
  \end{align*}
  \begin{align*}
    &\quad \flet f^\flat := (\flet k^\flat := h in (\lam{x} k(x)_\flat)^\flat) in (\lam{x} \flet u^\flat := x in f(u^\flat)^\flat)^\flat \\
    &= \flet k^\flat := h in (\flet f^\flat := (\lam{x} k(x)_\flat)^\flat in (\lam{x} \flet u^\flat := x in f(u^\flat)^\flat)^\flat) \\
    &\jdeq \flet k^\flat := h in (\lam{x} \flet u^\flat := x in k(u^\flat)_\flat{}^\flat)^\flat \\
    &= \flet k^\flat := h in (\lam{x} \flet u^\flat := x in k(u^\flat))^\flat \\
    &= \flet k^\flat := h in (\lam{x} k(x))^\flat \\
    &\jdeq \flet k^\flat := h in k^\flat \\
    &= h.
  \end{align*}
  \caption{The round-trip composites in \cref{thm:discrete-coreflective-0}}
  \label{fig:disc-corefl}
\end{figure}

\begin{cor}\label{thm:discrete-coreflective}
  If $B::\type$ is crisply discrete, then for any $A::\type$, postcomposition with $\flatf : \flat A \to A$ induces an equivalence
  \begin{equation}
    \flat (B\to \flat A) \simeq \flat (B\to A).\label{eq:flat-corefl}
  \end{equation}
\end{cor}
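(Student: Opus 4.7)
The plan is to reduce to Theorem \ref{thm:discrete-coreflective-0} by using the hypothesis that $B$ is crisply discrete. Since ``crisply discrete'' means we have a crisp witness that $\flatf_B : \flat B \to B$ is an equivalence, we may extract a crisp inverse $e :: B \to \flat B$. Consequently, for any crisp type $X$, precomposition $(\blank) \circ \flatf_B :: (B \to X) \to (\flat B \to X)$ is a crisp equivalence, with crisp inverse given by precomposition with $e$. (Note that $\flatf_B$ itself is crisp, since it depends only on the crisp type $B$.)

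Now apply $\flat$ to these crisp equivalences. By the functoriality of $\flat$ on crisp equivalences (established after Corollary \ref{thm:path-flat}), taking $X$ to be $\flat A$ and then $A$, we obtain equivalences
\[ \flat(B \to \flat A) \simeq \flat(\flat B \to \flat A) \quad\text{and}\quad \flat(B \to A) \simeq \flat(\flat B \to A). \]
Chaining these with the equivalence $\flat(\flat B \to \flat A) \simeq \flat(\flat B \to A)$ supplied by Theorem \ref{thm:discrete-coreflective-0} yields the desired equivalence $\flat(B \to \flat A) \simeq \flat(B \to A)$.

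It remains to verify that the composite obtained this way is indeed (homotopic to) the map induced by postcomposition with $\flatf : \flat A \to A$. Unfolding the definition, the composite sends a crisp $k :: B \to \flat A$ to the function $y \mapsto k(e(y)_\flat)_\flat$, whereas postcomposition with $\flatf$ sends $k$ to $y \mapsto k(y)_\flat$. These agree because $e$ is the inverse of $\flatf_B$, so $e(y)_\flat = y$ for every $y : B$; function extensionality then lets us conclude inside the ambient $\flat(\blank)$.

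The main subtlety will be step 5, the verification that the constructed equivalence agrees with the intended ``postcomposition with $\flatf$'' map. Everything else is a formal consequence of the previous theorem together with functoriality of $\flat$ on crisp equivalences, so the bookkeeping is the only real work.
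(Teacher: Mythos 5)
Your proof is correct and takes essentially the same route as the paper: extract a crisp equivalence $\flat B \simeq B$ from crisp discreteness, note that $\flat$ preserves crisp equivalences so precomposition transports the problem to the form covered by Theorem \ref{thm:discrete-coreflective-0}, and chain. The paper leaves the verification that the composite agrees with postcomposition by $\flatf$ implicit (treating it as obvious from the commuting square of pre/post-composition), whereas you spell it out explicitly; that added check is sound. (Minor cosmetic note: the final sentence refers to a ``step 5'' that doesn't appear in the proof as written.)
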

\begin{proof}
  As remarked after \cref{thm:path-flat}, the functor $\flat$ preserves equivalences, and precomposition with the equivalence $\flatf : \flat B \simeq B$ is an equivalence on function types.
  Thus, this follows immediately from \cref{thm:discrete-coreflective-0}.
\end{proof}

More generally, we have a dependent universal property:

\begin{thm}\label{thm:discrete-coreflective-D}
  For $B::\type$ and $A::\flat B\to\type$, fiberwise postcomposition with $\flatf : \flat A(x) \to A(x)$ induces an equivalence
  \begin{equation}\label{eq:flat-corefl-0D}
    \flat \left(\prd{x:\flat B} (\flet u^\flat := x in \flat(A(u^\flat))) \right) \simeq
    \flat \left(\prd{x:\flat B} A(x) \right)
  \end{equation}
  Therefore, if $B::\type$ is crisply discrete and $A::B\to\type$, fiberwise postcomposition with $\flatf : \flat A(x) \to A(x)$ induces an equivalence
  \begin{equation}\label{eq:flat-corefl-D}
    \flat \left(\prd{x:B} (\flet u^\flat := s(x) in \flat(A(u))) \right) \simeq
    \flat \left(\prd{x:B} A(x) \right)
  \end{equation}
  where $s:B\to \flat B$ is the inverse of $\flatf:\flat B \to B$.
\end{thm}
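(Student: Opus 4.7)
The plan is to generalize the proof of \cref{thm:discrete-coreflective-0} to the dependent setting, yielding~\eqref{eq:flat-corefl-0D}, and then to deduce~\eqref{eq:flat-corefl-D} from it by change of variables along the crisp equivalence $s$.

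For~\eqref{eq:flat-corefl-0D}, I would define the left-to-right map as follows. Given $h$ on the left, destruct via $\flat$-induction to a crisp $k :: \tprd{x:\flat B} (\flet u^\flat := x in \flat A(u^\flat))$. Since the context is now crisp, it suffices to produce a function $\prd{x:\flat B} A(x)$; for $x:\flat B$ we $\flat$-induct on $x$, and when $x \jdeq u^\flat$ for $u::B$ the codomain type of $k$ reduces judgmentally by the computation rule for $\mathsf{let}$ to $\flat A(u^\flat)$, so $k(u^\flat)_\flat : A(u^\flat)$ provides the required value. Symbolically,
\[ h \;\mapsto\; \flet k^\flat := h in \bigl(\lam{x:\flat B} \flet u^\flat := x in k(u^\flat)_\flat\bigr)^\flat. \]
For the right-to-left map, destruct $g$ to $f :: \prd{x:\flat B} A(x)$, then produce the required dependent function by $\flat$-induction on $x$, which reduces the target to $\flat A(u^\flat)$ for $u::B$, to be inhabited by $f(u^\flat)^\flat$:
\[ g \;\mapsto\; \flet f^\flat := g in \bigl(\lam{x:\flat B} \flet u^\flat := x in f(u^\flat)^\flat\bigr)^\flat. \]

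The two round-trip verifications proceed exactly as in \cref{fig:disc-corefl}. One uses \cref{thm:commute-let,thm:commute-flat} (applied pointwise under the $\lambda$ via function extensionality) to push destructions past each other and past $(\blank)^\flat$ and $(\blank)_\flat$, the computation rule for $\flat$ to collapse nested destructions, the cancellations $u^\flat{}_\flat \jdeq u$ and $u_\flat{}^\flat = u$ established in \cref{sec:flat-left-exact}, and finally the uniqueness principle \cref{thm:flat-eta} to re-absorb the inner $\mathsf{let}$ before using it once more on the outer $\flatf$-destruction. The one subtlety beyond the non-dependent computation is that the codomain of $k$ is itself a $\mathsf{let}$-expression at the level of types; keeping every intermediate term well-typed as $x$ is destructed uses the type-level commuting conversion \cref{thm:commute-flat-type}, but once $x$ is replaced by $u^\flat$ the codomain reduces judgmentally and the remaining manipulations are identical to those of \cref{fig:disc-corefl}.

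For~\eqref{eq:flat-corefl-D}, since $B$ is crisply discrete we have a crisp inverse $s :: B \to \flat B$ of $\flatf$, and precomposition with the mutually inverse crisp equivalences $s$ and $\flatf$ induces an equivalence $(\prd{x:\flat B}F(x)) \simeq (\prd{y:B}F(s(y)))$ for any crisp $F::\flat B\to\type$; since $\flat$ preserves crisp equivalences (by functoriality on homotopies, as in \cref{thm:path-flat} and the discussion following it), this passes through $\flat$. Applying this to both sides of~\eqref{eq:flat-corefl-0D} with $A':\flat B\to\type$ defined by $A'(x) \defeq \flet u^\flat := x in A(u)$ gives the LHS of~\eqref{eq:flat-corefl-D} on one side, and on the other side $\flat(\prd{x:B} \flet u^\flat := s(x) in A(u))$; by \cref{thm:commute-app} the inner $\mathsf{let}$ equals $A(\flet u^\flat := s(x) in u) = A(\flatf(s(x))) = A(x)$, which is the RHS of~\eqref{eq:flat-corefl-D}. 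The main obstacle throughout is the purely syntactic bookkeeping with the dependent $\mathsf{let}$-bound codomain; no type-theoretic ideas beyond those already used for \cref{thm:discrete-coreflective-0} and the commuting conversions of \cref{sec:crisp-induction} are needed.
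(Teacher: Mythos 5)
Your construction of the two maps for~\eqref{eq:flat-corefl-0D} is identical to the paper's, and your treatment of~\eqref{eq:flat-corefl-D} via the crisp equivalence $s$ spells out exactly the ``transporting along $\flat B\simeq B$'' that the paper leaves implicit. The proof is correct and follows the same approach, just with more explicit bookkeeping.
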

\begin{proof}
  The left-to-right map in~\eqref{eq:flat-corefl-0D} is defined like that in~\eqref{eq:flat-corefl-0}, taking $h$ 
  to
  \[ \flet k^\flat := h in (\lam{x:\flat B} \flet u^\flat := x in k(u^\flat)_\flat)^\flat \quad: \flat \left(\prd{x:\flat B} A(x) \right). \]
  Its inverse looks exactly the same as the inverse of~\eqref{eq:flat-corefl-0}, though now all the functions are dependent:
  \[ \flet f^\flat := g in (\lam{x:\flat B} \flet u^\flat := x in f(u^\flat)^\flat)^\flat \quad: \flat (\flat B \to \flat A). \]
  The calculations in \cref{fig:disc-corefl} generalize immediately, and~\eqref{eq:flat-corefl-D} is again obtained by transporting along the equivalence $\flat B \simeq B$.
\end{proof}

\begin{lem}
  For $A::\type$ to be crisply discrete, it suffices that $\flatf : \flat A \to A$ have a crisp section.
\end{lem}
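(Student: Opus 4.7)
The plan is to show the other composite $s \circ \flatf = \id_{\flat A}$, so that combined with the given section identity $\flatf \circ s = \id_A$ this exhibits $\flatf$ as an equivalence, witnessed crisply (since $s$ is crisp). By function extensionality, it suffices to construct, for each cohesive $x : \flat A$, an identification $s(x_\flat) = x$.

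First I would apply $\flat$-induction on $x$, reducing the goal to producing, for each crisp $u :: A$, an element of $(s(u^\flat{}_\flat) = u^\flat)$. The computation rule $u^\flat{}_\flat \jdeq u$ simplifies this further to $s(u) = u^\flat$ in $\flat A$. Now both $s(u)$ and $u^\flat$ are crisp elements of $\flat A$, because $s$ is a crisp section and $u$ is a crisp variable; so \cref{thm:equiv-path-flat} applies and reduces the goal to constructing an element of $\flat(s(u)_\flat = u)$. The crisp section witness provides exactly a crisp term $p :: s(u)_\flat = u$, so $p^\flat$ furnishes the required element, and we transport back along the equivalence to obtain $s(u) = u^\flat$.

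The only point requiring care is the application of \cref{thm:equiv-path-flat}: we must verify that inside the $\flat$-induction, the two sides of our equation are genuinely crisp, so that the crisp equivalence between identifications in $\flat A$ and $\flat$-wrapped identifications in $A$ is available. This is immediate from the crispness of $s$ together with the fact that $\flat$-induction introduces $u$ as a crisp variable; no cohesive variables appear in either $s(u)$ or $u^\flat$. Once this is confirmed, the rest of the argument is just bookkeeping with the introduction rule for $\flat$ and the computation rule.
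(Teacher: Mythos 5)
Your proposal is correct and follows essentially the same route as the paper: reduce by $\flat$-induction and the computation rule to showing $s(u) = u^\flat$ for crisp $u::A$, then use the characterization of identity types in $\flat A$ (\cref{thm:equiv-flat-path}/\cref{thm:equiv-path-flat}) to pull the crisp homotopy witness $H(u)::s(u)_\flat = u$ across to an identification in $\flat A$. The paper phrases the last step via \cref{thm:path-flat} (after first rewriting $s(u)$ as $s(u)_\flat{}^\flat$) rather than invoking \cref{thm:equiv-path-flat} directly, but this is an immaterial variation on the same argument.
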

\begin{proof}
  First we should clarify what is meant by a ``crisp section''.
  The type of sections of $\flatf$ is $\sm{s:A\to \flat A} \prd{a:A} s(a)_\flat = a$.
  Having a crisp element of this type means having a crisp $s::A\to \flat A$ and also a crisp homotopy $H:: \prd{a:A} s(a)_\flat = a$.

  It remains to show that $s(x_\flat) = x$ for all $x:\flat A$.
  By $\flat$-induction, it suffices to show that for any $u::A$ we have $s(u^\flat{}_\flat) = u^\flat$.
  But $u^\flat{}_\flat\jdeq u$, so what we must show is $s(u) = u^\flat$.
  Now $s(u):\flat A$ is crisp, so we also have $s(u) = s(u)_\flat{}^\flat$; thus it suffices to show $s(u)_\flat{}^\flat = u^\flat$.
  But now \cref{thm:path-flat} tells us that this follows from $H(u) :: s(u)_\flat = u$.
\end{proof}

\begin{thm}\label{thm:discrete-flat}
  For any $A::\type$, the type $\flat A$ is crisply discrete.
\end{thm}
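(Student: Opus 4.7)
My plan is to apply the preceding lemma, which reduces the goal to constructing a crisp section of $\flatf : \flat\flat A \to \flat A$ together with a crisp witness that it is a section. Since $A$ is assumed crisp throughout, every expression we build using only $A$ and the operations of $\flat$ will automatically be crisp, so the crispness of the data will be essentially automatic; the real content is constructing the section.

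The section should be a map $s : \flat A \to \flat \flat A$ that ``doubles up'' the $\flat$. To define $s(x)$ for $x : \flat A$, we apply $\flat$-induction to $x$: when $x$ is $u^\flat$ for some crisp $u :: A$, the expression $u^\flat$ is itself a crisp element of $\flat A$ (since $u$ is crisp), so we may apply $(\blank)^\flat$ to it and take $(u^\flat)^\flat : \flat\flat A$. In symbols,
\[ s(x) \defeq \flet u^\flat := x in (u^\flat)^\flat. \]
This expression is crisp because its only free variables ($A$, and implicitly the ambient crisp context) are crisp.

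It remains to check that $\flatf \circ s$ is the identity on $\flat A$. Unfolding $(\blank)_\flat$ and then commuting the two $\mathsf{let}$s using \cref{thm:commute-let}, we compute
\begin{align*}
  s(x)_\flat
  &\jdeq \flet v^\flat := (\flet u^\flat := x in (u^\flat)^\flat) in v\\
  &= \flet u^\flat := x in (\flet v^\flat := (u^\flat)^\flat in v)\\
  &\jdeq \flet u^\flat := x in u^\flat\\
  &= x,
\end{align*}
where the last step is the uniqueness principle (\cref{thm:flat-eta}). Both the homotopy and $s$ are built from only the crisp datum $A$, so they are crisp, and the previous lemma then gives that $\flatf : \flat\flat A \to \flat A$ is a crisp equivalence, i.e.\ $\flat A$ is crisply discrete.

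I do not expect a serious obstacle here: the main subtlety is simply keeping track of why the definition of $s$ type-checks (one needs $u^\flat$ rather than $u$ inside the outer $(\blank)^\flat$, because $(\blank)^\flat$ demands its argument live in the correct crisp context), and that is resolved by the $\flat$-induction destructuring $x$ into a crisp $u$.
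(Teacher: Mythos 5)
Your proof is correct and matches the paper's argument: you define the same section $s(x) \defeq \flet u^\flat := x in u^\flat{}^\flat$, verify it via a commuting conversion (\cref{thm:commute-let}), the computation rule, and the uniqueness principle (\cref{thm:flat-eta}), and then invoke the preceding lemma about crisp sections. The only difference is cosmetic: the paper leaves the appeal to the section lemma and the crispness check implicit, whereas you spell both out.
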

\begin{proof}
  We define $s:\flat A \to \flat\flat A$ by $s(x) \defeq (\flet u^\flat := x in u^\flat{}^\flat)$.
  Applying $\flatf$ to $s(x)$, we get
  \begin{equation*}
    \quad (\flet u^\flat := x in u^\flat{}^\flat)_\flat
    = (\flet u^\flat := x in u^\flat{}^\flat{}_\flat)
    \jdeq (\flet u^\flat := x in u^\flat)
    = x
  \end{equation*}
  using a commuting conversion and the uniqueness rule.
\end{proof}

Thus $\flat$ coreflects into the discrete types, dually to how $\sharp$ reflects into the codiscrete types.
However, there are several ways in which the duality breaks down.
Firstly there is the outer $\flat$ on the equivalence in \cref{thm:discrete-coreflective}.

Secondly, we have seen that $\sharp$ and $\flat$ are \emph{both} left exact, whereas a strict duality would interchange left exactness with right exactness.
Note that exactness of reflectors and coreflectors can be a bit subtle and confusing.
For instance, a reflector is a left adjoint, so it always preserves colimits, i.e.\ takes colimits in the ambient category to colimits \emph{in the reflective subcategory} --- but these latter colimits will only coincide with those in the ambient category if the subcategory is closed under them.
This is not generally the case for $\sharp$: the coproduct of codiscrete spaces is not generally codiscrete.

Dually, a coreflector is a right adjoint, so it always takes limits in the ambient category to limits in the coreflective subcategory.
The content of \cref{thm:flat-pres-prod,thm:flat-pres-pullbacks,thm:flat-pres-unit} is thus that the discrete types \emph{are} closed under finite limits.
We record these facts:

\begin{thm}\label{thm:discrete-prod}\label{thm:path-discrete}\label{thm:pullback-discrete}
  If $A,B,C::\type$ are crisply discrete and $f::A\to C$ and $g::B\to C$ and $x,y::A$, then the following types are crisply discrete:
  \begin{mathpar}
    A\times B \and
    A\times_C B \and
    \unit \and
    (x=y)
  \end{mathpar}
\end{thm}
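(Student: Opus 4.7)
The plan is, for each of the four types, to combine a preservation result for $\flat$ proved earlier with the discreteness of the given components, and then to check that the resulting equivalence coincides with $\flatf$. Since all the data (the types $A,B,C$, the maps $f,g$, the points $x,y$, and the witnesses of discreteness) are crisp, and since $\flat$ preserves crispness, the equivalence witnesses will be crisp as well, giving crisp discreteness in each case.

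The unit case is immediate from \cref{thm:flat-pres-unit}. For $A\times B$, \cref{thm:flat-pres-prod} gives $\flat(A\times B)\simeq \flat A\times\flat B$, and since $A$ and $B$ are discrete, the product map $\flatf\times\flatf : \flat A\times\flat B\to A\times B$ is a product of equivalences, hence an equivalence. By naturality of $\flatf$ (in each coordinate), the composite $\flat(A\times B)\to A\times B$ is $\flatf$ itself. For $A\times_C B$, \cref{thm:flat-pres-pullbacks} gives $\flat(A\times_C B)\simeq\flat A\times_{\flat C}\flat B$; the three naturality squares relating $\flat f,\flat g$ to $f,g$ via $\flatf$ are pointwise equivalences on the three spaces (by discreteness of $A,B,C$), so the induced map on pullbacks is an equivalence, and naturality again identifies the composite with $\flatf_{A\times_C B}$.

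The identity type case is the one requiring a little more care. Since $x,y::A$, \cref{thm:equiv-flat-path} produces an equivalence $e_1:\flat(x=y)\simeq (x^\flat=y^\flat)$ whose action on a generator $p^\flat$ (for $p::x=y$) is $p^{=\flat}$. On the other hand, discreteness of $A$ says $\flatf:\flat A\to A$ is an equivalence, so $\ap_{\flatf}:(x^\flat=y^\flat)\to(x^\flat{}_\flat=y^\flat{}_\flat)\jdeq(x=y)$ is also an equivalence. By \cref{thm:path-flat} we have $\ap_{\flatf}(p^{=\flat})=p$, which equals $\flatf(p^\flat)\jdeq (p^\flat)_\flat = p$; so on generators the composite $\ap_{\flatf}\circ e_1$ agrees with $\flatf_{(x=y)}$, and by $\flat$-induction the two maps are equal throughout. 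Hence $\flatf_{(x=y)}$ is a composite of equivalences.

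The main obstacle, such as it is, is just the identity-type case: we have to unpack the definition of the equivalence from \cref{thm:equiv-flat-path}, match it against $\flatf$ using the second clause of \cref{thm:path-flat}, and then appeal to $\flat$-induction to conclude the two maps are equal as functions $\flat(x=y)\to(x=y)$. Everything else is a direct application of earlier preservation lemmas together with the observation that equivalences are closed under finite products and pullbacks.
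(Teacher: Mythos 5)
Your proof is correct. For $\unit$, $A\times B$, and $A\times_C B$ you follow the paper's route (compose the preservation equivalence for $\flat$ with the discreteness equivalences of the factors), and you additionally spell out the check, implicit in the paper, that the resulting composite is (homotopic to) $\flatf$ itself; that check is the right thing to worry about, and your appeal to naturality settles it.

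For $(x=y)$ you take a genuinely different route from the paper. The paper uses the abstract observation that equivalences lift to identity types: since $\flatf:\flat A\to A$ is an equivalence, it suffices to treat $x,y::\flat A$, and then \cref{thm:equiv-path-flat} exhibits $(x=y)$ as equivalent to a type of the form $\flat(\blank)$, which is always crisply discrete by \cref{thm:discrete-flat}. You instead work directly from \cref{thm:equiv-flat-path} and \cref{thm:path-flat}, matching $\flatf_{(x=y)}$ against the composite $\ap_{\flatf}\circ\decode$ by checking agreement on generators $p^\flat$ and then invoking $\flat$-induction. Both arguments are correct; yours is more computational and verifies explicitly that the map asserted to be an equivalence is $\flatf$, while the paper's is shorter because it never needs to identify the two maps at all --- it only needs $(x=y)$ to be discrete, not that a particular map is the inverse, and so it can transport discreteness along an arbitrary equivalence. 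Your proof is more self-contained but does more work than strictly necessary.
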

\begin{proof}
  For $A\times B$, we have $A\times B \simeq \flat A \times \flat B \simeq \flat(A\times B)$.
  The case of $A\times_c B$ is similar, and $\unit$ is exactly \cref{thm:flat-pres-unit}.
  For $x=y$, since equivalences lift to identity types, it suffices to show that if $x,y::\flat A$ then $(x=y)$ is discrete;
  but this follows from \cref{thm:discrete-flat,thm:equiv-path-flat}.
\end{proof}

More generally, we have:

\begin{thm}\label{thm:discrete-sigma-0}
  Suppose $A::\type$ is crisply discrete, and that $B::A\to\type$ is such that for every $x:\flat A$, we have $(\flet u^\flat := x in \isdisc(B(u)))$.
  Then $\sm{x:A} B(x)$ is crisply discrete.
\end{thm}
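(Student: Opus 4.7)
The plan is to show that $\flatf : \flat(\sm{x:A} B(x)) \to \sm{x:A} B(x)$ is a (crisp) equivalence by assembling it from a chain of four explicit equivalences built from the hypotheses; at the end, tracing a generic element $(a,b)^\flat$ through the chain will confirm that the composite really is $\flatf$.

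First I would invoke \cref{thm:flat-pres-sigma} to obtain
$$\flat\left(\sm{x:A} B(x)\right) \simeq \sm{x:\flat A} \big(\flet u^\flat := x in \flat B(u)\big).$$
Second, I would upgrade the fiber by using the discreteness hypothesis on $B$: for each $x:\flat A$ I construct an equivalence $(\flet u^\flat := x in \flat B(u)) \simeq (\flet u^\flat := x in B(u))$ by $\flat$-induction on $x$. When $x = v^\flat$ the two sides reduce respectively to $\flat B(v)$ and $B(v)$, and the supplied element of $(\flet u^\flat := v^\flat in \isdisc(B(u))) \jdeq \isdisc(B(v))$ makes $\flatf : \flat B(v) \to B(v)$ an equivalence. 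Third, the commuting conversion \cref{thm:commute-app} applied to $B : A \to \type$ identifies $(\flet u^\flat := x in B(u))$ with $B(\flet u^\flat := x in u) \jdeq B(x_\flat)$. Fourth, the crisp discreteness of $A$ makes $\flatf : \flat A \to A$ an equivalence, so the induced Sigma-type equivalence gives $\sm{x:\flat A} B(x_\flat) \simeq \sm{a:A} B(a)$. Composing the four equivalences yields the required $\flat(\sm{x:A} B(x)) \simeq \sm{x:A} B(x)$, and since every ingredient (the crisp discreteness witness for $A$ and the crisp fiberwise discreteness witness for $B$) is crisp, the resulting equivalence witnesses \emph{crisp} discreteness.

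The main delicate point is Step 2, where the fiberwise equivalence is produced inside a $\flat$-induction and then must be assembled to fit cleanly into the Sigma-type equivalence of Step 1; this is routine but requires attention to how $\flat$-induction interacts with the $\mathsf{let}$-expressions appearing in the fibers. A secondary subtlety is the use of \cref{thm:commute-app} with the ``function'' being $B:A\to\type$, which is unproblematic given universe polymorphism. Everything else is a direct calculation with canonical equivalences, and the naturality of each step ensures that the composite agrees with $\flatf$ on constructors of the form $(a,b)^\flat$.
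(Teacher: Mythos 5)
Your proof is correct and matches the paper's own proof essentially step for step: it invokes \cref{thm:flat-pres-sigma}, uses the hypothesis on $B$ to drop the $\flat$ on the fibers via $\flat$-induction, applies a commuting conversion to rewrite the fiber as $B(x_\flat)$, and finally transports along the equivalence $\flat A \simeq A$. The only difference is that you spell out the commuting conversion (\cref{thm:commute-app}) and the fiberwise $\flat$-induction more explicitly than the paper's terse chain of equivalences does, which is fine.
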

The hypothesis is morally ``$B(u)$ is discrete for every $u::A$'', but stating it that way would require this theorem to be only a ``derivable rule'' (see \cref{rmk:derivable-rule}), since we cannot say ``for every $u::A$'' in an ordinary hypothesis.
\begin{proof*}
  Using \cref{thm:flat-pres-sigma}, we have
  \begin{align*}
    \flat\left(\tsm{x:A} B(x) \right)
    &\simeq \left(\tsm{x:\flat A} \big(\flet u^\flat := x in \flat B(u)\big)\right)\\
    & \simeq \left(\tsm{x:\flat A} \big(\flet u^\flat := x in B(u)\big)\right)\\
    & \simeq \tsm{x:\flat A} B(x_\flat)\\
    & \simeq \tsm{u:A} B(u).\qedhere
  \end{align*}
\end{proof*}

On the other hand, the dual of left-exactness of $\sharp$ would be right-exactness of $\flat$, which is true.
In fact, as remarked in \cref{sec:crisp-induction}, this is essentially the content of the crisp induction principles.
We record these facts:

\begin{thm}\label{thm:empty-discrete}\label{thm:coprod-discrete}\label{thm:nat-discrete}\label{thm:coeq-discrete}\label{thm:s1-discrete}\label{thm:trunc-discrete}\label{thm:setcoeq-discrete}
  If $A,B::\type$ are crisply discrete and $f,g::A\to B$ and $n::\N$, then the following types are crisply discrete:
  \begin{mathpar}
    A+B \and \emptyset \and \N \and \hocirc \and \coeq(f,g)
    \and \trunc n A
  \end{mathpar}
  Combining the last two, set-coequalizers also preserve crisp discreteness.\qed
\end{thm}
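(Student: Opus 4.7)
My plan is to read each case of the theorem directly off a corresponding preservation theorem for $\flat$ proved earlier, combined with the hypothesis that $A$ and $B$ are crisply discrete. The uniform pattern is: we already know that $\flat$ applied to each constructor is equivalent to the same constructor applied to the $\flat$-images of its inputs, and these $\flat$-images are in turn equivalent to the originals by hypothesis, so chaining gives an equivalence $\flat X \simeq X$. The only point that needs care is that this composite agrees, up to a typal equality, with $\flatf$ itself, which is what the definition of crisp discreteness asks.

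Concretely, for $A+B$ I would chain $\flat(A+B) \simeq \flat A + \flat B \simeq A+B$ via \cref{thm:flat-pres-coprod} and the hypothesis on $A,B$. The cases $\emptyset$ and $\N$ are direct from the equivalences $\flat\emptyset \simeq \emptyset$ and $\flat\N \simeq \N$ noted right after \cref{thm:flat-pres-coprod}, and the case $\hocirc$ is \cref{thm:flat-pres-hocirc}. For $\coeq(f,g)$ I apply \cref{thm:flat-pres-coeq} to get $\flat\coeq(f,g)\simeq\coeq(\flat f,\flat g)$, then observe that by the naturality of $\flatf$ the assumed equivalences $\flat A\simeq A$ and $\flat B\simeq B$ assemble into an equivalence of parallel pairs $(\flat f,\flat g)\simeq(f,g)$, which induces an equivalence of coequalizers. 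For $\trunc n A$ I apply \cref{thm:flat-pres-trunc} followed by the discreteness of $A$ inside the truncation.

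The verification that each composite is $\flatf$ proceeds by $\flat$-induction: on a generator $u^\flat$ of $\flat X$, the composite unpacks $u$ into its constructor form, applies $(\blank)^\flat$ and $(\blank)_\flat$ to the pieces, and reassembles, producing $u$ itself; the uniqueness principle \cref{thm:flat-eta} then identifies this map with $\flatf$. The main locus of bookkeeping is the coequalizer case, where one must track the counit $\flatf$ on $A$, on $B$, and on $\coeq(f,g)$ simultaneously and appeal to functoriality of $\flat$ on homotopies to know that the relevant squares commute; but no real obstacle arises. The closing remark about set-coequalizers is then immediate, since a set-coequalizer is $\trunc 0 {\coeq(f,g)}$ and we have just shown that both $\coeq$ and $\trunc 0$ preserve crisp discreteness.
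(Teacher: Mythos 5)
Your proof is correct and is essentially the approach the paper intends: the \qed after "We record these facts" is pointing at exactly the preservation results (\cref{thm:flat-pres-coprod}, \cref{thm:flat-pres-hocirc}, \cref{thm:flat-pres-coeq}, \cref{thm:flat-pres-trunc}, and the remarks on $\emptyset$ and $\N$) that you chain with the crisp-discreteness hypotheses. Your attention to the need to identify the composite equivalence with $\flatf$ itself, via $\flat$-induction, is the right bookkeeping point and handles the one place this chaining could in principle go wrong.
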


\subsection{Adjointness of $\flat$ and $\sharp$}
\label{sec:adjo-flat-sharp}

To complete our study of $\flat$, we consider how it interacts with $\sharp$.
Our first observation is that $\flat$ and $\sharp$ ``eat each other''.

\begin{thm}\label{thm:co-disc-equiv-gamma}
  For any $A::\type$, there are natural equivalences
  \[ \sharp\flat A \simeq \sharp A \qquad\text{and}\qquad \flat A \simeq \flat \sharp A. \]
\end{thm}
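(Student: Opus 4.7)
The plan is to exhibit each equivalence by evident candidate maps and verify the round-trips using the computation and uniqueness rules of $\flat$ and $\sharp$ together with the commuting conversion of \cref{thm:commute-let}.

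For $\sharp\flat A\simeq \sharp A$, one direction is the functorial image $\sharp\flatf$ of the counit $\flatf:\flat A\to A$. For the inverse I define $g:\sharp A\to \sharp\flat A$ by $g(y)\defeq (y_\sharp{}^\flat)^\sharp$; this typechecks because the outer $(\blank)^\sharp$ makes $y$ crisp, so $y_\sharp::A$ is crisp and $(y_\sharp)^\flat:\flat A$ follows. Both round-trip composites then reduce inside a single outer $(\blank)^\sharp$ by combinations of the $\sharp$- and $\flat$-computation rules together with the identity $v_\flat{}^\flat = v$ for crisp $v::\flat A$ recorded in~\eqref{eq:eta-flatf}, collapsing by the $\sharp$ uniqueness rule to $y$ and $x$ respectively.

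For $\flat A\simeq \flat\sharp A$, the forward map is $\flat\sharpf$, available because $\sharpf$ is a crisp function out of the crisp type $A$. For the inverse I define $h:\flat\sharp A\to \flat A$ by $h(x)\defeq (\flet u^\flat := x in (u_\sharp)^\flat)$, well-typed because the $\flat$-induction supplies crisp $u::\sharp A$ and hence crisp $u_\sharp::A$. Each round-trip produces a doubly-nested $\mathsf{let}$-expression, which \cref{thm:commute-let} flattens to a single outer $\flet u^\flat := x in \ldots$; the inner body reduces by $\flat$-computation followed by the appropriate $\sharp$ rule ($\sharp$-computation $(u^\sharp)_\sharp \jdeq u$ in one direction, $\sharp$-uniqueness $(u_\sharp)^\sharp \jdeq u$ in the other, the latter valid precisely because $u$ is crisp) to $u^\flat$; then the $\flat$ uniqueness principle of \cref{thm:flat-eta} collapses $\flet u^\flat := x in u^\flat$ back to $x$.

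The main obstacle will be the crispness bookkeeping: every application of $(\blank)^\flat$, $(\blank)_\sharp$, or the $\sharp$ uniqueness rule carries a crispness precondition, and the candidate maps typecheck only because each such operation is protected by either an enclosing $(\blank)^\sharp$ (whose scope is crisp) or a $\flat$-induction (which introduces a fresh crisp variable). Naturality in a crisp $f::A\to B$ is then automatic, since the definitions are uniform in $A$ and intertwine $\flat f$ with $\sharp f$ via the already-verified naturality of $\flatf$ and $\sharpf$.
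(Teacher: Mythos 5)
Your proposal matches the paper's proof essentially step for step: the candidate maps unfold to exactly the paper's explicit terms (e.g.\ $\sharp\flatf(y)\jdeq y_\sharp{}_\flat{}^\sharp$ and $\flat\sharpf(x)\jdeq\flet u^\flat := x in u^\sharp{}^\flat$), and the round-trip verifications invoke the same computation and uniqueness rules, the typal equality~\eqref{eq:eta-flatf}, and the commuting conversion of \cref{thm:commute-let} followed by the $\flat$ uniqueness principle of \cref{thm:flat-eta}. The only minor imprecision is that~\eqref{eq:eta-flatf} is needed in just one of the two round trips for the first equivalence, not both; otherwise your account is accurate.
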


This is sensible according to our topological intuition: $\flat$ and $\sharp$ are supposed to re-equip a space with the discrete or codiscrete topology, respectively, so it shouldn't matter whether we have already modified its topology in the other way.

\begin{proof*}
  The first equivalence is defined by
  \[ \lam{y:\sharp\flat A} y_\sharp{}_\flat{}^\sharp \qquad\text{and}\qquad \lam{x:\sharp A} x_\sharp{}^\flat{}^\sharp. \]
  The round-trip composite starting with $x:\sharp A$ yields
  \[ x_\sharp{}^\flat{}^\sharp{}_\sharp{}_\flat{}^\sharp
  \jdeq x_\sharp{}^\flat{}_\flat{}^\sharp
  \jdeq x_\sharp{}^\sharp
  \jdeq x
  \]
  while that starting with $y:\sharp\flat A$ yields
  \[ y_\sharp{}_\flat{}^\sharp{}_\sharp{}^\flat{}^\sharp
  \jdeq y_\sharp{}_\flat{}^\flat{}^\sharp
  = y_\sharp{}^\sharp
  \jdeq y.
  \]
  The second equivalence is defined by
  \[ \lam{x:\flat A} \flet u^\flat := x in u^\sharp{}^\flat \qquad\text{and}\qquad \lam{y:\flat\sharp A} \flet v^\flat := y in v_\sharp{}^\flat. \]
  The round-trip composite starting with $x:\flat A$ yields
  \begin{align*}
    &\quad \flet v^\flat := (\flet u^\flat := x in u^\sharp{}^\flat) in v_\sharp{}^\flat\\
    &= \flet u^\flat := x in (\flet v^\flat := u^\sharp{}^\flat in v_\sharp{}^\flat)\\
    &\jdeq \flet u^\flat := x in u^\sharp{}_\sharp{}^\flat\\
    &\jdeq \flet u^\flat := x in u^\flat\\
    &= u.
  \end{align*}
  And the round-trip composite starting with $y:\flat\sharp A$ yields
  \begin{align*}
    &\quad \flet u^\flat := (\flet v^\flat := y in v_\sharp{}^\flat) in u^\sharp{}^\flat\\
    &= \flet v^\flat := y in (\flet u^\flat := v_\sharp{}^\flat in u^\sharp{}^\flat)\\
    &\jdeq \flet v^\flat := y in v_\sharp{}^\sharp{}^\flat\\
    &\jdeq \flet v^\flat := y in v^\flat\\
    &= v.\qedhere
  \end{align*}
\end{proof*}

\begin{cor}\label{thm:co-disc-equiv}
  For any crisply discrete $A::\type$, the map $A\to \flat\sharp A$ is an equivalence.
  Dually, for any crisply codiscrete $A::\type$, the map $\sharp \flat A \to A$ is an equivalence.\qed
\end{cor}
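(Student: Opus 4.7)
The plan is to deduce both statements directly from Theorem \ref{thm:co-disc-equiv-gamma}, by identifying the equivalences given there with the canonical maps $\flat\sharpf$ and $\sharp\flatf$, and then composing with the inverses of $\flatf$ or $\sharpf$ provided by the discreteness/codiscreteness hypotheses.

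First I would unpack the two equivalences of Theorem \ref{thm:co-disc-equiv-gamma}. The left-to-right map $\flat A \to \flat\sharp A$ is $x \mapsto \flet u^\flat := x in u^\sharp{}^\flat$, and comparing with the definition of functoriality $\flat f(x) \jdeq (\flet u^\flat := x in f(u)^\flat)$ we see that this is literally $\flat\sharpf$. Similarly, the map $\sharp\flat A \to \sharp A$ given by $y \mapsto y_\sharp{}_\flat{}^\sharp$ matches the definition $\sharp f(y) \jdeq f(y_\sharp)^\sharp$ applied to $\flatf$, so it is $\sharp\flatf$. This identification is the only piece of real content beyond Theorem \ref{thm:co-disc-equiv-gamma}, and it is purely a matter of unfolding definitions.

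For the first claim, suppose $A::\type$ is crisply discrete, so that $\flatf : \flat A \to A$ has a crisp inverse $s :: A \to \flat A$. By Theorem \ref{thm:co-disc-equiv-gamma} (and the identification above), $\flat\sharpf : \flat A \to \flat\sharp A$ is an equivalence, so the composite $\flat\sharpf \circ s : A \to \flat\sharp A$ is an equivalence as well. It remains to observe that this composite is (equal to) the canonical map $A \to \flat\sharp A$ whose equivalence we wish to assert.

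For the dual, suppose $A::\type$ is crisply codiscrete, so that $\sharpf : A \to \sharp A$ has a crisp inverse $r :: \sharp A \to A$. Again by Theorem \ref{thm:co-disc-equiv-gamma}, $\sharp\flatf : \sharp\flat A \to \sharp A$ is an equivalence, and composing with $r$ yields an equivalence $\sharp\flat A \to A$. The main (minor) obstacle in both parts is the bookkeeping needed to confirm that the constructed composites agree with the intended canonical maps; once one has recognized the equivalences of Theorem \ref{thm:co-disc-equiv-gamma} as $\flat\sharpf$ and $\sharp\flatf$ respectively, both verifications are immediate by naturality and the computation/uniqueness rules for $\flat$ and $\sharp$.
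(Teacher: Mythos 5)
Your proof is correct and takes exactly the route the paper intends by omitting the proof (the \qed after the statement signals it follows directly from \cref{thm:co-disc-equiv-gamma}). You correctly identify the two equivalences there as $\flat\sharpf$ and $\sharp\flatf$ (by unfolding the functoriality definitions), and the remaining naturality check is immediate: for instance $\flatf \circ \flat\sharpf \circ s = \sharpf \circ \flatf \circ s = \sharpf$, so your composite is the canonical map factoring $\sharpf$ through $\flat\sharp A$, and dually.
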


We can express this more internally using the type $\codisc \defeq \tsm{A:\type} \iscodisc(A)$ from \cref{thm:codiscrete-universe} and an analogous universe of discrete types, defined as:
\[ \disc \defeq \sm{A:\flat\type} (\flet B^\flat := A in \isdisc(B)). \]

\begin{corua}\label{thm:disc-is-flat-codisc}
  $\disc\simeq \flat\codisc$; hence also $\sharp\disc\simeq \codisc$.
\end{corua}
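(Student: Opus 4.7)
The plan is to construct an explicit equivalence $\disc\simeq\flat\codisc$ by using $\flat$ and $\sharp$ to swap between crisp discrete and crisp codiscrete representations; the second claim $\sharp\disc\simeq\codisc$ then follows by applying $\sharp$ to the first, invoking $\sharp\flat\codisc\simeq\sharp\codisc$ from \cref{thm:co-disc-equiv-gamma} and $\sharp\codisc\simeq\codisc$ from \cref{thm:codiscrete-universe} (which is where \mysc{UA} is genuinely needed).

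First I would define $\Phi:\flat\codisc\to\disc$ by $\flat$-induction on the input: given a crisp $(A,c)::\codisc$, output $((\flat A)^\flat, d)$ where $d::\isdisc(\flat A)$ is supplied by \cref{thm:discrete-flat}. Dually, define $\Psi:\disc\to\flat\codisc$ by destructing the input and doing $\flat$-induction on the first component: given a crisp $A::\type$ with $X=A^\flat$, output $(\sharp A, c)^\flat$, where $c::\iscodisc(\sharp A)$ comes from \cref{thm:codiscrete-sharp}. Note that $\Psi$ does not actually use the discreteness witness $d$ in its construction.

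For the round trip $\Psi\Phi$, we compute $(A,c)^\flat \mapsto (\sharp\flat A, c')^\flat$; by \cref{thm:equiv-flat-path}, propositionality of $\iscodisc$, and univalence, equality in $\flat\codisc$ reduces to inhabiting $\flat(\sharp\flat A\simeq A)$, which follows from the crisp equivalence of \cref{thm:co-disc-equiv} (the equivalence there is provided by a crisp construction). For $\Phi\Psi$, we compute $(A^\flat, d) \mapsto ((\flat\sharp A)^\flat, d')$; the needed equality in $\disc$ reduces by the same principles, and combined with the crisp equivalence $\flat\sharp A\simeq\flat A$ from \cref{thm:co-disc-equiv-gamma}, it becomes $\flat(\flat A\simeq A)$, i.e.\ $\flat\isdisc(A)$.

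The main obstacle is promoting the cohesive witness $d:\isdisc(A)$ in context to the crisp witness $\flat\isdisc(A)$; equivalently, showing that $\isdisc(A)$ is a discrete proposition for every crisp $A$. I expect to handle this via the universal property of $\flat$ as a coreflector (\cref{thm:discrete-coreflective,thm:discrete-coreflective-D}): since $(\blank)_\flat:\flat A\to A$ is a crisp map with crisply discrete domain, a cohesive inverse ought to factor uniquely through the crisp analogue and thereby yield the desired crisp discreteness witness. This same observation will simultaneously show $\disc$ itself to be crisply discrete, which is in any case necessary, since the target $\flat\codisc$ is crisply discrete by \cref{thm:discrete-flat}.
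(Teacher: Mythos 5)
Your construction of $\Phi$ and $\Psi$, and the reduction of the hard round trip to promoting a cohesive $d : \isdisc(A)$ to a crisp $\flat\isdisc(A)$, match the paper's approach, which builds the same two maps out of the endomaps of $\flat\type$ induced by $\flat$ and $\sharp$ and cites \cref{thm:co-disc-equiv,thm:equiv-flat-path} and univalence for the round trips.  You also correctly observe that $\Psi\Phi \sim \id_{\flat\codisc}$ goes through without incident: the $\flat$ that \cref{thm:flat-pres-sigma} places on the second component means $\flat$-induction delivers a \emph{crisp} witness $c :: \iscodisc(A)$, so \cref{thm:co-disc-equiv} applies directly.  The derivation of $\sharp\disc \simeq \codisc$ from the first equivalence via \cref{thm:co-disc-equiv-gamma,thm:codiscrete-universe} is also right.

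The gap is in the resolution you sketch for the $\Phi\Psi$ round trip.  \Cref{thm:discrete-coreflective,thm:discrete-coreflective-D} compare two \emph{crisp} function spaces, both wrapped in an outer $\flat$: for crisply discrete $B$ one gets $\flat(B \to \flat A) \simeq \flat(B \to A)$.  This gives no purchase on the \emph{cohesive} inverse $g : A \to \flat A$ furnished by $d$ — that term is neither an element of a $\flat$-type nor a map $\flat A \to \flat A$, and nothing in \crefrange{sec:flat}{sec:disc} lets you push a term depending on a cohesive hypothesis through $\flat$.  What you actually need to supply is the lemma that $\isdisc(A)$ is a \emph{discrete} proposition for crisp $A$; in the presence of \ref{ax:r1} this is exactly \cref{thm:crisply-discrete} via \cref{thm:discrete-prop}, but here it must come from the $\flat$/$\sharp$ rules alone, and \cref{thm:discrete-coreflective} is not the right lever.  (The paper's own proof is equally terse at this precise point — the displayed equation silently replaces $\flat\iscodisc(B)$ from \cref{thm:flat-pres-sigma} by $\iscodisc(B)$ — so you were right to flag the difficulty; but flagging it is not fixing it.)
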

\begin{proof}
  By \cref{thm:flat-pres-sigma}, we have
  \[ \flat\codisc \simeq \sm{A:\flat\type} (\flet B^\flat := A in \iscodisc(B)). \]
  The equivalence $\disc\simeq \flat\codisc$ now comes from the endomaps of $\flat\type$ induced by $\flat$ and $\sharp$.
  These are mutually inverse by \cref{thm:co-disc-equiv,thm:equiv-flat-path} and univalence.
\end{proof}

Since $\sharp$ and $\flat$ preserve $n$-types, this equivalence relativizes to the universes of $n$-types.
In particular, we have $\discprop\simeq \flat\codiscprop$, which uses only propositional univalence.

Furthermore, this equivalence respects all type formers, suitably reflected or coreflected.
For instance, since discrete and codiscrete types are closed under cartesian products, we have product operations $\disc\to\disc\to\disc$ and $\codisc\to\codisc\to\codisc$, and these commute with the equivalence of \cref{thm:disc-is-flat-codisc}.
Discrete objects need not be closed under function types, but we have an operation $\disc\to\disc\to\disc$ defined by
\[\lam{A}{B} \flet C^\flat := A in \flet D^\flat := B in \flat(C\to D) \]
which agrees with $\lam{A}{B} (A\to B):\codisc\to\codisc\to\codisc$ under the equivalence of \cref{thm:disc-is-flat-codisc}; and so on.

Topological intuition also suggests that since all functions between discrete or codiscrete spaces are continuous, both $\flat A \to \flat B$ and $\sharp A \to \sharp B$ ought to be the set of all (discontinuous) functions from $A$ to $B$.
These two spaces may themselves have different topologies, of course (and in fact they do), but if we make them both discrete or codiscrete we should be able to identify them.

\begin{thm}\label{thm:flat-sharp-eqv}
  For any $A,B::\type$, there is an equivalence
  \begin{equation}
    \flat(\flat A\to \flat B) \simeq \flat(\sharp A \to \sharp B).\label{eq:flat-sharp-hom-iso}
  \end{equation}
\end{thm}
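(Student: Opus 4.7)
The plan is to construct maps in both directions directly by $\flat$-induction combined with manipulations of $(\blank)^\sharp$, $(\blank)_\sharp$, $(\blank)^\flat$, $(\blank)_\flat$, and then verify mutual inverseness via the commuting conversions and the computation/uniqueness rules for $\flat$ and $\sharp$.

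For the left-to-right map $\Phi$, destruct $h : \flat(\flat A\to \flat B)$ into crisp $f :: \flat A \to \flat B$. Given a cohesive $x : \sharp A$, wrap in $(\blank)^\sharp$ so that $x$ becomes crisp inside, whence $x_\sharp{}^\flat :: \flat A$ is crisp and $f(x_\sharp{}^\flat)_\flat :: B$ is the natural element to return. Thus define
\[ \Phi(h) \defeq \flet f^\flat := h in \bigl(\lam{x:\sharp A}(f(x_\sharp{}^\flat)_\flat)^\sharp\bigr)^\flat. \]
Dually, for $\Psi$, destruct $k : \flat(\sharp A\to \sharp B)$ into crisp $g :: \sharp A \to \sharp B$, and given $x : \flat A$ destruct further into crisp $a :: A$ to produce $(g(a^\sharp)_\sharp)^\flat : \flat B$:
\[ \Psi(k) \defeq \flet g^\flat := k in \bigl(\lam{x:\flat A} \flet a^\flat := x in (g(a^\sharp)_\sharp)^\flat\bigr)^\flat. \]

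To check $\Psi\circ\Phi = \id$, I would push the outer $\flet f^\flat := h$ past the inner $\flet g^\flat$ using \cref{thm:commute-let}, apply the $\flat$-computation rule to substitute $\Phi$'s body for $g$, and $\beta$-reduce. The $\sharp$-computation rule applied to $a^\sharp{}_\sharp$ and the $\sharp$-uniqueness rule applied to $(f(a^\flat)_\flat)^\sharp{}_\sharp$ collapse the remaining $\sharp$-wrappers, leaving $(f(a^\flat)_\flat)^\flat$. Since $f(a^\flat) :: \flat B$ is crisp, the identity $v_\flat{}^\flat = v$ for crisp $v :: \flat B$ (established within the proof of \cref{thm:co-disc-equiv-gamma} via \cref{thm:commute-flat} and \cref{thm:flat-eta}) reduces this further to $f(a^\flat)$. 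A final use of the $\flat$-uniqueness principle then replaces $\flet a^\flat := x in f(a^\flat)$ by $f(x)$; function extensionality collapses $\lam{x} f(x)$ to $f$, and one more $\flat$-uniqueness step returns $h$. The composite $\Phi\circ\Psi = \id$ proceeds completely symmetrically, using instead the $\sharp$-uniqueness rule to simplify $(g(y)_\sharp)^\sharp$ to $g(y)$ at the analogous step.

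The principal obstacle is bookkeeping: each of $(\blank)^\sharp$, $(\blank)_\sharp$, $(\blank)^\flat$, $(\blank)_\flat$ carries a crispness requirement, and the intermediate reductions typecheck only because the $\mathsf{let}$-bindings and $(\blank)^\sharp$-wrappers are positioned precisely so that the relevant subterms are crisp at exactly the points where a computation or uniqueness rule must be invoked. Verifying the side conditions of \cref{thm:commute-let} at each application also requires care, but no essentially new ideas are needed beyond those already employed for \cref{thm:co-disc-equiv-gamma} and \cref{thm:discrete-coreflective-0}.
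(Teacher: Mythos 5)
Your proof is correct and matches the paper's essentially term-for-term: the two maps $\Phi$ and $\Psi$ are exactly those in the paper's proof (modulo renaming $h\leftrightarrow f$ and $k\leftrightarrow g$), and the verification proceeds by the same chain of commuting conversions, $\flat$-computation, $\sharp$-rules, the typal identity $v_\flat{}^\flat = v$, and \cref{thm:flat-eta}. One small labeling slip: collapsing $(f(a^\flat)_\flat)^\sharp{}_\sharp$ is an application of the $\sharp$-\emph{computation} rule $(M^\sharp)_\sharp \jdeq M$ (since $f(a^\flat)_\flat$ is crisp), not the uniqueness rule, and you do not actually need function extensionality where you invoke it, since $\lam{x} f(x) \jdeq f$ is the judgmental $\eta$-rule for $\prod$.
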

\begin{proof*}
  On one hand, we have the functorial action of $\sharp$:
  \[\sharp \;:\; (\flat A \to \flat B) \to (\sharp\flat A \to \sharp\flat B).\]
  We can compose the result with the equivalences $\sharp\flat A \simeq \sharp A$ and $\sharp\flat B\simeq \sharp B$ on either side, and apply then the functor $\flat$, to obtain a map from left to right in~\eqref{eq:flat-sharp-hom-iso}.
  On the other hand, as remarked above, the functorial action of $\flat$ can be expressed (in this case) as
  \[ \flat \;:\; \flat(\sharp A \to \sharp B)\to \flat(\flat\sharp A \to \flat\sharp B) \]
  and we can again compose on either side with the equivalences $\flat\sharp A \simeq \flat A$ and $\flat\sharp B \simeq \flat B$.

  More explicitly, given $f:\flat(\flat A\to \flat B)$ we can write
  \[ \flet h^\flat := f in (\lam{x:\sharp A} h(x_\sharp{}^\flat)_\flat{}^\sharp)^\flat \;: \flat(\sharp A \to \sharp B) \]
  and given $g:\flat(\sharp A \to \sharp B)$ we can write
  \[ \flet k^\flat := g in (\lam{x:\flat A} \flet u^\flat := x in k(u^\sharp)_\sharp{}^\flat )^\flat \;: \flat(\flat A\to \flat B).\]
  Omitting the commuting conversions and reductions that eliminate the variables $h$ and $k$, as in the proof of \cref{thm:discrete-coreflective-0}, the computations that show these to be inverses are
  \begin{gather*}
  (\flet u^\flat := x_\sharp{}^\flat in k(u^\sharp)_\sharp{}^\flat)_\flat{}^\sharp
  \jdeq k(x_\sharp{}^\sharp)_\sharp{}^\flat{}_\flat{}^\sharp
  \jdeq k(x_\sharp{}^\sharp)_\sharp{}^\sharp
  \jdeq k(x)_\sharp{}^\sharp
  \jdeq k(x)
  \\
    (\flet u^\flat := x in h(u^\sharp{}_\sharp{}^\flat)_\flat{}^\sharp{}_\sharp{}^\flat)
    \jdeq (\flet u^\flat := x in h(u^\flat)_\flat{}^\flat)
    = (\flet u^\flat := x in h(u^\flat))
    = h(x).\qedhere
  \end{gather*}
\end{proof*}

\begin{cor}\label{thm:flat-sharp-adj}
  For any $A,B::\type$, there is a natural equivalence
  \[ \flat(\flat A \to B) \simeq \flat(A\to \sharp B). \]
  In other words, $\flat$ is ``crisply left adjoint'' to $\sharp$.
\end{cor}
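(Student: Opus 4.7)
The plan is to chain together three equivalences already established earlier in the paper, using \cref{thm:flat-sharp-eqv} as the pivot. Concretely, I would establish the chain
\[ \flat(\flat A \to B) \;\simeq\; \flat(\flat A \to \flat B) \;\simeq\; \flat(\sharp A \to \sharp B) \;\simeq\; \flat(A \to \sharp B) \]
and then compose.

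For the first equivalence, I would invoke \cref{thm:discrete-coreflective-0}, which says that postcomposition with $\flatf : \flat B \to B$ gives an equivalence $\flat(\flat A \to \flat B) \simeq \flat(\flat A \to B)$. (This is exactly the content of that theorem with the roles of its $A$ and $B$ swapped.) The middle equivalence is precisely the content of \cref{thm:flat-sharp-eqv}. For the third equivalence, \cref{thm:codiscrete-reflective} says that precomposition with $\sharpf : A \to \sharp A$ gives an equivalence $(\sharp A \to \sharp B) \simeq (A \to \sharp B)$, because $\sharp B$ is codiscrete by \cref{thm:codiscrete-sharp}. Since $A$ and $B$ are crisp, this equivalence is crisp, and as remarked after \cref{thm:path-flat}, $\flat$ preserves crisp equivalences, giving $\flat(\sharp A \to \sharp B) \simeq \flat(A \to \sharp B)$.

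Composing the three equivalences yields the desired result. Unwinding the definitions, the composite sends a crisp $h :: \flat A \to B$ to the function $\lam{a} h(a_\sharp{}^\flat)^\sharp$ (packaged inside $\flat(\blank)$ via $\mathsf{let}$), which is the expected ``transpose'' across the adjunction. Naturality in $A$ and $B$ (with respect to crisp maps) follows because each of the three constituent equivalences is visibly natural in that sense: the outer two are induced by postcomposition with $\flatf$ and precomposition with $\sharpf$, which are the counit and unit of the putative adjunction, and \cref{thm:flat-sharp-eqv} is constructed from the functorial actions of $\flat$ and $\sharp$.

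There is no real obstacle here; the content has already been done in the three theorems cited. The only thing to watch is the bookkeeping of which occurrences of $\flat$ and $\sharp$ appear where, and in particular the fact that we genuinely need $B :: \type$ to be crisp in order to form $\flat B$ and apply \cref{thm:discrete-coreflective-0}, matching the standing hypothesis on the statement. The label ``crisply left adjoint'' in the conclusion records precisely this: the adjointness is witnessed at the level of crisp maps (i.e., inside an outer $\flat$) rather than as an internal function-type equivalence, which is the best one can hope for given the no-go \cref{thm:no-go}.
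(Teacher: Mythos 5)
Your proof is correct and is essentially identical to the paper's: the paper's entire proof is precisely the chain $\flat(\flat A \to B) \simeq \flat(\flat A \to \flat B) \simeq \flat(\sharp A \to \sharp B) \simeq \flat(A \to \sharp B)$, justified (implicitly) by \cref{thm:discrete-coreflective-0}, \cref{thm:flat-sharp-eqv}, and \cref{thm:codiscrete-reflective} exactly as you spell out. One small slip in your ``unwinding'' remark: a crisp $h::\flat A \to B$ should be sent to $\lam{a:A}h(a^\flat)^\sharp$ (using that $a$ becomes crisp inside $(\blank)^\sharp$, so $a^\flat:\flat A$ is well-formed), not to $\lam{a}h(a_\sharp{}^\flat)^\sharp$ --- the latter is ill-typed since $a:A$ rather than $a:\sharp A$; this does not affect the validity of the argument.
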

\begin{proof*}
  We have the following chain of equivalences:
  \[ \flat(\flat A \to B) \simeq
  \flat(\flat A \to \flat B) \simeq
  \flat(\sharp A \to \sharp B) \simeq
  \flat(A \to \sharp B).\qedhere
  \]
\end{proof*}

In fact, more generally we can say:

\begin{thm}\label{thm:flat-sharp-adj-dep}
  For any $A::\type$ and $B::A\to\type$, there is a natural equivalence
  \[ \flat \left(\tprd{u:\flat A} B(u_\flat)\right) \simeq \flat\left(\tprd{x:A} \sharp B(x)\right). \]
\end{thm}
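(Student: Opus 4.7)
The plan is to mimic, in the dependent setting, the chain of equivalences used to prove \cref{thm:flat-sharp-adj}; since there is no obvious dependent analogue of the intermediate \cref{thm:flat-sharp-eqv}, I will instead construct the equivalence directly, in the crisp-variable style of \cref{thm:discrete-coreflective-0,thm:discrete-coreflective-D}.

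Going left to right, given $f : \flat\big(\tprd{u:\flat A} B(u_\flat)\big)$, use $\flat$-induction to replace $f$ with a crisp $k::\tprd{u:\flat A} B(u_\flat)$, and set
\[ \phi(f) \;\defeq\; \flet k^\flat := f in \big(\lam{x:A} k(x^\flat)^\sharp\big)^\flat \;:\; \flat\Big(\tprd{x:A} \sharp B(x)\Big). \]
The term $k(x^\flat)^\sharp$ is well-typed because inside $(\blank)^\sharp$ the cohesive variable $x:A$ is treated as crisp, so $x^\flat : \flat A$ is well-formed and $k(x^\flat) : B(x^\flat{}_\flat) \jdeq B(x)$. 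In the reverse direction, given $h$, destruct to crisp $l::\tprd{x:A}\sharp B(x)$ and set
\[ \psi(h) \;\defeq\; \flet l^\flat := h in \Big(\lam{u:\flat A} \flet v^\flat := u in l(v)_\sharp\Big)^\flat, \]
where under the crisp hypothesis $v::A$ the term $l(v):\sharp B(v)$ is itself crisp and so admits $l(v)_\sharp : B(v)$.

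For the composite $\psi\circ\phi$, applying the commuting conversion \cref{thm:commute-let} and the $\flat$ computation rule reduces $\psi(\phi(f))$ to $\flet k^\flat := f in \big(\lam{u:\flat A} \flet v^\flat := u in (k(v^\flat)^\sharp)_\sharp\big)^\flat$; now the $\sharp$ computation rule (valid because $k(v^\flat)$ is crisp inside the $\flet$) replaces $(k(v^\flat)^\sharp)_\sharp$ by $k(v^\flat)$, the uniqueness principle for $\flat$ (\cref{thm:flat-eta}) collapses $\flet v^\flat := u in k(v^\flat)$ to $k(u)$, and finally function extensionality plus the uniqueness principle for $\flat$ give $f$. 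The composite $\phi\circ\psi$ is symmetric: commuting conversions reduce it to $\flet l^\flat := h in \big(\lam{x:A} (\flet v^\flat := x^\flat in l(v)_\sharp)^\sharp\big)^\flat$, where inside $(\blank)^\sharp$ the variable $x$ becomes crisp so that the $\flat$ computation rule yields $l(x)_\sharp{}^\sharp$; the uniqueness rule for $\sharp$ then gives $l(x)$, and function extensionality plus $\flat$-uniqueness give $h$.

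The main obstacle is the meticulous book-keeping of which variables are crisp versus cohesive within each nested $(\blank)^\flat$, $(\blank)^\sharp$, and $\mathsf{let}$-binder: each step of the round-trip calculation above is well-typed only because of the interaction between the scope of the outer $(\blank)^\sharp$ (which promotes the enclosing cohesive $x$ to crisp) and the successive $\mathsf{let}$-bindings that crispify $v$ from $u$. Once that discipline is respected, every simplification reduces to a commuting conversion or to a computation/uniqueness rule for one of the two modalities, exactly as in the proofs of \cref{thm:flat-sharp-eqv,thm:discrete-coreflective-0}.
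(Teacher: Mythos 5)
Your proposal is correct and takes essentially the same route as the paper: you construct precisely the same two $\mathsf{let}$-expressions (modulo renaming of bound variables), and the round-trip verifications reduce by the same commuting conversions and $\flat$/$\sharp$ computation and uniqueness rules that the paper invokes.
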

\begin{proof*}
  Given $f:\flat \left(\prd{u:\flat A} B(u_\flat)\right)$, we define $g:\flat\left(\prd{x:A} \sharp B(x)\right)$ by
  \[ \flet h^\flat := f in (\lam{x:A} h(x^\flat)^\sharp)^\flat \]
  and given $g$, we define $f$ by
  \[ \flet k^\flat := g in (\lam{u:\flat A} \flet x^\flat := u in k(x)_\sharp )^\flat. \]
  Omitting the outer commuting conversions and reductions again, the computations that show these to be inverses are
  \[ (\flet x^\flat := u in h(x^\flat)^\sharp{}_\sharp)
  \jdeq (\flet x^\flat := u in h(x^\flat))
  \jdeq h(u) \]
  and
  \[ (\flet x^\flat := x^\flat in k(x)_\sharp)^\sharp
  \jdeq k(x)_\sharp{}^\sharp
  \jdeq k(x). \qedhere \]
\end{proof*}

Hence we have both a reflective subcategory (the codiscrete types) and a coreflective subcategory (the discrete types), which by \cref{thm:co-disc-equiv} are abstractly equivalent via the reflection and coreflection.
Moreover, \cref{thm:co-disc-equiv-gamma} tells us that modulo this equivalence, the reflector $\sharp$ agrees with the coreflector $\flat$, which is sensible if we consider that both take the same underlying set (or \oo-groupoid) and equip it with a new topology.

\begin{rmk}\label{rmk:flat-lem}
  We now have another formulation of the law of excluded middle.
  When assumed as an axiom, \cref{thm:sharp-lem} is a \emph{crisp} element of $\prd{P:\prop} \sharp(P\vee\neg P)$, and hence yields an element of $\flat\left(\prd{P:\prop} \sharp(P\vee\neg P)\right)$.
  Thus, by \cref{thm:flat-sharp-adj-dep} we can obtain a (crisp) element of
  \begin{equation}
    \prd{P:\flat \prop} (P_\flat \vee \neg P_\flat),\label{eq:flat-lem}
  \end{equation}
  a statement which we might call the \emph{flat law of excluded middle}.
\end{rmk}

\begin{rmk}\label{rmk:flat-overall}
  The overall conclusion of \crefrange{sec:flat}{sec:disc} can be summarized as ``$\flat$ is a coreflector, and the images of $\flat$ and $\sharp$ are equivalent by an equivalence that identifies $\flat$ with $\sharp$.''
  But in contrast to the analogous statement about $\sharp$ in \cref{rmk:sharp-overall}, this cannot be stated purely cohesively, because of the no-go \cref{thm:no-go}.
  In \textcite{ss:qgftchtt}, using ordinary type theory without crispness, we stated this axiomatically with $\flat$ as an operation on $\sharp\type$; here we instead derive it from the rules governing $\flat$ and $\sharp$ in relation to crisp hypotheses.
\end{rmk}

\addtocontents{toc}{\protect\setcounter{tocdepth}{2}}
\subsection{The axiom of choice revisited}
\label{sec:axiom-choice-2}

Let us now return to the sharp axiom of choice, \ref{ax:shac}.
Restated, this says that for any set $A$ and type family $P:A\to \type$, we have
\begin{equation*}
  \left(\prd{x:A} \sharp\brck{P(x)}\right) \to \sharp\brck{\prd{x:A} \sharp P(x)}.
\end{equation*}
Using discreteness, we can derive from this a statement less encumbered by $\sharp$s.

\begin{thmac}[The discrete axiom of choice]\label{thm:discrete-ac}
  Suppose $A::\type$ is a crisply discrete set, that $P::A\to \type$ is also crisp, and that $\prd{x:A} \brck{P(x)}$ crisply.
  Then $\brck{\prd{x:A} P(x)}$.
\end{thmac}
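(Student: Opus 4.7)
The plan is to convert the sharp axiom of choice into the discrete one by shifting the conclusion into the $\flat$-world, using \cref{thm:flat-sharp-adj-dep} together with the discreteness of $A$ to successively strip the two $\sharp$s that appear after applying \ref{ax:ac}. Since every hypothesis of the theorem is crisp, so is its conclusion, and we may freely move between crisp expressions and elements of $\flat$-types.

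Explicitly, from $f::\prd{x:A}\brck{P(x)}$ I would first form $(\lam{x} f(x)^\sharp):\prd{x:A}\sharp\brck{P(x)}$ and feed it into \ref{ax:ac} to obtain a crisp element of $\sharp\brck{\prd{x:A}\sharp P(x)}$. Being crisp, this permits applying $(\blank)_\sharp$ and then $(\blank)^\flat$, landing in $\flat\brck{\prd{x:A}\sharp P(x)}$, which by \cref{thm:flat-pres-trunc} is equivalent to $\brck{\flat(\prd{x:A}\sharp P(x))}$. Next I would compute $\flat(\prd{x:A}\sharp P(x))$: by \cref{thm:flat-sharp-adj-dep} (taking $B\jdeq P$) it is equivalent to $\flat(\prd{u:\flat A} P(u_\flat))$, and since $A$ is crisply discrete, $\flatf:\flat A\simeq A$ gives $\prd{u:\flat A} P(u_\flat) \simeq \prd{x:A} P(x)$. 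Combining, $\flat(\prd{x:A}\sharp P(x)) \simeq \flat(\prd{x:A}P(x))$, so a second use of \cref{thm:flat-pres-trunc} rewrites our inhabitant as one of $\flat\brck{\prd{x:A}P(x)}$, whence $(\blank)_\flat$ delivers the desired $\brck{\prd{x:A}P(x)}$.

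The principal obstacle, and the reason a direct ``untruncation'' fails, is that \ref{ax:ac} delivers $\sharp\brck{\prd{x:A}\sharp P(x)}$: if we eliminate out of the inner truncation naively we obtain a \emph{cohesive} function $\prd{x:A}\sharp P(x)$, and because its values $\sharp P(x)$ are not known to be crisp we cannot apply $(\blank)_\sharp$ fiberwise to recover $\prd{x:A}P(x)$. The detour through $\flat$ sidesteps this by performing the untruncation and the $\sharp$-stripping simultaneously in the crisp world, which is precisely the content of \cref{thm:flat-sharp-adj-dep} combined with the discreteness of $A$.
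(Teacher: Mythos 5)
Your proof is correct and takes essentially the same route as the paper's: apply \ref{ax:ac} with the sharped hypothesis, peel the outer $\sharp$ using crispness, then use \cref{thm:flat-sharp-adj-dep} together with $\flat A\simeq A$ to trade the inner $\sharp$ for nothing. The paper compresses the truncation bookkeeping into the phrase ``by functoriality, we can remove the $\brck{\blank}$ from both,'' while you spell it out explicitly via $(\blank)^\flat$ and \cref{thm:flat-pres-trunc}; your version makes more visible exactly where crispness is used to avoid the trap you correctly identify, but the underlying argument is the same.
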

\begin{proof}
  The hypothesis $\prd{x:A} \brck{P(x)}$ is stronger than that of the sharp axiom of choice, so we can apply the latter, obtaining $\sharp\brck{\prd{x:A} \sharp P(x)}$.
  Since all the parameters are crisp, we can apply $(\blank)_\sharp$ to get $\brck{\prd{x:A} \sharp P(x)}$; whereas what we want is $\brck{\prd{x:A} P(x)}$.
  By functoriality, we can remove the $\brck{\blank}$ from both.
  Now by \cref{thm:flat-sharp-adj-dep}, we get $\prd{u:\flat A} P(u_\flat)$; but $A$ is discrete, so this gives $\prd{x:A} P(x)$ as desired.
\end{proof}

\begin{corac}[The crisp countable axiom of choice]\label{thm:discrete-countable-ac}
  \makeatletter\def\@currentlabel{Axiom AC$_{\mathbb{N}}$}\makeatother\label{ax:cc}
  Suppose $P::\N \to \type$ is crisp and that $\prd{n:\N} \brck{P(n)}$ crisply.
  Then $\brck{\prd{n:\N} P(n)}$.\qed
\end{corac}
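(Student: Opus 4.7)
The plan is to simply instantiate the discrete axiom of choice (Theorem~\ref{thm:discrete-ac}) with $A \defeq \N$. Everything that needs to be checked is already in hand: first, $\N$ is a crisp type (it is defined in the empty context, so may be treated as crisp in any context); second, it is crisply discrete by Theorem~\ref{thm:nat-discrete}; and third, it is a set by the standard Hedberg-style argument (or by its characterization as a 0-type via the usual encode/decode for $\N$), which goes through in any context and so in particular yields a crisp witness. The family $P::\N\to\type$ is assumed crisp, and the hypothesis $\prd{n:\N} \brck{P(n)}$ is assumed to hold crisply.

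With all of these verifications, Theorem~\ref{thm:discrete-ac} applies verbatim and delivers the desired conclusion $\brck{\prd{n:\N} P(n)}$. There is no genuine obstacle; the only subtlety worth flagging explicitly in the writeup is the need that all three hypotheses of Theorem~\ref{thm:discrete-ac} (discreteness of $A$, crispness of $P$, and crispness of the pointwise inhabitedness witness) hold \emph{crisply}, since this is what licenses the internal use of $(\blank)_\sharp$ inside the proof of Theorem~\ref{thm:discrete-ac}. For $A = \N$ all three are immediate.

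So the proof will just be a one-line invocation: by Theorems~\ref{thm:nat-discrete} and~\ref{thm:discrete-ac}, taking $A \defeq \N$. No further computation is required.
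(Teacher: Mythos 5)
Your proposal is correct and is exactly the paper's intended argument: the paper gives no proof body (just \qed), signaling that this is an immediate instantiation of \cref{thm:discrete-ac} with $A\defeq\N$, which is crisply discrete by \cref{thm:nat-discrete} and a set. Your extra commentary on verifying that $\N$ is crisp and a set, and that the hypotheses must hold crisply, is accurate and just makes explicit what the paper leaves tacit.
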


We will refer to this statement as \ref{ax:cc}.
As mentioned previously, later on we will assume an axiom ensuring that all \mprop{}s are discrete; thus the crispness of $\prd{n:\N} \brck{P(n)}$ will become automatic.
However, the crispness of $P$ is a real restriction that makes this significantly weaker than the ordinary countable axiom of choice.

For example, let $\R_C$ denote the Cauchy real numbers and $\R$ the Dedekind real numbers.\footnote{For constructive definitions of $\R_C$ and $\R$, see e.g. \textcite[Chapter 11]{hottbook} or \textcite[\S D4.7]{ptj:elephant}.
At the moment, by the ``Cauchy reals'' we mean a simple quotient of the set of Cauchy sequences or approximations; in \cref{thm:RC-cauchy-complete} we will show that under suitable axioms, these coincide with the fancier Cauchy reals of \textcite[\S11.3]{hottbook}.}
There is an injection $i:\R_C \to \R$ which is not constructively an isomorphism.
It is well-known that ordinary countable choice implies that it \emph{is} an isomorphism, i.e.\ that every Dedekind real is a Cauchy real.
But with our restricted \ref{ax:cc} (or \ref{ax:lem}), all we can get is the following.

\begin{thmccorlem}\label{thm:crisp-RD-RC}
  Any \emph{crisp} Dedekind real $x::\R$ is a Cauchy real.
\end{thmccorlem}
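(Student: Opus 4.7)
The plan is to approximate the crisp Dedekind real $x$ by a Cauchy sequence of rationals. Write $L, U :: \Q \to \prop$ for the lower and upper cuts defining $x$; these are crisp because $x$ is. For each crisp $n :: \N$ I would first establish the mere existence statement $\brck{\tsm{q:\Q} L(q) \times U(q + 1/(n+1))}$. By the inhabitedness clauses of the cut there exist rationals $q_0 < r_0$ with $L(q_0)$ and $U(r_0)$, and by finitely iterating locatedness along a subdivision of the interval $[q_0, r_0]$ into pieces of width at most $1/(n+1)$, we extract a subdivision point $q$ with $L(q)$ and $U(q + 1/(n+1))$ (the step at each boundary $a < b$ of the running interval applies the locatedness clause to $a < a + 1/(n+1) < b$ to refine one side).

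Because $x$ is crisp, so are $L$ and $U$, which makes the entire family $P(n) \defeq \tsm{q:\Q} L(q) \times U(q+1/(n+1))$ a crisp family $\N \to \type$, and the witness $\prd{n:\N} \brck{P(n)}$ constructed above is likewise crisp. Under \ref{ax:cc}, the crisp countable choice principle \cref{thm:discrete-countable-ac} then yields $\brck{\prd{n:\N} P(n)}$, i.e.\ merely a function $f : \N \to \Q$ satisfying $L(f(n))$ and $U(f(n) + 1/(n+1))$ for all $n$. Under \ref{ax:lem} instead, choice can be bypassed: since $\N$ and hence $\Q$ are crisply discrete (by \cref{thm:nat-discrete} and closure under set-coequalizers), every rational may be treated as crisp, so $L(q)$ becomes a crisp proposition that is decidable via the flat law of excluded middle (\cref{rmk:flat-lem}); this allows us to define $f$ explicitly by a definable search procedure. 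Either route produces a mere $f : \N \to \Q$ which is Cauchy, since the bounds force $|f(n) - f(m)| < 1/(n+1) + 1/(m+1)$.

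Finally, because $\RC$ is a set-level quotient of Cauchy sequences and ``$i(y) = x$'' is a proposition in the set $\R$, the merely existing sequence $f$ produces an actual element $y : \RC$ with $i(y) = x$, witnessing that $x$ is a Cauchy real. The substantive content of the argument is the crispness bookkeeping rather than the analytic estimate: the hypothesis that $x :: \R$ is crisp is used precisely to ensure that the family $P$ is crisp, which is what unlocks either \ref{ax:cc} or the flat-LEM route. I expect this to be the main obstacle in writing the proof carefully, since at each step one must verify that everything built from $x$, $L$, $U$ remains crisp; without this, a generic Dedekind real need not be Cauchy, so the argument genuinely cannot extend to arbitrary (cohesive) $x : \R$.
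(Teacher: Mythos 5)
Your proof is correct, but it takes a genuinely different route from the paper's. The paper outsources the analysis to the HoTT Book: Lemma~11.4.1 there characterizes the Cauchy reals among the Dedekind reals as those $x$ admitting an untruncated locatedness witness $c : \prd{q,r:\Q}(q<r)\to(q<x)+(x<r)$, and Corollary~11.4.3 derives such a $c$ from countable choice (via a crisp family $P:S\to\type$ with $S\simeq\N$) or from decidability of $q<x$ under LEM. The whole proof then reduces to checking that, because $x::\R$ is crisp and $\Q$ is crisply discrete, the family $P$ is crisp and the proposition $q<x$ is decidable by crisp LEM once $q$ is taken crisp, so these derivations still apply. You instead rebuild the content of those two lemmas from scratch: you directly manufacture a sequence of rational $1/(n{+}1)$-approximations via finite subdivision and locatedness, then extract an actual sequence by crisp countable choice or by a decidable search under LEM, and finally observe that the sequence is Cauchy with limit $x$. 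You correctly identify the genuine content as the crispness bookkeeping, which is exactly what the paper exploits as well; the paper's version is shorter only because it reuses the existing equivalence. One small caution: your parenthetical description of the subdivision step (``applies the locatedness clause to $a < a + 1/(n+1) < b$ to refine one side'') doesn't quite typecheck as written --- locatedness takes a pair $q<r$ and returns a merely-disjoint alternative $\brck{L(q)+U(r)}$, so the refinement actually needs to test adjacent subdivision points, commute the truncation past a finite product, and then locate a crossover index --- but this is a standard constructive maneuver and the surrounding argument is sound. Also worth flagging: ``merely a function $f:\N\to\Q$'' is really merely a dependent pair from which the function is projected, and you should also say a word about why $i([f])=x$, though both are routine given the bounds $L(f(n))$ and $U(f(n)+1/(n+1))$.
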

\begin{proof}
  By \textcite[Lemma 11.4.1]{hottbook}, a Dedekind real $x:\R$ is a Cauchy real iff there exists $c : \prd{q, r : \Q} (q < r) \to (q < x) + (x < r)$.

  First suppose \ref{ax:lem}.
  Then since $x$ is crisp, and $\Q$ and $(q<r)$ are discrete, for any $q,r$ with $q<r$ we have $(q<x) + \neg(q<x)$.
  This implies $(q < x) + (x < r)$ as in \textcite[Corollary 11.4.3]{hottbook}.

  Now suppose \ref{ax:cc}.
  Then since $x$ is crisp, so is the family $P::S \to \type$ defined in \textcite[Corollary 11.4.3]{hottbook} as
  \[P(q,r) \defeq \sm{b:\bool} (b = \bfalse \to q < x) \land (b = \btrue \to x < r),\]
  where $S \defeq \{ (q,r) : \Q \times \Q \mid q < r \}$ is equivalent to $\N$.
  Thus, we can get a choice function and extract $c$ in the same way.
\end{proof}

\begin{corccorlem}\label{thm:RC-RD-points}
  The induced maps $\flat\R_C \to \flat \R$ and $\sharp\R_C \to\sharp\R$ are equivalences.
\end{corccorlem}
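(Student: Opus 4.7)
The plan is to show that both $\flat i$ and $\sharp i$ are surjective embeddings between sets, and therefore equivalences. Both halves rest on the same idea: \cref{thm:crisp-RD-RC} tells us that every \emph{crisp} Dedekind real has a Cauchy preimage, and both $\flat$ and $\sharp$ provide type-theoretic machinery for promoting a cohesive variable into a crisp one at the appropriate moment.

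First I would observe that $i:\R_C \to \R$ is an embedding between sets, so by \cref{thm:flat-ntype} and \cref{thm:codiscrete-ntypes} the types $\flat \R_C, \flat \R, \sharp \R_C, \sharp\R$ are all sets, and by the identity-type characterizations \cref{thm:equiv-flat-path} and \cref{thm:path-sharp} the maps $\flat i$ and $\sharp i$ are again embeddings. Between sets, embedding plus surjective implies equivalence, so it remains to produce the preimages. For $\sharp i$ the cleanest route is to invoke \cref{thm:emb-sharp-eqv}, which reduces the problem to exhibiting, for each $y:\R$, an element of $\sharp \fib_i(y)$. Here the introduction rule $(\blank)^\sharp$ lets us promote the cohesive $y:\R$ to a crisp $y::\R$, at which point \cref{thm:crisp-RD-RC} supplies a crisp $y'::\R_C$ with $i(y') = y$, giving the required fiber element.

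For $\flat i$ I would argue directly. Given $y:\flat\R$ we want $\brck{\fib_{\flat i}(y)}$; by $\flat$-induction it suffices to handle $y = x^\flat$ for crisp $x::\R$. Then \cref{thm:crisp-RD-RC}, applied crisply, produces a crisp $x'::\R_C$ together with a crisp equality $p::i(x') = x$, and \cref{thm:path-flat} converts this to $p^{=\flat}: i(x')^\flat = x^\flat$, i.e., $\flat i(x'^\flat) = x^\flat$, as needed. Alternatively one could first prove $\sharp i$ is an equivalence and then deduce the case of $\flat i$ by applying $\flat$ and using the natural equivalences $\flat \sharp A \simeq \flat A$ from \cref{thm:co-disc-equiv-gamma}, but the direct argument is shorter.

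The only delicate point is to make sure that ``$x$ is a Cauchy real'' in the sense of \cref{thm:crisp-RD-RC} genuinely yields a Cauchy preimage under $i$. The proof of \cref{thm:crisp-RD-RC} produces the decidability data of Lemma 11.4.1 of~\textcite{hottbook}, which is the standard way to see that a Dedekind real lies in the image of $i$; since $i$ is an embedding between sets, the resulting $\fib_i(x)$ is a proposition, so its inhabitation gives a unique Cauchy real mapping to $x$. Because \cref{thm:crisp-RD-RC} is proved entirely from the crisp axioms \ref{ax:lem} or \ref{ax:cc} applied to the crisp input $x$, the preimage is also crisp, which is precisely what is needed for $\flat$-induction and for $(\blank)^\sharp$ to land in the required type.
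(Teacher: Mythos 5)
Your proof is correct and follows essentially the same route as the paper's: both reduce the $\sharp$ case to $\prd{x:\R}\sharp\fib_i(x)$ via \cref{thm:emb-sharp-eqv} and dispatch this with \cref{thm:crisp-RD-RC} after crispifying $x$ (you use the $(\blank)^\sharp$ introduction rule directly where the paper invokes \cref{thm:flat-sharp-adj-dep}, but these accomplish the same thing). The only material difference is that you also give a standalone $\flat$-induction argument for $\flat i$, whereas the paper deduces the $\flat$ case from the $\sharp$ case in one line via \cref{thm:co-disc-equiv-gamma} --- an alternative you yourself mention --- so the paper's proof is marginally shorter but no different in substance.
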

\begin{proof}
  By \cref{thm:co-disc-equiv-gamma}, the two statements are equivalent, so it suffices to prove the second.
  By \cref{thm:emb-sharp-eqv}, it will suffice to show $\prd{x:\R} \sharp \fib_i(x)$.
  But by \cref{thm:flat-sharp-adj-dep}, this follows from \cref{thm:crisp-RD-RC}.
\end{proof}

In other words, $\R_C$ and $\R$ have the same points, but perhaps different topologies.
It is natural to wonder what those topologies are!
We will return to this in \cref{sec:reals,sec:continuity}.

Another important question to ask is what their points \emph{are}.
From the perspective of the cohesive part of the type theory, this is a meaningless question: their points are, by definition, the real numbers (Cauchy or Dedekind, respectively).
However, recall from \cref{sec:codisc} that we can also construct ``the set of real numbers'' entirely in the codiscrete world, yielding the type $\R'$ shown in \cref{fig:R} 
and reproduced in \cref{fig:Ragain}.
Thus, we can ask whether the set of \emph{points} of the \emph{space} of real numbers $\R$
is the same as this ``codiscrete set of real numbers'', i.e.\ is ``the space of real numbers'' $\R$ really a ``topology'' on the set of real numbers?
The answer is yes; this internalizes~\textcite[C3.6.11]{ptj:elephant}.

\begin{thm}\label{thm:sharp-R}
  $\flat\R'\simeq \flat \R$, and hence $\R'\simeq \sharp \R$.
\end{thm}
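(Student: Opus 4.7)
The key preliminary observation is that $\R'$ is itself codiscrete: it is built as a $\Sigma$-type whose index and all fibers are codiscrete (every component is of the form $\sharp\Q \to \codiscprop$, $\sharp\brck{\cdots}$, $\sharp\emptyset$, $\sharp(A+B)$, or a $\prod$ with codiscrete-valued fibers, and codiscrete types are closed under such $\sum$s and $\prod$s, as listed after \cref{thm:codiscrete-reflective}). Hence $\sharpf\colon \R'\to\sharp\R'$ is an equivalence. Combined with \cref{thm:co-disc-equiv-gamma}, we see that $\flat\R'\simeq\flat\R$ is equivalent to $\sharp\R'\simeq\sharp\R$, and further (using codiscreteness of $\R'$) to $\R'\simeq\sharp\R$. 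I will prove this last form.

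First I would construct a comparison map $\phi:\R \to \R'$. Given $(L,U,\ldots):\R$ with $L:\Q\to\prop$, the composite $\sharp\circ L : \Q\to\codiscprop$ lands in the codiscrete universe $\codiscprop\simeq\sharp\prop$, so by \cref{thm:codiscrete-reflective} it extends uniquely across $\sharpf:\Q\to\sharp\Q$ to a map $L':\sharp\Q\to\codiscprop$; similarly for $U$. The five Dedekind axioms of $\R$ transfer to the corresponding codiscretized axioms of $\R'$ in an essentially routine way: the $\sharp$ in front of each truncated axiom of $\R'$ accommodates the direct translation via $\sharpf$, while the restriction that $L'(q) = L(q^\sharp{}_\sharp)$ for crisp $q::\sharp\Q$ (i.e., $q=q_\sharp{}^\sharp$) lets us reduce each condition on $L'$ to the corresponding condition on $L$. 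Since $\R'$ is codiscrete, $\phi$ factors uniquely through a map $\tilde\phi:\sharp\R\to\R'$.

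The main obstacle is to show $\tilde\phi$ is an equivalence. For this I would construct the inverse $\psi:\R'\to\sharp\R$ as follows. Given $(L',U',\ldots):\R'$, I would restrict via $\sharpf$ to obtain $L\defeq L'\circ\sharpf$ and $U\defeq U'\circ\sharpf : \Q\to\codiscprop$. The codiscretized axioms of $\R'$, when pulled back along $\sharpf$, yield codiscrete versions of the Dedekind axioms on $(L,U)$; together with a crisp hypothesis these would produce a genuine Dedekind real but we are working cohesively, so we only get an element of $\sharp\R$. The translation uses \cref{thm:sharp-sigma} (to relate $\sharp\brck{\tsm{q:\sharp\Q} L'(q)}$ to $\sharp\brck{\tsm{q:\Q} L(q)}$ via the equivalence $\sharp\Q \simeq \sharp\flat\Q$ from \cref{thm:co-disc-equiv-gamma} and discreteness of $\Q$), \cref{thm:path-sharp} (for identifications of propositions in $\codiscprop$), and the fact that $\sharp$ commutes with coproducts and truncation into the codiscrete universe. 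Having defined $\psi$, both round-trips land in codiscrete types ($\R'$ or $\sharp\R$), so the requisite identifications can be checked by $\sharp$-induction reducing to the straightforward case of crisp $(L,U,\ldots)::\R$.

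Essentially everything in the proof is a bookkeeping exercise once one sees that the construction of $\R'$ is precisely the internal construction of the Dedekind reals inside the codiscrete universe $\codisc$, and that on points this coincides with the ordinary construction in $\type$; the hard part is just verifying the five axioms correspond under the back-and-forth translation.
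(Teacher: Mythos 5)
Your overall plan—observe $\R'$ is codiscrete, reduce to $\R'\simeq\sharp\R$, and build explicit maps—is a reasonable framing, and the map $\phi:\R\to\R'$ (hence $\tilde\phi:\sharp\R\to\R'$) works as you describe. But there is a genuine gap in the construction of $\psi:\R'\to\sharp\R$, and it is precisely the step that the paper's $\flat$-computation is designed to handle.

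The problem is the choice $L\defeq L'\circ\sharpf$ and $U\defeq U'\circ\sharpf$, which take values in $\codiscprop$. Even after applying $\sharpf$ to make the input $(L',U',\ldots)::\R'$ crisp, the Dedekind axioms for $(L,U)$ cannot be verified from the $\R'$-axioms. Consider disjointness: from the crisp witness $d::\prd{q:\sharp\Q}(L'(q)\times U'(q))\to\sharp\emptyset$ and a cohesive $q:\Q$, $l:L(q)$, $u:U(q)$, you obtain $d(q^\sharp)(l,u):\sharp\emptyset$. But $\sharp\emptyset$ is not $\emptyset$ in spatial type theory—that is exactly \ref{ax:dense}, which is not assumed here—and $l,u$ are cohesive, so you cannot apply $(\blank)_\sharp$ to escape. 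The same phenomenon blocks the roundedness axioms: $\R'$ gives $\sharp\brck{\cdots}$, and for cohesive $q,l$ you have no way to strip the $\sharp$ to produce the required $\brck{\cdots}$. The ``bookkeeping'' you defer is where the proof actually lives, and it fails with this choice of cut.

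The correct cut is the $\flat$-ified one: since $\Q$ is crisply discrete, $\flatf:\flat\Q\to\Q$ has an inverse $s$, and one should set $L^{\mathrm{corr}}(q)\defeq\flet q'^\flat := s(q) in \flat L'(q'^\sharp)$ (taking values in $\discprop$). With this, disjointness transfers: $\flat$ applied to the crisp map $L'(q'^\sharp)\times U'(q'^\sharp)\to\sharp\emptyset$ gives a map out of $\flat(L'(q'^\sharp)\times U'(q'^\sharp))\simeq L^{\mathrm{corr}}(q)\times U^{\mathrm{corr}}(q)$ into $\flat\sharp\emptyset\simeq\flat\emptyset\simeq\emptyset$ (\cref{thm:co-disc-equiv-gamma}); similar $\flat$-manipulations handle the $\sharp\brck{\cdots}$ axioms via $\flat\sharp\brck{\cdots}\simeq\flat\brck{\cdots}\simeq\brck{\flat(\cdots)}$. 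But at that point you are no longer avoiding the paper's route: you are applying $\flat$ term-by-term to undo the codiscretizations, which is precisely what the chain of equivalences in \cref{fig:Ragain} does uniformly. So either fix $\psi$ by $\flat$-ifying the cuts (and then the proof essentially reproduces the paper's computation of $\flat\R'$), or abandon the direct $\psi$ and verify instead that $\flat\tilde\phi:\flat\sharp\R\to\flat\R'$ is an equivalence, which again brings you back to computing $\flat\R'$.
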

\begin{figure}
  \centering
  \begin{align*}
  \R' &\defeq \tsm{L,U:\sharp\Q \to \codiscprop}\\
  &\qquad \sharp\brck{\tsm{q:\sharp\Q} L(q)}\times \sharp\brck{\tsm{r:\sharp\Q} U(r)}\\
  &\qquad\times \left(\tprd{q:\sharp\Q} L(q) \leftrightarrow \sharp\brck{\tsm{r : \sharp\Q} (q <' r) \times L(r)}\right)\\
  &\qquad\times \left(\tprd{r:\sharp\Q} U(r) \leftrightarrow \sharp\brck{\tsm{q : \sharp\Q} U(q) \times (q <' r)}\right)\\
  &\qquad\times \left(\tprd{q:\sharp\Q} (L(q) \times U(q)) \to \sharp\emptyset \right)
    \times \left(\tprd{q,r:\sharp\Q} (q <' r) \rightarrow \sharp\brck{\sharp(L(q) + U(r))}\right)
    \\
  &\simeq \tsm{L,U:\Q \to \codiscprop}\\
  &\qquad \sharp\brck{\tsm{q:\sharp\Q} L'(q)}\times \sharp\brck{\tsm{r:\sharp\Q} U'(r)}\\
  &\qquad\times \left(\tprd{q:\Q} L(q) \leftrightarrow \sharp\brck{\tsm{r : \sharp\Q} (q^\sharp <' r) \times L'(r)}\right)\\
  &\qquad\times \left(\tprd{r:\Q} U(r) \leftrightarrow \sharp\brck{\tsm{q : \sharp\Q} U'(q) \times (q <' r^\sharp)}\right)\\
  &\qquad\times \left(\tprd{q:\Q} (L(q) \times U(q)) \to \sharp\emptyset \right)
  \times \left(\tprd{q,r:\Q} (q < r) \rightarrow \sharp\brck{\sharp(L(q) + U(r))}\right)
    \\
  & \simeq \tsm{L,U:\Q \to \codiscprop}\\
  &\qquad \sharp\left(\brck{\tsm{q:\Q} L(q)}\times \brck{\tsm{r:\Q} U(r)}\right)\\
  &\qquad\times \left(\tprd{q:\Q} L(q) \leftrightarrow \sharp\brck{\tsm{r : \Q} \sharp(q < r) \times L(r)}\right)\\
  &\qquad\times \left(\tprd{r:\Q} U(r) \leftrightarrow \sharp\brck{\tsm{q : \Q} U(q) \times \sharp(q < r)}\right)\\
  &\qquad\times \left(\tprd{q:\Q} (L(q) \times U(q)) \to \sharp\emptyset \right)
  \times \left(\tprd{q,r:\Q} (q < r) \rightarrow \sharp\brck{(L(q) + U(r))}\right)
  \\ \\
  \flat\R' &\simeq \tsm{M,N:\flat(\Q \to \discprop)}
  \flet L^\flat := M in \flet U^\flat := N in\\
  &\qquad\flat \sharp\left(\brck{\tsm{q:\Q} \sharp L(q)}\times \brck{\tsm{r:\Q} \sharp U(r)}\right)\\
  &\qquad\times \flat \left(\tprd{q:\Q} \sharp L(q) \leftrightarrow \sharp\brck{\tsm{r : \Q} \sharp(q < r) \times \sharp L(r)}\right)\\
  &\qquad\times \flat \left(\tprd{r:\Q} \sharp U(r) \leftrightarrow \sharp\brck{\tsm{q : \Q} \sharp U(q) \times \sharp(q < r)}\right)\\
  &\qquad\times \flat \left(\tprd{q:\Q} (\sharp L(q) \times \sharp U(q)) \to \sharp\emptyset \right)
  \times \flat \left(\tprd{q,r:\Q} (q < r) \rightarrow \sharp\brck{(\sharp L(q) + \sharp U(r))}\right)
  \\
  &\simeq \tsm{M,N:\flat(\Q \to \discprop)}
  \flet L^\flat := M in \flet U^\flat := N in\\
  & \qquad \flat \left(\brck{\tsm{q:\Q} L(q)}\times \brck{\tsm{r:\Q} U(r)}\right)\\
  &\qquad\times \flat \left(\tprd{q:\Q} \flat\left(L(q) \leftrightarrow \brck{\tsm{r : \Q} (q < r) \times  L(r)}\right)\right)\\
  &\qquad\times \flat \left(\tprd{r:\Q} \flat\left(U(r) \leftrightarrow \brck{\tsm{q : \Q} U(q) \times (q < r)}\right)\right)\\
  &\qquad\times \flat \left(\tprd{q:\Q} \flat\big((L(q) \times U(q)) \to \emptyset\big) \right)
  \times \flat \left(\tprd{q,r:\Q} \flat\big((q < r) \rightarrow \brck{(L(q) + U(r))}\big)\right)
\end{align*}
\caption{The Dedekind reals and their codiscrete version}
\label{fig:Ragain}
\end{figure}
\begin{proof}
  First of all, notice that since $\codiscprop$ is codiscrete, we have
  \[(\sharp\Q \to \codiscprop) \simeq (\Q\to \codiscprop). \]
  Thus, in $\R'$ we can sum over $L,U:\Q\to\codiscprop$ instead; we write $L',U'$ for their unique extensions to $\sharp \Q$.

  Next, by the dependent version of \cref{thm:codiscrete-reflective}, the $\prd{q:\sharp\Q}$'s in $\R'$ can be replaced by $\prd{q:\Q}$, with $q$ replaced by $q^\sharp$ inside.
  Now we can use the facts that $L'(q^\sharp)=L(q)$, $U'(q^\sharp)=U(q)$, and $(q^\sharp <' r^\sharp) = \sharp(q<r)$ (since the inequalities on $\sharp\Q$ extend those of $\Q$), and since $\sharp(q<r)$ is the domain of a function with codiscrete codomain, we can replace it by $(q<r)$.
  Thus we obtain the second version shown in \cref{fig:Ragain}.

  Now note that for $q:\sharp \Q$ we have
  \[L'(q) = \sharp L'(q) = \sharp L'(q_\sharp{}^\sharp) = \sharp L(q_\sharp)\]
  and similarly for $R'$.
  Likewise, for $q:\sharp \Q$ and $r:\Q$ we have
  \[(q <' r^\sharp) = \sharp (q <' r^\sharp) = \sharp (q_\sharp{}^\sharp <' r^\sharp) = \sharp (q_\sharp<r).\]
  Thus, using \cref{thm:sharp-sigma} (and, in the second and third lines, the fact that $\sharp$ preserves products), together with the fact that $\sharp\brck{\sharp A}\simeq \sharp\brck{A}$ (since both are a reflection into codiscrete propositions) we obtain the third version.

  Now, since \Q\ is crisply discrete, we have
  \[\flat(\Q\to\codiscprop) \simeq \flat(\Q\to\flat\codiscprop) \simeq \flat(\Q\to\discprop) \]
  using \cref{thm:disc-is-flat-codisc} in the second step.
  Thus, with \cref{thm:flat-pres-sigma} we can convert $\flat\R'$ into a sum over $M,N:\flat(\Q\to\discprop)$, obtaining the first expression for $\flat\R'$ in \cref{fig:Ragain}.
  (Since $\flat$ preserves products, we have also distributed it over the $\times$s.)

  However, as remarked after \cref{thm:disc-is-flat-codisc}, the equivalence $\disc\simeq\flat\codisc$ respects all suitably-(co)reflected type formers.
  Since discrete types are closed under truncations and contain $\emptyset$, we deduce the second expression for $\flat\R'$ in \cref{fig:Ragain}.
  Here we abuse notation somewhat in applying $\flat$ to types that depend on cohesive variables $q,r:\Q$; the meaning is that since $\Q$ is discrete, we can transfer $q,r$ to elements of $\flat \Q$ and then destruct them into crisp variables first, as in the left-hand side of~\eqref{eq:flat-corefl-D}.
  Now applying \cref{thm:discrete-coreflective-D} gets rid of all the $\flat$s inside the $\prod$s.
  Finally, we can un-distribute the $\flat$s over the $\times$s, and apply \cref{thm:flat-pres-sigma} in reverse, to obtain $\flat \R$ on the right.
\end{proof}

\section{Topos models of spatial type theory}
\label{sec:topos-models}
\addtocontents{toc}{\protect\setcounter{tocdepth}{1}}

So far we have described 
a \emph{spatial type theory} with crisp variables and two modalities $\flat$ and $\sharp$, and also (in \cref{sec:lem,sec:lem-2,sec:axiom-choice,sec:axiom-choice-2}) some classical axioms that can be added to this theory without (we expect) destroying the intended topological content.
However, nothing we have said excludes the possibility that all types are both discrete and codiscrete, with $\flat$ and $\sharp$ being the identity.
(In this case our classical axioms would of course apply to all types, trivializing the topology.)

Thus, in order to ``do some real topology'', we need a new axiom that gives us a way to ``access the topology'' internally in type theory.
We could motivate such an axiom purely on first principles, but it seems appropriate at this point to instead discuss the various possible models of our theory.

So far, we have been referring mainly to classical topological spaces for intuition.
This is fine, but topological spaces do not \emph{actually} model our theory for several reasons.
One reason is that they are not a topos, lacking universes and even a subobject classifier.
Indeed, they are not even locally cartesian closed, so do not have $\prod$-types.

Another reason is that a classical topology is ``mere structure'' on a set, whereas our theory demands something more.
Consider for instance the following definition.

\begin{defn}\label{defn:concrete}
  A type $A$ is \textbf{concrete} if $\sharpf: A\to \sharp A$ is a embedding (i.e.\ a $(-1)$-truncated map).
\end{defn}

In classical topological spaces, the map from a space to its codiscrete reflection is always injective, so that all spaces would be concrete in this sense.
However, for us this is impossible.

\begin{thm}\label{thm:nonconcrete}
  If $\prop$ is concrete, then all \mprop{}s are codiscrete.
\end{thm}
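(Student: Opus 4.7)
My plan is to apply the embedding hypothesis at the pair $(\top, Q)$ for an arbitrary $Q:\prop$ and use propositional univalence to transport the resulting equivalence to $\sharpf_Q : Q \to \sharp Q$ itself.

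First, I combine the concreteness hypothesis with \cref{thm:path-sharp}. The commuting triangle in that theorem exhibits $\sharpf: (P=Q) \to \sharp(P=Q)$ as the composite of $\ap_\sharpf : (P=Q) \to (P^\sharp = Q^\sharp)$ with the equivalence $(P^\sharp = Q^\sharp) \simeq \sharp(P=Q)$. Since $\sharpf: \prop \to \sharp\prop$ is $(-1)$-truncated by hypothesis, $\ap_\sharpf$ is an equivalence, and hence so is $\sharpf$ on $(P=Q)$. In other words, for every $P, Q:\prop$ the identity type $(P=Q)$ is codiscrete.

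Next, I specialize $P$ to the true proposition $\top$. Propositional univalence furnishes an equivalence $\phi_Q : (\top = Q) \simeq Q$, $p \mapsto p_*(\ttt)$. Naturality of $\sharpf$ at $\phi_Q$ gives the commuting square
\[ \sharpf_Q \circ \phi_Q = \sharp(\phi_Q) \circ \sharpf_{(\top = Q)}. \]
The right-hand side is a composite of equivalences: $\sharpf_{(\top = Q)}$ is one by the preceding step, and $\sharp(\phi_Q)$ is the image of an equivalence under the functor $\sharp$. Thus the composite, and so $\sharpf_Q \circ \phi_Q$, is an equivalence; since $\phi_Q$ is an equivalence, so is $\sharpf_Q$, which says exactly that $Q$ is codiscrete.

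The one point that needs care is the first step: one must check that the equivalence produced by the embedding hypothesis really is the unit $\sharpf$ on the identity type $(P=Q)$, rather than some other map with the same domain and codomain. This compatibility is precisely what the commuting triangle of \cref{thm:path-sharp} provides, which is why the argument goes through without needing any further structure beyond propositional univalence.
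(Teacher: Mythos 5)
Your proof is correct and takes essentially the same approach as the paper: use the embedding hypothesis together with \cref{thm:path-sharp} to show that $(P=Q)$ is codiscrete for all $P,Q:\prop$, then specialize to the true proposition and invoke propositional univalence to transfer codiscreteness to $Q$ itself. You spell out the transfer step via naturality of $\sharpf$, where the paper simply writes $(P=\unit)=P$ and leaves the invariance of codiscreteness under equivalence implicit; the two differ only in level of detail, not in substance.
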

\begin{proof}
  Suppose $\sharpf : \prop \to \sharp \prop$ is an embedding.
  Then for any $P,Q:\prop$, we have $(P=Q) \simeq (P^\sharp = Q^\sharp)$; but the latter is $\sharp(P=Q)$ by \cref{thm:path-sharp}.
  Thus, $P=Q$ is codiscrete.
  But taking $Q=\unit$, we have $(P=1)=P$, so all \mprop{}s are codiscrete.
\end{proof}

From a topological point of view, ``all \mprop{}s are codiscrete'' would mean that all continuous injections are subspace embeddings.
This seems quite dubious, and in fact it trivializes almost the entire theory:

\begin{thm}\label{thm:codisc-toploc}
  For any $n:\N$, an $n$-type $A$ is codiscrete if and only if the map $\const:A\to (P\to A)$ is an equivalence for all \mprop{}s $P$ such that $\sharp P$.
  In particular, therefore, if all \mprop{}s are codiscrete, then all $n$-types are codiscrete.
\end{thm}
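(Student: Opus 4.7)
The plan is to prove both directions separately, with the nontrivial reverse direction by induction on $n$, and then deduce the ``in particular'' clause. For the forward direction, if $A$ is codiscrete and $P$ is a proposition with $\sharp P$, then by \cref{thm:codiscrete-reflective} precomposition with $\sharpf : P \to \sharp P$ gives an equivalence $(\sharp P \to A) \simeq (P \to A)$; since $\sharp P$ is a proposition (by \cref{thm:codiscrete-ntypes}) and inhabited, it is contractible, so $(\sharp P \to A) \simeq A$ by evaluation at the unique point. Tracing shows the composite $A \simeq (\sharp P \to A) \simeq (P \to A)$ is $\const$, as desired.

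For the reverse direction I would induct on $n$. In the base case of propositions, $\sharp A$ is also a proposition and it suffices to construct $\sharp A \to A$: given $u : \sharp A$, applying the hypothesis with $P := A$ (so that $\sharp P$ is inhabited by $u$) yields an equivalence $\const : A \to (A \to A)$ whose inverse, applied to $\id_A$, produces an element of $A$. (Contractibles are trivially codiscrete since $\unit$ is.) In the inductive step, let $A$ be an $(n+1)$-type with the property. First I would verify that for $x, y : A$, the identity type $x = y$ inherits the property: given $P$ with $\sharp P$, function extensionality gives $(P \to (x = y)) \simeq (\const_x = \const_y)$, and applying $\ap$ to the equivalence $\const : A \simeq (P \to A)$ identifies this further with $(x = y)$, with the overall composite being $\const$ on $x = y$. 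By the inductive hypothesis $x = y$ is codiscrete, so by \cref{thm:path-sharp} the map $\sharpf : A \to \sharp A$ is an embedding.

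Now for any $u : \sharp A$, let $P := \fib_{\sharpf}(u)$, which is therefore a proposition. By \cref{thm:sharp-pullback} combined with the fact that $\sharp \sharpf : \sharp A \to \sharp \sharp A$ is an equivalence (since $\sharp A$ is codiscrete), $\sharp P$ is contractible, in particular inhabited. Applying the hypothesis, let $a^* := \const^{-1}(\proj_1) : A$; then $a^* = \proj_1(p)$ for every $p : P$, which gives a map $P \to ((a^*)^\sharp = u)$ sending $(a, q) \mapsto q$ after rewriting $(a^*)^\sharp$ as $a^\sharp$. Since $\sharp A$ is codiscrete, so is $((a^*)^\sharp = u)$, hence this map factors through the inhabited $\sharp P$, yielding an element of $((a^*)^\sharp = u)$ and thus of $P$. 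So every fiber of $\sharpf$ is an inhabited proposition, hence contractible, and $\sharpf$ is an equivalence. The ``in particular'' clause is then immediate: if every proposition is codiscrete, then $\sharp P$ inhabited forces $P$ itself inhabited, hence contractible, so $\const : A \to (P \to A)$ is trivially an equivalence for any $A$. The main obstacle I anticipate is this surjectivity step---producing an honest element of $\fib_{\sharpf}(u)$ from only its $\sharp$-inhabitedness---which crucially uses the hypothesis to build a candidate $a^* \in A$ and then boosts the $\sharp$-level data to an honest equality via codiscreteness of path types in $\sharp A$.
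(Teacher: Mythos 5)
Your proof is correct and follows essentially the same route as the paper: the forward direction via the reflective universal property and contractibility of $\sharp P$; the reverse direction by induction, taking $P := A$ in the base case, and in the inductive step first deducing that $\sharpf$ is an embedding from the inductive hypothesis applied to path types, then establishing surjectivity by applying the hypothesis to $P := \fib_{\sharpf}(u)$, using left exactness of $\sharp$ to show $\sharp P$ is contractible, and boosting the resulting $\sharp$-level equality to an honest one by codiscreteness of path types in $\sharp A$.
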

\begin{proof}
  The ``only if'' direction follows immediately from the universal property of $\sharp$ (\cref{thm:codiscrete-reflective}), since if $P:\prop$ and $\sharp P$, then $(\sharp P\to A) \simeq A$.
  For the ``if'' direction, we induct on $n$.
  When $n=-1$, let $A:\prop$ and suppose $\const:A\to (P\to A)$ is an equivalence for all $P:\prop$ such that $\sharp P$.
  To show $A$ is codiscrete, we assume $\sharp A$ and try to prove $A$.
  But then $A$ is a valid choice of $P$, and of course $A\to A$; hence $A$.

  Before embarking on the inductive step, we note that by \cref{thm:path-sharp}, $A\to \sharp A$ is an embedding (i.e.\ $A$ is concrete) if and only if $x=y$ is codiscrete for all $x,y:A$.
  On the other hand, for $x,y:A$ and $P:\prop$, we have
  \[(\const_x = \const_y) \simeq (P\to (x=y)).\]
  Thus $\const:A\to (P\to A)$ is an embedding if and only if
  \[\const:(x=y)\to (P\to (x=y))\]
  is an equivalence.
  Therefore, if $A$ is an $(n+1)$-type and $\const:A\to (P\to A)$ is an equivalence for all $P:\prop$ such that $\sharp P$, then the inductive hypothesis ensures that $A\to \sharp A$ is an embedding; thus it remains to show that it is surjective.

  Let $z:\sharp A$, and write $\eta \defeq \sharpf : A\to \sharp A$.
  Then $\fib_{\eta}(z)$ is a \mprop{}.
  Moreover, since $\sharp$ is left exact, we have $\sharp \fib_{\eta}(z) \simeq \fib_{\sharp\eta}(z^\sharp)$.
  But $\sharp\eta$ is an equivalence, so the latter type is contractible.
  Thus, $\sharp \fib_{\eta}(z)$, so by assumption $\const:A\to (\fib_{\eta}(z)\to A)$ is an equivalence.
  But we have $\proj_1 : \fib_{\eta}(z)\to A$, so there is an $x:A$ such that $\proj_1 = \const_x$.

  We claim $\eta(x) = z$.
  Since this is a codiscrete goal, and we have $\sharp \fib_{\eta}(z)$, by $\sharp$-induction we may assume that $\fib_{\eta}(z)$, i.e.\ there is a $y:A$ and $p:\eta(y) = z$.
  But $\proj_1 = \const_x$ yields $y=x$, and hence $\eta(x) = z$ as desired.
\end{proof}

\begin{rmk}
  \cref{thm:codisc-toploc} is a general fact about left exact monadic modalities, which generalizes the fact that a subtopos of a 1-topos is determined by its action on monomorphisms \parencite[see e.g.][A4.3.6]{ptj:elephant}.
  A subtopos of an $(\oo,1)$-topos, however, is not completely determined by its action on $n$-types for finite $n$ \parencite[see][\S6.5]{lurie:higher-topoi}.
\end{rmk}

Therefore, we are forced to think of our ``topologies'' as some more contentful ``stuff'' rather than just structure.
A good example to think about is the \emph{topological topos} of \textcite{ptj:topological-topos}.
Its objects are sets $X$ equipped with, for every sequence $x:\N\to X$ and point $y\in X$, a collection of ``witnesses that $(x_n)$ converges to $y$'', equipped with natural operations (e.g.\ there is a specified ``reflexivity'' witness that a constant sequence converges to the point at which it is constant, etc.)
Such an object is codiscrete just when every sequence converges \emph{uniquely} to every point.
Thus, since monomorphisms must be injective on points and witnesses both, such an object is concrete just when there is \emph{at most one witness} that any sequence converges to any point.
But in general, a ``space'' in this topos can have a sequence that converges to one point in many different ways.

The topological topos is a 1-topos rather than a higher topos, so we can expect it to model all of our type theory except for full univalence.
Pending a solution to the general problem of higher-topos-theoretic semantics, we may expect that there should be a corresponding ``topological $(\infty,1)$-topos'' that models the whole theory with univalence.

More generally, the {spatial type theory} of \crefrange{sec:spatial-type-theory}{sec:disc} should admit models in any \emph{local topos}.
A geometric morphism $f:\mathcal{F} \to \mathcal{E}$ is called \emph{local} \parencite[see e.g.][\S C3.6]{ptj:elephant} if its direct image $f_*$ admits a further $\mathcal{E}$-indexed right adjoint $f^!$, which is then necessarily fully faithful (and so is $f^*$).
In this case $\mathcal{F}$ inherits a comonad $f^*f_*$ and a monad $f^! f_*$ which are adjoint, and ought to extend the internal type theory of $\mathcal{F}$ to model our $\flat$ and $\sharp$ (without univalence).
The restricted classical axioms of \cref{sec:lem,sec:lem-2,sec:axiom-choice,sec:axiom-choice-2} will hold if their ordinary versions hold in $\mathcal{E}$.
Moreover, if we enhance $f$ to a local morphism of $(\oo,1)$-toposes, then it should be possible to model homotopy type theory with (at least weak) univalent universes.

\begin{rmk}\label{rmk:missing-pieces}
  At present, the previous paragraph is only conjectural.
  What needs to be done to make it precise is (1) define an appropriate sort of ``category with families'' or ``contextual category'' to handle our two-context type theory with crisp variables and formulate $\sharp$ and $\flat$ as algebraic structure on such a gadget, (2) prove that syntax yields an initial one of these, and (3) construct such algebraic objects from local geometric morphisms.
  Of these, (1) should be straightforward, (2) is still only conjectural even for most ordinary type theories \parencite[the one complete theorem along these lines is in][]{streicher:semtt}, while (3) should be straightforward in the 1-topos case and, I hope, possible in the \oo-case, at least if we only ask for weak universes.
  However, none of this is technically necessary for our actual results, which are simply theorems in our formal system; the categorical semantics is for motivation, for relative consistency, and, eventually, for applications to classical mathematics.
\end{rmk}

As shown by \textcite[C3.6.3(d)]{ptj:elephant}, local toposes are obtained as sheaves on sites with terminal objects admitting no nontrivial covers (``local sites''), and this was generalized to $(\oo,1)$-toposes by \textcite[Proposition 3.4.18]{schreiber:dcct}.
The topological topos arises in this way: its site has two objects $1$ and $\N_\infty$.
More generally, we can consider sheaves on any small full subcategory of topological spaces that contains the one-point space.
Any such topos will model spatial type theory with our classical axioms, and if we generalize to higher sheaves, it should have at least weak univalent universes.

\begin{rmk}\label{rmk:local-toposes}
  A number of further examples of local geometric morphisms of 1-toposes can be found in \textcite[C3.6.3]{ptj:elephant}, all of which generalize appropriately to $(\oo,1)$-toposes.
  A ``purely \oo-categorical'' example is the $(\oo,1)$-topos of parametrized spectra, whose objects are pairs $(X,E)$ where $X$ is a space and $E$ a spectrum parametrized over $X$; this is equivalently the category of 1-excisive functors from pointed spaces to unpointed spaces studied in Goodwille calculus \parencite{goodwillie:calculus-iii,lurie:ha,joyal:logoi}.
  Here the discrete and codiscrete objects coincide, being those for which $E=0$, and moreover we have $\sharp=\flat$.
  Finally, the (co)discrete objects are also exactly the \emph{hypercomplete} ones, and hence include all $n$-types for finite $n$.

  In particular, therefore, our ``spatial type theory'' is in fact significantly more general than the name suggests.
  In the rest of this paper we will add axioms that bring it closer to our intended intuition that ``types have topology''; studying other classes of models would of course lead us to different axioms.
\end{rmk}

\begin{rmk}\label{thm:abs}
  A different sort of internal logic for local geometric morphisms (along with another class of examples arising from realizability) is considered by \textcite{ab:ax-local} and \textcite{abs:lrt-modal-comput-ea}.
  It is a ``logic over a type theory'' in which the types all belong to the base category but the propositions to the local topos, and there are two propositional modalities.
  This theory can be roughly identified with the fragment of spatial type theory in which we allow arbitrary crisp contexts (of sets, i.e.\ 0-truncated types), but require all types in the cohesive context to be propositions.
  (Since dependence on a \mprop{} is always trivial, we can then assume that our cohesive propositions depend only on the crisp context, as is usual for logic-enriched type theories.)
  Unfortunately, the meaning of $\flat$ and $\sharp$ in \textit{ibid.}\ is reversed from ours; but as we will see in \cref{rmk:why-flat} there is a good reason for our choice.

  As remarked in \textit{ibid.}, when restricted to propositions, our $\flat$ (their $\sharp$) satisfies the formal properties of the operator $\Box$ in S4 modal logic.
  Roughly speaking, this is the eventual origin of our term \emph{modality} for $\flat$ and $\sharp$ (and, later on, $\shape$).

  One further remark is that the operations relating types to propositions in the logic of \textit{ibid.}\ are not exactly those of spatial type theory.
  On one hand, since $\prod$ and $\sum$ cannot be applied to crisp dependence directly, we have to wrap them in a $\flat$ first to yield the quantifiers $\exists$ and $\forall$ of \textit{ibid.}
  Specifically, if $u::A\mid \cdot \types P:\prop$, then by ``$\forall u, P$'' we can only mean ``$\prd{x:\flat A} \flet u^\flat := x in P$'', and similarly for $\exists$.

  On the other hand, if $A$ is a set and $u::A$ and $v::A$, we \emph{can} form the proposition $u=v$, but this is not what is meant by ``$u=v$'' in \textit{ibid.}: since that is discrete, it must instead be $\flat(u=v)$ (or equivalently, by \cref{thm:equiv-flat-path}, $(u^\flat=v^\flat)$).
  That this has the correct universal property \parencite[i.e.\ satisfies ``Lawvere's law'' from][]{lawvere:comprehension} can be seen from crisp Id-induction (\cref{thm:crisp-J}), which implies that if $x::A,y::A \mid\cdot \types Q:\prop$ and $u::A\mid\cdot \types Q[u/x,u/y]\istrue$, then $x::A,y::A,p::x=y\mid\cdot \types Q \istrue$, and hence $x::A,y::A\mid q:\flat(x=y) \types Q \istrue$ by $\flat$-induction.
\end{rmk}

Returning to our goal of ``accessing the topology'' internally in type theory, one natural approach would be to pick one local topos with a ``topological'' character, or a class of them, and look for special features of that model that are visible internally.
For Johnstone's topological topos, this approach has been pursued by \textcite{es:universe-indiscrete}, in the following way.
One can define internally the \emph{generic convergent sequence} $\N_\oo$ to be the type of non-increasing functions $\N\to\bool$; in the topological topos this does in fact yield the space $\N_\oo$.
Thus, one can define a \emph{convergent sequence} in an arbitrary type $X$ to be a map $\N_\oo \to X$.
If we wanted to combine this approach with spatial type theory, we could add axioms ensuring that the discrete and/or codiscrete types, as defined using $\flat$ and $\sharp$, can be characterized in terms of $\N_\oo$.%
\footnote{Note that the notion of ``indiscrete'' referred to in the title of \textcite{es:universe-indiscrete} is weaker than our ``codiscrete''; the former asks only that every sequence converges to every point in \emph{some} way.
  \cref{thm:nonconcrete,thm:codisc-toploc} imply that the universe cannot be codiscrete in our stronger sense without largely trivializing the theory.}

This is an interesting direction for future research, but in this paper we take a different route, corresponding to a different choice of a small full subcategory of topological spaces.
We are interested in applications to theorems about ``manifold-like'' spaces locally modeled on the real numbers, both because of their intrinsic interest, because homotopy types presented by such spaces play an important role in classical algebraic topology, and because of their importance in applications (e.g.\ to physics).
While manifold-like spaces are generally well-enough behaved that their topology is detected by convergent sequences, so that they embed fully faithfully into the topological topos, the generating object $\N_\oo$ of this topos is \emph{not} at all manifold-like.
In particular, it is not even \emph{locally connected}, a property that is important for ensuring that a space has a well-behaved fundamental \oo-groupoid.

Thus, our motivating model will instead be the topos of sheaves (or the higher topos of higher sheaves) on the full subcategory of topological spaces whose objects are the \emph{cartesian spaces} $\R^n$.
This higher topos was studied by \textcite[\S4.3]{schreiber:dcct} under the name \emph{Euclidean-topological cohesion}, its objects being called \emph{Euclidean-topological \oo-groupoids} or \emph{continuous \oo-groupoids}; in other places they are called \emph{topological stacks} or \emph{topological \oo-stacks}.
In the same way that the topological topos defines ``topology'' to consist of convergent sequences, this topos defines ``topology'' to consist of \emph{continuous paths} (and, more generally, homotopies and higher homotopies between such paths).
In particular, just as the former allows spaces in which a sequence converges to a limit in many ways, our topos allows paths that ``are continuous'' in many ways; the ``concrete objects'' will be those in which any path is continuous in at most one way.%
\footnote{In general, when interpreted in a local topos of sheaves on a local site, our definition of ``concrete'' reduces to the notion of ``concrete sheaf'' studied by \textcite{dubuc:concrete-qtop,de:quasitopoi,bh:cc-smooth}.
  Its formulation using $\sharp$ was pointed out by Carchedi and appears in \textcite[Proposition 3.7.5]{schreiber:dcct}.}

We emphasize again that these ``paths'' are totally different from the terms of identity type (or ``identifications'') that in homotopy type theory are sometimes called ``paths''.
For instance, the \emph{topological circle} $\topcirc$ contains many continuous paths, but no identifications other than reflexivity (inside type theory, it will be a ``set''), whereas the \emph{homotopical circle} $\hocirc$ contains $\Z$-many identifications but no nontrivial topology (in fact, we have seen already in \cref{thm:s1-discrete} that it is topologically discrete).
A general object of our (higher) topos is a sort of stack, with \emph{both} topological paths and nontrivial identifications, and moreover it can have ``topology on its identifications''.
A good example is the delooping of a topological group such as $O(n)$; this is homotopically a 1-type, but its loop type is $O(n)$ with its topological structure.

Just as $\N_\oo$ plays a central role in the internal theory of the topological topos, we should expect the space of real numbers $\R$ to play an analogously central role for us.
We would like to be able to \emph{define} this space internally, just as $\N_\oo$ can be defined internally using decreasing binary sequences.
As remarked in \cref{sec:axiom-choice-2}, there are at least two obvious ways to define ``the real numbers'' internally, so we should inquire whether either of these definitions automatically inherits the correct topology.

The answer is that the \emph{Dedekind} real numbers do inherit the correct topology, whereas the Cauchy real numbers do not.
In fact, the Cauchy real numbers are \emph{discrete}; we will prove this internally in \cref{thm:RC-discrete}.
The correctness of the topology on the Dedekind reals requires a more involved semantic argument (see \textcite{lin:shtop} and \cref{thm:axt-holds}), but in \cref{thm:continuity} we will prove a corresponding internal approximation.
Both of these proofs use the axiom of \emph{real-cohesion} to be introduced in the second part of the paper, which enhances our spatial type theory by asserting that the ``topology'' of types is determined by maps out of the Dedekind real numbers.

\begin{rmk}\label{thm:local-sections}
  Before continuing, let us note that having moved away from the idea of types as classical topological spaces, we also need to update our subsidiary intuitions.
  For example, we noted before that monomorphisms of topological spaces are continuous injections; but in topological toposes the monomorphisms must be injective not only on points but on ``topological structure'' (e.g.\ witnesses to convergence of a sequence or continuity of a path).
  Such a monomorphism is a subspace inclusion when it is also ``full'' on topological structure (e.g.\ for any path lying in the subspace, the witnesses of its continuity in the subspace map isomorphically to the witnesses in the ambient space).

  The case of (regular) epimorphisms (corresponding to type-theoretic surjections) is somewhat more subtle and involves the Grothendieck topology with which we equip our site.
  In Johnstone's topological topos, a map $f:A \to B$ is epimorphic if it is surjective on points and any (witness of a) convergent sequence $(b_n) \to b_\oo$ in $B$ has a ``subsequence'' $(b_{n_k}) \to b_\oo$ that is the image of some (witness of a) convergent sequence in $A$.
  More importantly for us, in our topos of Euclidean-topological spaces, $f:A \to B$ is epimorphic if for any continuous ``plot'' $\R^n \to B$, we can cover $\R^n$ by open balls such that the restriction of our plot to any such ball is in the image of some plot in $A$.
  If we think of $A$ and $B$ as a sort of manifold locally modeled on $\R^n$, then the epimorphisms are the continuous maps that admit local sections.
\end{rmk}


\section{Cohesion and real-cohesion}
\label{sec:real-cohesion}
\label{sec:reals}

Recall that $\R$ denotes the \emph{Dedekind} real numbers (since it is much more important for us than any other kind of real numbers, we dignify it by omitting any subscript).
We may call a map $\R\to A$ a \emph{(continuous) path} in $A$.
If topology is to be determined by such paths, then a \emph{discrete} type should be one in which every such path is constant.
This is what the following axiom expresses.

\begin{named}{Axiom R$\flat$}\label{ax:r4}\label{ax:r3}
  A crisp type $A::\type$ is discrete if and only if $\const: A \to (\R\to A)$ is an equivalence.
\end{named}

\begin{rmk}
  As stated, this ``axiom'' has to be formulated as an unjustified rule rather than an element of some fixed type, like our original \ref{ax:lem}.
  However, as we did for \ref{ax:lem} in \cref{rmk:flat-lem}, by using $\flat$ we can reformulate it as an assumed (crisp) element of
  \[ \prd{B:\flat\type} \flet A^\flat := B in (\isdisc(A) \leftrightarrow \mathsf{isequiv}(\const_{A,\R})). \]
\end{rmk}


\subsection{Cohesion}
\label{sec:cohesion}

When \ref{ax:r3} is added to spatial type theory, we call it \emph{real-cohesive type theory} (or real-cohesive homotopy type theory, if homotopical features such as univalence and HITs are included).
We now explore the consequences of \ref{ax:r3} in stages, by introducing a sequence of weaker axioms that use successively more properties of $\R$.
In addition to clarifying the exposition, this makes it clear that many results are true much more generally.
By \emph{cohesive (homotopy) type theory} we mean spatial type theory with one or more of these weaker axioms.

We begin with the following, the weakest of the axioms of cohesion.

\begin{named}{Axiom C0}\label{ax:r0}
  There is a type family $R::I\to\type$ such that a crisp type $A::\type$ is discrete if and only if $\const: A \to (R_i\to A)$ is an equivalence for all $i:I$.
\end{named}

Since this characterization of discreteness is purely internal, we finally have a notion of ``discrete'' that applies also to non-crisp types.

\begin{defnr0}\label{defn:discrete}
  An arbitrary type $A:\type$ (not necessarily crisp) is said to be \textbf{discrete} if $\const: A \to (R_i\to A)$ is an equivalence for all $i:I$.
\end{defnr0}

\begin{lemr0}\label{thm:discrete-pi}
  Discrete types are an exponential ideal, and even a ``dependent exponential ideal'': if $B:A\to \type$ and each $B(x)$ is discrete, then so is $\prd{x:A} B(x)$.
\end{lemr0}
\begin{proof}
  $\left(R_i \to \prd{x:A} B(x)\right) \simeq \left(\prd{x:A} (R_i\to B(x))\right) \simeq \left(\prd{x:A} B(x)\right)$.
\end{proof}

Technically we also need to verify that this equivalence is the map $\const$, but this is usually easy, so we omit it, both here and in the following lemmas.

\begin{lemr0}\label{thm:discrete-sigma}
  Discrete types are closed under $\sum$: if $A$ is discrete and $B:A\to \type$ has each $B(x)$ discrete, then $\sm{x:A} B(x)$ is discrete.
\end{lemr0}
\begin{proof}
  We have $\left(R_i \to \sm{x:A} B(x)\right) \simeq \left(\sm{f:R_i\to A} \prd{r:R_i} B(f(r)) \right)$.
  But since $A \to (R_i\to A)$ is an equivalence, the latter is equivalent to $\sm{x:A} (R_i \to B(x))$, which is equivalent to $\sm{x:A} B(x)$ since each $B(x)$ is discrete.
\end{proof}

\begin{lemr0}\label{thm:discrete-path}
  If $A$ is discrete and $x,y:A$, then $(x=y)$ is discrete.
\end{lemr0}
\begin{proof}
  Since $\const : A\to (R_i\to A)$ is an equivalence, it induces an equivalence on identity types:
  \[ (x=y) \simeq (\const_x = \const_y). \]
  However, $(\const_x = \const_y)$ is equivalent to $R_i\to (x=y)$.
\end{proof}

\begin{lemr0}\label{thm:discrete-pullback}
  Discrete types are closed under pullbacks.
\end{lemr0}
\begin{proof}
  If $A,B,C$ are discrete and $f:A\to C$, $g:B\to C$, then
  \[A\times_C B = \sm{x:C}{y:B}(f(x)=g(y)),\]
  so this follows from \cref{thm:discrete-sigma,thm:discrete-path}.
\end{proof}

\subsection{Punctual cohesion}
\label{sec:pointed-cohesion}

Our next batch of consequences uses the additional assumption that each $R_i$ is inhabited (which is clearly true for $\R$).

\begin{named}{Axiom C1}\label{ax:r1}
  \ref{ax:r0} holds, and moreover we have some $r:\prd{i:I} R_i$.
\end{named}

This makes \cref{defn:discrete} equivalent to a weaker-looking definition.

\begin{lemr1}\label{thm:discrete-by-const}
  A type $A$ is discrete if and only if every function $R_i\to A$ is constant, i.e.\ for all $i:I$ and $f:R_i\to A$ there is an $a:A$ such that $f(x)=a$ for all $x:R_i$.
\end{lemr1}
\begin{proof}
  The given condition says that each $\const: A \to (R_i\to A)$ has a section; but it always has a retraction, namely evaluation at $r_i$.
  Thus, having a section is equivalent to being an equivalence.
\end{proof}

The primary application of \ref{ax:r1} for us is the following fact.

\begin{lemr1}\label{thm:discrete-prop}
  All \mprop{}s are discrete.
\end{lemr1}
\begin{proof}
  Since each $R_i$ is inhabited, if $A$ is a \mprop{} and $R_i\to A$, then $A$ is contractible; hence the condition of \cref{thm:discrete-by-const} holds.
\end{proof}

From the $\flat$-$\sharp$ adjunction, we obtain a partial dual:

\begin{lemr1}\label{thm:codiscrete-crispprop}
  All crisp \mprop{}s are codiscrete.
\end{lemr1}
\begin{proof}
  We are to show that for any $P::\prop$, $P$ is codiscrete, i.e.\ that $\sharp P \to P$ holds.
  Since $P$ and $\sharp P$ are both discrete by \cref{thm:discrete-prop}, it will suffice to prove $\flat \sharp P \to \flat P$.
  But by \cref{thm:co-disc-equiv-gamma}, $\flat\sharp P \simeq \flat P$.
\end{proof}

For instance, this implies that we can dispense with the distinction between ``crisp and discrete'' and ``crisply discrete''.

\begin{lemr1}\label{thm:crisply-discrete}
  If $A::\type$ is crisp and discrete, then we may assume it is crisply discrete.
\end{lemr1}
\begin{proof}
  Suppose $A::\type$ and that we have some $d : \isdisc(A)$.
  Since $\isdisc(A)$ is a \mprop{}, it is discrete, and so we might as well have $d : \flat (\isdisc(A))$.
  Thus, by $\flat$-induction, no matter our goal we are free to assume $d' :: \isdisc(A)$.
\end{proof}

Similarly, if a crisp function is an equivalence, it is automatically a crisp equivalence; if a crisp type is an $n$-type, it is crisply an $n$-type; and so on.

As another particular case of \cref{thm:codiscrete-crispprop}, we have:

\begin{thmr1}\label{thm:emptyset-codiscrete}
  $\emptyset$ is codiscrete, i.e.\ \ref{ax:dense} holds.
\end{thmr1}
\begin{proof}
  $\emptyset$ is a crisp \mprop{}.
\end{proof}

\begin{corlemr1}\label{thm:codiscrete-notnot-2}
  For any \mprop{} $P$ we have $\sharp P \simeq \neg\neg P$.
\end{corlemr1}
\begin{proof}
  This is just \cref{thm:codiscrete-notnot}, but with \ref{ax:dense} replaced by \ref{ax:r1}, which implies it (by \cref{thm:emptyset-codiscrete}).
\end{proof}

In particular, even though \ref{ax:r1} only characterizes discreteness explicitly, in the presence of \ref{ax:lem} it implies an even more explicit characterization of codiscreteness, at least for $n$-types.

\begin{corlemr1}\label{thm:codiscrete-notnot-ntypes}
  An $n$-type $A$ is codiscrete if and only if $\const:A\to (P\to A)$ is an equivalence for all \mprop{}s $P$ such that $\neg\neg P$.
\end{corlemr1}
\begin{proof}
  Combine \cref{thm:codiscrete-notnot-2,thm:codisc-toploc}.
\end{proof}

\begin{rmk}
  In topos-theoretic language, \ref{ax:dense} says that the subtopos defined by $\sharp$ is \emph{dense}, while the stronger \cref{thm:codiscrete-crispprop} says that $\sharp$ is in fact \emph{fiberwise dense} in the sense of \textcite[C1.1.22]{ptj:elephant}.
\end{rmk}

We also obtain a characterization of discrete \emph{sets} (i.e.\ 0-types) that makes no reference to the family $R$, and coincides with that of~\textcite{penon:thesis,dp:compact-obj}: they are the sets with decidable equality.

\begin{lemlemr1}\label{thm:disc-deceq}
  A set $A$ is discrete if and only if $\forall x,y:A. (x=y \vee x\neq y)$.
\end{lemlemr1}
\begin{proof}
  If $A$ is discrete, then given $x,y:A$ we may assume they are crisp.
  Then since $A$ is a set, $x=y$ is a crisp \mprop{}, hence $x=y \vee x\neq y$ follows from \ref{ax:lem}.

  Conversely, suppose $\forall x,y:A. (x=y \vee x\neq y)$.
  Since $x=y$ and $x\neq y$ are incompatible \mprop{}s, we have $((x=y) \vee (x\neq y)) \simeq ((x=y) + (x\neq y))$, so we can define functions by cases on equality in $A$.
  Now for any $f:R_i \to A$, define $g:R_i \to \bool$ by
  \[ g(r) =
  \begin{cases}
    \btrue &\quad \text{if }f(r)=f(r_i)\\
    \bfalse &\quad \text{if }f(r)\neq f(r_i).
  \end{cases}\]
  Since $\bool$ is discrete, $g$ is constant.
  But $g(r_i) = \btrue$, so $g(r)=\btrue$ for all $r:R_i$, i.e.\ $f(r) = f(r_i)$ for all $r$.
  Hence $f$ is constant.
\end{proof}

We record a few more useful consequences.
Firstly, $\flat$ detects emptiness:

\begin{corr1}\label{thm:flat-detects-empty}
  If $A::\type$ and $\neg\flat A$, then $\neg A$.
\end{corr1}
\begin{proof}
  If $\neg\flat A$, then it may as well hold crisply by \cref{thm:discrete-prop}, i.e.\ we have $f::\flat A \to \emptyset$.
  Then by \cref{thm:flat-sharp-adj}, we also have $A \to \sharp \emptyset$, and hence $\neg A$ since $\sharp\emptyset = \emptyset$.
\end{proof}

Next, any injective continuous function with discrete codomain also has discrete domain, as we would expect from classical topology.

\begin{lemr1}\label{thm:discrete-subobject}
  Any subobject of a discrete type is discrete.
\end{lemr1}
\begin{proof}
  If $A$ is discrete and $P:A\to \prop$ is a predicate, then each $P(x)$ is discrete by \cref{thm:discrete-prop}; hence the subobject $\sm{x:A} P(x)$ is discrete by \cref{thm:discrete-sigma}.
\end{proof}

\begin{corr1}\label{thm:flat-cart-emb}
  If $m::A\to B$ is a crisp embedding, then its naturality square for $\flat$ is a pullback:
  \begin{equation*}
  \vcenter{\xymatrix@-.5pc{
      \flat A\ar[r]\ar[d] &
      A\ar[d]\\
      \flat B \ar[r] &
      B
      }}
  \end{equation*}
\end{corr1}
\begin{proof}
  Since $\flat$ preserves embeddings, $\flat A \to \flat B$ is an embedding, as is the pullback of $A$ to $\flat B$.
  Thus, it will suffice to show that the latter pullback factors through $\flat A$ as a subobject of $\flat B$.
  But since it is a subobject of a discrete type, it is discrete, so this follows from the universal property of $\flat A$.
\end{proof}

We can also characterize the concrete objects (\cref{defn:concrete}) more exactly.

\begin{corlemr1}\label{thm:concrete-negnegsep}
  A set $A$ is concrete if and only if it is $\neg\neg$-separated, i.e.\ $\neg\neg(x=y)\to (x=y)$ for all $x,y:A$.
\end{corlemr1}
\begin{proof}
  By definition, $A$ is concrete if $\sharpf:A\to \sharp A$ is an embedding.
  But by \cref{thm:path-sharp}, $(x^\sharp = y^\sharp) \simeq \sharp(x=y)$, so this holds just when $x=y$ is codiscrete for all $x,y:A$.
  Now apply \cref{thm:codiscrete-notnot-2}.
\end{proof}

For example, since \R\ is always $\neg\neg$-separated \parencite[see for example][D4.7.6]{ptj:elephant}, it follows that \R\ is concrete.
But in fact, this is true even without LEM.

\begin{thmr1}\label{thm:R-concrete}
  \R\ is concrete.
\end{thmr1}
\begin{proof}
  It will suffice to prove that for any crisp $A::\type$ and $f,g::A\to \R$, if the two composites $A\to \R\to\sharp \R$ are equal, then so are $f$ and $g$.
  (Applying this when $A = \R \times_{\sharp\R} \R$ will then show that $\R\to\sharp \R$ is monic.)
  Moreover, by \cref{thm:flat-sharp-adj}, saying that the two composites $A\to \R\to\sharp \R$ are equal is the same as saying that the two composites $\flat A \to A \to \R$ are equal.

  Now a map $f:A\to\R$ is determined by two \Q-indexed families of subobjects of $A$, say $L^f(q)$ and $U^f(q)$ for $q:\Q$, satisfying the usual axioms, and likewise for $g$.
  Thus, our assumption says that $L^f(q)$ and $U^f(q)$ agree with $L^g(q)$ and $U^g(q)$ when pulled back to $\flat A$, and we want to show that $L^f(q) = L^g(q)$ and $U^f(q) = U^g(q)$ already over $A$.
  By \cref{thm:flat-cart-emb}, the pullback of $L^f(q)$ to $\flat A$ is $\flat L^f(q)$, and likewise for all the others.
  (Note that \Q, being abstractly isomorphic to \N, is discrete, so we can assume any rational number to be crisp).

  Now if $q<r$, we have $L^f(q) \cap U^f(r) = \emptyset$, and thus $\flat L^f(q) \cap \flat U^f(r) = \emptyset$.
  But $\flat U^f(r) = \flat U^g(r)$ by assumption, so $\flat L^f(q) \cap \flat U^g(r) = \emptyset$ as well.
  Since $\flat$ preserves pullbacks, by \cref{thm:flat-detects-empty} we have $L^f(q) \cap U^g(r) = \emptyset$.
  If further $r<s$, then since $U^g(r) \cup L^g(s) = A$, we have $L^f(q) \subseteq L^g(s)$.
  Finally, for any $p:\Q$ we have $L^f(p) = \bigcup_{q<p} L^f(q) \subseteq \bigcup_{q<p} L^g(p) = L^g(p)$ (by taking $s\defeq p$, and $r$ between $q$ and $p$).
  The same argument applies in the other direction and to all the other subsets.
\end{proof}

We can also show the inclusion of the discrete retopologization is injective (indeed, bijective) on points --- but with a truncation restriction.

\begin{thmr1}\label{thm:flat-counit-inj}
  If $B::\type$ is a crisp set, then $\flat B \to B$ is injective.
\end{thmr1}
\begin{proof}
  Note that $\flat B$ is a set by \cref{thm:flat-set}.
  In the proof of \cref{thm:equiv-path-flat}, we showed that for any $u,v:\flat B$ we have $(u=v) \simeq \code(u,v)$, where
  \[ \code(u,v) \defeq \flet x^\flat := u in (\flet y^\flat := v in \flat(x=y)). \]
  However, if $B$ is a set, then $x=y$ is a \mprop{}, hence discrete by \cref{thm:discrete-prop}.
  Thus, by $\flat$-induction we can prove that $\code(u,v)$ is equivalent to
  \[ \flet x^\flat := u in (\flet y^\flat := v in (x=y)) \]
  which by a couple of commuting conversions is equivalent to
  \[ (\flet x^\flat := u in x) = (\flet y^\flat := v in y) \]
  i.e.\ to $(u_\flat=v_\flat)$.
  Thus, $\flatf$ is injective.
\end{proof}

The proof also makes clear why we should \emph{not} expect $\flat B \to B$ to be an embedding if $B$ is not a set: in that case we also have to discretify the identity types.

\begin{corr1}
  If $B::\type$ is a crisp set, then the composite $\flat B \to B \to \sharp B$ is injective.
\end{corr1}
\begin{proof}
  It is equal to the composite $\flat B \cong \flat\sharp B \to \sharp B$, and $\sharp B$ is also a crisp set.
\end{proof}

\begin{corr1}\label{thm:crisp-discrete-concrete}
  Any crisp discrete set is concrete.\qed
\end{corr1}

We can also show that discrete \emph{sets} are closed under surjective quotients \parencite[see also][A4.6.6]{ptj:elephant}.

\begin{lemr1}\label{thm:set-surj-discrete}
  If $A,B::\type$ are crisp, $A$ is discrete, $B$ is a set, and $f::A\to B$ is surjective, then $B$ is discrete.
\end{lemr1}
\begin{proof}
  The kernel-pair of $f$ is a subobject of $A\times A$, and is therefore crisply discrete by \cref{thm:discrete-subobject}.
  But $B$ is the set-coequalizer of that kernel pair, hence discrete by \cref{thm:setcoeq-discrete}.
\end{proof}

\begin{corr1}\label{thm:0trunc-discrete}
  If $A$ is crisp and discrete, then so is $\trunc0A$.\qed
\end{corr1}

Now we can prove the following, which was claimed in \cref{sec:topos-models} to hold in our topos model.

\begin{thmr1}\label{thm:RC-discrete}
  The Cauchy real numbers $\R_C$ are discrete.
\end{thmr1}
\begin{proof}
  Since $\Q$ and $\N$ are discrete, by \cref{thm:discrete-pi}, the type $\N\to \Q$ of sequences of rational numbers is discrete.
  The type $\mathcal{C}$ of Cauchy sequences is a subobject of $\N\to \Q$, so by \cref{thm:discrete-subobject} it is also discrete.
  Finally, the Cauchy reals are a surjective image of the set of Cauchy sequences, so by \cref{thm:set-surj-discrete} they are also discrete.
\end{proof}

\cref{thm:RC-discrete} is about the usual sort of Cauchy real numbers defined as a quotient of the set of Cauchy sequences.
\textcite[\S11.3]{hottbook} constructs a ``better'' set of Cauchy real numbers that is constructively Cauchy complete.
However, our classicality axioms suffice to ensure that the usual $\R_C$ is already Cauchy complete, hence coincides with that of \textcite{hottbook}:

\begin{coracr1}\label{thm:RC-cauchy-complete}
  $\R_C$ is Cauchy-complete.
\end{coracr1}
\begin{proof}[Sketch of proof.]
  Since $\R_C$ is discrete by \cref{thm:RC-discrete} and the quotient map $\mathcal{C} \to \R_C$ is surjective, by \cref{thm:discrete-ac} there exists a section of it.
  Thus, for any Cauchy sequence in $\R_C$ there exists a sequence of sequences of rational numbers, and ``diagonalizing'' this in the usual way we obtain a limit in $\R_C$.
\end{proof}

Finally, we can also nail down the exact relationship between the topology on the two types of real numbers.

\begin{corccorlemr1}\label{thm:RC-flat-R}
  $\R_C \simeq \flat \R$.
\end{corccorlemr1}
\begin{proof}
  By \cref{thm:RC-RD-points}, $\flat \R_C \simeq \flat \R$.
  But $\R_C$ is discrete, so $\flat\R_C \simeq \R_C$.
\end{proof}

We will return to the question of what the topology on $\R$ itself is in \cref{sec:continuity}.

\addtocontents{toc}{\protect\setcounter{tocdepth}{2}}
\subsection{Omniscience principles}
\label{sec:ac-2}

We have seen that the ordinary LEM and AC are inconsistent with spatial interpretations, motivating the introduction of $\sharp$ and $\flat$ in order to state our modified \ref{ax:lem} and \ref{ax:ac}.
However, there are weaker ``classicality'' principles whose ordinary versions \emph{are} true in our motivating model, and some of them can even be proven from our current axioms, such as the following.

\begin{thmlemr1}\label{thm:lpo}
  The \textbf{limited principle of omniscience (LPO)} holds: for any $f:\N\to\bool$, either there exists an $n:\N$ such that $f(n)=\btrue$, or $f(n)=\bfalse$ for all $n$.
\end{thmlemr1}
\begin{proof}
  Since $\N\to\bool$ is discrete by \cref{thm:discrete-pi}, we may assume $f::\N\to\bool$ is crisp.
  Then ``$\exists n. f(n) = \btrue$'' is a crisp \mprop{}, so by \ref{ax:lem} we have either $\exists n. f(n) = \btrue$ or $\neg(\exists n. f(n)=\btrue)$.
  But $\neg (\exists n. f(n) = \btrue)$ is the same as $(\forall n. f(n)=\bfalse)$, so we are done.
\end{proof}

In particular, this implies that if we define $\N_\oo$ as in \textcite{es:universe-indiscrete} to be the type of non-increasing binary sequences $\N\to\bool$, then the canonical map $\N+\unit \to \N_\oo$ is an equivalence.
In particular, \emph{every} type in our theory is ``indiscrete'' in the sense of \textcite{es:universe-indiscrete}.
Thus, the ``topology'' of the types in cohesive type theory is very different from the ``topology'' studied by \textcite{es:universe-indiscrete}.

It is well-known in constructive mathematics that LPO implies the following weaker classicality principles.

\begin{corlemr1}\label{thm:llpo}
  The \textbf{lesser limited principle of omniscience (LLPO)} holds: for any $f,g:\N\to\bool$, if it is not the case that both $\exists n.f(n)=\btrue$ and $\exists n.g(n)=\btrue$, then either $\forall n.f(n)=\bfalse$ or $\forall n.g(n)=\bfalse$.\qed
\end{corlemr1}

\begin{corlemr1}\label{thm:markov}
  \textbf{Markov's principle (MP)} holds: if $f:\N\to\bool$ and $\neg \forall n. f(n)=\bfalse$, then $\exists n. f(n)=\btrue$.\qed
\end{corlemr1}

It is also well-known that LPO, LLPO, and MP are equivalent to statements about the order and equality of \emph{Cauchy} real numbers.
Specifically:
\begin{enumerate}
\item LPO is equivalent to saying that $\R_C$ has decidable equality.
  (Thus, \cref{thm:lpo,thm:disc-deceq} give an alternative proof of \cref{thm:RC-discrete}.)
\item LLPO is equivalent to $\forall x:\R_C.(x\le 0 \vee x\ge 0)$, hence that $\le$ is a total order on $\R_C$.
\item MP is equivalent to $\forall x:\R_C.(\neg(x\le 0) \to x>0)$.
\end{enumerate}

Traditionally in constructive mathematics, two real numbers $x,y$ are said to be \textbf{apart}, written $x\apart y$, if $|x-y|>0$.
We have $\neg(x\apart y) \leftrightarrow (x=y)$, but in general, $x\apart y$ is stronger than $\neg(x=y)$.
However, since $x=y$ is equivalent to $|x-y|\le 0$, Markov's Principle implies that $x\apart y$ and $\neg(x=y)$ coincide for \emph{Cauchy} reals.

Of course, we are generally more interested in the Dedekind reals $\R$ than the Cauchy ones $\R_C$.
Toby Bartels has suggested the following terminology:
\begin{enumerate}
\item The \textbf{analytic LPO} claims that $\R$ has decidable equality.
\item The \textbf{analytic LLPO} claims that $\forall x:\R.(x\le 0 \vee x\ge 0)$.
\item The \textbf{analytic MP} claims that $\forall x:\R.(\neg(x\le 0) \to x>0)$.
\end{enumerate}

Since (assuming \ref{ax:r3}) $\R$ is not discrete (its identity map is not constant), the analytic LPO is false.
Somewhat more surprisingly, we will prove in \cref{thm:no-allpo} that the analytic LLPO is also false.
The analytic Markov's principle, however, is actually \emph{true} in our motivating model!
I conjecture that it can even be proven from \ref{ax:lem} and \ref{ax:r3}, but I have not managed to show this yet.
Thus, we assume it as an additional axiom.
Actually, I prefer the following axiom, which is equivalent in the presence of \ref{ax:lem}.

\begin{named}{Axiom T}\label{ax:t}
  For any $x:\R$, the \mprop{} $x>0$ is codiscrete.
\end{named}

\begin{thmlemr1t}[Analytic Markov's Principle]\label{ax:amp}
  If $x,y:\R$ satisfy $\neg(x=y)$, then $x\apart y$.
  In particular, if $\neg(x\le 0)$, then $x>0$.
\end{thmlemr1t}
\begin{proof}
  By \ref{ax:t}, $x\apart y$ is codiscrete, so by \cref{thm:codiscrete-notnot-2} it is $\neg\neg$-stable.
  Thus, we can prove it by contradiction.
  However, $\neg(x\apart y)$ means $x=y$, which contradicts the assumption of $\neg(x=y)$.
\end{proof}

\ref{ax:t} is arguably the most mysterious part of the theory.
Topologically, it says that the open subset $(0,\oo)$ of $\R$ is a sub\emph{space}, i.e.\ has the induced topology.
In other words, it relates the intrinsic ``topology'' of $\R$, arising from its definition as a type, to the internal \emph{ordering} relation defined on it.
Another way to look at it is that it ensures that the intrinsic ``topology'' of $\R$ makes it into a \emph{topological field}, i.e.\ that the reciprocal is continuous on the \emph{subspace} of invertible elements.

Some considerations of models may also help to understand \ref{ax:t}.

\begin{eg}\label{thm:axt-holds}
  \ref{ax:t} holds in our motivating model of sheaves on the category $\C$ of cartesian spaces $\R^n$.
\end{eg}
\begin{proof}[Sketch of proof.]
  This requires recalling a bit about the standard proof that $\R$ inherits the correct topology in this model, which goes as follows \parencites[see][\S VI.9]{mm:shv-gl}{lin:shtop}.
  For any space $X\in\C$, we have $\Sh(\C)/X \simeq \Sh(\C/X)$, which admits a local geometric morphism to $\Sh(X)$.
  Since local geometric morphisms are orthogonal to grouplike morphisms \parencite[C3.6.10]{ptj:elephant}, and the space of real numbers is a grouplike locale, to give a real number in $\Sh(\C)/X$ is equivalent to giving a real number in $\Sh(X)$, or equivalently a map $X\to \R$.
  Thus, the sections of the Dedekind real number object over $X$ are the continuous maps $X\to \R$.

  Repeating this argument with $\R$ replaced by the space of \emph{positive} real numbers, which is also grouplike, we find that the object of positive reals in $\Sh(\C)$ is the sheaf of positive real-valued functions.
  To say that this is a codiscrete subobject of the Dedekind reals is to say that a continuous real-valued function factors through the space of positive real numbers just when it is positive at every point, which is obviously true.
\end{proof}

The preceding argument depends crucially on the classicality of the base topos, and in particular the fact that the locale of formal real numbers (which classifies internal Dedekind real numbers) is spatial.
Indeed, we have:

\begin{eg}\label{thm:axt-fails}
  If we start from an arbitrary base topos, and construct a similar topos of sheaves on the category of cartesian \emph{locales} $R_f^n$ (where $R_f$ denotes the locale of formal reals), then \ref{ax:t} can fail.
\end{eg}
\begin{proof}[Sketch of proof.]
  I am indebted to Bas Spitters for this argument.
  There exist toposes (particularly, recursive ones such as the effective topos) in which there are uniformly continuous functions $f:\R\to\R$ such that $f(x)>0$ for all $x$ but $f$ does not have uniform lower bounds on all finite intervals.
  Now since the metric space $\R$ is ``locally compact'' in the sense of Bishop, by \textcite{palmgren:lcmet-loc}, $f$ extends to a locale morphism $\bar{f} : R_f \to R_f$, and since $\R$ is the space of points of $R_f$, we still have $\bar{f}(x)>0$ for all $x$.
  However, $\bar{f}$ does not factor through the locale of formal positive reals, since by \textcite{palmgren:ulb} if it did then it would have local uniform lower bounds.
  Thus, we can repeat the proof of \cref{thm:axt-holds} until the last sentence, at which point we find that the ``obvious'' fact is now false.
\end{proof}

Note that the topos of \cref{thm:axt-fails} is local over its base and satisfies \ref{ax:r3}, by the same argument as before.
Thus, (modulo \cref{rmk:missing-pieces}) \ref{ax:t} does not follow from \ref{ax:r3} alone; but I do not know a countermodel to it that satisfies both \ref{ax:r3} and \ref{ax:lem}.

\section{Shape}
\label{sec:shape}
\addtocontents{toc}{\protect\setcounter{tocdepth}{1}}

Codiscrete types are defined by the fact that functions \emph{into} them need not be continuous; while discrete types are defined by the fact that functions \emph{out} of them need not be continuous.
In a sense, this means that all the \emph{non-tautological} information about codiscrete types is carried by the maps \emph{out} of them, and likewise the non-trivial information about discrete types is carried by the maps \emph{into} them.
Let us leave the first for another day\footnote{But see \cref{thm:CC,thm:ECC}.} and concentrate on maps $A\to B$, where $B$ is discrete.

In the world of classical topological spaces, if $B$ is discrete, then (at least when $A$ is well-behaved, which at the moment means ``locally connected'') continuous maps $A\to B$ are the same as functions $\pi_0(A) \to B$, where $\pi_0(A)$ is the set of connected components of $A$.
In other words, $\pi_0$ is a \emph{reflection} into the subcategory of discrete spaces.
Since $\pi_0(A)$ is also the ``first layer'' of the homotopy type, or fundamental \oo-groupoid, of $A$, this suggests that mapping into discrete spaces can carry information about the latter construction.

In fact, it turns out that the single generalization from sets to \oo-groupoids ensures that mapping into discrete types carries \emph{all} the information about the fundamental \oo-groupoid.
At least in good situations, such as our \oo-topos of sheaves on the category of cartesian spaces $\R^n$, the fundamental \oo-groupoid can be \emph{defined} as a left adjoint to the inclusion of discrete spaces \parencites[see][Proposition 4.3.32]{schreiber:dcct}[\S3]{carchedi:hotyorb}.
With \ref{ax:r4} in our theory, we can get an inkling of how this works internally by considering the topological circle $\topcirc$.

\begin{defn}\label{defn:topcirc}
  The \textbf{topological circle} \topcirc\ is the (homotopy) coequalizer of the pair of maps
  \[\xymatrix{\bR \ar@<1mm>[r]^{\id} \ar@<-1mm>[r]_{+1} & \bR }.\]
\end{defn}

We might write this as $\topcirc \defeq \bR/\Z$.
In \cref{thm:bdry-disc} we will compare this definition to some other possible definitions of the topological circle.
For now let us simply observe that it is at least \emph{one} reasonable definition --- with one caveat, namely that $\topcirc$ ought to be a set (i.e.\ have no higher identifications), and with this definition it is not obvious that this is so.
We could, of course, 0-truncate it, but that would defeat the purpose of what we are about to do, and fortunately it turns out to be unnecessary:

\begin{thmua}\label{thm:free-int-quotient}
  Suppose $R$ is a set and $f:R\simeq R$ an equivalence such that for all $r:R$ and $m,n:\Z$, if $f^n(r) = f^m(r)$, then $m=n$.
  (In other words, the induced action of \Z\ on $R$ is free.)
  Then the homotopy coequalizer of $\xymatrix{R \ar@<1mm>[r]^{\id} \ar@<-1mm>[r]_{f} & R }$ is also a set.
\end{thmua}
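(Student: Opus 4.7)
The plan is to carry out an encode–decode argument in the style of the computation of $\pi_1(\hocirc)$. Write $C$ for the coequalizer, with structure maps $q : R \to C$ and $p : \prd{r:R} q(r) = q(f(r))$. Since ``$(x=y)$ is a \mprop{}'' is itself a \mprop{}, showing $C$ is a set will reduce by double $\coeq$-induction to showing $(q(r) = q(r'))$ is a \mprop{} for all $r, r' : R$.

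The key construction will be a binary family $\code : C \to C \to \type$ with
\[ \code(q(r), q(r')) \simeq \sm{n:\Z}(f^n(r) = r'). \]
The freeness hypothesis, together with $R$ being a set, ensures that each such sum is a \mprop{}: if $(n,\alpha)$ and $(m,\beta)$ both inhabit it, then $\alpha \cdot \beta^{-1}$ exhibits $f^n(r) = f^m(r)$, forcing $n = m$, after which $\alpha = \beta$ automatically. To define $\code$ by $\coeq$-recursion in each variable, I will use univalence to convert the shift equivalences $(n,\alpha) \mapsto (n \pm 1, \ap_{f^{\pm 1}}(\alpha))$ into identifications of types; because the target types are \mprop{}s, all induced higher coherences land in contractible types and so are automatic.

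Next I will build maps in both directions. For $\encode : \prd{x,y:C}(x=y) \to \code(x,y)$, I first construct $\mathsf{r}_x : \code(x,x)$ by $\coeq$-induction on $x$, taking $\mathsf{r}_{q(r)} \defeq (0, \refl_r)$; its compatibility with $p(r)$ is an equation in the \mprop{} $\code(q(r), q(f(r)))$. Then $\encode(\alpha) \defeq \alpha_*(\mathsf{r}_x)$. For $\decode : \prd{x,y:C} \code(x,y) \to (x=y)$, I proceed by $\coeq$-induction in both variables; on generators, given $(n,\alpha) : \sm{n:\Z}(f^n(r) = r')$, I build $q(r) = q(f^n(r))$ by concatenating $|n|$ copies of $p$ (or of its inverse, according to the sign of $n$) and then transporting along $\alpha$ to reach $q(r')$.

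Finally, $\encode \circ \decode$ is automatically homotopic to the identity because its codomain $\code(x,y)$ is a \mprop{}, while $\decode \circ \encode$ reduces by Id-induction to verifying $\decode_{x,x}(\mathsf{r}_x) = \refl_x$, which is immediate on generators. Hence $(x=y) \simeq \code(x,y)$ is a \mprop{}, and so $C$ is a set. The main obstacle will be the higher-coherence conditions required for $\decode$ across the $p(r)$ constructors of $C$: a priori these are identifications between paths in $C$, which are not obviously propositional before the theorem is proved. The standard resolution is a mild bootstrap — propositionality of $\code$ can be established first, and then the required coherences for $\decode$ can be transported along $\encode$ into $\code$, where they hold automatically.
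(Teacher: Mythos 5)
Your route is genuinely different from the paper's: the paper applies the flattening lemma to identify the coequalizer with $\sm{x:\hocirc}P(x)$ and then delegates everything to already-established facts about $\hocirc$ (that it is a connected $1$-type with $(\base=\base)\simeq\Z$), whereas you attempt a self-contained encode--decode on the coequalizer itself. Your observation that $\sm{n:\Z}(f^n(r)=r')$ is a \mprop{} (freeness forces $n=m$, and $R$ being a set then forces $\alpha=\beta$) is correct, as is the point that building $\code$ as a family landing in $\prop$ needs no further coherence because $\prop$ is a set.

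There is, however, a genuine gap in the step you yourself flag as the main obstacle, and your proposed ``bootstrap'' is circular. The coherence datum for $\decode$ over a path constructor $p(r)$ is (after function extensionality) an identification in $(q(f(r))=y)$ --- an identity type of the coequalizer $C$, which is exactly the kind of type whose propositionality is the \emph{conclusion} of the theorem, so you may not assume it is a \mprop{}. Transporting along $\encode$ into $\code$ only shows that $\ap_{\encode}$ applied to the would-be coherence is trivial; to deduce triviality of the coherence itself you would need $\encode$ to be an embedding, and that is logically equivalent to what you are trying to prove. What is actually needed is the direct computation from the standard $\pi_1(\hocirc)$ argument: define $\decode$ explicitly as a $\Z$-indexed concatenation of $p$'s and their inverses, note that transport in $\code$ along $p(r)$ is the integer shift, and verify the coherence by cancelling a $p(r)^{-1}\cdot p(r)$ pair. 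This is provable, but it forces you to reproduce the integer-iteration and cancellation lemmas in a form fibered over $R$. The paper's flattening route exists precisely so that this work does not have to be redone: it imports those calculations wholesale from the already-completed case of $\hocirc$.
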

\begin{proof}
  Recall that $\hocirc$ denotes the \emph{homotopical} circle, a HIT with two constructors $\base:\hocirc$ and $\floop : \base=\base$.
  We define $P:\hocirc \to \type$ by $P(\base)\defeq R$ and $P(\floop) = f$ (modulo univalence).
  Note that $\hocirc$ is the homotopy coequalizer of $\unit \toto \unit$.
  Thus, by the flattening lemma \parencite[\S6.12]{hottbook}, the desired homotopy coequalizer of $\id_R$ and $f$ is equivalent to $\sm{x:\hocirc} P(x)$; so it will suffice to prove that the latter is a set.

  Let $x,y:\hocirc$ and $u:P(x)$, $v:P(y)$; we must prove that $(x,u)=(y,v)$ is a \mprop{}.
  Now $(x,u)=(y,v)$ is equivalent to $\sm{p:x=y} (p_* u = v)$.
  Thus, suppose $p,q:x=y$ and $r:p_*u=v$ and $s:q_*u=v$; we must show $(p,r)=(q,s)$.
  Since $r$ and $s$ are equalities in a set, it suffices to show $p=q$.
  But this is also an equality in a set (since $\hocirc$ is a 1-type), hence a \mprop{}; thus, since $\hocirc$ is connected, we may assume $x=\base$ and $y=\base$.

  Now under the isomorphism $(\base=\base)\simeq \Z$, our $p$ and $q$ get identified with $n,m:\Z$ respectively.
  Moreover, by definition of $P$, transporting along $p$ or $q$ gets identified with application of $f^n$ or $f^m$ respectively.
  Thus, from $r$ and $s$ we obtain $p_* u = q_* u$ and hence $f^n(u) = f^m(u)$, with $u:R$, and so the freeness assumption gives $n=m$ as needed.
\end{proof}

\begin{rmk}
  It should be possible to prove \cref{thm:free-int-quotient} using univalence only for \mprop{}s \parencite[e.g.\ with][Theorem 7.2.2]{hottbook}.
  But the preceding proof is easier.
\end{rmk}

\begin{corua}
  The topological circle $\topcirc$ is a set.
\end{corua}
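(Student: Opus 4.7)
The plan is to deduce this directly from Theorem \ref{thm:free-int-quotient} by taking $R \defeq \R$ and $f \defeq (\blank + 1) : \R \to \R$. So I need to verify the three hypotheses: that $\R$ is a set, that $+1$ is an equivalence, and that the induced $\Z$-action is free.

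First, $\R$ is a set because it is defined (see \cref{fig:R}) as a subtype of $(\Q \to \prop) \times (\Q \to \prop)$ cut out by a conjunction of \mprop{}s, and function types into \mprop{}s are sets. Second, $+1 : \R \to \R$ is an equivalence with explicit inverse $\blank - 1$, using the group structure on $\R$. Third, for any $r : \R$ and $m, n : \Z$, the iterated application $f^n(r)$ computes to $r + n$ (where the integer $n$ is embedded in $\R$ via the canonical inclusion $\Z \hookrightarrow \R$). Thus an equality $f^n(r) = f^m(r)$ yields $r + n = r + m$ in $\R$, and cancellation (adding $-r$) gives $n = m$ as elements of $\R$. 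Since $\Z \hookrightarrow \R$ is an injection, this forces $n = m$ as integers.

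With all three hypotheses verified, \cref{thm:free-int-quotient} applies to give that the homotopy coequalizer of $\id$ and $+1$ is a set; by \cref{defn:topcirc} this coequalizer is exactly $\topcirc$. No step here is really the main obstacle — each of the three hypotheses is routine — but the one that requires the most care is the freeness of the $\Z$-action, since one has to be explicit that the abstract iteration $f^n$ matches addition of the image of $n$ under $\Z \hookrightarrow \R$, and then invoke injectivity of that inclusion rather than any classicality axiom.
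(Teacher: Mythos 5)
Your proposal is correct and matches the paper's own proof: both apply \cref{thm:free-int-quotient} to $R \defeq \R$ with $f \defeq (+1)$, using that $\R$ is an abelian group so $+1$ is invertible, and that $\Z\hookrightarrow\R$ is injective (i.e.\ $\R$ has characteristic $0$) to get freeness. The paper phrases the freeness check slightly more tersely, but the substance is identical to your explicit cancellation argument.
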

\begin{proof}
  Since $\bR$ is an abelian group, $(+1):\bR\to\bR$ is an equivalence.
  The freeness condition in \cref{thm:free-int-quotient} is equivalent to saying that the unique ring homomorphism $\Z\to \bR$ is injective, which just says that $\bR$ has characteristic 0.
\end{proof}

The following proof contains the crucial idea involved in the adjoint characterization of fundamental \oo-groupoids.

\begin{thmr3}\label{thm:circ-circ}
  For any discrete type $X$, we have $(\topcirc \to X) \simeq (\hocirc \to X)$.
\end{thmr3}
\begin{proof}
  The universal property of $\hocirc$ means that $(\hocirc \to X)$ is equivalent to
  \begin{equation}
    \sm{x:X} (x=x).\label{eq:circ-circ-0}
  \end{equation}
  But by the universal property of a coequalizer, $(\topcirc\to X)$ is equivalent to
  \begin{equation}
    \sm{g:\bR\to X} (g\circ (+1) = g).\label{eq:circ-circ-1}
  \end{equation}
  Now $g\circ (+1) = g$ is an equality in the type $\bR\to X$, which is equivalent to $X$ since $X$ is discrete.
  The equivalence can be implemented by evaluating at any $r:\bR$, such as $0$; thus~\eqref{eq:circ-circ-1} is equivalent to
  \begin{equation}
    \sm{g:\bR\to X} (g(1) = g(0)).\label{eq:circ-circ-2}
  \end{equation}
  But since $(\bR\to X)\simeq X$, we may assume $g$ is constant at some $x:X$, giving~\eqref{eq:circ-circ-0}.
\end{proof}

In particular, the identity map of $\hocirc$ corresponds to a nontrivial map $\topcirc \to\hocirc$, which may be said informally to ``wrap the topological circle around the homotopical one''.
Since \cref{thm:circ-circ} is easily shown to be natural, this map exhibits $\hocirc$ as a \emph{reflection} of $\topcirc$ into the discrete types.
(Recall from \cref{thm:s1-discrete} that $\hocirc$ is \emph{topologically} discrete.)

Since $\hocirc$ is, or should be, the fundamental \oo-groupoid of $\topcirc$, this leads us to ask: does every type have a reflection into the discrete types?
The answer is yes, and it requires only the much weaker \ref{ax:r0}.

\begin{defnr0}
  For any type $A$, its \textbf{shape} $\shape A$ is the higher inductive type with the following five constructors.
  \begin{enumerate}
  \item $\sigma_A:A\to \shape A$
  \item $\kappa_A : \prd{i:I} (R_i\to \shape A) \to \shape A$
  \item For all $i:I$ and $g:R_i\to\shape A$ and $x:R_i$, an equality $g(x) = \kappa_A(i,g)$.
  \item $\kappa'_A : \prd{i:I} (R_i\to \shape A) \to \shape A$
  \item For all $i:I$ and $x:\shape A$, an equality $x = \kappa'_A(i,\const_x)$.
  \end{enumerate}
\end{defnr0}

\begin{rmk}\label{rmk:shape}
We use the word ``shape'' not only because it is nine syllables shorter than ``fundamental \oo-groupoid'', but because the latter has various connotations that we want to be free of.
In particular, the shape is a purely internal construction in our ``real-cohesive type theory'', although it generally \emph{behaves like} the fundamental \oo-groupoid.
Moreover, the fundamental \oo-groupoid is usually notated $\Pi_\oo$, but since $\Pi$ is rather overworked in type theory already, a different symbol is preferable.
The word ``shape'' comes from \emph{shape theory}, which also studies generalizations of the fundamental \oo-groupoid that make sense for less well-behaved spaces.
The symbol $\shape$ is not an integral sign ($\int$) but an ``esh'', the IPA sign for a voiceless postalveolar fricative (English \textit{sh}).
\end{rmk}

The definition of $\shape A$ is cooked up precisely to admit a map from $A$ and to be discrete:

\begin{lemr0}\label{thm:discrete-shape}
  $\shape A$ is discrete.
\end{lemr0}
\begin{proof}
  The last four constructors say exactly that $\shape A\to (R_i\to \shape A)$ has both a left and a right inverse.
\end{proof}

It is unsurprising, therefore, that $\shape A$ has the desired universal property.
We express this first as a \textbf{$\shape$-induction} principle.

\begin{thmr0}\label{thm:shape-induction}
  If $P:\shape A \to \type$ is a family of discrete types, and we have $d:\prd{x:A} P(\sigma(x))$, then we have $f:\prd{y:\shape A} P(y)$ such that $f(\sigma(x))=d(x)$ for all $x:A$.
\end{thmr0}
\begin{proof}
  The ``basic'' induction principle of $\shape A$ arising from its higher inductive definition says that given $Q:\shape A \to \type$ together with
  \begin{enumerate}
  \item $d_\sigma:\prd{x:A} Q(\sigma(x))$
  \item $d_\kappa:\prd{i:I}{g:R_i\to\shape A}{h:\prd{x:R_i} Q(g(x))} Q(\kappa(i,g))$
  \item $d_= : \prd{g:\R\to\shape A}{h:\prd{x:\R} Q(g(x))}{x:\R} (h(x) =_{\xi} d_\kappa(i,g,h))$, where $\xi$ is the third constructor of $\shape A$,
  \item $d_{\kappa'}:\prd{i:I}{g:R_i\to\shape A}{h:\prd{x:R_i} Q(g(x))} Q(\kappa'(i,g))$
  \item $d_{='} : \prd{i:I}{x:\shape A}{y:Q(x)} (y =_{\zeta} d_{\kappa'}(i,\const_x,\const_y))$, where $\zeta$ is the last constructor of $\shape A$,
  \end{enumerate}
  we have $f:\prd{y:\shape A} P(y)$ such that $f(\sigma(x))\jdeq d_\sigma(x)$ (plus four other equalities).
  Thus, given our current hypotheses, it remains to construct $d_\kappa$, $d_=$, $d_{\kappa'}$, and $d_{='}$.

  First suppose ${g:R_i\to\shape A}$ and ${h:\prd{x:R_i} Q(g(x))}$, for some $i$.
  By the third constructor of $\shape A$, we have $g(x) = \kappa(i,g)$ for all $x:R_i$.
  Thus, transporting $h(x)$ along these equalities, we get $h' : R_i \to Q(\kappa(i,g))$.
  Thus, since $Q(\kappa(i,g))$ is discrete, there is a $d_\kappa(i,g,h):Q(\kappa(i,))$ such that $d_=(i,g,h,x)  :h'(x) = d_\kappa(i,g,h)$ for all $x:\R$, as desired.

  Now suppose $x:\shape A$ and $y:Q(x)$.
  By the last constructor of $\shape A$, we have $x = \kappa(i,\const_x)$.
  Thus, transporting $y$ along this equality, we obtain $y' : Q(\kappa(i,\const_x))$.
  Since $Q(\kappa(i,\const_x))$ is discrete, we have $d_{='}(i,x,y) : y' = d_{\kappa'}(i,\const_x,\const_y)$ as desired.
\end{proof}

\begin{corr0}
  $\shape R_i$ is contractible for any $i:I$.
\end{corr0}
\begin{proof}
  It is inhabited by $\kappa_{R_i}(i,\sigma_{R_i})$, so it remains to show $x=\kappa_{R_i}(i,\sigma_{R_i})$ for all $x:\shape R$.
  Since $\shape R_i$ is discrete, so is this equality type; so by $\shape$-induction, it suffices to prove $\sigma_{R_i}(x) = \kappa_{R_i}(i,\sigma_{R_i})$ for all $x:R_i$.
  But this follows from the third constructor of $\shape R_i$.
\end{proof}

\begin{corr3}\label{thm:shape-R-contr}
  $\shape\R$ is contractible.\qed
\end{corr3}

\begin{corr1}\label{thm:shape-surj}
  For any $A$, the map $\sigma:A\to \shape A$ is surjective.
  In particular, if $\brck{\shape A}$ then $\brck{A}$.
\end{corr1}
\begin{proof}
  Given $y:\shape A$, define $P(y) \defeq \brck{\sm{x:A} \sigma(x)=y}$.
  By \cref{thm:discrete-prop}, each $P(y)$ is discrete.
  Thus, by \cref{thm:shape-induction}, to prove $\prd{y:\shape A} P(y)$ (i.e.\ that $\sigma$ is surjective), it suffices to prove $\prd{x:A} P(\sigma(x))$; but this is obvious.
\end{proof}

\begin{rmk}\label{thm:no-crisp-shape-ind}
  Recall from \cref{sec:flat} that many positive type formers have a ``crisp'' variant of their induction principle, so that we can do case analysis on a $u::A+B$ or induction on a $u::\N$.
  This is \emph{not} the case for $\shape$, however.
  If it were, then we could do $\shape$-induction on $u::\shape A$, so that if $B$ were discrete then any $M:B$ depending on $u::A$ would factor through $\shape A$.
  Specializing to $A\defeq\R$ and $B\defeq \bool$, any $M:\bool$ depending on $u::\R$ would be constant.
  However, since $u$ is crisp, by using the crisp LEM we can define $M:\bool$ to be $\bfalse$ if $u<0$ and $\btrue$ otherwise, which is not at all constant.
\end{rmk}

Now we have the reflection property:

\begin{thmr0}\label{thm:discrete-reflective}
  For any $A:\type$ and any discrete $B:\type$, composition with $\sigma : A \to \shape A$ induces an equivalence
  \[ (\shape A \to B) \simeq (A\to B). \]
\end{thmr0}
\begin{proof}
  $\shape$-induction into the constant family $\lam{x} B$ yields a section of $(\blank\circ \sigma)$.
  Thus, it suffices to show that given $h,k:\shape A \to B$, if $h\circ\sigma =k\circ \sigma$, then $h=k$.
  This follows from another $\shape$-induction with $P(x) \defeq (h(x)=k(x))$, which is discrete by \cref{thm:discrete-path}.
\end{proof}

Moreover, analogously to how $\flat\dashv \sharp$ crisply (\cref{thm:flat-sharp-adj}), we have $\shape\dashv\flat$ crisply.

\begin{thmr0}\label{thm:shape-flat-adj}
  For any $A,B::\type$, there is a natural equivalence
  \[ \flat(\shape A \to B) \simeq \flat(A\to \flat B). \]
\end{thmr0}
\begin{proof}
  Since $\flat B$ is discrete, by \cref{thm:discrete-reflective} we have a natural equivalence
  \[ (\shape A \to \flat B) \simeq (A \to \flat B) \]
  and this equivalence is preserved by $\flat$.
  On the other hand, since $\shape A$ is discrete, by \cref{thm:discrete-coreflective} we have a natural equivalence
  \[ \flat (\shape A\to \flat B) \simeq \flat (\shape A\to B). \]
  Composing these two equivalences yields the conclusion.
\end{proof}

\cref{thm:discrete-reflective} makes $\shape$ into a monadic modality, like $\sharp$, and
its construction as a localization makes it an \emph{accessible} one \parencite[see][]{rss:modalities}.
Unlike $\sharp$, however, $\shape$ is not left exact.
To show this, we introduce our final weakening of \ref{ax:r4}.

\begin{named}{Axiom C2}\label{ax:r2}
  \ref{ax:r1} holds, and moreover there exist $i_0:I$ and $r_0,r_1:R_{i_0}$ such that $R_{i_0}$ is a set and $r_0 \neq r_1$.
\end{named}

Note that \ref{ax:r3} implies \ref{ax:r2}, since $0\neq 1$ in \R.

\begin{thmr2}\label{thm:shape-not-lex}
  There exists a pullback that is not preserved by $\shape$.
\end{thmr2}
\begin{proof}
  The pullback of the inclusions $r_0, r_1 : \unit \to R_{i_0}$ is $\emptyset$.
  But $\shape\unit$ and $\shape R_{i_0}$ are both contractible, whereas $\shape\emptyset=\emptyset$ since it is already discrete.
\end{proof}


(However, like any monadic modality, $\shape$ does preserve products.)

Viewing $\shape$ as a reflection gives another, more category-theoretic, way to phrase and prove \cref{thm:circ-circ}.

\begin{thmr3}\label{thm:circ-circ-2}
  $\shape\topcirc = \hocirc$.
\end{thmr3}
\begin{proof}
  By \cref{thm:discrete-reflective}, $\shape$ is a left adjoint to the inclusion of discrete types in all types.
  Thus, it preserves all colimits, which is to say it takes colimits in the category of all types to colimits in the category of discrete types.
  Thus, since $\topcirc$ is the (homotopy) coequalizer of $\R \toto \R$, it follows that $\shape \topcirc$ is the coequalizer \emph{in the category of discrete types} of $\shape\R \toto \shape \R$, and hence by \cref{thm:shape-R-contr} of $\unit \toto \unit$.
  Now $\hocirc$ is the coequalizer of $\unit \toto \unit$ in the category of \emph{all} types; but by \cref{thm:s1-discrete} it lies in the subcategory of discrete types and hence is also the coequalizer there.
\end{proof}

Similar methods may be used in many other examples.
That is, given a ``cell complex'' presentation of a classical topological space, if we can convert it into both a specification for a HIT and a colimit decomposition of that space that is sufficiently ``cofibrant'', then $\shape$ will preserve that colimit and take the space to the HIT.
We do have to be careful to avoid the sort of cell complex used in classical algebraic topology where discs are glued along their \emph{boundaries}, because such gluing in a constructive world tends to produce inhomogeneous ``cusps''.
For instance, identifying the two endpoints of the topological interval $[0,1] = \{ x:\R \mid 0\le x\le 1 \}$ would not\footnote{Although, amusingly, its shape would still be $\hocirc$: it would be the coequalizer of $1\toto [0,1]$, while $\shape [0,1]$ is also contractible since $[0,1]$ is a retract of \R.} produce a space equivalent to $\topcirc$.
This is also a known ``defect'' (if one regards it so) of the topos model we have in mind; see also the remarks after \cref{thm:no-allpo}.
(It is ``fixed'' in the topological topos of \textcite[\S6]{ptj:topological-topos}, but in that model no left adjoint $\shape$ can exist due to a lack of local connectedness.)

However, in many cases it should be possible to remedy this by gluing along \emph{open overlaps} instead.
What makes this work is the fact that although $\R$ fails to satisfy the trichotomy principle constructively, we do have $(x<z) \to (x<y)\lor (y<z)$ for all $x,y,z:\R$.
Combined with the following observation, this tells us that gluing along open overlaps produces no cusps.

\begin{lem}\label{thm:join-or}
  If $f:A\to C$ and $g:B\to C$ are embeddings, then their union as sub-types of $C$ is their pushout under their intersection: $A\cup B \simeq A \sqcup^{A\cap B} B$.
\end{lem}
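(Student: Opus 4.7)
The plan is to exhibit the pushout $P \defeq A \sqcup^{A \cap B} B$ as an embedding into $C$ whose image equals $A \cup B$. Since $A \cap B$ is by definition the pullback $A \times_C B$, the maps $f$ and $g$ are compatible on $A \cap B$, so the universal property of the pushout yields a canonical $h : P \to C$ extending $f$ and $g$. It then suffices to show that $h$ is an embedding with image $A \cup B$; its corestriction is then the desired equivalence $P \simeq A \cup B$.

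I would verify the embedding property by computing the fibers of $h$ using descent for pushouts: since the square defining $h$ is itself a pushout, for each $c : C$ we have
$$\fib_h(c) \;\simeq\; \fib_f(c) \sqcup^{\fib_f(c) \times \fib_g(c)} \fib_g(c),$$
where the fiber over $c$ of $A \cap B \to C$ computes as $\fib_f(c) \times \fib_g(c)$ because $A \cap B$ is a pullback. The right-hand side is the \emph{join} $\fib_f(c) * \fib_g(c)$, and because $f$ and $g$ are embeddings both $\fib_f(c)$ and $\fib_g(c)$ are propositions. Using the standard fact that the join of two propositions is their disjunction, this simplifies to $\fib_h(c) \simeq \fib_f(c) \vee \fib_g(c)$, which is a proposition for every $c$. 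Hence $h$ is an embedding, and its image is $\{c : C \mid \fib_f(c) \vee \fib_g(c)\}$, which is precisely the standard description of the union $A \cup B$ of two subtypes of $C$.

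The main technical point is the descent computation of the fibers of a pushout, which is a routine instance of the flattening lemma. The conceptual content lies in the shape of the input: because $A \cap B$ is the \emph{pullback} of $f$ and $g$, the resulting pushout of fibers is a join with the \emph{product} as the middle term, exactly the configuration that collapses to a disjunction when the outer terms are propositional; and the hypothesis that $f$ and $g$ are embeddings is precisely what guarantees that those outer terms are propositions. The only real obstacle is recognizing and invoking the fact that joins of propositions are disjunctions, but this is by now a well-known ingredient of synthetic homotopy theory.
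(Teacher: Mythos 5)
Your proof is correct, but it takes a somewhat different route from the paper's. The paper verifies the universal property of the union directly: it observes that any embedding $D \hookrightarrow C$ through which both $f$ and $g$ factor yields a commuting square on $A \cap B$ (because $D$ is a mono), hence induces $A \sqcup^{A\cap B} B \to D$, so the pushout is the smallest such subobject; and it handles the one remaining point — that $A \sqcup^{A\cap B} B \to C$ is itself an embedding — by citing a lemma of Rijke's join-construction paper. Your proof instead computes the fibers of $h : A \sqcup^{A\cap B} B \to C$ directly via descent/flattening for pushouts, showing $\fib_h(c) \simeq \fib_f(c) * \fib_g(c) \simeq \fib_f(c) \vee \fib_g(c)$, and concludes that $h$ is an embedding onto the subtype $\{c : C \mid \fib_f(c) \vee \fib_g(c)\}$, which is the union. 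In effect you inline the proof of the very lemma the paper cites: Rijke's Lemma 2.4 is established by exactly this descent-plus-join-of-propositions calculation. The tradeoff is that your argument is more self-contained and also makes the identification of the fibers of the pushout map with $\fib_f(c) \vee \fib_g(c)$ visible in one pass, while the paper's argument separates cleanly into ``monicity'' (cited) and ``minimality'' (verified), matching the paper's definition of the union as the smallest subobject through which both embeddings factor. One small point worth making explicit in your write-up: the fiber of $A \cap B \to C$ over $c$ is $\fib_f(c) \times \fib_g(c)$ because $A \cap B = A \times_C B$ and the fiber of a pullback projection over $c$ is the product of the fibers — you assert this in passing, and it is true, but it deserves a sentence since it is the step that makes the middle term of the pushout a product and hence turns the fiberwise pushout into a join.
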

\begin{proof}
  By definition, $f\cup g : A\cup B\to C$ is the smallest embedding through which both $f$ and $g$ factor, while $A\cap B$ is the pullback $A\times_C B$.
  If $h:D\to C$ is an embedding through which both $f$ and $g$ factor, then the square
  \[ \xymatrix@-.5pc{ A\cap B \ar[r] \ar[d] & B \ar[d] \\ A \ar[r] & D } \]
  commutes since $h$ is an embedding, so there is an induced map $A \sqcup^{A\cap B} B \to D$.
  It remains to note that $A \sqcup^{A\cap B} B\to C$ is an embedding by \textcite[Lemma 2.4]{rijke:join}.
\end{proof}

As an example, we give two other definitions of $\topcirc$.

\begin{thm}\label{thm:topcircles}\label{thm:bdry-disc}
  The following types are equivalent.
  \begin{enumerate}
  \item $\topcirc$ as defined in \cref{defn:topcirc}, i.e.\ the coequalizer of the identity and $+1:\R\to\R$.\label{item:tc1}
  \item The subset $\{ (x,y) \mid x^2+y^2=1\}$ of $\R\times \R$ (the boundary of the topological disc $\topdisc$).\label{item:tc2}
  \item The coequalizer of the inclusion of open intervals $(0,\varepsilon) \to (0,1)$ and the translated inclusion $(\lam{x} 1-\varepsilon+x):(0,\varepsilon) \to (0,1)$, for any $0<\varepsilon<\frac12$.\label{item:tc3}
  \end{enumerate}
\end{thm}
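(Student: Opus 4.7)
The plan is to prove (1) $\simeq$ (3) and (2) $\simeq$ (3) separately, yielding the three-way equivalence. For (1) $\simeq$ (3), I would first simplify (3) by the rescaling $(0,1) \simeq (0, 1+\delta)$ (multiplication by $1+\delta$), where $\delta = \varepsilon/(1-\varepsilon)$; this turns (3) into an equivalent coequalizer $C$ of $\id$ and $(+1) : (0, \delta) \to (0, 1+\delta)$. The composite $(0, 1+\delta) \hookrightarrow \R \to \R/\Z$ coequalizes this pair (since $[x]=[x+1]$ in $\R/\Z$), so it induces a map $C \to$ (1). To show this is an equivalence, I compare universal properties: a map out of (1) into a type $Z$ is a periodic $f:\R \to Z$ (satisfying $f(x+1) = f(x)$), and restriction to $(0, 1+\delta)$ gives a map out of $C$. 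Conversely, given $g:(0, 1+\delta) \to Z$ with $g(x+1)=g(x)$ on $(0, \delta)$, I extend to a periodic $f:\R\to Z$ using the open cover of $\R$ by the translates $\{(n,n+1+\delta) : n:\Z\}$: for any $y:\R$, one locates $n \in \Z$ with $y \in (n, n+1+\delta)$ via the disjunction property of $\R$ noted in the paper combined with the discreteness of $\Z$, and sets $f(y) \defeq g(y-n)$; the identification on $(0,\delta)$ guarantees the choice of $n$ does not matter on overlaps.

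For (2) $\simeq$ (3), writing $S$ for the subset in (2), I would cover $S$ by two open arcs and apply \cref{thm:join-or}. Let $A = \{(x,y) \in S: y < \tfrac12\}$ and $B = \{(x,y) \in S: y > -\tfrac12\}$. By \ref{ax:t}, both subtypes are codiscrete, and hence the inclusions into $S$ are embeddings. Their union is all of $S$ by the disjunction property of $\R$ applied to $-\tfrac12 < \tfrac12$. Each of $A$ and $B$ is equivalent to an open interval via stereographic projection from the excluded point ($(0,1)$ for $A$, $(0,-1)$ for $B$). Their intersection $A \cap B$ splits as the disjoint union of two open arcs (one with $x > 0$, one with $x < 0$), using that $|y| < \tfrac12$ on $A \cap B$ forces $|x| > \sqrt{3}/2 > 0$ combined with the disjunction property again. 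Each component is likewise equivalent to an open interval. \Cref{thm:join-or} then gives $S \simeq A \sqcup^{A\cap B} B$. I would compute this pushout in two stages: first pushing out along one of the two components of $A \cap B$ glues $A$ and $B$ into a single longer open interval, and pushing out along the remaining component identifies two sub-intervals at opposite ends of this interval by a translation. After rescaling to unit length, this is exactly the coequalizer (3).

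The main obstacle will be the first step, where constructively extending a function from the fundamental domain $(0, 1+\delta)$ to a periodic function on $\R$ requires that $\R$ itself be recovered as a homotopy colimit along its cover by translates, with the overlap data matching the coequalizer's identification. In the motivating topos this is standard sheaf-theoretic gluing, but expressing it internally takes care, most likely going through \ref{ax:t} (to ensure each translate is a genuine subspace) and a detailed use of the disjunction property of $\R$ to locate a given $y$ in some translate.
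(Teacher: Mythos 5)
Your decomposition differs from the paper's: the paper proves (1) $\simeq$ (2) and (2) $\simeq$ (3), whereas you propose (1) $\simeq$ (3) and (2) $\simeq$ (3). The second half of your plan --- (2) $\simeq$ (3) by covering the circle with two open arcs and applying \cref{thm:join-or} --- is essentially the paper's argument, modulo the choice of cut heights ($\pm\tfrac12$ versus $\pm\varepsilon$) and the use of stereographic projection rather than coordinate projection, both cosmetic. But the appeal to \ref{ax:t} there is both unnecessary and misplaced: \cref{thm:join-or} requires only that the inclusions $A \hookrightarrow S$ and $B \hookrightarrow S$ be embeddings, which holds automatically because $A$ and $B$ are subtypes defined by propositional predicates. \ref{ax:t} asserts codiscreteness of the \emph{proposition} $x>0$, which is a different matter, and in any case codiscreteness of the fiber propositions would not make the total subtype codiscrete, nor is codiscreteness a hypothesis of \cref{thm:join-or}.

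The genuine gap is in (1) $\simeq$ (3), and you essentially acknowledge it in your final paragraph. Building the inverse map by extending $g : (0,1+\delta) \to Z$ to a periodic $f : \R \to Z$ amounts to gluing along the infinite open cover $\{(n,n+1+\delta)\}_{n:\Z}$ of $\R$. The only gluing lemma the paper provides is \cref{thm:join-or}, which handles \emph{binary} unions of subtypes; an infinitary version is precisely a descent statement for $\R$ as a colimit of its cover, which is not provable from the axioms as given (it is exactly the kind of ``effective epimorphism'' property that the paper discusses only semantically, in \cref{thm:local-sections}). Worse, the ``locate $n$'' step cannot be carried out as a function: the relevant $n:\Z$ is only \emph{merely} determined by $y$, and factoring the assignment $y \mapsto g(y-n)$ through the resulting propositional truncation is valid only when the codomain is a set, whereas the universal property of a coequalizer quantifies over arbitrary types. \ref{ax:t} does not help with either problem.

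The paper sidesteps all of this by proving (1) $\simeq$ (2) directly: the constructively-defined trigonometric functions give a map $\lam{x}(\cos 2\pi x, \sin 2\pi x) : \R \to \{(x,y) \mid x^2+y^2=1\}$, which is periodic, hence descends across the coequalizer to a map from $\topcirc$ to the circle in (2), and this descended map is a bijection of sets, hence an equivalence. No infinitary gluing occurs; binary gluing via \cref{thm:join-or} is needed only for (2) $\simeq$ (3). The key idea missing from your proposal is the explicit global parametrization of the circle furnished by $\sin$ and $\cos$.
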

\begin{proof}
  We can define the functions $\sin$ and $\cos$ on the Dedekind reals constructively, and prove that $\sin^2 x + \cos^2 x = 1$.
  Thus, $\lam{x} (\cos (2\pi x),\sin (2\pi x))$ defines a map from $\R$ to $\{ (x,y) \mid x^2+y^2=1\}$, which by the periodicity of $\sin$ and $\cos$ descends to $\topcirc$.
  It is then straightforward to verify that this map is both surjective and injective, hence an equivalence since both types are sets.
  Thus~\ref{item:tc1} and~\ref{item:tc2} are equivalent.

  Now let $0<\varepsilon<\frac12$.
  Every real number is either $<\varepsilon$ or $>-\varepsilon$, so the set $\{ (x,y) \mid x^2+y^2=1\}$ in~\ref{item:tc2} is the union of its subsets $U$ and $V$ consisting of the points where $y<\varepsilon$ and where $y>-\varepsilon$.
  Thus, by \cref{thm:join-or} it is the pushout of $U$ and $V$, each of which is isomorphic to an open interval, under their intersection, which is isomorphic to the disjoint union of two open intervals, included into $U$ and $V$ at their ends.
  Since (again by \cref{thm:join-or}) the pushout of two open intervals under their intersection is their union, we can rearrange this pushout to become~\ref{item:tc3}.
\end{proof}

Similarly, we can obtain the topological sphere $\topsphere$ by gluing two topological discs $\topdisc$ (isomorphic to $\R^2$, hence with contractible shape) along a strip $\topcirc \times \R$ (with shape $\hocirc$).
Thus $\shape\topsphere$ is the homotopical suspension of $\hocirc$, i.e.\ the homotopical sphere $\hosphere$.
We can proceed inductively for higher spheres, and so on.
In this way it may be possible to access homotopy types that would be quite difficult to present as HITs (due, for example, to their having infinitely many constructors), by first constructing their topological versions as sets using the Dedekind reals and then applying $\shape$.

\begin{rmk}\label{rmk:shape-overall}
  The overall conclusions drawn from \ref{ax:r0} in this section and \cref{sec:real-cohesion} can be summarized as ``$\shape$ is a reflector into the same subcategory that $\flat$ coreflects into, and it preserves finite products''.
  As with the analogous conclusions about $\sharp$ and $\flat$ noted in \cref{rmk:sharp-overall,rmk:flat-overall}, in \textcite{ss:qgftchtt} we assumed this axiomatically, while here we have deduced it from \ref{ax:r0} and a higher inductive definition of $\shape$.
  This difference from \textcite{ss:qgftchtt} is roughly orthogonal to our introduction of $\flat$ and $\sharp$ using crisp variables; it is not necessitated by anything, but it is a convenient starting point for expressing the stronger \ref{ax:r1}, \ref{ax:r2}, and \ref{ax:r3}.
  These axioms were not considered in \textcite{ss:qgftchtt}; but in \cref{sec:pieces-have-points} we will relate the first two to corresponding categorical properties studied by \textcite{lawvere:cohesion,johnstone:punctual-lc,lm:ac-cohesive}.
\end{rmk}

\begin{rmk}\label{rmk:why-flat}
  The crisp adjointness $\shape\dashv\flat$ from \cref{thm:shape-flat-adj} finally allows us to justify the choice of the notations $\flat$ and $\sharp$ (the notation and justification are both due to \textcite{schreiber:dcct}).
  Suppose $G$ is a group, meaning a set (0-type) with a group structure (but, like all types in spatial type theory, also possessing intrinsic topology).
  We can construct a delooping $\mathbf{B}G$ (a.k.a.\ $K(G,1)$) by the methods of \textcite{lf:emspaces}.

  Now if $X$ is any space, a \emph{principal $G$-bundle over $X$ with a flat connection} assigns to every (topological) path in $X$ an element of $G$ (``transport'' along that path), such that concatenating paths corresponds to multiplication in $G$, and so forth.
  Since paths in $X$ and elements of $G$ are the 1-morphisms in the fundamental \oo-groupoid of $X$ and in $\mathbf{B}G$, respectively, such a bundle with flat connection can be described categorically as a map $\shape X \to\mathbf{B}G$.
  But by \cref{thm:shape-flat-adj}, this is equivalent to a map $X\to \flat\mathbf{B}G$.
  Thus, $\flat\mathbf{B}G$ is the \emph{moduli space of flat $G$-connections}, justifying the notation $\flat$.
  The notation $\sharp$ is chosen simply as dual to flat; although one can argue that if $Z$ classifies bundles of some sort then $\sharp Z$ classifies analogous bundles that are ``sharp'' in that their fibers can vary discontinuously.
  (Unfortunately, as noted in \cref{thm:abs}, in \textcite{ab:ax-local} and \textcite{abs:lrt-modal-comput-ea} the symbols $\flat$ and $\sharp$ were used for the propositional restriction of the same operations, but with reversed meanings.)
\end{rmk}

\section{Axioms of cohesion}
\label{sec:pieces-have-points}

The axioms introduced in \cref{sec:real-cohesion,sec:shape} form a linear string of implications:
\begin{center}
  R$\flat$ $\Rightarrow$ C2 $\Rightarrow$ C1 $\Rightarrow$ C0.
\end{center}
Our main goal (the Brouwer fixed point theorem) requires \ref{ax:r4} (or at least its consequence \cref{thm:shape-R-contr}).
However, the weaker versions are much more general assumptions, which are not very specific to the ``topological'' situation and are satisfied in many other models, so it is interesting that they suffice for most of the general theory.
In this somewhat digressive section, we investigate some of their further consequences and their topos-theoretic meaning.

\subsection{\ref{ax:r0} means stable local connectedness}
\label{sec:c0}

Combining \cref{thm:flat-sharp-adj,thm:shape-flat-adj}, we see that \ref{ax:r0} gives us a crisp ``adjoint triple'' $\shape \dashv \flat\dashv \sharp$, in which $\shape$ preserves finite products.
In topos-theoretic language, the category of types is ``local and locally connected'' over the category of discrete (or, equivalently, codiscrete) types, and in addition the left adjoint preserves binary products.
\textcite{johnstone:punctual-lc} called this additional condition \emph{stable local connectedness}.

Conversely, if a topos is local and stably locally connected, then by combining \textcite[C3.6.3(d)]{ptj:elephant} with \textcite[Proposition 1.3]{johnstone:punctual-lc}, it has a site of definition that is local and locally connected and closed under finite products.
We should therefore be able to show that \ref{ax:r0} holds in its internal spatial type theory by taking $I$ to be the discrete set of objects of the site and $R_i$ the corresponding representable.
The condition that $A\to (R_i\to A)$ be an equivalence then says, in terms of sheaves, that $A(U) \to A(R_i\times U)$ is an isomorphism (or, in the higher-topos case, an equivalence) for any object $U$ of the site (here we use the fact that the site has finite products).
The special case $U=1$ tells us that $A$ is a constant (pre)sheaf, and the rest of the conditions follow automatically.
(We could also take $I$ to be a set of objects that generate the site under finite products, as \R\ does for our site of cartesian spaces $\R^n$.)

Thus, \ref{ax:r0} is exactly a type-theoretic incarnation of stable local connectedness (as an additional property added to a local topos).
\textcite{schreiber:dcct} calls a local and stably locally connected \oo-topos \emph{cohesive} (although for \textcite{lawvere:cohesion} the word ``cohesive'' also includes the categorical version of \ref{ax:r1}, below, and an additional axiom called ``continuity'' that we do not consider here).

In addition to our motivating example (sheaves on cartesian spaces), several other similar examples of local and stably locally connected toposes and \oo-toposes can be found in \textcite{schreiber:dcct}.
Here are a few more:
\begin{itemize}
\item By \textcite[Lemma 1.1]{johnstone:punctual-lc} and \textcite[C3.6.3(a) and C3.6.17(a)]{ptj:elephant}, the topos $\Sh(X)$ of sheaves on a space (or locale) is local and stably locally connected if and only if $X$ has both a focal point and a dense point.
  For example, in the Sierpinski space, the closed point is focal and the open point is dense.
\item The local \oo-topos of parametrized spectra mentioned in \cref{rmk:local-toposes} is stably locally connected, with $\shape=\sharp$.
  In fact, it is also punctually locally connected (see below).
\item \textcite{rezk:global-cohesion} shows that ``global equivariant homotopy theory'' forms a local and stably locally connected \oo-topos.
\end{itemize}

\subsection{\ref{ax:r1} means punctual local connectedness}
\label{sec:c1}

If we additionally assume \ref{ax:r1}, then by \cref{thm:discrete-subobject} the discrete objects are closed under subobjects.
\textcite{johnstone:punctual-lc} shows that this is equivalent to the following condition, there called  \emph{punctual local connectedness} (\textcite{lawvere:cohesion} calls it the \emph{Nullstellensatz}; and \textcite{schreiber:dcct} calls it \emph{pieces have points}).

\begin{thmr1}[{\cite[Lemma 2.3]{johnstone:punctual-lc}}]\label{thm:pieces-have-points}
  For any $A::\type$, the composite $\flat A \to A \to \shape A$ is surjective.
\end{thmr1}
\begin{proof}
  As in \textcite[Theorem 8.8.1]{hottbook}, it suffices to show that $\trunc0{\flat A} \to \trunc0{\shape A}$ is surjective.
  Thus, since epimorphisms of sets are surjections \parencite[Lemma 10.1.4]{hottbook}, it suffices to show (invoking the universal property of $\trunc0{\blank}$) that $(\shape A\to C) \to (\flat A\to C)$ is injective for any set $C$.
  Moreover, since in the proof of Lemma 10.1.4 from \textcite{hottbook} we only need to instantiate this property with one particular set $C$, in fact it suffices to show that
  \begin{equation}
    \flat (\shape A\to C) \to \flat (\flat A\to C)\label{eq:php1}
  \end{equation}
  is injective for any \emph{crisp} set $C$.
  And by \cref{thm:discrete-coreflective}, we may as well assume $C$ is also discrete (otherwise we could apply $\flat$ to it).

  Now we can essentially copy the proof of Lemma 2.3 from \textcite{johnstone:punctual-lc}.
  Firstly, note that~\eqref{eq:php1} is equal to the composite
  \[ \flat (\shape A\to C) \xto{\simeq} \flat (A\to C) \xto{\;\flat\;} \flat(\flat A \to \flat C) \xto{\simeq} \flat(\flat A \to C) \]
  where the first equivalence is \cref{thm:discrete-reflective} and the last is because $C$ is discrete.
  Thus, it will suffice to show that the middle map is injective.
  But this map is equal to the composite
  \[ \flat (A\to C) \longrightarrow \flat(A\to \sharp C) \xto{\simeq} \flat(\flat A \to C) \xto{\simeq} \flat(\flat A \to \flat C) \]
  where the middle equivalence is \cref{thm:flat-sharp-adj} and the last is because $C$ is discrete.
  But $C\to \sharp C$ is injective by \cref{thm:crisp-discrete-concrete}.
\end{proof}

Thus, \ref{ax:r1} implies punctual local connectedness.
Conversely, by \textcite[Proposition 1.4]{johnstone:punctual-lc}, any punctually locally connected topos has a site of definition that is local and locally connected and in which every object has a global point.
Clearly in this case if we take $I$ and $R_i$ as described above, then \ref{ax:r1} will hold; thus it is exactly a type-theoretic incarnation of punctual local connectedness.


In the different terminology of \textcite{lm:ac-cohesive}, \cref{thm:pieces-have-points} (along with our previous results) means that the category of types is ``pre-cohesive'' over that of (co)discrete types.
In fact, it is ``stably pre-cohesive'', because all of our theorems admit an arbitrary crisp context.

\subsection{\ref{ax:r2} means contractible codiscreteness}
\label{sec:c2}

Following \textcite{lm:ac-cohesive} further, we investigate the shapes of codiscrete types, for which purpose we bring back \ref{ax:r2} (page~\pageref{ax:r2}).

\begin{thmlemr2}\label{thm:CC}
  If $A$ is codiscrete, then $\shape A$ is a \mprop{}.
\end{thmlemr2}
\begin{proof}
  Let $x,y:\shape A$; we must show $x=y$.
  By $\shape$-induction, we may assume that $x=\eta(u)$ and $y=\eta(v)$ for $u,v:A$.
  Let $i_0$ and $r_0,r_1:R_{i_0}$ be as in \ref{ax:r2}.
  Since they are axioms, they are crisp, and so we have $r_0{}^\flat : \flat R_{i_0}$ and $r_1{}^\flat : \flat R_{i_0}$.
  By \cref{thm:equiv-flat-path}, $(r_0{}^\flat = r_1{}^\flat)$ is equivalent to $\flat(r_0 = r_1)$, i.e.\ to $\flat\emptyset$ and hence to $\emptyset$.
  In other words, we have $(r_0{}^\flat \neq r_1{}^\flat)$.

  Now since $\flat R_{i_0}$ is a crisp discrete set, by the flat LEM it has decidable equality.
  Thus, there is a map $f:\flat R_{i_0} \to A$ with $f(r_0^\flat) = u$ and $f(r_1{}^\flat) =v$.
  But the inclusion $\flat R_{i_0} \to R_{i_0}$ is inverted by $\flat$ and hence also by $\sharp$, so since $A$ is codiscrete our $f$ extends to a map $g:R_{i_0}\to A$ with $g(r_0) = u$ and $g(r_1) =v$.
  Now the composite $g\circ \eta : R_{i_0} \to \shape A$ has $g(r_0)=x$ and $g(r_1)=y$; but as $\shape A$ is discrete, $g$ is constant, so $x=y$.
\end{proof}

\begin{corlemr2}\label{thm:ECC}
  For any crisp $B$ we have $\shape \sharp B = \brck{B}$.
\end{corlemr2}
\begin{proof}
  For any $B$ we have $B \to \shape \sharp B$, hence a map $\brck{B} \to \shape \sharp B$.
  But $\flat B \simeq \flat \sharp B \to \shape \sharp B$ is surjective by \cref{thm:pieces-have-points}, and factors through $\brck{B}$.
  Hence our map $\brck{B} \to \shape \sharp B$ is also surjective, thus an equivalence.
\end{proof}

\textcite{lm:ac-cohesive} call the conclusion of \cref{thm:CC} \emph{connected codiscreteness}, and \cref{thm:ECC} (proven there as Lemma 7.3) \emph{explicit connected codiscreteness}.
In a homotopical context, however, it seems better to say \emph{contractible} codiscreteness.
As before, since we can work in an arbitrary crisp context, we automatically have ``stable contractible codiscreteness''.

\begin{rmk}
  Unfortunately, the phrase ``$X$ is contractible'' is used by \textcite{lm:ac-cohesive} to mean ``$\trunc0{\shape (Y\to X)}\simeq \unit$ for all $Y$'', which conflicts with its standard usage in homotopy type theory to mean ``$X\simeq \unit$''.
  (They don't mention the 0-truncation since they are working with 1-toposes.)
  The intution behind their definition is that the sets $\trunc0{\shape (Y\to X)}$ are the hom-sets of a ``strong homotopy category'' of types, analogous to the strong homotopy category of topological spaces obtained by identifying homotopic maps, so this definition says that $X$ is terminal in that category.
  Since $\shape$ is a functor from this strong homotopy category to the category of discrete spaces and their 0-truncated hom-sets, and it preserves the terminal object, if $X$ is contractible in the sense of \textit{ibid.}\ then $\shape X \simeq \unit$, i.e.\ $\shape X$ is contractible in the sense of homotopy type theory.
  Thus, if we want to keep ``$X$ is contractible'' to mean $X\simeq \unit$, we could use a phrase like \emph{strongly spatially contractible} for the ``contractibility'' of \textcite{lm:ac-cohesive}.
\end{rmk}

Conversely, if a punctually locally connected topos satisfies \cref{thm:CC}, then we have an object, namely $\sharp \bool$, for which $\shape\sharp\bool$ is contractible, but which has two unequal elements $\bfalse^\sharp$ and $\btrue^\sharp$.
(By \cref{thm:path-sharp} we have $(\bfalse^\sharp = \btrue^\sharp) \simeq \sharp(\bfalse=\btrue) \simeq \sharp \emptyset$, which is empty by \cref{thm:emptyset-codiscrete}.)
Thus, we can make \ref{ax:r2} hold by adding $\sharp\bool$ to the collection of $R_i$'s; so \ref{ax:r2} is exactly a type-theoretic incarnation of (punctual local connectedness and) contractible codiscreteness.

We could now copy the proof of \textcite[Corollary 6.5]{lm:ac-cohesive} to show that every type $A$ embeds into a type $B$ such that $\shape B$ is contractible, there called \emph{sufficient cohesion}.
I conjecture that \ref{ax:r3} and \ref{ax:ac} should together also imply the property called \emph{continuity} in \textcite{lawvere:cohesion,menni:contcoh} --- that the canonical map $\shape(A^S) \to (\shape A)^S$ is an equivalence for any crisp set $A$ and any crisp discrete set $S$ --- but I have not managed to prove this yet.

\ref{ax:r3} does not seem to have been studied yet in a topos-theoretic context.
Of course, it is much more restrictive even than \ref{ax:r2}; rather than defining a general class of toposes for investigation, it singles out an important characteristic of a single topos (or a small group of toposes) that we are already interested in.

\section{Synthetic topology}
\label{sec:continuity}

As a warm-up for Brouwer's fixed-point theorem, in this section we develop some aspects of synthetic topology that stay in the world of sets (0-types).
Here we can already see the power of \ref{ax:r3}, even before any homotopy theory enters: it forces $\R$ to be \emph{connected} in several senses.
One such sense is the following.

\begin{thmr3}\label{thm:compact-connected} 
  If $P:\R\to \prop$ is a ``detachable subset'' (i.e.\ for all $x:\R$ we have $P(x) \vee \neg P(x)$), then either $\forall x,\, P(x)$ or $\forall x,\, \neg P(x)$.
\end{thmr3}
\begin{proof}
  A detachable $P:\R\to \prop$ yields a function $f:\R\to\bool$, where $f(x)=\btrue$ if $P(x)$ and $f(x)=\bfalse$ if $\neg P(x)$.
  But $\bool$ is discrete, so by \ref{ax:r3} $f$ is constant.
\end{proof}

Put differently, if $U$ and $V$ are subsets of \R\ such that $U\cup V = \R$ and $U\cap V = \emptyset$, then either $U=\R$ or $V=\R$.
In other words, \R\ cannot be ``broken into two pieces'' nontrivially.
\textcite{taylor:lamcra} calls this property \emph{compact connectedness}.
Combining this with \ref{ax:lem}, and also \ref{ax:t} from \cref{sec:ac-2}, we can prove a version of the Intermediate Value Theorem (which is a sort of one-dimensional analogue of Brouwer's fixed-point theorem).

\begin{thmlemr3t}[Discontinuous IVT]\label{thm:discont-ivt} 
  Let $f::\R\to\R$ be a crisp function and $c::\R$ a crisp real number.
  If there exist $a,b:\R$ such that $f(a)<c<f(b)$, then there exists an $x:\R$ such that $f(x)=c$.
\end{thmlemr3t}
\begin{proof}
  Since the hypothesis of $a$ and $b$ is a crisp \mprop{}, it is discrete.
  Thus all our hypotheses may as well be crisp, so we may prove $\sharp\brck{\sm{x:\R} f(x)=c}$ instead, which by \cref{thm:codiscrete-notnot-2} is equivalent to $\neg\neg{\sm{x:\R} f(x)=c}$.
  Thus, suppose for contradiction $\neg{\sm{x:\R} f(x)=c}$, i.e.\ for all $x:\R$ we have $f(x)\neq c$.
  By \cref{ax:amp}, for all $x$ we have $f(x)\apart c$, hence either $f(x)<c$ or $f(x)>c$.
  Let $U = \{ x \mid f(x)<c\}$ and $V = \{ x \mid f(x)>c\}$; then $U\cup V = \R$ and $U\cap V = \emptyset$.
  By \cref{thm:compact-connected}, then, either $U=\R$ or $V=\R$, which is a contradiction since $a\in U$ and $b\in V$.
\end{proof}

The crispness of the hypotheses $f$ and $c$ means that the point $x$ does not vary continuously with them.
It is well-known that a continuous choice of $x$ is impossible, and that this is what prevents the classical version of IVT from holding constructively
\parencite[see, for instance,][]{taylor:lamcra}.
Our technology of spatial type theory enables us to state a ``discontinuous'' IVT, even in a world where all types are spaces, and our axioms of classicality and real-cohesion allow us to prove it.

A more common way to constructivize the IVT is to strengthen the hypothesis (e.g.\ that $f$ ``doesn't hover'' or is ``locally non-constant'') or weaken the conclusion (e.g.\ we only find an approximate solution).
We can also use \ref{ax:r3} to prove such a form of IVT, without the need for \ref{ax:lem} or \ref{ax:t}.

First we show that \R\ is connected in a different sense.
The following proof also introduces an idea that we will use repeatedly.
Since $\shape$ is a left adjoint, it takes colimits of types to colimits in the subcategory of discrete types.
Moreover, since \emph{crisp} discrete types are also coreflective, they are closed under colimits, so $\shape$ preserves crisp colimits of types.
Similarly, using \cref{thm:0trunc-discrete}, crisp discrete sets are also closed under set-colimits (i.e.\ 0-truncations of homotopy colimits).
Thus, if we write $\shape_0 A$ for $\trunc0{\shape A}$, then the functor $\shape_0$ reflects sets into discrete sets, so it preserves crisp set-colimits.
We can then use facts about (set-)colimits of the shapes of types to conclude things about the types themselves.

\begin{thmr3}\label{thm:overt-connected} 
  Let $U,V::\R\to\prop$ be crisp subsets of \R\ with $U\cup V = \R$.
  If $\brck{U}$ and $\brck{V}$, 
  then also $\brck{U\cap V}$.
\end{thmr3}
\begin{proof}
  By \cref{thm:join-or}, the assumption ensures that $\R$ is the pushout of $U$ and $V$ over $U\cap V$.
  Thus, the contractible type $\shape \R$ is the pushout of $\shape U$ and $\shape V$ under $\shape(U\cap V)$, and hence the contractible $\shape_0 \R$ is the set-pushout of $\shape_0 U$ and $\shape_0 V$ under $\shape_0(U\cap V)$.
  But $\brck{U}$ and $\brck{V}$, hence also $\brck{\shape_0 U}$ and $\brck{\shape_0 V}$.

  Now if $x:\shape_0 U$ and $y:\shape_0 V$, their images in the contractible type $\shape_0 \R$ must be equal.
  Using the explicit construction of this set-pushout as the set-quotient of an equivalence relation of zigzags on $\shape_0 U + \shape_0 V$, we see that we must in particular have $\brck{\shape_0(U\cap V)}$.
  Finally, since $\shape(U\cap V) \to \shape_0(U\cap V)$ is 0-connected and hence surjective, while $(U\cap V) \to \shape(U\cap V)$ is surjective by \cref{thm:shape-surj}, we also have $\brck{U\cap V}$.
\end{proof}

\textcite{taylor:lamcra} calls the above property \emph{overt connectedness}.
Note that we have only proven it for crisp subsets.

The 0-truncations in the above proof could be omitted, but if we left them out, then instead of the set-based construction of set-pushouts we would need to use the van Kampen theorem \parencite[\S8.7]{hottbook}.
The former needs only the fact that the sets form a $\Pi W$-pretopos \parencite[Theorem 10.1.11]{hottbook}, which requires only function extensionality and propositional extensionality; whereas the latter in general requires full univalence.
Similar remarks apply to the proof of \cref{thm:continuity} below, but in \cref{thm:abfp} we will need to resort to univalence and van Kampen.
(Actually, based on intuition from classical topology, it is natural to conjecture that $\shape U$ is automatically a set for any subset $U\subseteq \R$, but I do not know how to prove this.)

\begin{thmr3}[Approximate IVT]\label{thm:approx-ivt}
  Let $f::\R\to\R$ be a crisp function and $c::\R$ a crisp real number.
  Then for any $\varepsilon>0$, if there exist $a,b:\R$ such that $f(a)<c<f(b)$, then there exists an $x:\R$ such that $|f(x)-c|<\varepsilon$.
\end{thmr3}
\begin{proof}
  It suffices to consider the case when $\varepsilon:\Q^+$ is a positive rational number.
  Then since $\Q^+$ is discrete, we may assume $\varepsilon$ is also crisp.
  Therefore the sets $U \defeq \{ x:\R \mid f(x) < c+\varepsilon \}$ and $V \defeq \{ x:\R \mid f(x) > c-\varepsilon \}$ are also crisp.
  We have $U\cup V = \R$ by locatedness of Dedekind reals, while by assumption $a\in U$ and $b\in V$.
  Thus, \cref{thm:overt-connected} supplies the desired $x$.
\end{proof}

Note that although \cref{thm:approx-ivt} eliminates classicality axioms, we still have only crisp dependence on $f$ and $a$.
In fact, the approximate IVT (for continuous functions) is contructively provable without any additional assumptions; the most common proofs use countable choice or uniform continuity of $f$, but \textcite{mattf:aivt} has recently given a proof avoiding both of these.
So there is no topological reason for this crispness restriction, but I do not know whether it can be removed using our current methods.

On the other hand, \cref{thm:discont-ivt,thm:approx-ivt} are statements about \emph{all} functions $\R\to\R$.
No explicit continuity hypothesis is required, because in our synthetic world ``all functions are continuous''.
In fact, with a little more work we can actually prove explicitly that all functions are continuous, thereby showing that the axioms of real-cohesion really do nail down precisely the ``topology'' of $\R$ to be the intended one.

\begin{thmr3t}\label{thm:continuity}
  Every crisp function $f::\R\to\R$ is $\varepsilon$-$\delta$ continuous at every crisp real number $a::\R$.
\end{thmr3t}
\begin{proof}
  We will need to assume given crisp $b_1<a$ and $b_2>a$ such that $f(b_1) \apart  f(a)$ and $f(b_2) \apart  f(a)$.
  This can always be ensured by modifying $f$ without changing it near $a$.
  For instance, $f(a+2)-f(a)$ is either $>\frac12$ or $<1$ (and this is crisp since it is a \mprop{}); if the former we can take $b_2=a+2$, while if the latter we can subtract $\max(0,x-a-1)$ from $f$ before taking $b_2 = a+2$.

  As before, we may assume $\varepsilon::\Q^+$ is a crisp positive rational number.
  We may also assume $\varepsilon$ is less than both $|f(b_1)-f(a)|$ and $|f(b_2)-f(a)|$.
  Define
  \begin{align*}
    U &= \{ x:\R \mid |f(x)-f(a)|<\varepsilon/2 \}\\
    V &= \{ x:\R \mid |f(x)-f(a)|>\varepsilon/4 \}
  \end{align*}
  Then $U\cup V=\R$, so that $\R$ is the pushout of $U$ and $V$ under $U\cap V$.
  Let $R_1 = (-\oo,a]$ and $R_2 = [a,\oo)$, and $U_i = U \cap R_i$, $V_i = V \cap R_i$ for $i=1,2$, so that $R_i = U_i \cup V_i$ and $R_i$ is the pushout of $U_i$ and $V_i$ under $U_i\cap V_i$.
  Note that $a\in U_i$ and $b_i\in V_i$ for $i=1,2$.

  Consider the fibers of the map $U\to \shape_0 U$; these are the ``spatially connected components'' of $U$.
  Let $w$ be the image of $a$ in $\shape_0 U$, and $W$ the preimage of $w$ in $U$; thus $W$ is the ``component of $U$ containing $a$''.
  Similarly, define $w_i$ and $W_i$ for $i=1,2$ to represent the component of $U_i$ containing $a$.
  Then the function $\shape_0 U_i \to \shape_0 U$ maps $w_i$ to $w$, so that $W_i$ is a subset of $W$.
  Similarly, let $Z_i$ be the component of $V_i$ containing $b_i$, with image $z_i$ in $\shape_0 V_i$, for $i=1,2$.

  Now as remarked above, $\shape_0$ prerves crisp set-colimits.
  Thus, the contractible $\shape_0 R$ is the set-pushout of $\shape_0 U$ and $\shape_0 V$ under $\shape_0(U\cap V)$, and similarly for $i=1,2$.
  In particular, $w_i$ and $z_i$ are identified in the set-pushout of $\shape_0 U_i$ and $\shape_0 V_i$ under $\shape_0(U_i\cap V_i)$, and so $w_i$ must be equal to the image of some element of $\shape_0(U_i\cap V_i)$.
  Since $X \to \shape_0 X$ is surjective by \cref{thm:shape-surj}, $W_i$ must contain an element of $U_i\cap V_i$.
  Let $y_i$ be such an element.
  Since $y_i \in V_i$, we have in particular $f(y_i) \neq f(a)$, and hence $y_i\neq a$.
  By \cref{ax:amp}, $y_i \apart  a$; so since $y_i \in R_i$ we have $y_1 < a < y_2$.
  Finally, since $y_i \in U_i$, also $y_i \in U$.

  Let $\delta = \min(y_2-a,a-y_1)$; I claim this is the $\delta$ to our $\varepsilon$.
  Thus, suppose $x:\R$ and $|x-a|<\delta$, hence $x \in (y_1,y_2)$.
  (Note that $x$ is \emph{not} crisp.)
  Either $|f(x)-f(a)|<\varepsilon$ or $|f(x)-f(a)|>\varepsilon/2$; the first is what we want, so it suffices to show that the second leads to a contradiction.

  Thus, suppose $|f(x)-f(a)|>\varepsilon/2$.
  Then since any $p\in U$ satisfies $|f(p)-f(a)|<\varepsilon/2$ by definition of $U$, we have $f(p)\neq f(x)$, hence $p\neq x$.
  By \cref{ax:amp}, therefore, $p \apart x$, and thus either $p <  x$ or $x < p$.
  In other words, $\prd{p:U} (p<x)\vee (x<p)$, which is equivalent to $\prd{p:U} (p<x)+(x<p)$ since $y$ cannot be both greater and less than $x$.
  It follows that we can define a map $q : U \to \bool$ by $q(p)=\bfalse$ if $p<x$ and $q(p)=\btrue$ if $x<p$.
  In particular, since $y_1 < x$ and $x < y_2$, we have $q(y_1) = \bfalse$ and $q(y_2) = \btrue$.
  But $y_1$ and $y_2$ are also both in $W$, i.e. they are identified in $\shape_0 U$.
  Since $\bool$ is a discrete set, $q : U \to \bool$ must factor through $\shape_0 U$, and so $q(y_1) = q(y_2)$, a contradiction.
  So it must be that the other case holds: $|f(x)-f(a)|<\varepsilon$, as desired.
\end{proof}

The statement that all functions $\R\to\R$ are $\varepsilon$-$\delta$ continuous is sometimes called ``Brouwer's theorem'', because Brouwer proved it in his ``intuitionistic mathematics'' using choice sequences.
Here we have proved a version of it using instead our \ref{ax:r3} and \ref{ax:t}.
(Both uses of \ref{ax:t} could also be replaced by the assumption that $f$ is ``strongly extensional'', i.e.\ $f(x)\apart f(y)$ implies $x\apart y$.)

\cref{thm:continuity} provides an almost complete answer to the question of ``what is the topology of $\R$'', with ``topology'' interpreted in the sense relevant to our intended model, namely ``functions out of $\R$''.

\begin{corlemr3t}
  The canonical function $\flat(\R\to\R) \to (\flat \R \to \flat \R)$ is an injection, and coincides with the inclusion of the $\varepsilon$-$\delta$ continuous functions.
\end{corlemr3t}
\begin{proof}
  To show it is an injection, suppose given $f,g::\R\to\R$ such that $\flat f = \flat g$ as functions $\flat \R \to\flat\R$, or equivalently $\sharp f = \sharp g$ as functions $\sharp \R \to \sharp \R$.
  We must show that for any $x:\R$ we have $f(x)=g(x)$; but since $\R$ is concrete by \cref{thm:R-concrete}, it suffices to show $f(x)^\sharp = g(x)^\sharp$, which follows from $\sharp f = \sharp g$.

  Now \cref{thm:continuity} implies that this injection lands inside the $\varepsilon$-$\delta$ continuous functions.
  Conversely, suppose $f:\flat\R \to\flat\R$ is $\varepsilon$-$\delta$ continuous.
  By \ref{ax:lem} applied in the universe of discrete types, $f$ is locally uniformly continuous therein (meaning that it is uniformly continuous on all finite intervals).
  Since it suffices to consider intervals with rational endpoints, while $\Q$ is discrete and discrete types are closed under all logical operations, $f$ is also locally uniformly continuous in the world of all types.
  Therefore, so is the composite $\Q \to \flat \R \xto{f} \flat\R \to \R$.
  But it can be proved constructively that any locally uniformly continuous function $\Q\to\R$ extends uniquely to $\R$; see~\textcite[Theorem 5.6.2]{tvd:constructivism-i}.
  And such an extension restricts to $f$ on $\flat \R$, since the two agree on $\Q$ and both are continuous.
\end{proof}

It would be even better to characterize the ``topology'' of $\R\to\R$ itself and so on, or equivalently the functions $\R^n\to\R$ for all $n:\N$, but I don't know how to do this without additional axioms.
However, the general method of proof of \cref{thm:continuity} does have various further applications.
Here we mention only one more: the promised falsity of the analytic LLPO from \cref{sec:ac-2}.

\begin{thmlemr3}\label{thm:no-allpo}
  It is not the case that for all $x:\R$ we have $x\le 0 \vee x\ge 0$.
\end{thmlemr3}
\begin{proof}
  By \cref{thm:flat-detects-empty}, it suffices to show $\neg\flat(\forall x:\R.(x\le 0 \vee x\ge 0))$.
  For contradiction, suppose $\flat(\forall x:\R.(x\le 0 \vee x\ge 0))$, hence $\forall x:\R.(x\le 0 \vee x\ge 0)$ crisply.

  Let $f:\R\to\R$ be such that $f(0)=0$ and $f(x) = x \sin(\frac \pi x)$ for any $x\apart 0$.
  Such a function can be defined constructively without too much trouble, and since it needs no hypotheses it is crisp.
  Let $U = \{ x:\R \mid x\ge 0 \wedge f(x)\ge 0\}$ and $V = \{ x:\R \mid x\ge 0 \wedge f(x)\le 0\}$; then $U$ and $V$ are also crisp.
  Our assumption implies that $U\cup V = [0,\oo)$.

  Classically, $U$ would be the disjoint union of $\{0\}$ with the intervals $[\frac1{n+1},\frac1n]$ for even $n$, and $V$ the similar union for odd $n$.
  (Here and subsequently, we adopt the convention that when $n=0$, the interval $[\frac1{n+1},\frac1n]$ means $[1,\oo)$.)
  In fact, \ref{ax:lem} and \ref{ax:r1} suffice to make this true, by the following argument.

  Let $\N_\oo$ be the type of functions $\N\to \bool$ that take the value $\btrue$ at most once (this is equivalent to the type of non-increasing sequences).
  Now, for any even $n$ and any $x\in U$, we have $(x<\frac1{n-1}) \vee (x>\frac1n)$; but since $x\notin (\frac1n,\frac1{n-1})$, it follows that $(x\le \frac1n) \vee (x > \frac1n)$.
  Similarly, $(x\ge \frac1{n+1}) \vee (x < \frac1{n+1})$, and so the \mprop{} $x\in [\frac1{n+1},\frac1n]$ is decidable (given that $x\in U$).
  Thus, we can define $p : U \to \N_\oo$ by
  \[ p(x)_n =
  \begin{cases}
    \btrue &\quad x \in [\frac1{2n+1},\frac1{2n}]\\
    \bfalse &\quad x \notin [\frac1{2n+1},\frac1{2n}].
  \end{cases}\]
  If we write $\underline n:\N_\oo$ for the characteristic sequence of $n:\N$, and $\oo:\N_\oo$ for the constantly-$\bfalse$ sequence, then
  $p^{-1}(\underline n) =[\frac1{2n+1},\frac1{2n}]$ and $p^{-1}(\oo) = \{0\}$.
  On the other hand, as for any function, $U \simeq \sm{n:\N_\oo} p^{-1}(n)$.
  But by LPO (\cref{thm:lpo}), $\N_\oo = \N+\unit$, so we have
  \begin{alignat*}{2}
    U &\simeq \{ 0 \} + \tsm{n:\N} \textstyle[\frac1{2n+1},\frac1{2n}] &&\quad \text{and similarly}\\
    V &\simeq \{ 0 \} + \tsm{n:\N} \textstyle[\frac1{2n+2},\frac1{2n+1}].
  \end{alignat*}
  Now as remarked above, $\shape$ preserves coproducts of crisp types (such as the summands of $U$ and $V$).
  Similarly, for any $P:\N\to\type$, by the generalization of~\cite[Theorem 7.3.9]{hottbook} to monadic modalities we have
  \begin{alignat*}{2}
    \shape \bigl(\tsm{n:\N} P(n)\bigr)
    &\simeq \shape \bigl(\tsm{n:\N} \shape P(n)\bigr)\\
    &\simeq \tsm{n:\N} \shape P(n)
    &&\quad \text{(since this is already discrete, as $\N$ is).}
  \end{alignat*}
  Thus, $\shape U \simeq \unit + \tsm{n:\N} \shape [\frac1{2n+1},\frac1{2n}]$ and similarly for $V$.
  But any closed interval $[a,b]$ is a retract of $\R$, hence $\shape[a,b]$ is a retract of $\shape\R$ and thus contractible.
  Therefore, $\shape U\simeq \unit+\N$ and also $\shape V\simeq \unit+\N$.

  Recalling that $[0,\oo) = U \cup V$, it follows that $[0,\oo)$ is the pushout of $U$ and $V$ under their intersection.
  However, this intersection is just $\{0\} \cup \{ \frac 1n \mid n:\N \}$, which is another copy of $\unit+\N$.
  Since $\shape$ preserves this pushout, the contractible type $\shape [0,\oo)$ is the pushout of $\unit+\N$ and $\unit+\N$ under $\unit+\N$.
  Moreover, the two maps $\unit+\N \toto \unit+\N$ in this pushout can be identified with the identity and with ``$+1$'' acting on $\N$, respectively (arising from the inclusions of $\frac1n$ as the left endpoint of one interval and the right endpoint of another).
  But the pushout of these two maps is $\bool$ rather than $\unit$, since there is nothing to identify the elements of $\unit$ (which correspond to $\{0\}$ in $[0,\oo)$) with anything else.
  This is a contradiction, so our assumption $\flat(\forall x:\R.(x\le 0 \vee x\ge 0))$ is false.
\end{proof}

\cref{thm:no-allpo} explains why classical ``closed'' cell complexes must be avoided in real-cohesion, as mentioned in \cref{sec:shape}.
Namely, the analytic LLPO claims that the evident map $(-\oo,0] + [0,\oo) \to \R$ is surjective, which would be exactly what we need in order to be able to glue (say) the endpoints of a closed interval together seamlessly; its failure means that such a gluing will not give the desired answer.
(Conversely, therefore, if one wants a ``topological topos'' in which classical cell complexes \emph{do} work, one has to give up on local connectedness, as done by~\textcite{ptj:topological-topos}.)

\section{The Brouwer fixed-point theorem}
\label{sec:bfp}

Finally we are ready to attack Brouwer's fixed-point theorem.
We confine ourselves to the simplest version, which is about self-maps of the disc:
\[ \topdisc \defeq \{ (x,y) : \R\times\R \mid x^2 + y^2 \le 1 \}. \]
Recall from \cref{thm:topcircles} that the boundary of the disc
\[ \partial\topdisc \defeq \{ (x,y) : \R\times\R \mid x^2 + y^2 = 1 \} \]
is an equivalent definition of the topological circle $\topcirc$; it is the only one we will use in this section.
Recall also from \cref{thm:circ-circ-2} that $\shape\topcirc \simeq \hocirc$.
We also have:

\begin{lemr3}
  $\shape \topdisc$ is contractible.
\end{lemr3}
\begin{proof}
  Since $\shape\R$ is contractible and $\shape$ preserves finite products, $\shape(\R\times\R)$ is also contractible.
  But $\topdisc$ is a retract of $\R\times \R$, by the function
  \[ (x,y) \mapsto \left(\frac{x}{\max(1,\sqrt{x^2+y^2})},\frac{y}{\max(1,\sqrt{x^2+y^2})}\right). \]
  Hence $\shape \topdisc$ is a retract of $\shape(\R\times\R)$, and thus also contractible.
\end{proof}

In particular, therefore, we have:

\begin{lemuar3}\label{thm:topcirc-not-retract}
  $\topcirc$ is not a retract of $\topdisc$.
\end{lemuar3}
\begin{proof}
  If it were, $\shape\topcirc$ would be a retract of $\shape\topdisc$ and hence contractible.
  But $\shape\topcirc = \hocirc$, which is not contractible since its loop space is $\Z$~\parencite{ls:pi1s1}.
\end{proof}

So much for the homotopy-theoretic part of the proof; we proceed to the topological part.
This might be done in many ways, but we choose to be explicit and calculational to make it obvious that the argument is constructive.
Here is also where the mysterious \ref{ax:t} from \cref{sec:ac-2} reappears.

\begin{lemlemr1t}\label{thm:disc-fpfree}
  If there is a map $f:\topdisc \to \topdisc$ with no fixed point, then $\topcirc$ is a retract of $\topdisc$.
\end{lemlemr1t}
\begin{proof}
  Our assumption is that for all $z:\topdisc$ we have $f(z)\neq z$.
  Write $z=(x,y)$ and $f(z)=(u,v)$; then $f(z)\neq z$ means $(x-u)^2 + (y-v)^2 \neq 0$, hence by \cref{ax:amp} (using \ref{ax:t}) $(x-u)^2 + (y-v)^2 >0$.

  Now the line through $f(z)$ and $z$ can be parametrized as
  \begin{equation}
    (t x + (1-t)u , t y + (1-t)v)\label{eq:topdisc-retract-line}
  \end{equation}
  with $t=0$ being $f(z)$ and $t=1$ being $z$.
  We are interested in the intersections of this line with $\topcirc$, which are given by solving a quadratic equation:
  \begin{align*}
    (t x + (1-t)u)^2 + (t y + (1-t)v)^2 &= 1 \quad\iff\\
    ((x-u)t + u)^2 + ((y-v)t + v)^2 &= 1 \quad\iff\\
    ((x-u)^2 + (y-v)^2)t^2 + 2((x-u)u + (y-v)v)t + (u^2+v^2-1) &= 0
  \end{align*}
  The standard quadratic formula yields the solutions
  \begin{align*}
    t
    &= \frac{-B \pm \sqrt{D}}{A}
  \end{align*}
  where
  \begin{align*}
    B &= ((x-u)u + (y-v)v)\\
    D &= {((x-u)u + (y-v)v)^2 - ((x-u)^2 + (y-v)^2)(u^2+v^2-1)}\\
    A &= {(x-u)^2 + (y-v)^2}.
  \end{align*}
  To ensure that this yields real solutions constructively, we need to know that $A\apart 0$ and $D\ge 0$.
  As we saw above, the former follows from our assumption that $f(z)\neq z$.
  For the latter, we can compute
  \begin{equation*}
    D = (x-u)^2 + (y-v)^2 - (v x - u y)^2.
  \end{equation*}
  Now $(x-u)^2 + (y-v)^2$ is the squared length of one diagonal of the parallelogram spanned by $z$ and $f(z)$ regarded as vectors originating at the origin, while $(v x - u y)^2$ is the squared area of that parallelogram.
  The area of a parallelogram is at most half the product of the lengths of its diagonals; thus $D\ge 0$ as long as the other diagonal of the parallelogram in question is $\le 2$.
  But this other diagonal is the vector sum $z + f(z)$, which has magnitude $\le 2$ since both $z$ and $f(z)$ lie in $\topdisc$.

  Thus we constructively have two real solutions to our quadratic equation for $t$.
  We choose the one in which the $\pm$ sign is $+$.
  Substituting this value of $t$ into the equation~\eqref{eq:topdisc-retract-line} of our line, we obtain a formula for a point in $\R^2$ that is a function of $(x,y)$ and $(u,v)$, hence (recalling that $(u,v) = f(x,y)$) of $z = (x,y) : \topdisc$ only.
  Moreover, by construction, this point always lies in $\topcirc$.
  Let us denote it $r(z)$; thus we have $r:\topdisc \to \topcirc$.

  It remains to show that $r$ is a retraction of the obvious inclusion, i.e.\ that if $z = (x,y)$ lies in $\topcirc$, then $r(z)=z$.
  For this, it suffices to show that if $x^2+y^2 = 1$, then our chosen value of $t$ becomes equal to $1$.
  This is equivalent to asking that $A+B = \sqrt{D}$, and this in turn is equivalent to asking that $A+B\ge 0$ and that $(A+B)^2-D=0$.
  For the first, we compute
  \begin{align*}
    A + B &= x^2 + y^2 - u x - v y\\
    &= 1 - (ux + vy)
  \end{align*}
  But $ux+vy$ is the dot product of $z$ and $f(z)$, which is $\le 1$ since both have magnitude $\le 1$.
  Finally, for the second we compute
  \[ (A+B)^2-D = ((x-u)^2 + (y-v)^2) (x^2 + y^2 - 1) \]
  which clearly vanishes if $x^2+y^2=1$.
\end{proof}

Combining \cref{thm:topcirc-not-retract,thm:disc-fpfree}, we see that an arbitrary $f:\topdisc \to \topdisc$ cannot fail to have a fixed point.
To deduce a positive statement from this, we invoke \ref{ax:lem}.

\begin{thmualemr3t}[Sharp Brouwer Fixed-Point Theorem]\label{thm:bfp}
  For any function $f:\topdisc \to \topdisc$, we have $\sharp\brck{\sm{x:\topdisc} f(x)=x}$.
  That is, any self-map of $\topdisc$ sharply has a fixed point.
\end{thmualemr3t}
\begin{proof}
  By \cref{thm:codiscrete-notnot-2}, the claim is equivalent to $\neg\neg \sm{x:\topdisc} f(x)=x$.
  But this is what we have just proven.
\end{proof}

Equivalently, we can say:

\begin{corualemr3t}[Crisp Brouwer Fixed-Point Theorem]
  For any crisp function $f::\topdisc \to \topdisc$, there exists $x:\topdisc$ such that $f(x) = x$.
\end{corualemr3t}
\begin{proof}
  If $f$ is crisp, then so is the fixed point asserted by \cref{thm:bfp}, so we can strip the $\sharp$ from it.
\end{proof}

Topologically, these formulations both say the same thing: every continuous self-map of $\topdisc$ has a fixed point, but such a fixed point cannot be selected continuously.
Since the existential in both theorems is propositionally truncated, even if the $\sharp$ or crispness could be omitted we would not be asserting the existence of a single continuous \emph{function} selecting fixed points.
However, according to \cref{thm:local-sections}, we would be asserting that such a function exists \emph{locally}, i.e.\ that any given function has a neighborhood of functions on which fixed points can be selected continuously; and this is just as impossible.
Thus, the versions of the theorem that we have proven really are the strongest we can expect.

On the other hand, just as we did with the Intermediate Value Theorem in \cref{sec:continuity}, we can eliminate \ref{ax:lem} and \ref{ax:t} by constructing an \emph{approximate} fixed point.
Our proof of this will also involve more synthetic homotopy theory: in addition to $\pi_1(\hocirc) = \mathbb{Z}$, we will use the van Kampen theorem for pushouts.

\begin{thmuar3}[Approximate Brouwer Fixed-Point Theorem]\label{thm:abfp}
  For any crisp function $f::\topdisc \to \topdisc$ and $\varepsilon>0$, there exists an $x:\topdisc$ such that $|f(x)-x|<\varepsilon$.
\end{thmuar3}
\begin{proof}
  As before, we may assume that $\varepsilon$ is rational, and hence that it is crisp.
  We may also assume $\varepsilon <1$.
  Define
  \[ g(x) = \begin{cases}
    f(x) \cdot \frac{\min(|f(x)|,1-\varepsilon/2)}{|f(x)|} &\qquad \text{if}\; |f(x)|>1-\varepsilon\\
    f(x) &\qquad\text{if}\; |f(x)|< 1-\varepsilon/2
  \end{cases}
  \]
  The case division is constructively valid by \cref{thm:join-or}, since one of the two cases must be true, and if both are true then the two definitions agree.
  And since $\varepsilon<1$, if $|f(x)|>1-\varepsilon$ then $|f(x)|>0$, so that $\frac{\min(|f(x)|,1-\varepsilon/2)}{|f(x)|}$ makes sense.

  Now this definition ensures that $|f(x)-g(x)|<\varepsilon/2$, so it suffices to find an $x:\topdisc$ such that $|g(x)-x|<\varepsilon/2$.
  Moreover, we have $|g(x)-x| > \varepsilon/4$ for all $x:\topcirc$.

  Let $U = \{ x:\topdisc \mid |g(x)-x| > \varepsilon/4 \}$ and $V = \{ x:\topdisc \mid |g(x)-x| < \varepsilon/2 \}$.
  Then $U \cup V = \topdisc$, exhibiting $\topdisc$ as a crisp pushout of $U$ and $V$ under their intersection.
  Therefore the contractible type $\shape \topdisc$ is also the pushout of $\shape U$ and $\shape V$ under $\shape (U\cap V)$.

  However, $\topcirc$ is contained in $U$ by the construction of $g$, and the construction of the retraction $r$ in \cref{thm:disc-fpfree} works for all $x:U$.
  Thus, $\topcirc$ is a retract of $U$, and so $\shape \topcirc$ is a retract of $\shape U$.
  But since $\shape\topcirc = \hocirc$, it contains a nonidentity identification, and hence so does $\shape U$.
  On the other hand, this identification must be mapped to the identity in the contractible type $\shape\topdisc$.

  Now recall that the van Kampen theorem \parencite[\S8.7]{hottbook} identifies the 0-truncated equality types of a (homotopy) pushout as a certain set-quotient.
  Inspecting this construction we see that the only way two identifications in $\shape U$ can become equal in $\shape U \sqcup^{\shape(U\cap V)} \shape V$ is if we have $\brck{\shape V}$.
  Hence, by \cref{thm:shape-surj}, we also have $\brck{V}$.
\end{proof}

As in \cref{thm:approx-ivt}, there seems no topological reason for the crispness of $f$, but I do not know how to remove it.
Note that although the technique is similar to that in \cref{thm:overt-connected,thm:continuity}, here we need to use $\shape$ rather than $\shape_0$, and likewise we need the van Kampen theorem for pushouts that are not sets (whose proof also involves the full univalence axiom).

\printbibliography

\end{document}